\newcounter{denseversion}
\newcounter{comments}
\newcounter{authorcounter}
\newcounter{adresscounter}
\def\title#1{\gdef\@title{#1}}
\def\@title{}
\def\subtitle#1{\gdef\@subtitle{#1}}
\def\@subtitle{}
\def\authortagsused{0}
\def\adresstag#1{\if!#1!\else$^{\;#1\;}$\fi}
\def\@authorsep#1{
  \ifnum\value{authorcounter}=#1 and \else\unskip, \fi
}
\renewcommand{\author}[2][]{
  \stepcounter{authorcounter}
  \if!#1!\else\gdef\authortagsused{1}\fi
  \ifnum\value{authorcounter}=1
    \def\@authorstringa{#2\adresstag{#1}}
    \def\@authorstringb{#2}
    \def\@authorstringc{#2\adresstag{#1}}
  \else
    \ifnum\value{authorcounter}=2
      \g@addto@macro\@authorstringa{\@authorsep{2}#2\adresstag{#1}}
      \g@addto@macro\@authorstringb{\@authorsep{2}#2}
      \g@addto@macro\@authorstringc{\\#2\adresstag{#1}}
    \else
      \g@addto@macro\@authorstringa{\@authorsep{3}#2\adresstag{#1}}
      \g@addto@macro\@authorstringb{\@authorsep{3}#2}
      \g@addto@macro\@authorstringc{\\#2\adresstag{#1}}
    \fi
  \fi}
\def\@author{\ifnum\value{denseversion}=0\@authorstringa\else\@authorstringb\fi}
\def\@adressstringa{}
\def\@adressstringb{}
\newcommand{\adress}[2][]{
  \stepcounter{adresscounter}
  \ifnum\value{adresscounter}=1
    \g@addto@macro\@adressstringa{\ifnum\authortagsused=0\def\br{\\}\else\def\br{, }\fi\adresstag{#1}#2}
    \g@addto@macro\@adressstringb{\def\br{\\}\adresstag{#1}\parbox[t]{14cm}{#2}}
  \else
    \g@addto@macro\@adressstringa{\\[\bigskipamount]\adresstag{#1}#2}
    \g@addto@macro\@adressstringb{\\[\medskipamount]\adresstag{#1}\parbox[t]{14cm}{#2}}
  \fi}
\def\preprint#1{\gdef\@preprint{#1}}
\def\@preprint{}
\def\keywords#1{\gdef\@keywords{#1}}
\def\@keywords{}
\def\msc#1{\gdef\@msc{#1}}
\def\@msc{}
\def\email#1{
   \gdef\@email{#1}
   \g@addto@macro\@authorstringc{ {\it (#1)}}}
\def\@email{}
\def\dedication#1{\gdef\@dedication{#1}}
\def\@dedication{}
\def\mybaselinestretch#1{
  \gdef\@mybaselinestretch{#1}
  \renewcommand{\baselinestretch}{\@mybaselinestretch}}
\def\myparskip#1{
  \gdef\@myparskip{#1}
  %\setenumstandard
  \setlength{\parskip}{\@myparskip}}
\newlength{\@listleftmargin}
\def\setenumstandard{
  \setlist{leftmargin=\@listleftmargin,itemsep=0pt,topsep=0pt,partopsep=0pt,parsep=\@myparskip}
  \setlist[enumerate]{align=left,labelsep=*,leftmargin=\@listleftmargin,itemsep=0pt,topsep=0pt,partopsep=0pt,parsep=\@myparskip}
}
\def\denseversion{
  \setcounter{denseversion}{1}
  \newgeometry{left=3cm,right=3cm,top=3cm}
  \mybaselinestretch{1.1}
  \myparskip{0.8ex}
  \normalfont
  \def\possiblelinebreak{}
  \fancyfoot[C]{\itshape{--$\,\,$\thepage$\,\,$--}}}
\def\possiblelinebreak{\\}
\renewcommand{\emph}[1]{\def\reserved@a{it}\ifx\f@shape\reserved@a\ul{#1}\else\textit{#1}\fi}
\def\setcrefnames{}
\newcommand{\mytableofcontents}{
   \ifnum\value{denseversion}=0
     \tableofcontents
     \setcrefnames %% \tableofcontents macht die \crefnames kaputt
   \else
     \renewcommand{\baselinestretch}{1.1}
     \setlength{\parskip}{0ex}
     \normalfont
     \begingroup
     \def\addvspace##1{\vskip0.4em}
     \tableofcontents
     \setcrefnames %% \tableofcontents macht die \crefnames kaputt
     \endgroup
     \renewcommand{\baselinestretch}{\@mybaselinestretch}
     \setlength{\parskip}{\@myparskip}
     \normalfont
   \fi}
\newlength{\zeilenlaenge}
\def\putindent#1{
  \settowidth{\zeilenlaenge}{#1}
  \ifnum\zeilenlaenge>\textwidth
    #1
  \else
    \noindent #1
  \fi
}
\def\pdfdaten{
  \hypersetup{
    pdftitle = {\@title},
    pdfauthor = {\@author},
    pdfkeywords = {\@keywords},    
    bookmarksopen = true,
    bookmarksopenlevel = 1
  }}  
\def\showkeywords{\begin{flushleft}\footnotesize\textbf{Keywords}: \@keywords\end{flushleft}}
\def\showmsc{\begin{flushleft}\footnotesize\textbf{MSC 2010}: \@msc\end{flushleft}}
\def\mytitle{}
\def\zmptitle{
  \begin{tabular}{cc}
    \begin{minipage}[c]{0.4\textwidth}
      \begin{flushleft}
        \includegraphics[width=110pt]{../../tex/zmp}
      \end{flushleft}  
    \end{minipage}&
    \begin{minipage}[c]{0.55\textwidth}
      \begin{flushright}
      {\small\sf\@preprint}
      \end{flushright}
    \end{minipage}
  \end{tabular}
  \vskip 2cm}
\def\maketitle{
  \pdfdaten
  \noindent
  \mytitle
  \begin{center}
    \LARGE\@title\\
    \if!\@subtitle!\else\smallskip\LARGE\@subtitle\\\fi
    \bigskip
    \if!\@author!\else\bigskip\large\@author\\\fi
    \ifnum\value{denseversion}=0
      \if!\@adressstringa!\else\bigskip\normalsize\@adressstringa\\\fi
      \if!\@email!\else\ifnum\value{authorcounter}=1\bigskip\normalsize\textit{\@email}\\\else\fi\fi
    \else
    \fi
    \if!\@dedication!\else\bigskip\normalsize{\@dedication}\\\fi
  \end{center}
  \ifnum\value{denseversion}=0\vskip 1.5cm\else\vskip0.5cm\fi}
\def\kobib#1{
  \begin{raggedright}
  \ifnum\value{denseversion}=0\else\small\fi
  \Oldbibliography{#1/kobib}
  \bibliographystyle{#1/kobib}
  \end{raggedright}
  \ifnum\value{denseversion}=0\else
      \noindent
      \if!\@authorstringc!\else
        \ifnum\authortagsused=0\ifnum\value{authorcounter}>1\normalsize\@authorstringc\\[\medskipamount]\else\fi\else\normalsize\@authorstringc\\[\medskipamount]\fi
      \fi
      \if!\@adressstringb!\else\normalsize\@adressstringb\\{}\fi
      \ifnum\authortagsused=0
        \ifnum\value{authorcounter}=1
          \if!\@email!\else\linebreak\normalsize\textit{\@email}\\{}\fi
        \else
        \fi
      \else
      \fi
  \fi}
\let\Oldbibliography\bibliography
\def\bibliography#1{
  \begin{raggedright}
  \ifnum\value{denseversion}=0\else\small\fi
  \Oldbibliography{#1}
  \end{raggedright}
  \ifnum\value{denseversion}=0\else
      \medskip
      \noindent
      \if!\@authorstringc!\else
        \ifnum\authortagsused=0\ifnum\value{authorcounter}>1\normalsize\@authorstringc\\[\medskipamount]\else\fi\else\normalsize\@authorstringc\\[\medskipamount]\fi
      \fi
      \if!\@adressstringb!\else\normalsize\@adressstringb\\{}\fi
      \ifnum\authortagsused=0
        \ifnum\value{authorcounter}=1
          \if!\@email!\else\linebreak\normalsize\textit{\@email}\\{}\fi
        \else
        \fi
      \else
      \fi
  \fi
}
\newenvironment{commentfigure}{\begin{comment}}{\end{comment}}
\newenvironment{sidewayscommentfigure}{\begin{minipage}}{\end{minipage}}
\newenvironment{displaycomment}{\begin{list}{}{\rightmargin=1cm\leftmargin=1cm}\item\sf\begin{small}}{\end{small}\end{list}}
\def\tocmark#1{}
\def\draftstamp#1{
  \def\tocmark##1{
    \ifnum\c@secnumdepth=0\section{##1}\fi
    \ifnum\c@secnumdepth=1\subsection{##1}\fi
    \ifnum\c@secnumdepth=2\subsubsection{##1}\fi
    \ifnum\c@secnumdepth=3\subsubsection{##1}\fi
  }
  \ifnum\value{comments}=0
    \gdef\@draft{DRAFT - Edited on \today\ by #1 - Comments are not displayed}
  \else
    \gdef\@draft{DRAFT - Edited on \today\ by #1 - Comments are displayed}
  \fi
  \fancyhead[C]{\footnotesize\tt\textcolor{red}{\@draft}}}
\def\skript{
  \renewenvironment{displaycomment}{}{}
  \ifnum\value{comments}=0
    \renewenvironment{example*}{\comment}{\endcomment}
    \renewenvironment{remark*}{\comment}{\endcomment}
  \else\fi
  \parindent=0mm	
}
\newcounter{sectioning}
\def\tsection#1{\ifnum\value{sectioning}=0\section{#1}\fi}
\def\lsection#1{
  \ifnum\value{sectioning}=1
    \clearpage
    \def\thesection{\lektionname~\arabic{section}:}
    \section{#1}
    \def\thesection{\arabic{section}}
  \fi}
\def\tsubsection#1{\ifnum\value{sectioning}=0\subsection{#1}\fi}
\def\lsubsection#1{\ifnum\value{sectioning}=1\subsection{#1}\fi}
\def\tsubsubsection#1{\ifnum\value{sectioning}=0\subsubsection{#1}\fi}
\def\lsubsubsection#1{\ifnum\value{sectioning}=1\subsubsection{#1}\fi}
\def\N {\mathbb{N}}
\def\Z {\mathbb{Z}}
\def\R {\mathbb{R}}
\def\T {\mathbb{T}}
\def\id{\mathrm{id}}
\def\subset{\subseteq}
\def\sep{\;|\;}
\renewcommand{\varepsilon}{\epsilon}
\newcommand{\incl}{\hookrightarrow}
\def\gdw{\Leftrightarrow}
\renewenvironment{proof}[1][\nameProof]
  {\par\pushQED{\qed}%
   \normalfont \topsep6\p@\@plus6\p@\relax
   \trivlist
   \item[\hskip\labelsep
         \itshape
         #1\@addpunct{.}]
  \leavevmode}
  {\popQED\endtrivlist\@endpefalse}
\def\notebox#1#2{\begin{minipage}[b]{#1}\sloppy\renewcommand{\baselinestretch}{0.8}\footnotesize \begin{center}#2\end{center}\end{minipage}}
\def\mqquad{\hspace{-4em}}
\newcommand{\arr}[1][r]{\ar@<0.7ex>[#1]\ar@<-0.7ex>[#1]}
\newcommand{\arrr}[1][r]{\ar@<1.4ex>[#1]\ar[#1]\ar@<-1.4ex>[#1]}
\newcommand{\arrrr}[1][r]{\ar@<2.1ex>[#1]\ar@<-2.1ex>[#1]\ar@<0.7ex>[#1]\ar@<-0.7ex>[#1]}
\def\stackref#1#2{\stackrel{\text{\ref{#1}}}{#2}}
\def\eqref#1{\stackref{#1}{=}}
\newlength{\myeqt} % Darin wird die Länge des übergebenen Textes abgespeichert
\newlength{\myeqs} % Darin wird die Länge des übergebenen Symbolds abgespeichert
\newlength{\myeqm} % Wieviel "klein" über die Breite des Symbolds hinausgehen darf
\newlength{\myeqn} % Die Standardbreite für große Boxen
\newcommand\symtext[3][\myeqn]{
  \settowidth{\myeqt}{#2}
  \settowidth{\myeqs}{$#3$}
  \addtolength{\myeqs}{\the\myeqm}
  \ifdim\myeqt>\myeqs
    %groß
    \stackrel{\hspace{-#1}\notebox{#1}{\medskip #2 \\ $\downarrow$\smallskip}\hspace{-#1}}{#3}
  \else
    %klein
    \stackrel{\text{#2}}{#3}
  \fi}
\def\brackets#1{\IfStrEq{#1}{-}{}{(#1)}}
\def\subindex#1{\IfStrEq{#1}{-}{}{_{#1}}}
\newcommand{\alxydim}[2]{\begin{aligned}\xymatrix#1{#2}\end{aligned}}
\newlength{\myl}
\def\ddt#1#2#3{\left.\frac{\mathrm{d}^{\IfStrEq{#1}{1}{}{#1}}}{\mathrm{d}#2}\IfStrEq{#2}{#3}{\right.}{\right|_{#3}}}
\def\Man{\mathscr{M}\mathrm{an}}
\def\Diff{\mathscr{D}\mathrm{iff}}
\def\Top{\mathscr{T}\mathrm{op}}
\def\PSh{\mathscr{PS}\mathrm{h}}
\def\Sh{\mathscr{S}\mathrm{h}}
\def\Open{\mathscr{O}\mathrm{pen}}
\def\Set{\mathscr{S}\mathrm{et}}
\def\grpd{\mathcal{G}\!rpd}
\def\des{\mathcal{D}\!esc}
\def\Hom{\mathscr{H}\mathrm{om}}
\def\pr{{\mathrm{pr}}}
\newlength{\widthtmp}
\def\length#1{\settowidth{\widthtmp}{#1}\the\widthtmp}
\def\ttimes#1#2{\hspace{-0.15em}\tensor[_{#1}]{\times}{_{#2}}}
\def\quot#1{``#1''}
\def\quand{\quad\text{ and }\quad}
\def\quomma{\quad\text{, }\quad}
\def\nameProof{Proof}
\title{Internal geometry and functors between sites}
\author{Konrad Waldorf}
\email{konrad.waldorf@uni-greifswald.de}
\keywords{}
\def\mathscr#1{\EuScript{#1}}
\def\Uni{\mathcal{U}\hspace{-0.1em}n\hspace{-0.1em}i}
\def\Sing{\mathcal{S}\hspace{-0.1em}i\hspace{-0.1em}n\hspace{-0.1em}g}
\def\Pre{\mathcal{P}\hspace{-0.1em}r\hspace{-0.1em}e}
\def\Subman{\mathscr{S}\mathrm{ub}\Man}
\def\Cart{\mathscr{C}\mathrm{art}}
\def\Grpd{\mathscr{G}\mathrm{rpd}}
\def\Bun{\mathscr{B}\mathrm{un}}
\def\Bibun#1#2{#1\text{-}#2\text{-}\mathscr{B}\mathrm{i}\mathscr{B}\mathrm{un}}
\def\Fun{\mathscr{F}\mathrm{un}}
\def\Cat{\mathscr{C}\mathrm{at}}
\def\Ana{\mathscr{A}\mathrm{na}}
\def\Yo{\hspace{-0.25em}\raisebox{-0.1em}{\includegraphics[height=1em]{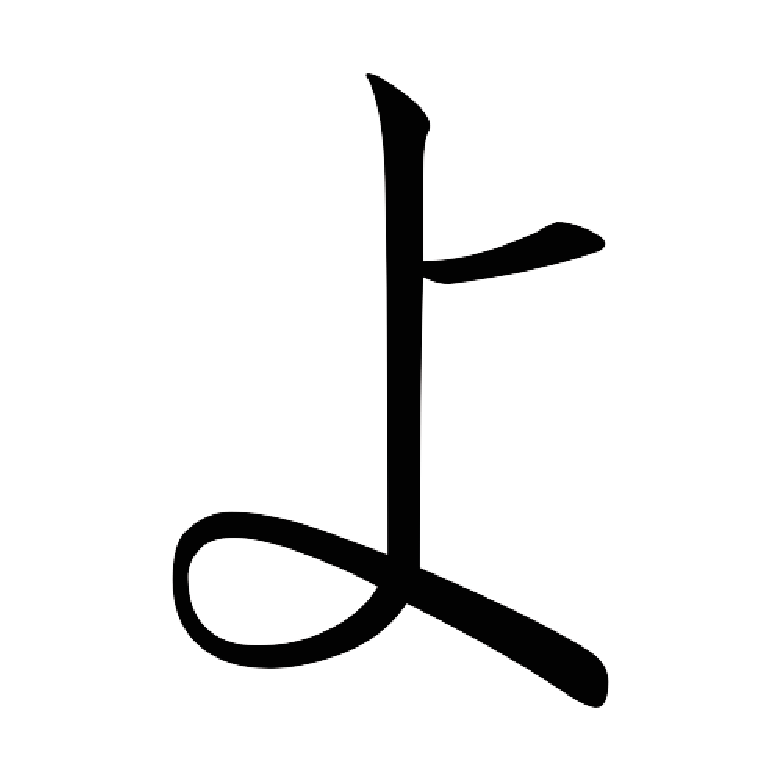}}\hspace{-0.15em}}
\begin{document}

%\draftstamp{Konrad}
\maketitle

\begin{abstract}
Locality is implemented in an arbitrary category using  Grothendieck topologies. We explore how different Grothendieck topologies on one category can be related, and, more general, how functors between categories can preserve them. As applications of locality, we review geometric objects such as sheaves,  groupoids, functors, bibundles, and anafunctors internal to an arbitrary Grothendieck site. We give definitions such that all these objects are invariant under equivalences of Grothendieck topologies and  certain functors between sites. As examples of sites, we look at categories of smooth manifolds, diffeological spaces, topological spaces, and sheaves, and we study properties of various functors between those.      
%\showkeywords
%\showmsc
\end{abstract}

%\setcounter{tocdepth}{2}
%\mytableofcontents

\setsecnumdepth{1}

\section{Introduction}

Grothendieck initiated a program in \cite{Grothendieck1960a} aimed at internalizing mathematical language, with the goal of transferring concepts from the category $\Set$ of sets to other categories $\mathscr{C}$. For instance, groups internal to the category $\Top$ of topological spaces are topological groups, while groups internal to the category $\Man$ of smooth manifolds are Lie groups, and so forth. This internalization approach offers two main advantages: first, it allows for the consistent combination of different mathematical structures, such as groups and smooth manifolds. Second, by keeping the category $\mathscr{C}$ arbitrary, it provides an efficient framework for defining structures and proving theorems that can be specialized to various contexts by choosing specific categories $\mathscr{C}$.

While much of the work on internalization has focused on algebraic structures, algebraic geometry, and logic, this article addresses the internalization of \emph{(differential-)geometric} structures, such as groupoids and principal bundles. These structures often involve a notion of locality. To implement locality within an arbitrary category $\mathscr{C}$, Grothendieck introduced the concept of a \quot{topology on a category}, now known as a Grothendieck topology. Many researchers have contributed to the internalization of geometric structures, with notable references including Grothendieck himself \cite{Grothendieck1960}, Ehresmann \cite{Ehresmann1963}, Bunge-Par\'e \cite{Bunge1979}, Bartels \cite{bartels}, Carchedi \cite{Carchedi2011}, and Roberts \cite{Roberts2012}. This program has proven extremely useful and insightful, particularly for categories with multiple Grothendieck topologies, such as the category $\Top$ of topological spaces and the category $\Diff$ of diffeological spaces.

Focusing specifically on internal groupoids and internal Morita equivalences, the work of Meyer and Zhu \cite{Meyer2014} provides a comprehensive  general framework. They describe two versions of internal Morita equivalences, one using bibundles and one using anafunctors, both of which give rise to equivalent bicategories, whose 1-isomorphisms are Morita equivalences. Moreover, the inclusion of the 2-category of internal functors maps weak equivalences (with respect to the Grothendieck topology) to Morita equivalences.
This framework has been described previously in each context separately, for example by Hilsum-Skandalis \cite{Hilsum1987} and Makkai \cite{makkai1} in topological spaces, by Blohmann in smooth manifolds \cite{Blohmann2008},  and van der Schaaf in diffeological spaces \cite{Schaaf2020}.

However, one aspect that is missing in the literature on the internalization of geometric structures is the comparison of different Grothendieck topologies on a category $\mathscr{C}$, and more generally, the ability to switch sites along functors. One reason for this oversight may be that existing terminology for comparing Grothendieck topologies is heavily rooted in algebraic geometry (where two Grothendieck topologies are considered equivalent if they yield the same categories of sheaves). Additionally, the notion of sieves, commonly used in this context, can appear unintuitive when applied to categories like $\Top$ or $\Man$. To address these issues, we propose and study in \cref{GTs} a new notion of equivalence between Grothendieck topologies (\cref{equivalence-between-grothendieck-topologies}): two Grothendieck topologies are considered equivalent if they determine the same class of \quot{universal locally split morphisms}. This concept is motivated by a discussion in \cite{Roberts2012} and aligns well with the categories of interest in this work:

\begin{itemize}
\item
Every Grothendieck topology on a category is equivalent to a singleton topology, called its universal completion (\cref{universal-completion}).

\item All standard Grothendieck topologies on the category $\Man$ of smooth manifolds are equivalent (\cref{topologies-on-man}), indicating that there is only one notion of locality on $\Man$.

\item
The category $\Top$ of topological spaces features at least four non-trivial equivalence classes of Grothendieck topologies (\cref{Comparison-of-GTs-on-Top}), each corresponding to a different notion of locality.

\item
Similarly, the category $\Diff$ of diffeological spaces contains at least four non-trivial equivalence classes of Grothendieck topologies (\cref{comparison-GTs-on-Diff}); to the best of our knowledge, this is the first systematic study of these equivalence classes.

\end{itemize}

\medskip

All forthcoming definitions and structures considered in this article will be invariant under equivalences of Grothendieck topologies. For example, we provide new definitions for when a functor between sites is \emph{continuous} (\cref{continuous-functor}), \emph{cocontinuous} (\cref{cocontinuous-functor}), and has a \emph{dense image} (\cref{dense-image}), all of which remain invariant under these equivalences.

In \cref{geometry-on-sites}, we revisit several internalized geometric structures. Our definitions differ from those typically found in the literature because existing definitions (1) are \emph{not} invariant under equivalences of Grothendieck topologies, and (2) often apply only to singleton Grothendieck topologies. We resolve these issues by applying the previous definitions to the universal completion of the given Grothendieck topology. For instance, rather than requiring that the projection map of a principal bundle be a covering, we require that it be universally locally split. While this may seem like a minor adjustment, it has three significant advantages:
\begin{enumerate}

\item
Our definitions are invariant under equivalences between Grothendieck topologies and are preserved by continuous functors (see \cref{continuous-functors-and-weak-equivalences,functoriality-of-G-bundles,continuous-functors-and-anafunctors}).

\item
The existing theory, such as the results of Meyer and Zhu \cite{Meyer2014}, can be used (see \cref{bicat-bibundles,bicategory-anafunctor,anafunctors-and-bibundles}).

\item
This approach avoids the artefact that arises when applying, for instance, the definition of a principal bundle from \cite{Meyer2014} to the Grothendieck topology of surjective local diffeomorphisms on the category $\Man$, in which case the bundle projection is required to be a local diffeomorphism. Our definition, by contrast, only requires it to \emph{split} over a surjective local diffeomorphism.

\end{enumerate}
\medskip

In \cref{sheaves}, we examine the extent to which the notion of a sheaf is compatible with our new equivalence relation between Grothendieck topologies. Unfortunately, it turns out that the traditional definition of a sheaf is not compatible. This result is unsurprising, given that the traditional definition does not even yield the same sheaves on $\Man$ for the Grothendieck topologies of open covers and surjective submersions (see \cref{traditional-sheaf-bad}), even though these are (and should be) equivalent. Therefore, with sincere apologies to the reader, we propose a modified definition of a sheaf (\cref{sheaf}). The key difference is that we add the condition that sheaves must send coproducts to products, a requirement that is often imposed in geometric contexts. This modification primarily affects singleton Grothendieck topologies; for superextensive (and \quot{singletonizable}) Grothendieck topologies, our new definition coincides with the traditional one (\cref{comparison-sheaf}). Our revised definitions of continuous and cocontinuous functors, as well as the revised  definition of a sheaf, necessitate then a reexamination of the classical base change operations for sheaves. By adding an additional condition on functors with respect to coproducts, we derive in \cref{adjunction-sheaves,comparison-lemma} versions of the \quot{comparison lemma}, similar to those found in \cite[App. 4]{MacLane1992} and \cite[C2.2]{Johnstone2002}.

\Cref{examples-of-sites} discusses the specific examples of sites mentioned earlier, and \cref{examples-of-functors} explores functors between these sites:

\begin{enumerate}[(a)]

\item
The inclusion of the site $\Open$ of open subsets of $\R^{n}$, $n\in \N$, into the site of smooth manifolds. This inclusion is continuous, cocontinuous, and has a dense image, thereby inducing equivalences between the categories of sheaves (\cref{sheaves-on-Open}).

\item
The smooth diffeology functor, which embeds smooth manifolds into diffeological spaces. This functor is continuous and cocontinuous for all four (equivalence classes of) Grothendieck topologies on diffeological spaces, but has a dense image only for the Grothendieck topology of subductions (\cref{properties-of-S}). This makes geometry on the sites $\Man$ and $\Diff$ highly compatible (\cref{smooth-diffeology-covariance,smooth-diffeology-sheaves}).

\item
The D-topology functor between diffeological spaces and topological spaces, which, however, turns out to be problematic in several respects.

\end{enumerate}

\medskip

This article originated from a talk I gave in 2022 at the Global Diffeology Seminar. While preparing for this talk, I realized that Grothendieck topologies had not yet gained traction within the diffeological community. I also noticed that important definitions, such as those of bicategories of groupoids, often reappear in different settings -- most recently in a diffeological context by van der Schaaf in \cite{Schaaf2020}. I hope that this article serves to summarize and integrate various strands of work within this field.

\paragraph{Acknowledgements.}
I would like to thank Severin Bunk and Alexander Engel for many help\-ful discussions on this topic.

\setsecnumdepth{2}

\section{Grothendieck topologies}

\label{GTs}

\subsection{Basic definitions}

In this article, all categories are locally small. 
If $\mathscr{C}$ is a category and $X\in\mathscr{C}$,
we denote by 
\begin{equation*}
\mathrm{Fam}_{\mathscr{C}}(X) := \{ (\pi_i)_{i\in I} \sep I\text{ a set, }U_i \in \mathscr{C}\text{, } \pi_i: U_i \to X \text{ a morphism}\}
\end{equation*}
the set of families of morphisms with target $X$. 

\begin{definition}
\label{grothendieck-topology}
A \emph{Grothendieck topology} on a category $\mathscr{C}$ is a family $T=(T_X)_{X\in \mathscr{C}}$, parameterized by the objects $X$ of $\mathscr{C}$, of subsets $T_X \subset \mathrm{Fam}_\mathscr{C}(X)$  whose elements are called the \emph{coverings} of $X$, 
such that:
\begin{itemize}

\item 
 Isomorphisms are coverings: if $\phi: U \to X$ is an isomorphism, then   $(\phi)\in T_X$.

\item
 Coverings of coverings are coverings: if $(\pi_i: U_i \to X)_{i\in I} \in T_X$ and for each $i\in I$, $(\psi_{j})_{j\in J_i}\in T_{U_i}$, then $(\pi_i \circ \psi_j)_{i\in I,j\in J_i} \in T_X$.   

\item
Coverings pull back to coverings: if $(\pi_i: U_i \to X)_{i\in I} \in T_X$  and $f: W \to X$ is a morphism, then the pullback
\begin{equation*}
\alxydim{@=2.5em}{f^{*}U_i \ar[r] \ar[d]_{f^{*}\pi_i} & U_i \ar[d]^{\pi_i} \\ W \ar[r]_{f} &X}
\end{equation*}
exists for each $i\in I$, and $(f^{*}\pi_i)_{i\in I} \in T_W$. 

\end{itemize} 
A category $\mathscr{C}$ together with a Grothendieck topology is called a \emph{site}. A covering is called \emph{singleton covering}, if its index set is a singleton.
A Grothendieck topology is called \emph{singleton Grothendieck topology}, if all coverings are singleton coverings.  
\end{definition}

What we defined above is often called a \emph{pretopology}. We only use pretopologies here, so that there is no need to distinguish. The following class of morphisms of a category will be important. 

\begin{definition}
A morphism in a category $\mathscr{C}$ is called \emph{universal} if its pullback along an arbitrary morphism exists. 
\end{definition}

\begin{remark}
\begin{enumerate}[(a)]

\item
If $\mathscr{C}$ admits has finite fibre products, every morphism in universal. 

\item 
All members of coverings of a Grothendieck topology, in particular: all isomorphisms, are universal. 

\item
The class of universal morphisms is stable under pullback and closed under composition  (this follows from the pasting law for pullbacks).

\end{enumerate}
\end{remark}
 
 \begin{example}
All universal morphisms of a category $\mathscr{C}$ form a singleton Grothendieck topology on $\mathscr{C}$, called the \emph{discrete Grothendieck topology} and denoted by $T_{dis}$. 
The \emph{indiscrete Grothendieck} topology $T_{indis}$ on a category $\mathscr{C}$ is the singleton Grothendieck topology  that  consists of all isomorphisms. 
If $T$ is an arbitrary singleton Grothendieck topology on $\mathscr{C}$, then 
\begin{equation*}
 T_{indis}\subset T \subset T_{dis}\text{.}
\end{equation*} 
\end{example}

\begin{remark}
Above, by $T \subset T'$ we mean that for each object $X\in \mathscr{C}$ we have $T_X \subset T_X'$, i.e., every covering of $T$ is also a covering of $T'$. Often, this way of comparing Grothendieck topologies is too strict. In \cref{equivalence-between-GTs} we introduce another relation that is weaker.  
\end{remark}

\begin{example}
\label{restriction}
If $(\mathscr{C},T)$ is a site and $\mathscr{D} \subset \mathscr{C}$ is a category such that the inclusion preserves finite fibre products (e.g., a reflexive subcategory), then we consider coverings  $(\pi_i:U_i \to X)_{i\in I}$ such that each  $\pi_i$ is universal in $\mathscr{D}$. These form a Grothendieck topology on $\mathscr{D}$, which we call the \emph{restriction of $T$ to $\mathscr{D}$} and denote it by $T|_{\mathscr{D}}$. 
\begin{comment}
The only non-trivial thing is the pullback axiom. If $f:W \to Z$ is a morphism in $\mathscr{D}$, then the pullbacks $f^{*}\phi_i: f^{*}U_i \to Z$ exist and are universal since $\phi_i$ are universal. Since these fibre products coincide with those in $\mathscr{C}$ and $(\phi_i)_{i\in I}$ is a $T$-covering, $(f^{*}\phi_i)_{i\in I}$ is also a $T$-covering. This shows that $(f^{*}\phi_i)_{i\in I}$ is a $T|_{\mathscr{D}}$-covering.   
\end{comment}
\end{example}

\begin{definition}
In any category, a morphism $\pi:Y \to M$ is called an \emph{effective epimorphism}, if the double fibre product $Y^{[2]}=Y \ttimes\pi\pi Y$ exists and  the diagram  
\begin{equation*}
\alxydim{}{Y^{[2]} \arr[r] & Y \ar[r] & X}
\end{equation*}
is a coequalizer. A morphism is called a \emph{universally effective epimorphism}, if it is an effective epimorphism, universal, and every pullback is again a universally effective epimorphism.  
\end{definition}

\begin{example}
If $\mathscr{C}$ is any category, the class of universally effective epimorphisms forms a Grothendieck topology called the \emph{canonical Grothendieck topology} on $\mathscr{C}$ and is denoted by $T_{\mathscr{C}}$. 
\end{example}

\begin{definition}
\label{subcanonical}
A Grothendieck topology $T$ on $\mathscr{C}$ is called \emph{subcanonical} if $T \prec T_{\mathscr{C}}$. 
\end{definition}

This way of defining subcanonical Grothendieck topologies is evidently invariant under equivalences of Grothendieck topologies.

\begin{remark}
\label{epimorphisms}
Coequalizers, and in particular, effective epimorphisms, are epimorphisms in the categorical sense.
When $T$ is a subcanonical  Grothendieck topology, then all universal $T$-locally split morphisms are epimorphisms. When $T$ is subcanonical and  singleton, then all $T$-coverings are epimorphisms, matching the intuition that coverings are surjective. 
\end{remark}

\subsection{Equivalence}

\label{equivalence-between-GTs}

The following notion appears in
\cite[III 7.5]{MacLane1992} and \cite[Def. 3.8]{Roberts2012}, though under different names with another intention. 

\begin{definition}
Let $(\mathscr{C},T)$ be a site.
A morphism $\pi:Y \to X$ in $\mathscr{C}$ is called  \emph{locally split} if  there exists a covering $(\pi_i: U_i \to X)_{i\in I}$ together with  morphisms $\rho_i: U_i \to Y$ such that the diagram
\begin{equation*}
\alxydim{}{& Y \ar[d]^{\pi} \\ U_i \ar@/^1pc/[ur]^{\rho_i} \ar[r]_{\pi_i} & X}
\end{equation*}
is commutative for all $i\in I$. The morphisms $\rho_i$ are called  \emph{local sections}.
\end{definition}

\begin{example}
The locally split morphisms for the indiscrete Grothendieck topology $T_{indis}$ are precisely the \emph{globally split} morphisms (i.e., those morphisms $\pi:Y \to X$ for which there exists a morphism $\rho:X \to Y$ such that $\pi \circ \rho = \id_X$). 
\end{example}

\begin{lemma}
\label{locally-split-morphisms}
For every site  $(\mathscr{C},T)$ the following basic statements hold:
\begin{enumerate}[(a)]

\item
\label{locally-split-morphisms:a}
Every singleton covering is locally split.

\item 
\label{locally-split-morphisms:b}
The class of locally split morphisms is closed under composition.

\item
\label{locally-split-morphisms:c}
If $f: X \to Y$ and $g:Y \to Z$ are morphisms such that $g \circ f$ is a locally split, then $g$ is locally split.

\item
\label{locally-split-morphisms:d}
The class of locally split morphisms is stable under pullback, whenever these exist.

\item
\label{locally-split-morphisms:e}
Every globally split morphism  is locally split.

\end{enumerate}
\end{lemma}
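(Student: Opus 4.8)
The plan is to verify each of the five statements directly from the definition of a locally split morphism, using the axioms of a Grothendieck topology stated in \cref{grothendieck-topology}. Each part is elementary, so I would treat them in an order that lets later parts reuse earlier ones.

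For \cref{locally-split-morphisms:a}, suppose $\pi:Y\to X$ is a singleton covering, so $(\pi)\in T_X$. I would take this one-element covering itself and the local section $\rho=\id_Y$, so that $\pi\circ\id_Y=\pi$ exhibits $\pi$ as locally split. For \cref{locally-split-morphisms:e}, if $\pi:Y\to X$ is globally split with section $\rho:X\to Y$, I would use the identity covering $(\id_X)\in T_X$ (an isomorphism, hence a covering by the first axiom) together with the single local section $\rho$; commutativity $\pi\circ\rho=\id_X$ is exactly the global splitting.

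For the composition statement \cref{locally-split-morphisms:b}, let $f:X\to Y$ and $g:Y\to Z$ both be locally split, with coverings $(\pi_i:U_i\to Z)_{i\in I}$ and local sections $\rho_i:U_i\to Y$ for $g$, and coverings $(\sigma_{ij}:V_{ij}\to U_i)_{j\in J_i}$ with local sections $\tau_{ij}:V_{ij}\to X$ for $f$ pulled back appropriately. The idea is to compose the two systems of local sections: pull the covering of $f$ back along $\rho_i$, or more simply use the ``coverings of coverings'' axiom to build a covering $(\pi_i\circ\sigma_{ij})$ of $Z$, and set the local section of $g\circ f$ to be $\rho_i\circ(\text{section of }f)$ lifted suitably. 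I expect this to be the main obstacle, because matching up the local sections of $f$ with the targets $U_i$ of the covering of $g$ requires pulling back the $f$-covering along $\rho_i:U_i\to Y$ and invoking the pullback axiom; one must check the resulting composite genuinely lifts $g\circ f$. Statement \cref{locally-split-morphisms:c} is easier: given a covering $(\pi_i)$ and local sections $\rho_i:U_i\to X$ with $(g\circ f)\circ\rho_i=\pi_i$, the morphisms $f\circ\rho_i:U_i\to Y$ serve as local sections for $g$, since $g\circ(f\circ\rho_i)=\pi_i$; so $g$ is locally split over the same covering.

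Finally, for \cref{locally-split-morphisms:d}, let $\pi:Y\to X$ be locally split with covering $(\pi_i:U_i\to X)_{i\in I}$ and sections $\rho_i$, and let $h:W\to X$ be a morphism along which the pullback $h^{*}\pi:h^{*}Y\to W$ exists. By the pullback axiom the family $(h^{*}\pi_i:h^{*}U_i\to W)_{i\in I}$ is a covering of $W$. The plan is to produce local sections of $h^{*}\pi$ over this covering by the universal property of the pullback $h^{*}Y$: the two maps $h^{*}U_i\to U_i\xrightarrow{\rho_i}Y$ and $h^{*}U_i\to W$ agree after composing into $X$ (using $\pi\circ\rho_i=\pi_i$ and commutativity of the pullback squares), hence induce a unique morphism $h^{*}U_i\to h^{*}Y$ that splits $h^{*}\pi$ over $h^{*}\pi_i$. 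Checking that the relevant square commutes so that this induced map is a genuine local section is the only point requiring care, and it follows from a short diagram chase.
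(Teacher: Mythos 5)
Your proposal is correct. The paper states this lemma as a collection of ``basic statements'' and offers no proof at all, so there is nothing to compare against; your verification is exactly the standard argument one would supply: (a) and (e) via the identity section over the given covering resp.\ the identity covering, (c) by post-composing the sections with $f$, (d) by the universal property of the pullback, and (b) by pulling the covering witnessing local splitness of $f$ back along the local sections $\rho_i$ of $g$ and then composing coverings.

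One small imprecision in (b): the local section of $g\circ f$ over the composite covering $\pi_i\circ\rho_i^{*}\sigma_j$ is not ``$\rho_i\circ(\text{section of }f)$'' (which does not typecheck), but rather $\tau_j\circ p_{ij}$, where $p_{ij}\colon U_i\times_Y V_j\to V_j$ is the pullback projection; one then checks
$(g\circ f)\circ\tau_j\circ p_{ij}=g\circ\sigma_j\circ p_{ij}=g\circ\rho_i\circ(\rho_i^{*}\sigma_j)=\pi_i\circ(\rho_i^{*}\sigma_j)$.
You flag precisely this matching-up as the point requiring care and describe the correct mechanism (pull back the $f$-covering along $\rho_i$, invoke the pullback and composition axioms), so this is a matter of writing the composite correctly rather than a gap in the argument.
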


In the following, when we consider more than one Grothendieck topology on one category $\mathscr{C}$, we use the terminology \quot{$T$-covering} and \quot{$T$-locally split}, in order to refer to a specific Grothendieck topology. 

\begin{definition}
\label{equivalence-between-grothendieck-topologies}
Let $\mathscr{C}$ be a category and let $T_1$ and $T_2$ be Grothendieck topologies on $\mathscr{C}$. We say that $T_1$ is  \emph{coarser} than $T_2$ if every universal $T_1$-locally split morphism is $T_2$-locally split. We write $T_1 \prec T_2$. Equivalently, we say that $T_2$ is \emph{finer} than $T_1$. Finally, we say that $T_1$ and $T_2$ are \emph{equivalent} if $T_1\prec T_2$ and $T_2\prec T_1$, and we denote this by $T_1 \sim T_2$. 
\end{definition}

\begin{remark}
If $T_1 \subset T_2$, i.e., every $T_1$-covering  is a $T_2$-covering, then $T_1\prec T_2$.
\end{remark}

\begin{remark}
\label{equivalence-by-refinement}
If $T_1$ and $T_2$ are Grothendieck topologies, then a \emph{$T_2$-refinement} of a $T_1$-covering $(\pi_i: U_i \to X)_{i\in I}$ is a $T_2$-covering $(\psi_j: V_j \to X)_{j\in J}$ together with a map $f: J \to I$ and morphisms $\rho_j: V_j \to U_{f(j)}$ such that $\pi_{f(j)} \circ \rho_j = \psi_j$. If every $T_1$-covering has a $T_2$-refinement, then $T_1 \prec T_2$. In particular, if $T_2 \subset T_1$ and every $T_1$-covering has a $T_2$-refinement, then $T_1 \sim T_2$.
\end{remark}

\begin{lemma}
\label{checking-equivalence}
Suppose $T_1$ is a singleton Grothendieck topology. Then, $T_1 \prec T_2$ if and only if every $T_1$-covering is $T_2$-locally split.  
\end{lemma}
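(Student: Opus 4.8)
The plan is to prove both implications directly from the definitions, exploiting the singleton hypothesis to reduce each $T_1$-covering to a single morphism. Recall that, by \cref{equivalence-between-grothendieck-topologies}, the relation $T_1 \prec T_2$ asserts that every \emph{universal} $T_1$-locally split morphism is $T_2$-locally split; the point of the lemma is that, for singleton $T_1$, this whole class is controlled by the $T_1$-coverings alone.

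For the forward implication, I would assume $T_1 \prec T_2$ and take an arbitrary $T_1$-covering, which since $T_1$ is singleton is a single morphism $\pi\maps U \to X$. First I would note that $\pi$ is $T_1$-locally split: this is immediate from \cref{locally-split-morphisms:a}, since $\pi$ is a singleton covering (concretely, $\id_U$ is a local section over the covering $(\pi)$ itself). Next I would observe that $\pi$ is universal, because every member of a covering is universal, as recorded in the remark on universal morphisms above. Hence $\pi$ is a universal $T_1$-locally split morphism, and $T_1 \prec T_2$ yields at once that $\pi$ is $T_2$-locally split.

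For the converse, I would assume every $T_1$-covering is $T_2$-locally split and let $\pi\maps Y \to X$ be an arbitrary universal $T_1$-locally split morphism, the goal being to show it is $T_2$-locally split. Unwinding the definition of $T_1$-locally split and using that $T_1$ is singleton, there is a single $T_1$-covering $\pi_0\maps U \to X$ together with a local section $\rho_0\maps U \to Y$ satisfying $\pi \circ \rho_0 = \pi_0$. By the standing assumption, $\pi_0$ is $T_2$-locally split, so the composite $\pi \circ \rho_0$ is $T_2$-locally split. Then I would invoke \cref{locally-split-morphisms:c} applied to the factorization $\pi_0 = \pi \circ \rho_0$ to conclude that $\pi$ itself is $T_2$-locally split.

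I do not expect a serious obstacle, as the statement is essentially bookkeeping around the definitions. The one point requiring care is ensuring, in the forward direction, that the covering morphism genuinely qualifies as \emph{universal}, so that it falls within the scope of \cref{equivalence-between-grothendieck-topologies}; this is precisely what the remark on universal morphisms guarantees. In the backward direction the only insight needed is to recognize that the factorization $\pi_0 = \pi \circ \rho_0$ lets one transport $T_2$-local splitness downward to $\pi$ via \cref{locally-split-morphisms:c}; notably, the universality of $\pi$ is not actually used beyond what the definition of $\prec$ stipulates.
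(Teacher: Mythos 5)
Your proof is correct and follows essentially the same route as the paper's: the forward direction is identical, and in the converse direction you invoke \cref{locally-split-morphisms:c} where the paper simply writes out the composite local sections $\rho\circ\rho_i$ by hand --- the same argument in packaged form. No gaps.
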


\begin{proof}
Suppose every $T_1$-covering is $T_2$-locally split. If $\pi:Y \to X$ is $T_1$-locally split, let $\pi':Y' \to X$ be a $T_1$-covering with a section $\rho:Y' \to Y$. By assumption $\pi'$ is $T_2$-locally split. Thus, there exists a $T_2$-covering  $(\pi_i: U_i \to X )_{i\in I}$ with local sections $\rho_i: U_i\to Y'$. The composition $\rho \circ \rho_i$ exhibits $\pi$ as $T_2$-locally split. 
Conversely, assume that every $T_1$-locally split morphism is $T_2$-locally split. By \cref{locally-split-morphisms:a}, every $T_1$-covering is then $T_2$-locally split. 
\end{proof}

\begin{remark}
Every Grothendieck topology $T$ satisfies $T_{indis} \prec T \prec T_{dis}$.
\end{remark}

If $T$ is any Grothendieck topology, then  the class  $\Uni(T)$ of all universal $T$-locally split morphisms is  a singleton Grothendieck topology,  introduced by Roberts in  \cite[Def. 3.8]{Roberts2012}. It is called the \emph{universal completion of $T$}.

\begin{lemma}
\label{universal-completion}
The universal completion of a Grothendieck topology $T$ has the following properties:
\begin{enumerate}[(a)]

\item
\label{universal-completion:a}
$\Uni(T)\sim T$. 
\begin{comment}
since every $\Uni(T)$-covering is $T$-locally split and \cref{checking-equivalence}, and conversely, every  universal $T$-locally split map is a $\Uni(T)$-covering. 
\end{comment}
In particular, every Grothendieck topology is equivalent to a singleton one.

\item
\label{universal-completion:c}
If $T_1$ and $T_2$ are Grothendieck topologies, then
\begin{equation*}
T_1 \prec T_2\quad\gdw\quad \Uni(T_1) \subset \Uni(T_2)
\end{equation*}
and
\begin{equation*}  
  T_1\sim T_2 \quad\gdw\quad  \Uni(T_1) = \Uni(T_2)\text{.}
\end{equation*} 

\item
\label{universal-completion:d}
If $T$ is singleton, then $T \subset \Uni(T)$.

\item 
\label{universal-completion:b}
$\Uni(\Uni(T))=\Uni(T)$.
\begin{comment}
This follows from \cref{universal-completion:a*,universal-completion:c*}.   
Indeed, $\tilde T:= \Uni(T)$ is a singleton Grothendieck topology, and hence $\tilde T \subset \Uni(\tilde T)$ by \cref{universal-completion:d*}. Conversely, suppose $\pi:Y \to X$ is $\tilde T$-locally split. This clearly implies that it is $T$-locally split. This shows $ \Uni(\tilde T)\subset \tilde T$. 
\end{comment}

\end{enumerate}
\end{lemma}

\begin{definition}
\label{local-GT}
A Grothendieck topology $T$ is called \emph{local}, if the following holds: if
\begin{equation*}
\alxydim{}{V \ar[r] \ar[d]_{f} & Y \ar[d]^{\pi} \\ U \ar[r]_{g} & X}
\end{equation*}
is a pullback diagram, and $f$ and $g$ are universal  $T$-locally split, then $\pi$ is universal  $T$-locally split. 
\end{definition}

\begin{remark}
\label{assumption-2.6}
Due to \cref{universal-completion:c}, being local is invariant under equivalences of Grothendieck topologies. In the terminology of \cite{Meyer2014}, $T$ is local in the sense of \cref{local-GT} if the singleton Grothendieck topology  $\Uni(T)$ satisfies \quot{Assumption 2.6}. 
\end{remark}

\subsection{Singletonization}

\label{singletonization-sec}

We discuss another, more common, method to produce a singleton Grothendieck topology $\Sing(T)$ from a non-singleton Grothendieck topology $T$, such that $T \sim \Sing(T)$. Typically, $\Sing(T)$ will be much smaller than $\Uni(T)$; on the other hand, its existence requires some conditions. 

\begin{definition}
\label{extensive-category}
A category $\mathscr{C}$ is called \emph{extensive}, if it satisfies the following conditions:
\begin{enumerate}[(a)]
\item 
It has an initial object $\emptyset$, 

\item 
Finite coproducts are disjoint, i.e., if $X,Y\in \mathscr{C}$ have a coproduct, then 
 \begin{equation*}
\alxydim{}{\emptyset \ar[r] \ar[d] & X \ar[d] \\ Y \ar[r] & X \coprod Y}
\end{equation*} 
is a pullback diagram.

\item
Coproducts are stable under pullback, i.e., if $(U_i)_{i\in I}$ is a family of objects whose coproduct $\coprod_{i\in I} U_i$ exists, and $f:X \to \coprod_{i\in I} U_i$ is a morphism, then the pullbacks
\begin{equation*}
\alxydim{}{X \ttimes f{\iota_i}U_i \ar[r]\ar[d] & U_i \ar[d]\\ X \ar[r]_-{f} & \displaystyle\coprod_{i\in I} U_i} 
\end{equation*}
exist for all $i\in I$, and
\begin{equation*}
X = \coprod_{i\in I} X \ttimes f{\iota_i} U_i
\end{equation*}
in virtue of  coproduct injections $X \ttimes f{\iota_i} U_i \to X$.
\end{enumerate}
\end{definition}

Note that \cref{extensive-category} is a bit weaker then the usual version: we do not not require that $\mathscr{C}$ has (finite) coproducts, we only require an initial object.

\begin{definition}
A Grothendieck topology $T$ on a category $\mathscr{C}$ is called \emph{singletonizable}, if for every $T$-covering $(\phi_i: U_i \to X)_{i\in I}$ 
 the coproduct of the family $(U_i)_{i\in I}$ exists.   
\end{definition}

If $T$ is singletonizable, we construct a singleton Grothendieck topology  $\Sing(T)$ whose coverings are the morphisms
\begin{equation*}
\phi:\coprod_{i\in I} U_i \to X
\quomma
\phi|_{U_i} := \phi_i
\end{equation*}
produced from all covering families $(\phi_i: U_i \to X)_{i\in I}$ of $T$. If $\mathscr{C}$ is extensive, this indeed defines a Grothendieck topology.
The main point of $\Sing(T)$ is the following fact, which follows directly from the construction. 

\begin{proposition}
\label{singletonization}
Let $\mathscr{C}$ be an extensive category and let $T$ be a singletonizable Grothendieck topology. Then, $\Sing(T)\sim T$. 
\end{proposition}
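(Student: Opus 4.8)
The plan is to verify the two relations $\Sing(T)\prec T$ and $T\prec \Sing(T)$ separately; since both $\Sing(T)$ and $T$ are available (extensivity together with singletonizability guarantees, as recorded above, that $\Sing(T)$ is indeed a Grothendieck topology), the whole content reduces to manipulating coproduct injections and the universal property of coproducts. Because $\Sing(T)$ is a singleton Grothendieck topology, \cref{checking-equivalence} lets me replace the relation $\Sing(T)\prec T$ by the a priori weaker condition that every $\Sing(T)$-covering is merely $T$-locally split.

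For $\Sing(T)\prec T$ I would argue as follows. A $\Sing(T)$-covering is, by construction, a morphism $\phi:\coprod_{i\in I} U_i \to X$ obtained from a $T$-covering $(\phi_i:U_i \to X)_{i\in I}$ with $\phi\circ\iota_i=\phi_i$, where the $\iota_i$ are the coproduct injections. I simply exhibit the $T$-covering $(\phi_i)_{i\in I}$ together with these injections $\iota_i:U_i\to\coprod_j U_j$ as local sections: the identity $\phi\circ\iota_i=\phi_i$ says precisely that $\iota_i$ is a section of $\phi$ over $\phi_i$. Hence $\phi$ is $T$-locally split, and \cref{checking-equivalence} yields $\Sing(T)\prec T$.

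For $T\prec \Sing(T)$, let $\pi:Y\to X$ be universal $T$-locally split, witnessed by a $T$-covering $(\pi_i:U_i\to X)_{i\in I}$ and local sections $\rho_i:U_i\to Y$ with $\pi\circ\rho_i=\pi_i$. Singletonizability provides the coproduct, so $\phi:\coprod_i U_i\to X$ with $\phi\circ\iota_i=\pi_i$ is a $\Sing(T)$-covering. The universal property of the coproduct assembles the $\rho_i$ into a single morphism $\sigma:\coprod_i U_i\to Y$ with $\sigma\circ\iota_i=\rho_i$; then $\pi\circ\sigma$ and $\phi$ agree after precomposition with every $\iota_i$, so by uniqueness $\pi\circ\sigma=\phi$. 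Thus $\sigma$ is a single local section exhibiting $\pi$ as $\Sing(T)$-locally split, giving $T\prec\Sing(T)$. Combining the two relations yields $\Sing(T)\sim T$.

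I do not expect a genuine obstacle in the equivalence itself: the only place where real work happens is the (separately treated) verification that $\Sing(T)$ satisfies the axioms of a Grothendieck topology, which is where extensivity is actually consumed, whereas the equivalence proper uses only the universal property of coproducts together with the elementary closure properties of \cref{locally-split-morphisms}. The one point worth stating carefully is that in the second direction the hypothesis that $\pi$ is \emph{universal} is available but never used: a single global section $\sigma$ over the singleton $\Sing(T)$-covering $\phi$ already does the job.
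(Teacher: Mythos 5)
Your proof is correct and follows essentially the same route the paper takes (the paper merely asserts that both directions are clear from the construction): the coproduct injections serve as local sections in one direction, and the universal property of the coproduct assembles the local sections into a single section in the other. Your observation that universality of $\pi$ is not needed in the second direction, and that extensivity is consumed only in verifying that $\Sing(T)$ is a Grothendieck topology, is accurate.
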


\begin{comment}
\begin{proof}
It is clear that every $\Sing(T)$-covering is $T$-locally split; this shows $\Sing(T)\prec T$. It is also clear that every $T$-locally split map is $\Sing(T)$-locally split. 
\end{proof}
\end{comment}

\begin{comment}
\begin{remark}
Suppose $T_1 \subset T_2$ where $T_1$ is singletonizable, and every $T_2$-covering has a $T_1$-refinement. Then, $T_2$ is weakly singleton. 
\end{remark}
\end{comment}

\begin{remark}
We have $\Sing(T) \subset \Uni(T)$, and typically $\Sing(T)$ is much smaller then $\Uni(T)$.
\end{remark}

\begin{remark}
\label{extensive-topology}
If the category $\mathscr{C}$ is extensive, then $\mathscr{C}$ has a canonical Grothendieck topology $T^{ext}_{\mathscr{C}}$ called the \emph{extensive topology}, consisting of all families $(X_i \to \coprod_{i\in I}X_i)_{i\in I}$ of coproduct injections, for all families $(X_i)_{i\in I}$ of objects $X_i \in \mathscr{C}$ whose coproduct exists. A Grothendieck topology $T$ is called \emph{superextensive} if $T^{ext}_{\mathscr{C}} \subset T$. We remark:
\begin{itemize}

\item
The empty family is a covering in $T^{ext}_{\mathscr{C}}$ of $\emptyset\in \mathscr{C}$. 

\item 
Singleton Grothendieck topologies are typically never superextensive. 

\item
$T^{ext}_{\mathscr{C}}$ is singletonizable, and $\Sing(T_{\mathscr{C}}^{ext})=T^{indis}$. Hence, $T^{ext}_{\mathscr{C}} \sim T^{indis}$.

\item
Every Grothendieck topology $T$ on an extensive category satisfies $T^{ext}_{\mathscr{C}} \prec T$. 
\end{itemize}
As the latter two points suggest, our equivalence relation \cref{equivalence-between-grothendieck-topologies} is two coarse to distinguish superextensiveness.
\begin{comment}
If $T$ is superextensive, then coproducts of $T$-locally split maps are $T$-locally split. Indeed, suppose $\pi_i:Y_i \to X_i$ are $T$-locally split, such that the coproducts $\coprod Y_i$ and $\coprod X_i$ exist. Consider $T$-coverings $(\phi_{ij}:U_{ij} \to X_i)_{j\in J_i}$ together with local sections $\rho_j: U_{ij} \to Y_i$. The composition of these coverings with the covering $(X_i \to \coprod_{i\in I}X_i)_{i\in I}$ is a new $T$-covering $(U_{ij} \to \coprod X_i)_{i\in I,j\in J_i}$, which has the local sections $U_{ij} \to Y_i \incl \coprod Y_i$. This shows that $\coprod \pi_i: \coprod Y_i \to \coprod X_i$ is $T$-locally split. But apparently it is not true that coproducts of universal morphisms are universal.
\end{comment}
\end{remark}

\setsecnumdepth{2}

\subsection{Continuous functors}

\label{continuous-functors}

\begin{definition}
\label{continuous-functor}
If $(\mathscr{C}_1,T_1)$ and $(\mathscr{C}_2,T_2)$ are sites, then a functor $\mathscr{F}: \mathscr{C}_1\to \mathscr{C}_2$ is called \emph{continuous}, if
\begin{enumerate}[(i)]

\item 
it sends universal $T_1$-locally split morphisms to universal $T_2$-locally split morphisms.

\item
it preserves fibre products with universal $T_1$-locally split morphisms. 

\end{enumerate}
\end{definition}

\begin{remark}
A functor is continuous in the sense of \cref{continuous-functor} if and only if it sends $\Uni(T_1)$-coverings to $\Uni(T_2)$-coverings, and preserves fibre products of $\Uni(T_1)$-coverings. It is hence continuous (w.r.t. $T_1$ and $T_2$) if and only if it is continuous in the traditional sense (see, e.g.,  \cite[Def. 7.13.1]{stacks-project}) w.r.t. $\Uni(T_1)$ and $\Uni(T_2)$.
\end{remark}

Whether or not a functor is continuous depends only on the equivalence classes of the involved Grothendieck topologies -- this is the main reason for not using the traditional definition. The following lemma provides a useful criterion to check continuity.

\begin{lemma}
\label{continuous-functor-coverings}
If $\mathscr{F}$ sends $T_1$-coverings to $T_2$-coverings, preserves universal morphisms and  fibre products  with universal morphisms, then it is continuous. 
\end{lemma}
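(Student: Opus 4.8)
The plan is to verify the two conditions (i) and (ii) of \cref{continuous-functor} directly from the three hypotheses, observing that each reduces to a special case of what is assumed. The only point requiring a genuine (if short) argument is condition (i); condition (ii) will be essentially immediate.

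For condition (i), let $\pi\maps Y \to X$ be a universal $T_1$-locally split morphism. First, since $\pi$ is universal and $\mathscr{F}$ preserves universal morphisms by hypothesis, $\mathscr{F}(\pi)$ is universal. It then remains to show that $\mathscr{F}(\pi)$ is $T_2$-locally split. Being $T_1$-locally split, $\pi$ admits a $T_1$-covering $(\pi_i\maps U_i \to X)_{i\in I}$ together with local sections $\rho_i\maps U_i \to Y$ satisfying $\pi \circ \rho_i = \pi_i$. I would apply $\mathscr{F}$ to this entire diagram: because $\mathscr{F}$ sends $T_1$-coverings to $T_2$-coverings, the family $(\mathscr{F}(\pi_i)\maps \mathscr{F}(U_i) \to \mathscr{F}(X))_{i\in I}$ is a $T_2$-covering, while the morphisms $\mathscr{F}(\rho_i)\maps \mathscr{F}(U_i) \to \mathscr{F}(Y)$ satisfy $\mathscr{F}(\pi) \circ \mathscr{F}(\rho_i) = \mathscr{F}(\pi_i)$ by functoriality. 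This exhibits $\mathscr{F}(\pi)$ as $T_2$-locally split, and combined with the universality already established we conclude that $\mathscr{F}(\pi)$ is a universal $T_2$-locally split morphism, which is (i).

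For condition (ii), I would simply observe that a universal $T_1$-locally split morphism is in particular universal, so any fibre product with such a morphism is a fibre product with a universal morphism. Since $\mathscr{F}$ preserves the latter by hypothesis, it preserves the former, which is exactly the content of (ii).

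There is no serious obstacle here; the substance of the lemma is that it suffices to test the covering-preservation hypothesis on the (typically much smaller) class of $T_1$-coverings rather than on the full class of $\Uni(T_1)$-coverings that the literal definition of continuity via the universal completion would demand. The only subtlety to keep in mind is that the witnessing covering in the definition of \quot{$T_1$-locally split} is a genuine $T_1$-covering, which is precisely why a hypothesis phrased in terms of $T_1$-coverings (rather than $\Uni(T_1)$-coverings) is already enough to push the local sections forward and obtain the required $T_2$-splitting.
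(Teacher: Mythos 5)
Your proof is correct and follows essentially the same route as the paper's: push the witnessing $T_1$-covering and its local sections through $\mathscr{F}$ to obtain a $T_2$-splitting of $\mathscr{F}(\pi)$, invoke preservation of universal morphisms for universality, and note that condition (ii) is a special case of the fibre-product hypothesis. Nothing to add.
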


\begin{proof}
If $\pi:Y \to X$ is $T_1$-locally split, let $(\pi_i:U_i \to X)_{i\in I}$ be a $T_1$-covering with local sections $\rho_i: U_i \to Y$. By assumption, $(\mathscr{F}(\pi_i):\mathscr{F}(U_i) \to \mathscr{F}(X))_{i\in I}$ is a $T_2$-covering, and we have the local sections $\mathscr{F}(\rho_i)$ showing that $\mathscr{F}(\pi)$ is $T_2$-locally split. Since $\mathscr{F}$ preserves universal morphisms, $\mathscr{F}(\pi)$ is universal. This shows (i). (ii) follows directly from the assumptions.
\end{proof}

\begin{lemma}
\begin{enumerate}[(a)]
\item 
\label{identity-functor-continuous}
If $T_1$ and $T_2$ are Grothendieck topologies on a category $\mathscr{C}$, then  $T_1 \prec T_2$ if and only if the identity functor $\id: (\mathscr{C},T_1) \to (\mathscr{C},T_2)$ is continuous.
\begin{comment}
Condition (ii) is trivial for an identity functor, and (i) says that every universal $T_1$-locally split map is universal $T_2$-locally split, it is hence equivalent to   $T_1 \prec T_2$.  
\end{comment}

\item
\label{composition-of-continuous-functors}
The composition of continuous functors is continuous.

\end{enumerate}
\end{lemma}

\begin{remark}
\label{induced-topology-on-subcategory}
If $\mathscr{B}\subset \mathscr{C}$ is a  subcategory of a site $(\mathscr{C},T)$, such that the inclusion functor preserves universal morphisms and  fibre products with universal morphisms, then the inclusion functor is continuous for the restriction $T|_{\mathscr{B}}$ of $T$ to $\mathscr{B}$ (see \cref{restriction}).
\begin{comment}
Hence, $T|_{\mathscr{B}}$ is in general smaller then the induced Grothendieck topology; in many cases, however, it will be the same.
\end{comment}
\end{remark}

\subsection{Cocontinuous functors}

\label{cocontinuous-functors}

\begin{definition}
\label{cocontinuous-functor}
If $(\mathscr{C}_1,T_1)$ and $(\mathscr{C}_2,T_2)$ are sites, then a functor $\mathscr{F}: \mathscr{C}_1\to \mathscr{C}_2$ is called \emph{cocontinuous}, if for every object $X$ of $\mathscr{C}_1$ and every universal $T_2$-locally split morphism $\pi:Y \to \mathscr{F}(X)$ there exists a universal $T_1$-locally split morphism $\pi':Y' \to X$ and a refinement 
\begin{equation*}
\alxydim{}{\mathscr{F}(Y') \ar[dr]_{\mathscr{F}(\pi')} \ar[rr] && Y \ar[dl]^{\pi} \\ & \mathscr{F}(X)}
\end{equation*}
\end{definition}

\begin{remark}
A functor is cocontinuous w.r.t. $T_1$ and $T_2$ if and only if it is cocontinuous in the traditional sense (see, e.g. \cite[Def. 7.20.1]{stacks-project}) w.r.t. $\Uni(T_1)$ and $\Uni(T_2)$.
\end{remark}

\begin{lemma}
\label{check-cocontinuity}
Suppose $\mathscr{F}$ has the property that for every $X\in \mathscr{C}_1$ and every $T_2$-covering $(\pi_i: U_i \to \mathscr{F}(X))_{i\in I}$ there exists a $T_1$-covering $(\psi_j : V_j \to X)_{j\in J}$, a map $r: J \to I$, and morphisms $\rho_j: V_j \to U_{r(j)}$ such that $\pi_{r(j)} \circ \rho_j = \psi_j$. Suppose further that either
\begin{enumerate}[(a)]

\item
$T_1$ and $T_2$ are singleton, or 

\item
$T_1$ is singletonizable and $\mathscr{F}$ preserves coproducts. 

\end{enumerate} 
Then, $\mathscr{F}$ is cocontinuous.
\end{lemma}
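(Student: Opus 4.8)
The plan is to bridge the gap between the hypothesis, which is phrased at the level of $T_1$- and $T_2$-\emph{coverings}, and the conclusion, which by \cref{cocontinuous-functor} is phrased at the level of \emph{universal locally split} morphisms. First I would unwind the definition of cocontinuity: fix $X \in \mathscr{C}_1$ and a universal $T_2$-locally split morphism $\pi: Y \to \mathscr{F}(X)$. By definition there is a $T_2$-covering $(\pi_i: U_i \to \mathscr{F}(X))_{i\in I}$ with local sections $\sigma_i: U_i \to Y$, i.e.\ $\pi \circ \sigma_i = \pi_i$. Applying the standing hypothesis to $X$ and this $T_2$-covering yields a $T_1$-covering $(\psi_j: V_j \to X)_{j\in J}$, a map $r: J \to I$, and morphisms $\rho_j: \mathscr{F}(V_j) \to U_{r(j)}$ with $\pi_{r(j)} \circ \rho_j = \mathscr{F}(\psi_j)$.

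The key observation is that composing the local sections with the refinement morphisms transports the covering-level refinement to a locally-split-level refinement. Setting $g_j := \sigma_{r(j)} \circ \rho_j: \mathscr{F}(V_j) \to Y$, one computes
\[
\pi \circ g_j = \pi \circ \sigma_{r(j)} \circ \rho_j = \pi_{r(j)} \circ \rho_j = \mathscr{F}(\psi_j),
\]
so each $g_j$ is a morphism over $\mathscr{F}(X)$ exhibiting $\mathscr{F}(\psi_j)$ as factoring through $\pi$. It remains to assemble the family $(\psi_j)_{j\in J}$ into a single universal $T_1$-locally split morphism $\pi': Y' \to X$ and the family $(g_j)_{j\in J}$ into a single refinement $\mathscr{F}(Y') \to Y$; this is exactly where the two alternative hypotheses enter.

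In case (a), since $T_1$ is singleton the covering $(\psi_j)_{j\in J}$ reduces to a single morphism $\psi:V\to X$; being a singleton covering, $\psi$ is universal and, by \cref{locally-split-morphisms:a}, $T_1$-locally split. I would take $\pi':=\psi$ and $g:=g_*$, and the displayed identity is precisely $\pi\circ g=\mathscr{F}(\pi')$. In case (b), I would singletonize: since $T_1$ is singletonizable the coproduct $Y':=\coprod_{j\in J}V_j$ exists, and $\pi':=\coprod_{j}\psi_j:Y'\to X$ with $\pi'|_{V_j}=\psi_j$ is a $\Sing(T_1)$-covering, hence universal $T_1$-locally split (using $\Sing(T_1)\subset\Uni(T_1)$, the local sections being the coproduct injections $\iota_j:V_j\to Y'$). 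Because $\mathscr{F}$ preserves coproducts, $\mathscr{F}(Y')\cong\coprod_{j}\mathscr{F}(V_j)$ with injections $\mathscr{F}(\iota_j)$, so the family $(g_j)_{j}$ induces a unique $g:\mathscr{F}(Y')\to Y$ with $g\circ\mathscr{F}(\iota_j)=g_j$; precomposing $\pi\circ g$ and $\mathscr{F}(\pi')$ with each $\mathscr{F}(\iota_j)$ and invoking the displayed identity shows that they agree on every injection, hence coincide. In both cases $\pi'$ and $g$ verify the refinement triangle of \cref{cocontinuous-functor}.

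I expect the main obstacle to lie in case (b), specifically in guaranteeing that the merged morphism $\coprod_j\psi_j$ is genuinely \emph{universal} and not merely locally split: a priori the pullback of a coproduct of covering members need not exist, and it is precisely the singletonization machinery (together with the underlying stability of coproducts) that secures this via $\Sing(T_1)\subset\Uni(T_1)$. The remaining checks -- that the induced $g$ is well defined and that the triangle commutes -- are then routine, once the identification $\mathscr{F}(\coprod_j V_j)\cong\coprod_j\mathscr{F}(V_j)$ is available, which is exactly the hypothesis that $\mathscr{F}$ preserves coproducts.
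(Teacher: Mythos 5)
Your argument is correct and follows essentially the same route as the paper's proof: unwind the local splitting of $\pi$ into a $T_2$-covering with sections, apply the refinement hypothesis, and then assemble the result either directly (case (a), using that a singleton covering is universal and locally split) or via the coproduct $\coprod_j V_j$ together with the coproduct-preservation of $\mathscr{F}$ (case (b)). Your closing remark about the universality of $\coprod_j \psi_j$ correctly identifies the one point the paper's own proof passes over in silence; it is secured exactly as you say, by the stability of coproducts under pullback that underlies the singletonization construction.
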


\begin{proof}
Suppose $\pi:Y \to \mathscr{F}(X)$ is a universal $T_2$-locally split map. In case (a), it can be refined through a $T_2$-covering $\zeta: Z \to \mathscr{F}(X)$ via a morphism $Z \to Y$. By assumption, this $T_2$-covering $\zeta$ can be refined by the image of a $T_1$-covering $\pi':Y' \to X$ via a morphism $\mathscr{F}(Y') \to Z$. Since $\pi'$ is  universal $T_1$-locally split, and we have $\mathscr{F}(Y') \to Z \to Y$, this shows cocontinuity.

In case (b), $\pi$ can be refined through a $T_2$-covering $\zeta_i: Z_i \to \mathscr{F}(X)$ via morphisms $\rho_i:Z_i \to Y$. By assumption, this $T_2$-covering can be refined by the image of a $T_1$-covering $\pi_j':Y_j' \to X$ via a map $r:J \to I$ and morphisms $\kappa_j: \mathscr{F}(Y'_j) \to Z_{r(j)}$. 
Since $T_1$ is singletonizable, we may form the $T_1$-locally split morphism $Y':=\coprod Y_j' \to X$, and since $\mathscr{F}$ preserves coproducts, we obtain a morphism
\begin{equation*}
\mathscr{F}(Y') \cong \coprod \mathscr{F}(Y_j') \to \coprod Z_{r(j)} \to Y\text{,}
\end{equation*}
showing cocontinuity.  
\end{proof}

\begin{remark}
The composition of cocontinuous functors is cocontinuous.
\end{remark}

\begin{lemma}
\label{identitiy-functor-is-continuous}
If $T_1$ and $T_2$ are Grothendieck topologies on a category $\mathscr{C}$, then $T_2 \prec T_1$ if and only if the identity functor $\id: (\mathscr{C},T_1) \to (\mathscr{C},T_2)$ is cocontinuous.
\end{lemma}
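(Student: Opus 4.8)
The plan is to unwind the definition of cocontinuity (\cref{cocontinuous-functor}) in the special case $\mathscr{F}=\id$ and match it directly against the relation $T_2 \prec T_1$ from \cref{equivalence-between-grothendieck-topologies}. For the identity functor we have $\mathscr{F}(X)=X$ on objects, so cocontinuity of $\id:(\mathscr{C},T_1)\to(\mathscr{C},T_2)$ reads: for every object $X$ and every universal $T_2$-locally split morphism $\pi:Y\to X$, there exist a universal $T_1$-locally split morphism $\pi':Y'\to X$ and a morphism $g:Y'\to Y$ with $\pi\circ g=\pi'$. On the other hand, $T_2\prec T_1$ says precisely that every universal $T_2$-locally split morphism is $T_1$-locally split. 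Once both statements are phrased this way, each implication becomes essentially a one-line matching of data.

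For the forward direction ($T_2\prec T_1 \Rightarrow$ cocontinuity) I would take an arbitrary universal $T_2$-locally split morphism $\pi:Y\to X$. By hypothesis $\pi$ is $T_1$-locally split, and it is universal by assumption, hence it is universal $T_1$-locally split. I would then simply choose $Y':=Y$, $\pi':=\pi$, and $g:=\id_Y$; the required refinement triangle (namely $\id_Y$ followed by $\pi$) commutes trivially, so $\id$ is cocontinuous.

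For the converse (cocontinuity $\Rightarrow T_2\prec T_1$) I would start from an arbitrary universal $T_2$-locally split $\pi:Y\to X$ and apply cocontinuity to obtain a universal $T_1$-locally split $\pi':Y'\to X$ together with $g:Y'\to Y$ satisfying $\pi\circ g=\pi'$. Since $\pi'$ is $T_1$-locally split and factors as $\pi\circ g$, \cref{locally-split-morphisms:c} forces $\pi$ to be $T_1$-locally split. As $\pi$ was an arbitrary universal $T_2$-locally split morphism, this is exactly the statement $T_2\prec T_1$.

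I expect no serious obstacle. The only points requiring care are tracking which topology sits on the source versus the target of the identity functor (source $T_1$, target $T_2$) when reading off \cref{cocontinuous-functor}, and noticing that in the forward direction the universality of $\pi$ comes for free from the hypothesis that it is \emph{universal} $T_2$-locally split, so that the choice $\pi'=\pi$ is admissible and no genuine refinement is needed. The argument is the cocontinuous analogue of \cref{identity-functor-continuous}, with the roles of the two topologies interchanged and with \cref{locally-split-morphisms:c} replacing the trivial verification of condition (ii) there.
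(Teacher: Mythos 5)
Your proposal is correct and follows essentially the same route as the paper's own (very terse) proof: take $\pi'=\pi$ with the identity refinement in the forward direction, and in the converse use the refinement $\pi\circ g=\pi'$ together with the fact that a morphism is locally split whenever a composite through it is (your citation of \cref{locally-split-morphisms:c} just makes explicit what the paper leaves implicit).
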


\begin{proof}
If $T_2 \prec T_1$, i.e., every universal $T_2$-locally split morphism is universal $T_1$-locally split, providing the required condition. Conversely, if $\id$ is cocontinuous, and $\pi:Y \to X$ is universal  $T_2$-locally split, then there exists a universal $T_1$-locally split map $\pi':Y' \to X$ and a refinement map $Y' \to Y$, showing that $\pi$ is  $T_1$-locally split. 
\end{proof}

\subsection{Denseness}

\begin{definition}
\label{dense-image}
Let $\mathscr{C}_1$ and $\mathscr{C}_2$ be categories, and let $T$ be a Grothendieck topology on $\mathscr{C}_2$. 
A functor $\mathscr{F}: \mathscr{C}_1 \to \mathscr{C}_2$ is said to have a \emph{dense image} if:
\begin{enumerate}[(a)]

\item 
\label{dense-image:1}
For each object $X$ in $\mathscr{C}_2$ there exists a family $(U_i)_{i\in I}$ of objects $U_i \in \mathscr{C}_1$ and a family of morphisms $\phi_i: \mathscr{F}(U_i) \to X$ such that the coproduct $\coprod \mathscr{F}(U_i)$ exists and 
\begin{equation*}
\coprod_{i\in I} \mathscr{F}(U_i) \to X
\end{equation*} 
is universal  $T$-locally split.

\item
\label{dense-image:2}
It is full and faithful. 

\end{enumerate}
\end{definition}

\begin{remark}
If $\mathscr{F}$ is an equivalence of categories, then it has dense image not matter what $T$ is. 
\end{remark}

\begin{remark}
The standard definition of dense image would be to require that $(\phi_i)_{i\in I}$ is a covering of $X$. This is unsuitable for singleton Grothendieck topologies, and also not invariant under equivalences.
However, if $(\phi_i)_{i\in I}$ is a $T$-covering and $T$ is singletonizable, then $\mathscr{F}$ has dense image. 
\end{remark}

\begin{lemma}
Suppose $\mathscr{C}_1$, $\mathscr{C}_2$, and $\mathscr{C}_3$ are categories, $T_2$ is a Grothendieck topology on $\mathscr{C}_2$ and  $T_3$ is a  Grothendieck topology on $\mathscr{C}_3$, with the property that its universal $T$-locally split morphisms are closed under coproducts. If
 $\mathscr{F}:\mathscr{C}_1 \to \mathscr{C}_2$ is a functor with dense image, and $\mathscr{G}:\mathscr{C}_2 \to \mathscr{C}_3$ is a coproduct-preserving, continuous functor with dense image, then, the composition $\mathscr{G} \circ \mathscr{F}$ has dense image.
\end{lemma}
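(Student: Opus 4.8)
The plan is to verify the two clauses of \cref{dense-image} for the composite $\mathscr{G}\circ\mathscr{F}$. Clause \cref{dense-image:2} is immediate: $\mathscr{F}$ and $\mathscr{G}$ are each full and faithful by their dense-image hypotheses, and a composite of full and faithful functors is full and faithful. The real content is clause \cref{dense-image:1}, so I would fix an object $Z\in\mathscr{C}_3$ and aim to produce a family in $\mathscr{C}_1$ whose image under $\mathscr{G}\circ\mathscr{F}$ admits a coproduct mapping to $Z$ by a universal $T_3$-locally split morphism.

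First I would apply the dense image of $\mathscr{G}$ to $Z$, obtaining a family $(Y_j)_{j\in J}$ in $\mathscr{C}_2$ such that $\coprod_j \mathscr{G}(Y_j)$ exists and $p:\coprod_j \mathscr{G}(Y_j)\to Z$ is universal $T_3$-locally split. Next, for each $j$ I would apply the dense image of $\mathscr{F}$ to $Y_j\in\mathscr{C}_2$, obtaining a family $(U_{ji})_{i\in I_j}$ in $\mathscr{C}_1$ together with a universal $T_2$-locally split morphism $\nu_j: A_j := \coprod_i \mathscr{F}(U_{ji}) \to Y_j$. Applying $\mathscr{G}$ and using that it preserves coproducts gives $\mathscr{G}(A_j)\cong \coprod_i \mathscr{G}(\mathscr{F}(U_{ji}))$, while continuity of $\mathscr{G}$ turns $\nu_j$ into a universal $T_3$-locally split morphism $\mathscr{G}(\nu_j):\coprod_i \mathscr{G}(\mathscr{F}(U_{ji}))\to \mathscr{G}(Y_j)$.

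The decisive step is to assemble these slices over $j$. Since the $\mathscr{G}(\nu_j)$ are universal $T_3$-locally split and the universal $T_3$-locally split morphisms are closed under coproducts, the coproduct morphism $q:\coprod_j\coprod_i \mathscr{G}(\mathscr{F}(U_{ji}))\to \coprod_j \mathscr{G}(Y_j)$ is again universal $T_3$-locally split; by associativity of coproducts its domain is $\coprod_{(j,i)} \mathscr{G}(\mathscr{F}(U_{ji}))$. Composing with $p$, and invoking that both universal morphisms and locally split morphisms are closed under composition (\cref{locally-split-morphisms:b}), the composite $p\circ q:\coprod_{(j,i)} \mathscr{G}(\mathscr{F}(U_{ji}))\to Z$ is universal $T_3$-locally split. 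This exhibits the family $(U_{ji})_{(j,i)}$ in $\mathscr{C}_1$ required by \cref{dense-image:1}, which together with full faithfulness completes the proof.

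I expect the only genuine obstacle to be the coproduct bookkeeping in the decisive step: one must ensure that the double coproduct $\coprod_{(j,i)} \mathscr{G}(\mathscr{F}(U_{ji}))$ exists, that it agrees with $\coprod_j \mathscr{G}(A_j)$, and that taking the coproduct of the maps $\mathscr{G}(\nu_j)$ preserves the universal-locally-split property. The existence of the inner (slice) coproducts is supplied by the coproduct-preservation of $\mathscr{G}$, and the assembly over $j$ together with preservation of the universal-locally-split property is exactly what the hypothesis that the universal $T_3$-locally split morphisms are closed under coproducts is designed to provide. Everything else reduces to the already-established closure properties of universal and of locally split morphisms, so no further calculation is needed.
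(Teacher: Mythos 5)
Your proposal is correct and follows essentially the same route as the paper's own proof: apply denseness of $\mathscr{G}$, then of $\mathscr{F}$ slice-wise, use coproduct-preservation of $\mathscr{G}$ to identify the double coproduct, invoke continuity of $\mathscr{G}$ and the closure-under-coproducts hypothesis on $T_3$, and finish by composing universal locally split morphisms. The only difference is cosmetic bookkeeping (and you correctly treat full faithfulness as the trivial clause, whereas the paper's opening sentence accidentally swaps the two condition labels).
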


\begin{proof}
Condition \cref{dense-image:1*} is clear, we focus on \cref{dense-image:2*}.
Let $X\in \mathscr{C}_3$.
Since $\mathscr{G}$ has dense image, there exists a family $(U_i)_{i\in I}$ of objects $U_i \in \mathscr{C}_2$ and a family of morphisms $\phi_i: \mathscr{G}(U_i) \to X$ such that the coproduct $\coprod \mathscr{G}(U_i)$ exists and 
\begin{equation*}
\phi:\coprod_{i\in I} \mathscr{G}(U_i) \to X
\end{equation*} 
is universal  $T_3$-locally split.
Since $\mathscr{F}$ has dense image, there exists, for each $i\in I$, a family $(V_j)_{j\in J_i}$ of objects $V_j \in \mathscr{C}_1$ and a family of morphisms $\psi_j: \mathscr{F}(V_j)\to U_i$ such that the coproduct $\coprod \mathscr{F}(V_j)$ exists and 
\begin{equation*}
\psi_i:\coprod_{j\in J_i} \mathscr{F}(V_i) \to U_i
\end{equation*} 
is universal  $T_2$-locally split.
We consider the morphism\begin{equation*}
\alxydim{@C=4em}{\coprod_{i\in I}\coprod_{j\in J_i} \mathscr{G}(\mathscr{F}(V_j)) \ar[r]^{\cong} & \coprod_{i\in I} \mathscr{G}(\coprod_{j\in J_i} \mathscr{F}(V_j))\ar[r]^-{\coprod \mathscr{G}(\psi_i)} & \coprod_{i\in I} \mathscr{G}(U_i)\ar[r]^-{\phi} & X\text{.}}
\end{equation*}
Here, the first morphism is induced from the coproduct injections $\mathscr{F}(V_j) \to \coprod \mathscr{F}(V_j)$, and it is an isomorphism, hence universal $T_3$-locally split, because $\mathscr{G}$ is coproduct-preserving. The second morphism is a coproduct of universal  $T_3$-locally split morphisms, because $\mathscr{G}$ is continuous and $\psi_i$ is universal $T_2$-locally split, and hence again universal $T$-locally split by assumption. Since universal morphisms and locally split morphisms are closed under composition, the above morphism is universal $T_3$-locally split. 
\end{proof}

\setsecnumdepth{2}

\section{Geometry on sites}

\label{geometry-on-sites}

We want to describe some common geometric notions internal to a category $\mathscr{C}$ with a Grothendieck topology. We mainly rely on existing definitions and results, mostly by Bartels  \cite{bartels}, Roberts  \cite{Roberts2012}, and Meyer-Zhu \cite{Meyer2014}. Since, in none of these sources continuous or cocontinuous functors are mentioned, and no  equivalences of Grothendieck topologies are discussed, we show here  how well everything fits together.    
 
\subsection{Internal categories}

\label{internal-categories}

Internal categories have been introduced by Ehresmann \cite{Ehresmann1963}. We use the following version.

\begin{definition}
\label{internal-category}
A \emph{$\mathscr{C}$-category $X$} consists of objects $X_0,X_1$ of $\mathscr{C}$, universal morphisms $s,t:X_1 \to X_0$ called \emph{source} and \emph{target}, respectively, a morphism $i: X_0 \to X_1$, and a morphism $X_1 \ttimes st X_1 \to X_0: (\gamma',\gamma) \mapsto \gamma' \circ \gamma$ called \emph{composition}, such that the usual axioms of a category are satisfied. A \emph{$\mathscr{C}$-groupoid} is a $\mathscr{C}$-category $X$ together with a morphism $inv:X_1 \to X_1$ satisfying the usual axioms for an inversion w.r.t. composition. If $\mathscr{C}$ has a terminal object $*$, then a groupoid with $X_0=\ast$ is called a \emph{group}.  
\end{definition}

We refer to \cite[\S 3.1]{Meyer2014} for a diagrammatic version of the \quot{usual axioms}. In the definition of an internal category , the crucial point is to guarantee  the existence of the  fibre products 
\begin{equation*}
X_k:=\underbrace{X_1 \ttimes st X_1 \ttimes s {} ... \ttimes {} t X_1}_{k\text{ times}}
\end{equation*}
for at least $k=2$ (the domain of the composition map) and $k=3$ (in order to define associativity). Ehresmann in \cite{Ehresmann1963} and Roberts \cite{Roberts2012} just did this an requires the existence of $X_2$ and $X_3$. Since later on, we have to consider other fibre products along $s$ and $t$, so that it is just more convenient to require -- as we do in \cref{internal-category} -- that $s$ and $t$ are universal, implying the existence of all fibre products $X_k$.

Another option, chosen in many treatments such as  \cite{Meyer2014},  is to  require that $s$ and $t$ are coverings for a Grothendieck topology, also implying the existence of $X_k$. We do not do this for two reasons: first, it is only meaningful for singleton Grothendieck topologies, and second, because it misuses Grothendieck topologies to infer the existence of fibre products, whereas the purpose of Grothendieck topologies is to implement locality.  

Nonetheless, our $\mathscr{C}$-groupoids are precisely the \quot{groupoids in $(\mathscr{C},\,\Uni(T))$} in the sense of \cite[Def. 3.2]{Meyer2014}, for any Grothendieck topology $T$ on $\mathscr{C}$. 
Indeed, source and target maps of our $\mathscr{C}$-groupoids are globally split (via $i:X_0 \to X_1$) and universal. By \cref{locally-split-morphisms:e} they are hence universal $T$-locally split, for any $T$, and thus $\Uni(T)$-coverings. Conversely, if $s$ and $t$ are $\Uni(T)$-coverings, then they are in particular universal.

\begin{remark}
If $\mathscr{F}:\mathscr{C}_1 \to \mathscr{C}_2$ is a functor that preserves  universal morphisms and fibre products with universal morphisms, it sends $\mathscr{C}_1$-categories/ groupoids/ groups to $\mathscr{C}_2$-ones.  
\end{remark}

Similarly to \cref{internal-category}, we define functors and natural transformations \cite[\S 3.5]{Meyer2014};  then, $\mathscr{C}$-categories and $\mathscr{C}$-groupoids form  2-categories \cite[Prop. 3.12]{Meyer2014}, which we denote by $\mathscr{C}$-$\Cat$ and $\mathscr{C}$-$\Grpd$, respectively. 

\begin{comment}
\begin{definition}
Let $X$ and $Y$ be $\mathscr{C}$-categories. 
A $\mathscr{C}$-functor $F: X \to Y$ consists of two morphisms $F_0:X_0 \to Y_0$ and $F_1: X_1 \to Y_1$ such that...
\end{definition}

\begin{definition}
Let $F,G:X \to Y$ be $\mathscr{C}$-functors. A natural transformation $\eta:F \Rightarrow G$ is a morphism $\eta: X_0 \to Y_1$ such that $s \circ \eta=F_0$ and $t \circ \eta=G_0$, and such that the diagram
\begin{equation*}
\alxydim{@C=4em}{X_1 \ar[r]^-{(G_1,\eta \circ s)} \ar[d]_{(\eta \circ t,F_1)} & Y_1 \ttimes st Y_1 \ar[d]^{\circ} \\ Y_1 \ttimes st Y_1 \ar[r]_-{\circ} & Y_1}
\end{equation*}
is commutative.
\end{definition} 
\end{comment}

\begin{definition}
\label{weak-equivalence}
Let $(\mathscr{C},T)$ be a site. A functor $F:G \to H$ between $\mathscr{C}$-groupoids is called \emph{$T$-weak equivalence} if:
\begin{enumerate}[(a)]

\item 
\label{weak-equivalence:a}
it is \emph{fully faithful} in the sense that the diagram
\begin{equation*}
\alxydim{}{G_1 \ar[r]^{F_1} \ar[d]_{(s,t)} & H_1\ar[d]^{(s,t)} \\ G_0 \times G_0 \ar[r]_{F_0 \times F_0} & H_0 \times H_0}
\end{equation*}
is a pullback. 

\item
\label{weak-equivalence:b}
it is \emph{$T$-essentially surjective} in the sense that
the
map
\begin{equation*}
s\circ \pr_2 : G_0 \ttimes{F_0}t H_1 \to H_0 
\end{equation*}
is universal  $T$-locally split. 

\end{enumerate} 
\end{definition}

\begin{comment}
\begin{remark}
\label{comparison-weak-equivalences}
Spelling out what it means to be $T$-essentially surjective, it requires the existence of a covering $\pi: Z \to H_0$ and of a morphism $g: Z \to G_0 \ttimes {F_0}t H_1$ such that 
\begin{equation}
\alxydim{@=4em}{& Z \ar[d]^{\pi} \ar@/_1pc/[dl]_{g}="1" \\ G_0 \ttimes {F_0}t H_1 \ar[r]_-{s\circ \pr_2} & H_0 }
\end{equation}  
is commutative.
\end{remark}
\end{comment}

The following construction is common and will be useful later. If $G$ is a $\mathscr{C}$-groupoid and $\pi:Y \to G_0$ is a universal $T$-locally split morphism, then we consider the $\mathscr{C}$-groupoid $G^{\pi}$ with objects $G^{\pi}_0:=Y$, morphisms $G^{\pi}_1:=Y \ttimes\pi t G_1 \ttimes s\pi Y$, source and target maps $s := \pr_{3}$ and $t := \pr_1$, and composition 
\begin{equation*}
\alxydim{@C=1em}{\mqquad G^{\pi}_1 \ttimes ts G^{\pi}_1 = Y \ttimes\pi t G_1 \ttimes s\pi Y \ttimes \id\id Y \ttimes \pi t G_1 \ttimes s\pi Y  \ar[d]^-{\pr_{1256}} \\ Y \ttimes\pi t G_1  \ttimes st G_1 \ttimes s\pi Y \ar[r]^{\id \times \circ \times \id} & Y \ttimes\pi t G_1 \ttimes s\pi Y=G^{\pi}_1\text{.}}
\end{equation*} 
One can identify $G^{\id}=G$. 
If 
\begin{equation}
\label{refinement-diagram}
\alxydim{}{Y \ar[rr]^{\rho} \ar[dr]_{\pi} && Y' \ar[dl]^{\pi'} \\ & G_0}
\end{equation}
is a commutative diagram with $\pi$ and $\pi'$ universal $T$-locally split, then we obtain a functor $G^{\rho,\pi'}:G^{\pi} \to G^{\pi'}$ with $G^{\rho}_0=\rho$. In particular, for $\pi'=\id_{G_0}$ and $\rho=\pi$, we have a functor $G^{\pi,\id}:G^{\pi} \to G$.
\begin{comment}
This is called \quot{hypercover} of $G$ in \cite{Meyer2014}. 
\end{comment}

\begin{lemma}
The  functors $G^{\rho,\pi'}$ are  $T$-weak equivalences, for all diagrams as in \cref{refinement-diagram}.
\end{lemma}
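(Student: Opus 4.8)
The plan is to verify directly the two conditions of \cref{weak-equivalence} for the functor $F := G^{\rho,\pi'}$. On objects $F_0 = \rho \maps Y \to Y'$, and on morphisms $F_1 \maps G^{\pi}_1 \to G^{\pi'}_1$ is the map induced by $\rho$, sending a triple $(y_1,\gamma,y_0) \in Y \ttimes{\pi}{t} G_1 \ttimes{s}{\pi} Y$ to $(\rho(y_1),\gamma,\rho(y_0))$. This is well defined precisely because $\pi = \pi' \circ \rho$, so that $t(\gamma) = \pi(y_1) = \pi'(\rho(y_1))$ and $s(\gamma) = \pi(y_0) = \pi'(\rho(y_0))$. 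I treat the two conditions separately; condition \cref{weak-equivalence:a} is formal, while \cref{weak-equivalence:b} carries the real content.

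For fully faithfulness \cref{weak-equivalence:a}, the source--target maps are $(s,t) = (\pr_3,\pr_1)$ on both $G^\pi_1$ and $G^{\pi'}_1$, and I would compute the pullback of $(s,t)\maps G^{\pi'}_1 \to Y' \times Y'$ along $\rho \times \rho \maps Y \times Y \to Y' \times Y'$. An element of this pullback is a pair $\bigl((y_0,y_1),(z_1,\gamma,z_0)\bigr)$ with $z_1 = \rho(y_1)$ and $z_0 = \rho(y_0)$, hence is determined by a triple $(y_1,\gamma,y_0)$ subject to $t(\gamma) = \pi'(\rho(y_1)) = \pi(y_1)$ and $s(\gamma) = \pi'(\rho(y_0)) = \pi(y_0)$. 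This is exactly the description of $G^\pi_1$, and under this identification the comparison map is $F_1$; so the square is a pullback. The only input is again the factorization $\pi = \pi' \circ \rho$, which makes the two iterated fibre products coincide.

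Essential surjectivity \cref{weak-equivalence:b} is the main step: I must show that $s \circ \pr_2 \maps Y \ttimes{\rho}{t} G^{\pi'}_1 \to Y'$ is universal $T$-locally split. First I would reorganize the domain: a point is a pair $(y,m)$ with $m = (z_1,\gamma,z_0)$ and $\rho(y) = t(m) = z_1$, which is the same datum as a triple $(y,\gamma,z_0)$ with $t(\gamma) = \pi(y)$ and $s(\gamma) = \pi'(z_0)$; thus the domain is $Y \ttimes{\pi}{t} G_1 \ttimes{s}{\pi'} Y'$ and the map $s\circ\pr_2$ becomes the projection $\pr_3$ onto $Y'$. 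I would then recognize this projection as the pullback along $\pi'$ of the morphism $a := s \circ \pr_{G_1} \maps Y \ttimes{\pi}{t} G_1 \to G_0$. Now $\pr_{G_1}$ is by construction the pullback of $\pi$ along $t$, hence universal $T$-locally split (universality is stable under pullback, and so are locally split morphisms by \cref{locally-split-morphisms:d}); and $s$ is universal and globally split via $i$, hence universal $T$-locally split by \cref{locally-split-morphisms:e}. Their composite $a$ is therefore universal $T$-locally split, since both classes are closed under composition (\cref{locally-split-morphisms:b}). Finally $s \circ \pr_2 = (\pi')^{*}a$ is a pullback of $a$, so it is universal $T$-locally split as well, establishing \cref{weak-equivalence:b}.

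The main obstacle is thus the essential-surjectivity step, and within it the decisive move is to untangle the nested fibre products and see $s \circ \pr_2$ as a pullback of the composite $s \circ \pr_{G_1}$; after that the conclusion is purely a matter of the stability and composition properties collected in \cref{locally-split-morphisms}. Throughout I would keep track that the relevant fibre products exist, which they do because $\pi$, $s$, $t$, and hence $a$, are universal. I note in passing that since $G^{\pi',\id}\circ G^{\rho,\pi'} = G^{\pi,\id}$ holds on objects, one could alternatively try to deduce the general case from the case $\pi'=\id_{G_0}$; however this would require a two-out-of-three principle for $T$-weak equivalences that is not available here, so the uniform direct verification above (which specializes to $\rho=\pi$, $\pi'=\id_{G_0}$) is the cleaner route.
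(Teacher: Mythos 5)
Your proof is correct, and for the essential step it takes a genuinely different (more structural) route than the paper. The paper also reduces condition \cref{weak-equivalence:b} to showing that $\pr_3 : Y \ttimes{\pi}{t} G_1 \ttimes{s}{\pi'} Y' \to Y'$ is universal $T$-locally split, but then argues by hand: it picks a $T$-covering $(\pi_i:U_i\to G_0)$ with local sections $\rho_i$ of $\pi$, pulls it back along $\pi'$ to a covering of $Y'$, and writes down explicit local sections $(u,y')\mapsto(\rho_i(u),\,i(\pi'(y')),\,y')$ using the identity morphism $i$ of the groupoid. You instead exhibit $\pr_3$ as the pullback along $\pi'$ of the composite $s\circ\pr_{G_1}\colon Y\ttimes{\pi}{t}G_1\to G_0$, where $\pr_{G_1}=t^{*}\pi$ is universal $T$-locally split by stability under pullback and $s$ is universal and globally split via $i$, and then you invoke closure under composition and pullback from \cref{locally-split-morphisms}. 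Unwinding your abstract argument recovers essentially the paper's explicit sections (the splitting of $s$ by $i$ is where the identity morphism enters), but your version is cleaner in that it never manipulates covering families directly and makes visible exactly which closure properties are used. You also verify fully faithfulness \cref{weak-equivalence:a}, which the paper omits as formal; your element-wise identification of the two iterated fibre products should, strictly speaking, be read as an argument with generalized elements or as an instance of the pasting/associativity law for pullbacks, but the content is right and hinges, as you say, only on $\pi=\pi'\circ\rho$.
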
 

\begin{proof} 
The relevant morphism in \cref{weak-equivalence:b} is
\begin{equation}
\label{erwrweyxc}
\pr_3 : Y \ttimes{\pi} t G_1 \ttimes s{\pi'} Y' \to Y' 
\end{equation}
and we show that it is universal $T$-locally split. It is clearly universal as it is a projection from a fibre product with all morphisms universal. To see that it is $T$-locally split, we choose a covering $(\pi_i: U_i \to G_0)_{i\in I}$ with local sections $\rho_i: U_i \to Y$, using that $\pi$ is $T$-locally split. Then, we consider the pullback covering $(\pi'^{*}\pi_i: \pi'^{*}U_i \to Y')_{i\in I}$. Now, the local sections 
\begin{equation*}
\alxydim{@C=8em}{\pi'^{*}U_i = U_i \times_{G_0} Y' \ar[r]^-{(\rho_i  \circ \pr_1,i \circ \pr_2,\pr_2 )} & Y  \ttimes \pi t G_1 \ttimes s{\pi'} Y' }
\end{equation*}
show that \cref{erwrweyxc} splits over the cover $(\pi_i'^{*}\pi)$. 
\end{proof}

\begin{remark}
If $T$ is local, then our $T$-weak equivalences are exactly the weak equivalences of \cite[Thm. 3.28]{Meyer2014} for the site $(\mathscr{C},\,\Uni(T))$ because $\Uni(T)$ satisfies their Assumption 2.6, see \cref{assumption-2.6}.
Likewise, our definition coincides with Bunge-Paré \cite{Bunge1979} for the site $(\mathscr{C},\,\Uni(T))$, who, however impose more conditions on $\mathscr{C}$ and $T$. 
Roberts \cite[Def. 4.14]{Roberts2012} uses a slightly different definition of $T$-essentially surjectivity for a functor $F:G \to H$: he requires that there exists a (singleton) $T$-covering $\pi:Z \to H_0$ and a functor $K: H^{\pi} \to G$ with a natural isomorphism $F \circ K \cong \pi$. This is strictly stronger then our condition in \cref{weak-equivalence:b}; however, under the additional assumption that $F$ is fully faithful, they coincide, see \cite[Prop. 4.19]{Roberts2012}. In particular, Roberts' $T$-weak equivalence coincide with ours (when $T$ is singleton).  
\end{remark}

\begin{remark}
If $\mathscr{C}$ has a functor $\mathscr{C} \to \Set$ that preserves fibre products,   then every $\mathscr{C}$-groupoid has an underlying ordinary groupoid. If $T$ is subcanonical, and the functor $\mathscr{C} \to \Set$ additionally preserves epimorphisms, then a $T$-weak equivalence  $F$ induces an ordinary equivalence of the underling groupoids, since  $T$-locally split morphisms are epimorphisms (\cref{epimorphisms}), and epimorphisms in $\Set$ are surjective. 
\end{remark}

An immediate consequence of our definitions is the following.

\begin{lemma}
\label{continuous-functors-and-weak-equivalences}
Continuous functors $\mathscr{F}:(\mathscr{C}_1,T_1) \to (\mathscr{C}_2,T_2)$ send $T_1$-weak equivalences between $\mathscr{C}_1$-groupoids to $T_2$-weak equivalences between $\mathscr{C}_2$-groupoids.
\end{lemma}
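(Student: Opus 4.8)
The plan is to verify directly the two defining conditions of a $T_2$-weak equivalence, \cref{weak-equivalence:a} and \cref{weak-equivalence:b}, for the functor $\mathscr{F}(F):\mathscr{F}(G) \to \mathscr{F}(H)$. As a preliminary step I would observe that $\mathscr{F}(G)$ and $\mathscr{F}(H)$ are genuinely $\mathscr{C}_2$-groupoids: the source and target maps of a $\mathscr{C}_1$-groupoid are globally split and universal, hence universal $T_1$-locally split by \cref{locally-split-morphisms:e}, so condition (i) of \cref{continuous-functor} makes $\mathscr{F}(s)$ and $\mathscr{F}(t)$ universal $T_2$-locally split, while condition (ii) guarantees that $\mathscr{F}$ preserves the iterated fibre products $X_k$ along $s$ and $t$ that carry the groupoid structure.

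Essential surjectivity (\cref{weak-equivalence:b}) is the easier half. The domain $G_0 \ttimes{F_0}{t} H_1$ of the essential-surjectivity map is a fibre product along the target map $t$, which is universal $T_1$-locally split; by condition (ii) of \cref{continuous-functor} the functor $\mathscr{F}$ therefore preserves it, yielding a canonical identification $\mathscr{F}(G_0 \ttimes{F_0}{t} H_1) \cong \mathscr{F}(G_0) \ttimes{\mathscr{F}(F_0)}{\mathscr{F}(t)} \mathscr{F}(H_1)$ under which $\mathscr{F}(\pr_2)$ becomes the $\mathscr{C}_2$-projection. Consequently $\mathscr{F}$ carries $s \circ \pr_2$ to the corresponding map $\mathscr{F}(s) \circ \pr_2$ for $\mathscr{F}(F)$; since the former is universal $T_1$-locally split by hypothesis, condition (i) makes the latter universal $T_2$-locally split, as required.

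For full faithfulness (\cref{weak-equivalence:a}) the idea is to re-express the defining pullback as an iterated fibre product along morphisms that are universal $T_1$-locally split, so that continuity applies. Concretely, I would use the elementary categorical identity expressing $(G_0 \times G_0) \times_{H_0\times H_0} H_1$ as the two-step fibre product $G_0 \ttimes{F_0}{s \circ \pr_2}(G_0 \ttimes{F_0}{t} H_1)$: the first step is a pullback along $t$, and the second is a pullback along $s\circ\pr_2 : G_0\ttimes{F_0}{t}H_1 \to H_0$, which is precisely the essential-surjectivity map of $F$ and hence universal $T_1$-locally split by \cref{weak-equivalence:b}. Full faithfulness of $F$ identifies this iterated pullback with $G_1$. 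Applying condition (ii) of continuity to each of the two steps shows that $\mathscr{F}$ preserves the iterated fibre product, and running the same identity inside $\mathscr{C}_2$ reassembles it into the pullback square that exhibits $\mathscr{F}(F_1)$ as fully faithful.

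I expect the full-faithfulness step to be the main obstacle and the only genuinely new idea: a continuous functor is assumed to preserve fibre products only along universal locally split morphisms, yet neither leg $(s,t): H_1 \to H_0\times H_0$ nor $F_0\times F_0$ of the defining square is of that kind. The \emph{crux} is recognising that the base of the second pullback is the essential-surjectivity map, which is universal $T_1$-locally split, so that the full faithfulness of $\mathscr{F}(F)$ in the end rests on the essential surjectivity of $F$. A minor point to dispatch is that the product formulation of \cref{weak-equivalence:a} presupposes the products $\mathscr{F}(G_0)\times\mathscr{F}(G_0)$ and $\mathscr{F}(H_0)\times\mathscr{F}(H_0)$ in $\mathscr{C}_2$; the iterated-pullback description avoids this issue, since it uses only fibre products along $s$ and $t$, which exist because $s$ and $t$ are universal.
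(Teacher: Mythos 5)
Your proof is correct, and it takes the only natural route: the paper itself offers no argument (it declares the lemma an immediate consequence of the definitions), so your direct verification of conditions \cref{weak-equivalence:a} and \cref{weak-equivalence:b} is exactly the intended content spelled out. The one genuinely non-trivial point — that continuity only guarantees preservation of fibre products along universal locally split morphisms, so the fully-faithfulness square must be decomposed as $G_0 \ttimes{F_0}{s\circ\pr_2}(G_0 \ttimes{F_0}{t} H_1)$, whose two stages are pullbacks along $t$ and along the essential-surjectivity map, both universal $T_1$-locally split — is identified and handled correctly, and your observation that this reformulation also sidesteps the existence of the products $G_0\times G_0$ and $H_0\times H_0$ is a welcome extra precision over the paper's terse treatment.
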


\subsection{Principal bundles}

Let $\mathscr{C}$ be a category and  $G$ be a $\mathscr{C}$-groupoid.
A \emph{right action} of  $G$ on an object $P\in \mathscr{C}$ is a pair of a morphism $\phi:P \to G_0$ (called \emph{anchor}) and  $\rho: P \ttimes\phi t G_1 \to P$ such that the diagrams
\begin{equation*}
\alxydim{}{P \ttimes\phi t G_1 \ar[r]^-{\rho} \ar[d]_{\pr_2} & P \ar[d]^{\phi} \\ G_1 \ar[r]_{s} & G_0}
\quand
\alxydim{}{P \ttimes\phi t G_1 \ttimes st G_1 \ar[r]^-{\rho \times \id} \ar[d]_-{\id \times \circ} &  P \ttimes\phi tG_1 \ar[d]^{\rho} \\ P\ttimes\phi t G_1 \ar[r]_{\rho} & P}
\end{equation*}
are commutative, see \cite[\S 4]{Meyer2014} or \cite[Def. I.2.13]{Carchedi2011}. Here, all fibre products exist because $s$ and $t$ are universal; we do not put any condition on $\phi$.  A \emph{left action} is a right action of the opposite groupoid.
A morphism $p:P \to X$ to some object $X$ is called \emph{invariant}, if $p\circ \rho = p\circ \pr_1$. A morphism $f:P_1 \to P_2$ between objects with right $G$-actions $(\phi_1,\rho_1)$ and $(\phi_2, \rho_2)$ is called \emph{equivariant}, if the diagrams
\begin{equation*}
\alxydim{}{P_1 \ar[rr]^-{f} \ar[dr]_{\phi_1} && P_2 \ar[dl]^{\phi_2} \\ & G_0}
\quand
\alxydim{}{P_1 \ttimes{\phi_1} t G_1 \ar[d]_{\rho_1} \ar[r]^-{f \times \id} & P_2 \ttimes{\phi_2} t G_1 \ar[d]^{\rho_2} \\ P_1 \ar[r]_-{f} &P_2 }
\end{equation*}
are commutative.

For the following definition of a principal bundle, we follow the philosophy (see, e.g., \cite{Sati}) that there is a \quot{formal} version, independent of any Grothendieck topology, to which a locality condition may be added or not.  

\begin{definition}
\label{principal-G-bundle}
Let $\mathscr{C}$ be a category, $X\in \mathscr{C}$ and $G$ be a $\mathscr{C}$-groupoid.
A  \emph{principal $G$-bundle} over $X$ is an object $P\in \mathscr{C}$ with a right $G$-action $(\phi,\rho)$ and a $G$-invariant, universal morphism $p:P\to X$, such that the \emph{shear map}
\begin{equation*}
(\pr_1,\rho):P \ttimes\phi t G_1 \to P \ttimes pp P
\end{equation*}
is an isomorphism.
 A morphism of principal $G$-bundles $\phi:P\to P'$ is an equivariant morphism such that $p' \circ \phi=p$. Principal $G$-bundles over $X$ form a category $\Bun_G(X)$. 
\end{definition}

That the shear map is an isomorphism means $G$ acts \quot{free and transitively on the fibres}. The category $\Bun_G(X)$ is a actually a groupoid; this can be proved in the usual way.
 That $p$ is universal allows to define the pullback of a principal $G$-bundle along an arbitrary morphism $f:X' \to X$, furnishing a functor
 \begin{equation*}
f^{*}: \Bun_G(X) \to \Bun_G(X')\text{.}
\end{equation*}

\begin{comment} 
\begin{lemma}
Let $f:X' \to X$ be a morphism and $p:P \to X$ be a principal $G$-bundle over $X$. Then, $f^{*}P := X' \ttimes fp P$ together with the projection $f^{*}p:=\pr_1:f^{*}P \to X'$ and the right $G$-action given by $\phi \circ \pr_2: p^{*}P \to G_0$ and the induced morphism
\begin{equation*}
\alxydim{}{X' \ttimes fp P \ttimes\phi t G_1 \ar@{-->}[dr] \ar@/_1pc/[ddr]_{\pr_1} \ar@/^1pc/[drr]^{\rho \circ \pr_{23}} &\\&X' \ttimes fp P \ar[r]^-{\pr_2}\ar[d]_{\pr_1} & P \ar[d]^{p} \\ & X' \ar[r]_{f} & X}
\end{equation*} 
is a principal $G$-bundle over $X'$. Moreover, this establishes a functor
\begin{equation*}
f^{*}: \Bun_G(X) \to \Bun_G(X')\text{.}
\end{equation*}
\end{lemma}

See \cite[Prop. 5.6]{Meyer2014} for a complete proof. 
\end{comment}

\begin{example}
\label{groupoid-as-bundle}
If $G$ is a $\mathscr{C}$-groupoid, then $t:G_1 \to G_0$ together with the right $G$-action defined by $\phi := s: G_1 \to G_0$ and $\rho := \circ: G_1 \ttimes st G_1 \to G_1$ is a principal $G$-bundle. 
\end{example}

\begin{example}
\label{trivial-principal-bundle}
If $G$ is a $\mathscr{C}$-groupoid and $U\in \mathscr{C}$ is an object with a morphism $\psi:U \to G_0$, then $I_{\psi} := U \ttimes\psi t G_1$ carries a right $G$-action similar to that of \cref{groupoid-as-bundle}, with anchor   $\phi := s \circ \pr_2:I_{\psi} \to G_0$. Moreover,  equipped with the morphisms $p:=\pr_1: I_{\psi} \to U$, this defines  a principal $G$-bundle over $U$; it is called the \emph{trivial $G$-bundle over $U$ with anchor $\psi$}. In order to verify this, first note that $p$ is the pullback of $t:G_1 \to G_0$ along $\psi$, hence universal. 
\begin{comment}
Moreover, $p$ has a section $(\id,\id \circ \psi): U \to I_{\psi}$ and is hence a universally effective epimorphism. 
\end{comment}
It is easy to see that the diagram
\begin{equation*}
\alxydim{@C=4em}{U\ttimes \psi t G_1 \ttimes st G_1 \ar[r]^-{\pr_1,\circ} \ar[d]_{\pr_{12}} & I_{\psi} \ar[d]^{p} \\  I_{\psi} \ar[r]_{p} &  U}
\end{equation*}
is a fibre product,  and that the shear map is the identity; hence, an isomorphism.
\end{example}

\begin{remark}
\label{sections-and-local-trivializations}
Let  $G$ be a $\mathscr{C}$-groupoid, and $p:P\to X$   be a principal $G$-bundle. Let $\pi:U \to X$ be a morphism. We fix the following standard terminology:
\begin{enumerate}

\item
A \emph{section} over $\pi$ is a morphism $\sigma: U \to P$ such that $p \circ \sigma=\pi$. 

\item
A \emph{trivialization} over $\pi$ is a morphism $\psi: U \to G_0$ and an isomorphism $\Phi: I_{\psi} \to \pi^{*}P$ of principal $G$-bundles over $U$.

\end{enumerate}
For $\pi=\id_X$, we say \emph{global section} and \emph{global trivialization}.
As usual, sections and trivializations over the same morphism $\pi:U \to X$ are in a canonical bijection. Namely, if $\Phi: I_{\psi} \to \pi^{*}P$ is a  trivialization over $\pi:U \to X$, then the morphism
\begin{equation*}
\alxydim{@C=3em}{U  \ar[r]^-{(\id,i \circ \psi)} & U \ttimes\psi t G_1 \ar[r]^{\Phi} & U \ttimes \pi p P \ar[r]^-{\pr_2} & P }
\end{equation*}
is a  section over $\pi:U \to X$, and this assignment establishes the bijection.
\begin{comment}
\begin{proof}
Given $\Phi$, let $\sigma_{\Phi}$ be the morphism defined above, which is easily verified to be a section. Conversely, if a local section $\sigma$ is given, we define  $\psi_{\sigma} := \phi \circ \sigma$ and define the local trivialization $\Phi_{\sigma}$ by
\begin{equation*}
\alxydim{@C=4em}{I_{\psi_{\sigma}}= U \ttimes{\phi \circ \sigma} t G_1 \ar[r]^-{(\id,\sigma\times \id)} & U \ttimes \pi pP \ttimes\phi t G_1 \ar[r]^-{\id \times \rho} & U \ttimes \pi p P=\pi^{*}P\text{.}  }
\end{equation*}
It is straightforward to check that this is $G$-equivariant and preserves the projections to $U$, and hence is a bundle isomorphism. It is also straightforward to show  that both constructions are inverse to each other. 
\end{proof}
\end{comment}
\end{remark}

This finalizes the formal part of principal bundles; next, we involve a Grothendieck topology $T$ on $\mathscr{C}$.

\begin{definition} 
\label{locally-trivial-bundle}
Let $(\mathscr{C},T)$ be a site and $G$ be a $\mathscr{C}$-groupoid. A principal $G$-bundle $P$ is called \emph{$T$-locally trivial}, if 
its projection $p:P \to X$ is  $T$-locally split. The full subcategory over all $T$-locally trivial principal $G$-bundles over $X$ is denoted by $\Bun_G(X)^{T}$. 
\end{definition}

Evidently, equivalent Grothendieck topologies have the same locally trivial principal bundles. 
The next result follows directly from the definition and \cref{sections-and-local-trivializations}. 

\begin{proposition}
\label{principal-bundles-equivalence}
The following are equivalent for a principal $G$-bundle  $P$ over $X$:
\begin{enumerate}[(a)]

\item 
$P$ is $T$-locally trivial.

\item
There exists a $T$-covering $(\pi_i: U_i \to X)_{i\in I}$ such that $P$ has a section over each $\pi_i$.

\item
There exists a $T$-covering $(\pi_i: U_i \to X)_{i\in I}$ such that $P$ has a  trivialization over each  $\pi_i$.

\end{enumerate}
\end{proposition}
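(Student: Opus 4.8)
The plan is to prove the chain of equivalences by unwinding definitions, the only substantive input being the section--trivialization bijection recorded in \cref{sections-and-local-trivializations}. I would prove (a)$\Leftrightarrow$(b) and (b)$\Leftrightarrow$(c), which together give all three conditions equivalent.

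First I would establish (a)$\Leftrightarrow$(b), which is purely a comparison of definitions. By \cref{locally-trivial-bundle}, $P$ is $T$-locally trivial exactly when its projection $p:P\to X$ is $T$-locally split. Spelling out what it means for $p$ to be $T$-locally split, this says there is a $T$-covering $(\pi_i:U_i\to X)_{i\in I}$ together with morphisms $\rho_i:U_i\to P$ satisfying $p\circ\rho_i=\pi_i$ for every $i\in I$. But a morphism $\sigma:U_i\to P$ with $p\circ\sigma=\pi_i$ is, by definition, precisely a section of $P$ over $\pi_i$ in the sense of \cref{sections-and-local-trivializations}. Hence the local sections $\rho_i$ witnessing that $p$ is $T$-locally split are exactly sections of $P$ over the $\pi_i$, and (a) and (b) are verbatim the same assertion.

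Next I would establish (b)$\Leftrightarrow$(c) by invoking the canonical bijection of \cref{sections-and-local-trivializations}: for any single morphism $\pi:U\to X$, sections of $P$ over $\pi$ correspond bijectively to trivializations of $P$ over $\pi$. Applying this separately for each index $i$, to one and the same $T$-covering $(\pi_i:U_i\to X)_{i\in I}$, a family of sections over the $\pi_i$ exists if and only if a family of trivializations over the $\pi_i$ exists, with no change of covering required. This yields (b)$\Leftrightarrow$(c) and closes the loop.

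I do not expect any genuine obstacle: all the real content has already been isolated into \cref{sections-and-local-trivializations}, and the proposition amounts to a dictionary translation between three equivalent phrasings. The only point deserving a word of care is that in (b) and (c) the \emph{same} covering may be used; this is immediate, since the bijection of \cref{sections-and-local-trivializations} is pointwise in the morphism $\pi_i$ and therefore respects a fixed index set $I$.
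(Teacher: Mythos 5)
Your proof is correct and follows exactly the route the paper intends: the paper itself states that the result ``follows directly from the definition and \cref{sections-and-local-trivializations}'', and your argument is precisely that unwinding --- (a)$\Leftrightarrow$(b) is the definition of $T$-locally split read off against the definition of a section, and (b)$\Leftrightarrow$(c) is the pointwise section--trivialization bijection applied over a fixed covering. Nothing is missing.
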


\begin{example}
\label{examples-locally-trivial}
\begin{itemize}
\item 

Trivial principal $G$-bundles (\cref{trivial-principal-bundle}) are $T$-locally trivial, for any $T$, as their projection has a global section. 
\item
Every principal $G$-bundle is  $T_{dis}$-locally trivial.

\item
A principal $G$-bundle is $T_{indis}$-locally trivial if and only if it has a global section or trivialization.

\item
A principal $G$-bundle is $T_{\mathscr{C}}$-locally trivial if its projection is a universally effective epimorphism.
\begin{comment}
If its projection is a universally effective epimorphism, it is a $T_{\mathscr{C}}$-covering, and hence $T_{\mathscr{C}}$-locally split.  
\end{comment}

\item
Our $T$-local principal $G$-bundles are precisely the \quot{$G$-bundles in $(\mathscr{C},\,\Uni(T))$} defined in \cite[Def. 5.1]{Meyer2014}.

\end{itemize}
\end{example}

Finally, the following results follows directly from the definitions.

\begin{lemma}
\label{functoriality-of-G-bundles}
Suppose $\mathscr{F}:(\mathscr{C}_1,T_1) \to (\mathscr{C}_2,T_2)$ is a  continuous functor between sites. If $p: P \to X$ is a $T_1$-locally trivial principal $G$-bundle, then $\mathscr{F}(\pi):\mathscr{F}(P)\to \mathscr{F}(X)$ is a $T_2$-locally trivial principal $\mathscr{F}(G)$-bundle over $\mathscr{F}(X)$.
Thus, we obtain a functor
\begin{equation*}
\mathscr{F}_{*}: \Bun_G(X)^{T_1} \to \Bun_{\mathscr{F}(G)}(\mathscr{F}(X))^{T_2}\text{.}
\end{equation*}
\end{lemma}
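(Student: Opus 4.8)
The plan is to transport every piece of structure through $\mathscr{F}$, exploiting at each step the observation that the only morphisms along which the relevant fibre products are formed -- the source and target maps $s,t$ of $G$, the map $t$ on the acting side of the action, and the bundle projection $p$ -- are all \emph{universal $T_1$-locally split}, so that the two clauses of \cref{continuous-functor} apply. First I would check that $\mathscr{F}(G)$ is a $\mathscr{C}_2$-groupoid: the maps $s,t:G_1 \to G_0$ are universal and globally split via $i$, hence universal $T_1$-locally split by \cref{locally-split-morphisms:e}; clause (i) of continuity sends them to universal $T_2$-locally split maps, and clause (ii) preserves the iterated fibre products $G_k$ (formed along $s,t$), so the image structure maps satisfy the axioms of \cref{internal-category}.

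Next I would transport the action. The fibre product $P \ttimes\phi t G_1$ is a pullback along the universal $T_1$-locally split morphism $t$, hence preserved by clause (ii), giving a canonical isomorphism $\mathscr{F}(P \ttimes\phi t G_1) \cong \mathscr{F}(P) \ttimes{\mathscr{F}(\phi)}{\mathscr{F}(t)} \mathscr{F}(G_1)$; the same applies to the triple product in the associativity axiom (again formed along $t$). Since $\mathscr{F}$ preserves commuting diagrams, $\mathscr{F}(\rho)$ with anchor $\mathscr{F}(\phi)$ defines a right $\mathscr{F}(G)$-action. Note that $\phi$ itself need not be universal, and this is harmless precisely because every fibre product in sight is taken along $t$, never along $\phi$.

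The crux is the projection. By \cref{principal-G-bundle} the map $p:P \to X$ is universal, and by the hypothesis of $T_1$-local triviality (\cref{locally-trivial-bundle}) it is $T_1$-locally split; hence $p$ is universal $T_1$-locally split. This is exactly the class preserved by continuity clause (i), which therefore yields in one stroke that $\mathscr{F}(p)$ is universal $T_2$-locally split: universality supplies the bundle-projection condition of \cref{principal-G-bundle}, while being $T_2$-locally split is precisely $T_2$-local triviality. I want to stress that this is the one place where the local-triviality hypothesis is essential -- without it $p$ would be merely universal, and a continuous functor is \emph{not} required to preserve arbitrary universal morphisms, nor fibre products along them. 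Invariance of $\mathscr{F}(p)$ transports from $p \circ \rho = p \circ \pr_1$ by applying $\mathscr{F}$ and using that $\pr_1$ is sent to the projection of the preserved fibre product. For the shear map, its target $P \ttimes pp P$ is a fibre product along the universal $T_1$-locally split map $p$, so it is preserved by clause (ii), and its source is the already-preserved $P \ttimes\phi t G_1$; applying $\mathscr{F}$ to the shear isomorphism and identifying domain and codomain with the preserved fibre products shows that the shear map of $\mathscr{F}(P)$ is $\mathscr{F}$ of an isomorphism, hence an isomorphism.

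Finally, for functoriality I would send a morphism $f:P \to P'$ of principal $G$-bundles to $\mathscr{F}(f)$: the two equivariance squares of \cref{principal-G-bundle} are preserved because the relevant fibre products are again formed along $t$, and $p' \circ f = p$ gives $\mathscr{F}(p') \circ \mathscr{F}(f) = \mathscr{F}(p)$, so $\mathscr{F}(f)$ is a morphism in $\Bun_{\mathscr{F}(G)}(\mathscr{F}(X))^{T_2}$; that $\mathscr{F}_{*}$ respects identities and composition is immediate from functoriality of $\mathscr{F}$. The whole argument is thus routine diagram-transport, and the only genuine content is the point of the previous paragraph: local triviality is exactly what promotes $p$ into the class of morphisms that continuous functors are designed to preserve.
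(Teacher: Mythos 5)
Your argument is correct and is exactly the routine verification the paper intends: the paper offers no written proof beyond ``follows directly from the definitions,'' and your diagram-transport — using that $s$, $t$, and (thanks to local triviality) $p$ are all universal $T_1$-locally split, so that both clauses of the continuity definition apply to every structure map and every fibre product in sight — is the argument being left to the reader. Your emphasis on the fact that local triviality is precisely what places $p$ in the class of morphisms a continuous functor preserves is the one genuinely non-trivial observation, and it is right.
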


\subsection{Bibundles}

\label{bibundles}

We internalize the definition of a bibundle in the standard way. 

\begin{definition}
Let $G$ and $H$ be $\mathscr{C}$-groupoids in a category $\mathscr{C}$. A \emph{$G$-$H$-bibundle}  consists of an object $P\in \mathscr{C}$, a left $G$-action on $P$ and a right $H$-action on $P$, such that the actions commute and the left anchor $P \to G_0$ together with the right action turns $P$ into a principal $H$-bundle over $G_0$. If $\mathscr{C}$ carries a Grothendieck topology $T$, then a bibundle is called \emph{$T$-locally trivial} if its principal $H$-bundle over $G_0$ is $T$-locally trivial. If $P_1,P_2$ are $G$-$H$-bibundles then a \emph{bibundle transformation} is a morphism $\varphi:P_1 \to P_2$ that is equivariant for both actions.
\end{definition}

Our $T$-locally trivial $G$-$H$-bibundles are precisely the \quot{bibundle functors from $G$ to $H$ in $(\mathscr{C},\,\Uni(T))$} of \cite[Def. 6.1]{Meyer2014}.

Obviously, $G$-$H$-bibundles form a groupoid $\Bibun GH$, and the $T$-locally trivial ones form a full subgroupoid $\Bibun GH^{T}$. Since $\Bibun GH=\Bibun GH^{T_{dis}}$ (see \cref{examples-locally-trivial}) it suffices to treat the $T$-locally trivial case in the following. \Cref{functoriality-of-G-bundles} implies that any continuous functor $\mathscr{F}: (\mathscr{C}_1,T_1) \to (\mathscr{C}_2,T_2)$ induces a functor
\begin{equation*}
\mathscr{F}_{*}: \Bibun GH(X)^{T_1} \to \Bibun{\mathscr{F}(G)}{\mathscr{F}(H)}(\mathscr{F}(X))^{T_2}\text{.}
\end{equation*}

\begin{example}
For $G$ a $\mathscr{C}$-groupoid and $X\in \mathscr{C}$, a (locally trivial) bibundle $F:X_{dis} \to G$ is precisely the same as a (locally trivial) principal $G$-bundle over $X$, and we have an equivalence of categories $\Bun_G(X)^{T} \cong \Bibun {X_{dis}}G^{T}$.
\end{example}

\begin{example}
\label{bibundles-from-functors}
If $\Fun(G,H)$ denotes the groupoid of functors from $G$ to $H$, together with natural transformations, then there is a fully faithful functor
\begin{equation*}
\Fun(G,H) \to \Bibun GH^{T}\text{,}
\end{equation*}
no matter what $T$ is. 
It associates to a  functor $F:G \to H$ the bibundle $P_F$ which is, as a principal $H$-bundle over $G_0$, the trivial $H$-bundle with anchor $F_0: G_0 \to H_0$ (see \cref{trivial-principal-bundle}). Thus, its total space is $P_F := G_0 \ttimes F t H_1$, and the left anchor is $\alpha_l=\pr_1: P_F \to G_0$.
\begin{comment}
The right $H$-action is given by the anchor 
 $\alpha_r := s \circ \pr_2: P_F \to H_0$ and 
\begin{equation*}
\alxydim{}{G_0 \ttimes F t H_1 \ttimes {s}t H_1  \ar[r]^-{\id \times\circ} &  G_0 \ttimes Ft H_1 \text{.}}
\end{equation*}
\end{comment}
The additional left $G$-action is defined by
\begin{equation*}
\alxydim{@C=3em}{ G_1 \ttimes s{\id}G_0 \ttimes F t H_1 \ar[r]^-{\pr_{13}} &  G_1 \ttimes {F \circ s}t H_1 \ar[r]^-{(t,F) \times \id} & G_0 \ttimes Ft H_1 \ttimes st H_1  \ar[r]^-{\id \times \circ} & G_0 \ttimes F t H_1\text{.}}
\end{equation*}
\end{example}
\begin{remark}
\label{bicat-bibundles}
Under an additional assumption on the Grothendieck topology, namely that $T$ is local and that actions of \v Cech groupoids for $\Uni(T)$-coverings are basic, one can define the composition of $T$-locally trivial bibundles \cite[Prop. 7.8]{Meyer2014}. Then, the following results hold:
\begin{enumerate}[(i)]

\item 
\label{bicat-bibundles:a}
$\mathscr{C}$-groupoids, locally trivial bibundles, and bibundle transformations form a bicategory $(\mathscr{C},T)$-$\Grpd^{bi}$ with $\Hom_{(\mathscr{C},T)\text{-}\Grpd^{bi}}(G,H)=\Bibun GH^{T}$ \cite[Thm. 7.13]{Meyer2014}. 

\item
\label{bicat-bibundles:b}
A $G$-$H$-bibundle $P$ is weakly invertible in this bicategory if and only if the right anchor and the left action make it a principal $G^{op}$-bundle over $H_0$, and a weak inverse is obtained by  regarding $P$ as an  $H^{op}$-$G^{op}$-bibundle and then using the canonical isomorphisms $H^{op}\cong H$ and $G^{op}\cong G$ (induced by  inversion); see \cite[Thm. 7.23]{Meyer2014}.   

\item
\label{bicat-bibundles:c}
The construction in \cref{bibundles-from-functors} furnishes a 2-functor from the 2-category of $\mathscr{C}$-groupoids, functors, and natural transformations, such that a  2-functor becomes an invertible bimodule if and only if it is a $T$-weak equivalence \cite[Prop. 6.7]{Meyer2014}. 

\end{enumerate}
\end{remark}

\subsection{Anafunctors}

The following definitions of a $T$-anafunctor and transformations is the traditional one for the site  $(\mathscr{C},\,\Uni(T))$, see \cite{makkai1}, \cite{bartels}, \cite[Def. 5.1]{Roberts2012} \cite[Defs. 3.17 \& 3.19]{Meyer2014} 
\begin{definition}
Let $(\mathscr{C},T)$ be a site and let $G$ and $H$ be $\mathscr{C}$-groupoids. A \emph{$T$-anafunctor} $F: G \to H$ is a pair $F=(\pi,F^{\pi})$ consisting of a universal $T$-locally split morphism $\pi:Y \to G_0$ and a functor $F^{\pi}: G^{\pi} \to H$.
A transformation $F \Rightarrow F'$ between two $T$-anafunctors $F,F': G \to H$ is a natural transformation $\eta: F^{\pi} \circ G^{\rho,\pi} \Rightarrow F^{\pi'} \circ G^{\rho',\pi'}$, where $\rho: Y \ttimes\pi{\pi'}Y' \to Y$   and $\rho': Y \ttimes\pi{\pi'}Y' \to Y'$ are the projections. 
\end{definition}

\begin{comment}
Note that this means that $\eta:Y \ttimes\pi{\pi'}Y' \to H_1$ is a morphism such that $s \circ \eta = F^{\pi}_0 \circ \rho$ and $t \circ \eta = F^{\pi'}_0 \circ \rho'$, and such that the diagram
\begin{equation*}
\alxydim{@C=6em}{(Y \ttimes\pi{\pi'}Y' ) \times G_1 \times (Y \ttimes\pi{\pi'}Y' ) \ar[r]^-{(G_1^{\pi} \circ R_{\rho'},\eta \circ s)} \ar[d]_{(\eta \circ t,F_1^{\pi} \circ R_{\rho})} & H_1 \ttimes st H_1 \ar[d]^{\circ} \\ H_1 \ttimes st H_1 \ar[r]_-{\circ} & H_1}
\end{equation*}
is commutative.
\end{comment}

One can compose transformations
between $T$-anafunctors to obtain a category $\Ana^{T}(G,H)$.

\begin{remark}
If $T$ is a singleton Grothendieck topology, then one can assume without loss of generality that $\pi$ in the definition of a $T$-anafunctor is a $T$-covering. Indeed, if $F=(\pi,F^{\pi})$ is a $T$-anafunctor with $\pi:Y \to G$ universal $T$-locally split, we choose a $T$-covering $\tilde\pi: \tilde Y \to G$ and a local section $\rho: \tilde Y \to Y$. Then, $(\tilde\pi, F^{\pi} \circ G^{\rho,\pi})$ is another $T$-anafunctor and isomorphic to $F$. Via this result, our definition coincides with  \cite[Def. 5.1]{Roberts2012}.\end{remark}

A direct consequence of our definitions is the following result. 

\begin{lemma}
\label{continuous-functors-and-anafunctors}
If $\mathscr{F}:(\mathscr{C}_1,T_1) \to (\mathscr{C}_2,T_2)$ is a continuous functor, and $F=(\pi,F^{\pi}):G \to H$ is a $T_1$-anafunctor, then $\mathscr{F}(F):=(\mathscr{F}(\pi),\mathscr{F}(F^{\pi}))$ is a $T_2$-anafunctor from $\mathscr{F}(G)$ to $\mathscr{F}(H)$.
\end{lemma}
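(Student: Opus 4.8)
The plan is to verify separately the two data required of a $T_2$-anafunctor $\mathscr{F}(G) \to \mathscr{F}(H)$, using the two clauses of \cref{continuous-functor}. First I would check that $\mathscr{F}(G)$ and $\mathscr{F}(H)$ are genuinely $\mathscr{C}_2$-groupoids: the source and target of any $\mathscr{C}_1$-groupoid are globally split via the identity and universal, hence universal $T_1$-locally split by \cref{locally-split-morphisms:e}, so a continuous functor sends them to universal morphisms (clause (i)) and preserves the fibre products $X_k$ taken along them (clause (ii)); this is exactly the situation covered by the remark following \cref{internal-category}. Since $\pi:Y \to G_0$ is universal $T_1$-locally split, clause (i) then gives at once that $\mathscr{F}(\pi):\mathscr{F}(Y) \to \mathscr{F}(G_0)=\mathscr{F}(G)_0$ is universal $T_2$-locally split, which is the first datum.

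The heart of the argument is to identify $\mathscr{F}(G^{\pi})$ with $\mathscr{F}(G)^{\mathscr{F}(\pi)}$, so that $\mathscr{F}(F^{\pi})$ acquires the correct domain. On objects this is immediate, as $\mathscr{F}(G^{\pi}_0)=\mathscr{F}(Y)=\mathscr{F}(G)^{\mathscr{F}(\pi)}_0$. On morphisms I would use that in the nested fibre product $G^{\pi}_1 = Y \ttimes\pi t G_1 \ttimes s\pi Y$ the universal $T_1$-locally split morphism $\pi$ occurs as a leg of each of the two fibre products; clause (ii) therefore guarantees that $\mathscr{F}$ preserves both, so that
\[
\mathscr{F}(G^{\pi}_1) \cong \mathscr{F}(Y) \ttimes{\mathscr{F}(\pi)}{\mathscr{F}(t)} \mathscr{F}(G_1) \ttimes{\mathscr{F}(s)}{\mathscr{F}(\pi)} \mathscr{F}(Y) = \mathscr{F}(G)^{\mathscr{F}(\pi)}_1\text{.}
\]
Because projections are carried to projections under this identification, the source $\pr_3$, target $\pr_1$, identity, and composition of $G^{\pi}$ are sent to the corresponding structure maps of $\mathscr{F}(G)^{\mathscr{F}(\pi)}$; the only extra check is that the composition-defining fibre product $G^{\pi}_1 \ttimes ts G^{\pi}_1$ is preserved, which holds because the source and target of $G^{\pi}$ are themselves globally split (via its identity) and universal, hence universal $T_1$-locally split. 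This yields $\mathscr{F}(G^{\pi})=\mathscr{F}(G)^{\mathscr{F}(\pi)}$ as $\mathscr{C}_2$-groupoids.

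With this identification in place, $\mathscr{F}(F^{\pi})$ is simply the image of the $\mathscr{C}_1$-functor $F^{\pi}:G^{\pi}\to H$, which is a $\mathscr{C}_2$-functor for the same reason as above (all relevant fibre products are along globally split universal morphisms), so it is a functor $\mathscr{F}(G)^{\mathscr{F}(\pi)} \to \mathscr{F}(H)$; together with the first datum this exhibits $(\mathscr{F}(\pi),\mathscr{F}(F^{\pi}))$ as a $T_2$-anafunctor. I expect the one genuine obstacle to be the commutation of $\mathscr{F}$ with the fibre products defining $G^{\pi}_1$ and its composition domain, which is precisely what clause (ii) of continuity is for; the reason the weaker continuity hypothesis (preservation only of fibre products with universal $T_1$-locally split morphisms, rather than with all universal morphisms) suffices is that every leg appearing here — namely $\pi$ and the source and target maps — is universal $T_1$-locally split. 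Everything else is a formal transport of the groupoid and functor axioms through a fibre-product-preserving functor.
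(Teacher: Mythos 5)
Your proof is correct and matches the paper's intent exactly: the paper states this lemma without proof, calling it ``a direct consequence of our definitions,'' and your write-up supplies precisely the details being elided --- that $\mathscr{F}(\pi)$ is universal $T_2$-locally split by clause (i) of \cref{continuous-functor}, and that $\mathscr{F}(G^{\pi})\cong\mathscr{F}(G)^{\mathscr{F}(\pi)}$ because every leg of the relevant fibre products (namely $\pi$ and the source/target maps, the latter being globally split and universal, hence universal $T_1$-locally split by \cref{locally-split-morphisms:e}) falls under clause (ii). No gaps.
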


\cite[Prop. 12]{bartels}, \cite[Thm. 5.16]{Roberts2012} and \cite[Thm. 3.22]{Meyer2014} (using $(\mathscr{C},\,\Uni(T))$) all prove the following result, for which the first two references make additional assumptions on $T$, but the third reference avoids any.

\begin{proposition}
\label{bicategory-anafunctor}
Let $(\mathscr{C},T)$ be a site.
$\mathscr{C}$-groupoids, $T$-anafunctors, and transformations form a bicategory $(\mathscr{C},T)$-$\Grpd^{ana}$, with $\Hom_{(\mathscr{C},T)\text{-}\Grpd^{ana}}(G,H)=\Ana^{T}(G,H)$. A $T$-anafunctor $F=(\pi,F^{\pi})$ is weakly invertible if and only if the functor $F^{\pi}$ is a $T$-weak equivalence. 
\end{proposition}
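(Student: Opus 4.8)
The plan is to reduce the whole statement to the singleton Grothendieck topology $\Uni(T)$, where it is precisely \cite[Thm. 3.22]{Meyer2014}. Recall that $\Uni(T)$ is a singleton Grothendieck topology whose coverings are exactly the universal $T$-locally split morphisms, and that our $\mathscr{C}$-groupoids are exactly the groupoids in $(\mathscr{C},\Uni(T))$ in the sense of \cite[Def. 3.2]{Meyer2014}. The key observation is that every datum in the definitions of a $T$-anafunctor, of a transformation of $T$-anafunctors, and of a $T$-weak equivalence (\cref{weak-equivalence}) is expressed purely through universal $T$-locally split morphisms: the morphism $\pi$ of an anafunctor $F=(\pi,F^{\pi})$ is required to be universal $T$-locally split, the groupoids $G^{\pi}$ and the comparison functors $G^{\rho,\pi'}$ are assembled from such morphisms, and $T$-essential surjectivity asks that $s\circ\pr_2$ be universal $T$-locally split. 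Since these morphisms are by definition the $\Uni(T)$-coverings, the category $\Ana^{T}(G,H)$ is literally the category of $\Uni(T)$-anafunctors of \cite[Defs. 3.17 \& 3.19]{Meyer2014}, and a functor $F^{\pi}$ is a $T$-weak equivalence if and only if it is a $\Uni(T)$-weak equivalence.

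With this identification in hand, I would invoke \cite[Thm. 3.22]{Meyer2014} for the site $(\mathscr{C},\Uni(T))$. That theorem supplies the entire bicategory structure -- horizontal composition of anafunctors, the associator and unitors, and both compositions of transformations -- together with the statement that $F=(\pi,F^{\pi})$ is weakly invertible exactly when $F^{\pi}$ is a $\Uni(T)$-weak equivalence. Transporting this back along the identification above produces the bicategory $(\mathscr{C},T)$-$\Grpd^{ana}$ with the stated hom-categories and the claimed criterion for weak invertibility. A point worth stressing is that, unlike the bibundle bicategory of \cref{bicat-bibundles}, no locality or basicness hypothesis on $T$ is required here: Meyer--Zhu's anafunctor theorem holds for an arbitrary singleton Grothendieck topology, and $\Uni(T)$ is always singleton.

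The remaining work is bookkeeping rather than mathematics: one must confirm that the definitions coincide verbatim, in particular that our $G^{\pi}$, the comparison functors $G^{\rho,\pi'}$, and our transformations agree with the hypercover groupoids and transformations used in \cite{Meyer2014}, which they do by construction. The only genuinely structural input underlying the reduction is that horizontal composition is well defined: composing $(\pi,F^{\pi}):G\to H$ with $(\pi',{F'}^{\pi'}):H\to K$ involves a pullback of $\Uni(T)$-coverings followed by composition of functors, so one needs such pullbacks and composites to remain $\Uni(T)$-coverings. This is automatic, since $\Uni(T)$ is a Grothendieck topology and universal $T$-locally split morphisms are closed under composition and stable under pullback (\cref{locally-split-morphisms:b,locally-split-morphisms:d}); it is exactly the property that makes the construction in \cite[Thm. 3.22]{Meyer2014} run. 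Hence the anticipated obstacle is purely notational matching, and once it is checked the proposition follows immediately.
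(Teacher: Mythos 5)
Your proposal is correct and matches the paper's own treatment: the paper establishes this proposition precisely by citing \cite[Thm. 3.22]{Meyer2014} applied to the site $(\mathscr{C},\,\Uni(T))$, noting that unlike Bartels and Roberts, Meyer--Zhu impose no additional hypotheses on $T$. Your verification that $\Ana^{T}(G,H)$ and the $T$-weak equivalences coincide verbatim with the $\Uni(T)$-notions of Meyer--Zhu is exactly the identification the paper relies on.
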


\begin{example}
\label{anafunctors-from-functors}
Any functor $F: G \to H$ induces an $T$-anafunctor $\tilde F=(\id_{G_0},F)$, no matter what $T$ is.
A natural transformation $\eta:F \Rightarrow G$ between functors induces a transformation between the associated $T$-anafunctors $\tilde F$ and $\tilde G$. This defines a fully faithful functor
\begin{equation*}
\Fun(G,H) \to \Ana^{T}(G,H)\text{.}
\end{equation*}
Obviously, it sends $T$-weak equivalences to invertible $T$-anafunctors. Roberts \cite[Prop. 5.17]{Roberts2012} and Meyer-Zhu prove \cite[Thm. 3.23]{Meyer2014} that the corresponding 2-functor
\begin{equation*}
\mathscr{C}\text{-}\Cat^{bi} \to (\mathscr{C},T)\text{-}\Cat^{ana}
\end{equation*} 
is a localization of $\mathscr{C}\text{-}\Cat^{bi}$ at the class of $T$-weak equivalences.
\end{example}

\begin{example}
\label{anafunctors-and-bibundles}
If $P$ is a $T$-locally trivial  $G$-$H$-bibundle, then there is an associated $T$-anafunctor $F_P: G \to H$ with $F_P=(\alpha_l,\tilde P)$, where $\alpha_l: P \to G_0$ is the left anchor of $P$ and $\tilde P:G^{\alpha_l}\to H$ is the functor  defined on objects by the right anchor $\alpha_r: P \to H_0$ and on morphisms by the composite 
\begin{equation*}
\alxydim{}{P \ttimes{\alpha_l}{t} G \ttimes{s}{\alpha_l} P \ar[r]^-{\id \times \rho_{l}} & P \ttimes{\alpha_l}{\alpha_l}P \ar[r]^-{\cong} & P \ttimes{\alpha_l}{t} H \ar[r]^-{\pr_2} & H }
\end{equation*}
This defines a functor
\begin{equation*}
\Ana: \Bibun GH^{T}\to \Ana^{T}(G,H)
\end{equation*}
that is in fact an equivalence of categories \cite[Thm. 7.15]{Meyer2014}. 
The diagram
\begin{equation*}
\alxydim{@C=0em}{& \Fun(G,H) \ar[dr]^{\text{\cref{anafunctors-from-functors}}}\ar[dl]_{\text{\cref{bibundles-from-functors}}} \\ \Bibun GH^{T} \ar[rr]_{\Ana} && \Ana^{T}(G,H)}
\end{equation*}
commutes up to a canonical invertible transformation \cite[Lemma 6.11]{Meyer2014}. Moreover, under the assumptions that allow to form the bicategory $(\mathscr{C},T)$-$\Grpd^{bi}$ (see \cref{bicat-bibundles})  $\Ana$ induces an equivalence of bicategories
 \begin{equation*}
(\mathscr{C},T)\text{-}\Grpd^{bi} \cong (\mathscr{C},T)\text{-}\Grpd^{ana}\text{,} 
\end{equation*}
see \cite[Thm. 7.15]{Meyer2014}.
In particular, it sends invertible bibundles to invertible anafunctors.
\end{example}

\section{Sheaves}

\label{sheaves}

We discuss sheaves w.r.t. Grothendieck topologies (\cref{grothendieck-topology}), and focus on discussing their compatibility with equivalences between Grothendieck topologies (\cref{equivalence-between-grothendieck-topologies}), and functors between sites (\cref{continuous-functors,cocontinuous-functors}). 

\subsection{Descent}

\begin{definition}
A \emph{presheaf} on a category $\mathscr{C}$ is a functor
\begin{equation*}
\mathcal{F}: \mathscr{C}^{op} \to \Set\text{,}
\end{equation*}
where $\Set$ is the category of sets. We denote by $\PSh(\mathscr{C}):=\Fun(\mathscr{C}^{op},\Set)$ the category of presheaves on $\mathscr{C}$.
\end{definition}

\begin{remark}
If $X$ is a topological space, let $\mathscr{C}:= \Open_X$ be the category whose objects are the open sets of $X$, and whose morphisms are all the inclusions $U \incl V$ of open sets. A presheaf on $\Open_X$ is  what is usually defined as a \quot{presheaf on $X$}.   
\end{remark}

If $\mathcal{F}\in \PSh(\mathscr{C})$ and $f:X \to Y$ is a morphism in $\mathscr{C}$, we usually write $f^{*} := \mathcal{F}(f)$ for the map $\mathcal{F}(f): \mathcal{F}(Y) \to \mathcal{F}(X)$ and, we call this the map \emph{induced} from $f$ by the presheaf $\mathcal{F}$.

\begin{definition}
\label{extensive-sheaf}
A presheaf $\mathcal{F}$ is called \emph{extensive}, if it maps coproducts in $\mathscr{C}$ to products in $\Set$.
\end{definition}

More precisely, this means that for all coproducts $X=\displaystyle \coprod_{i\in I}X_i$ in $\mathscr{C}$, the map
\begin{equation}
\label{sheaf-extensive}
\mathcal{F}(X) \to \prod_{i\in I} \mathcal{F}(X_i)
\end{equation}
induced by the coproduct injections $X_i \to X$ and the universal property of the product, is a bijection. 
In particular, if $\mathscr{C}$ has an initial object $\emptyset$, then an extensive presheaf satisfies $\mathcal{F}(\emptyset)=\{*\}$.

\begin{definition}
\label{sheaf}
Let $\mathscr{C}$ be a category and  $\mathcal{F}$ be a presheaf on $\mathscr{C}$. If $\pi:Y \to X$ is a morphism whose double fibre product $Y \times_X Y$ exists, then we say that $\mathcal{F}$ \emph{satisfies descend w.r.t. $\pi$} if the diagram
\begin{equation*}
\alxydim{}{\mathcal{F}(X) \ar[r]^{\pi^{*}} & \mathcal{F}(Y) \ar@{}@<0.2em>[r]^-{\pr_1^{*}} \arr[r]_-{\pr_2^{*}} & \mathcal{F}(Y \times_X Y)}
\end{equation*}
is an equalizer. If $T$ is a  Grothendieck topology on $\mathscr{C}$, we say that $\mathcal{F}$ is a \emph{$T$-sheaf}, if it is extensive and satisfies descend w.r.t. all universal $T$-locally split morphisms. We denote by $\Sh(\mathscr{C},T)$ the full category of $\PSh(C)$ over the $T$-sheaves.
\end{definition}

\noindent
This sightly non-standard definition of a sheaf has two main advantages over the traditional definition of a sheaf, recalled below in \cref{traditional-sheaf}:  
\begin{enumerate}

\item 
It is manifestly invariant under equivalences of Grothendieck topologies; indeed,  $T_1 \prec T_2$ implies via \cref{universal-completion:c} that every $T_2$-sheaf is a $T_1$-sheaf.  

\item
It is suitable for singleton \emph{and} non-singleton Grothendieck topologies, whereas the traditional sheaf definition is problematic for singleton Grothendieck topologies (see \cref{problem-with-traditional}).

\item
For most \emph{non-singleton} Grothendieck topologies (precisely: superextensive and singletonizable) our definition above is equivalent to the traditional one (\cref{comparison-sheaf}).

\end{enumerate}

\medskip

Next we will provide some methods how to check whether a presheaf is a sheaf. First we note that, since the  diagram in \cref{sheaf} is a diagram in the category $\Set$, its equalizer has the following explicit model, the set \begin{equation*}
\des_{\mathcal{F}}(\pi) := \{ a\in \mathcal{F}(Y) \sep \pr_2^{*}a=\pr_1^{*}a \} \subset \mathcal{F}(Y)
\end{equation*}
of \quot{descent data}.
The map $\pi^{*}: \mathcal{F}(X) \to \mathcal{F}(Y)$ lands in this subset, and the condition that it is an equalizer is thus the following.

\begin{lemma}
Let $\pi:Y \to X$ be a  morphism whose double fibre product $Y \times_X Y$ exists. A presheaf $\mathcal{F}$ satisfies descend w.r.t. $\pi$ if and only if the map 
\begin{equation*}
\pi^{*}: \mathcal{F}(X) \to \des_{\mathcal{F}}(\pi)
\end{equation*}
is a bijection.
\end{lemma}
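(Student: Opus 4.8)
The plan is to reduce the statement to the elementary fact that in $\Set$ an equalizer of a parallel pair of maps is computed as the subset on which the two maps agree, combined with the uniqueness of equalizers up to canonical isomorphism. First I would check that $\pi^{*}$ genuinely lands in the subset $\des_{\mathcal{F}}(\pi) \subset \mathcal{F}(Y)$: because $Y \times_X Y$ is a fibre product we have $\pi \circ \pr_1 = \pi \circ \pr_2$, and applying the contravariant functor $\mathcal{F}$ gives $\pr_1^{*} \circ \pi^{*} = \pr_2^{*} \circ \pi^{*}$, so every element of the image of $\pi^{*}$ satisfies the defining condition of $\des_{\mathcal{F}}(\pi)$. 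Writing $\iota: \des_{\mathcal{F}}(\pi) \hookrightarrow \mathcal{F}(Y)$ for the inclusion and $p: \mathcal{F}(X) \to \des_{\mathcal{F}}(\pi)$ for the corestriction of $\pi^{*}$, so that $\pi^{*} = \iota \circ p$, the assertion of the lemma is precisely that the diagram in \cref{sheaf} is an equalizer if and only if $p$ is a bijection. The one background fact I would invoke is the standard observation that $\iota$ itself exhibits $\des_{\mathcal{F}}(\pi)$ as the equalizer of $\pr_1^{*}$ and $\pr_2^{*}$ in $\Set$.

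For the direction assuming $p$ is a bijection, I would verify the universal property directly. Given any set $S$ and a map $h: S \to \mathcal{F}(Y)$ with $\pr_1^{*} \circ h = \pr_2^{*} \circ h$, the universal property of the equalizer $\iota$ produces a unique $h': S \to \des_{\mathcal{F}}(\pi)$ with $\iota \circ h' = h$; then $\tilde h := p^{-1} \circ h'$ satisfies $\pi^{*} \circ \tilde h = \iota \circ p \circ p^{-1} \circ h' = \iota \circ h' = h$, and uniqueness of $\tilde h$ follows from injectivity of $\iota$ together with bijectivity of $p$. Hence $(\mathcal{F}(X),\pi^{*})$ is an equalizer. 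For the converse, if the diagram is already an equalizer, then both $(\mathcal{F}(X),\pi^{*})$ and $(\des_{\mathcal{F}}(\pi),\iota)$ are equalizers of the same parallel pair, so the canonical comparison morphism between them is an isomorphism in $\Set$; since that comparison morphism is exactly $p$, this means $p$ is a bijection.

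I do not expect any genuine obstacle here, as the argument is a formal unwinding of the universal property. The only point requiring a moment's care is bookkeeping: one must keep the corestricted map $p$ notationally distinct from the original $\pi^{*}: \mathcal{F}(X) \to \mathcal{F}(Y)$ and explicitly record the factorization $\pi^{*} = \iota \circ p$, since it is precisely this identification that lets the abstract uniqueness of equalizers be read off as the bijectivity of the concrete comparison map into the descent-data set.
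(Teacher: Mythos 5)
Your proposal is correct and follows essentially the same route as the paper's own argument: one direction verifies the universal property directly by composing with the inverse of the corestricted map, and the other uses that the inclusion $\des_{\mathcal{F}}(\pi)\subset \mathcal{F}(Y)$ is itself an equalizer together with uniqueness of equalizers, identifying the canonical comparison map with $\pi^{*}$ corestricted to $\des_{\mathcal{F}}(\pi)$. Your explicit bookkeeping of the factorization $\pi^{*}=\iota\circ p$ is a slightly more careful writeup of the same idea.
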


\begin{comment}
Assume $\pi^{*}: \mathcal{F}(X) \to \des_{\mathcal{F}}(\pi)$ is a bijection, and assume that
\begin{equation}
\label{eq:sheafconditiondiagram}
\alxydim{}{C \ar[r]^-{\chi} & \mathcal{F}(Y) \arr[r] & \mathcal{F}(Y \times_X Y)}
\end{equation}
is a commutative diagram. That is, for every $x\in C$ we have $\pr_1^{*}\chi(x)=\pr_2^{*}\chi(x)$. Thus, $\chi(x)\in \des_{\mathcal{F}}(\pi)$. Consider the map $u:C \to \mathcal{F}(X)$ defined by $x\mapsto (\pi^{*})^{-1}(\chi(x))$. Obviously, $\pi^{*}\circ u=\chi$. Since $\pi^{*}$ is a bijection, this equality determines $u$ uniquely. This shows that $\pi^{*}:\mathcal{F}(X) \to \mathcal{F}(Y)$  is an equalizer. 

Conversely, we show that the inclusion $\des_{\mathcal{F}}(\pi)\subset \mathcal{F}(Y)$ is an equalizer. If then $\pi^{*}:\mathcal{F}(X) \to \mathcal{F}(Y)$ is also an equalizer, it follows from the uniqueness of equalizers, that $\pi^{*}: \mathcal{F}(X) \to \des_{\mathcal{F}}(\pi)$ is a bijection. If we now again have a commutative diagram \cref{eq:sheafconditiondiagram}, we see that $\chi$ co-restricts to a map  $C \to \des_{\mathcal{F}}(\pi)$. This shows that $\des_{\mathcal{F}}(\pi)$ is an equalizer. 
\end{comment}

Next we look at a  \emph{singleton} Grothendieck topology, and where it suffices to check descent for  $T$-coverings instead as for all  universal $T$-locally split morphisms. 

\begin{lemma}
If $T$ is a singleton Grothendieck topology, then $\mathcal{F}$ satisfies descent w.r.t.  all universal $T$-locally split maps if and only if it satisfies descent w.r.t. all $T$-coverings. 
\end{lemma}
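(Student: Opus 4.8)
The plan is to establish the two implications separately; the forward direction is immediate and the converse is a refinement argument. Since $T$ is singleton, every $T$-covering $\pi'$ is a singleton covering, hence universal (all members of coverings are universal) and, by \cref{locally-split-morphisms:a}, $T$-locally split. Thus every $T$-covering is itself a universal $T$-locally split morphism, and descent with respect to the larger class trivially entails descent with respect to all $T$-coverings. All the work is in the converse.

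So assume $\mathcal{F}$ satisfies descent with respect to every $T$-covering, and let $\pi:Y\to X$ be universal $T$-locally split; note $Y\times_X Y$ exists because $\pi$ is universal. As $T$ is singleton, the locality datum consists of a single $T$-covering $\pi':Y'\to X$ together with a section $\rho:Y'\to Y$ satisfying $\pi\circ\rho=\pi'$, so that $\pi'^{*}=\rho^{*}\circ\pi^{*}$. By hypothesis $\pi'^{*}:\mathcal{F}(X)\to\des_{\mathcal{F}}(\pi')$ is a bijection, hence injective, and the factorization immediately forces $\pi^{*}$ to be injective as well. It remains to prove surjectivity of $\pi^{*}:\mathcal{F}(X)\to\des_{\mathcal{F}}(\pi)$.

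Given $a\in\des_{\mathcal{F}}(\pi)$, I would first check $\rho^{*}a\in\des_{\mathcal{F}}(\pi')$ by pulling the identity $\pr_1^{*}a=\pr_2^{*}a$ back along the map $Y'\times_X Y'\to Y\times_X Y$ induced by $\rho$. Setting $b:=(\pi'^{*})^{-1}(\rho^{*}a)$, both $a$ and $\pi^{*}b$ lie in $\des_{\mathcal{F}}(\pi)$ (since $\pi^{*}$ always lands in the descent set) and satisfy $\rho^{*}(\pi^{*}b)=\pi'^{*}b=\rho^{*}a$. So it suffices to show that $\rho^{*}$ is injective on $\des_{\mathcal{F}}(\pi)$, which will yield $\pi^{*}b=a$.

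This injectivity is the crux. The key device is the pullback covering $q:=\pi^{*}\pi':Y\times_X Y'\to Y$, which is a $T$-covering by the pullback axiom of \cref{grothendieck-topology}; hence $\mathcal{F}$ satisfies descent along $q$ and $q^{*}$ is in particular injective. I would introduce the comparison morphism $\mu:=(\pr_1,\rho\circ\pr_2):Y\times_X Y'\to Y\times_X Y$, well defined because $\pi\circ\pr_1=\pi'\circ\pr_2=\pi\circ\rho\circ\pr_2$. Pulling $\pr_1^{*}a=\pr_2^{*}a$ back along $\mu$, and using that the first projection of $Y\times_X Y$ composed with $\mu$ equals $q$ while the second equals $\rho\circ\pr_2$, gives $q^{*}a=\pr_2^{*}(\rho^{*}a)$; the same computation for any $a'\in\des_{\mathcal{F}}(\pi)$ gives $q^{*}a'=\pr_2^{*}(\rho^{*}a')$. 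Hence $\rho^{*}a=\rho^{*}a'$ implies $q^{*}a=q^{*}a'$, and injectivity of $q^{*}$ delivers $a=a'$. Taking $a'=\pi^{*}b$ completes surjectivity and the proof. I expect the main obstacle to be setting up $\mu$ and tracking the several projections so that the two descent data are compared through the single covering $q$; once $\mu$ is in place the verification is purely formal.
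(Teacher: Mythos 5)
Your proof is correct. The paper leaves this lemma as an exercise and points to \cref{comparison-sheaf}, whose substance is \cref{descent-for-non-singleton}; specialized to a singleton covering, that argument is exactly yours -- injectivity from the factorization $\pi'^{*}=\rho^{*}\circ\pi^{*}$, and surjectivity by transferring the descent datum to $\pi'$, then using the pullback covering $q=\pi^{*}\pi'$ together with the comparison map $(\pr_1,\rho\circ\pr_2):Y\times_X Y'\to Y\times_X Y$ to conclude $\pi^{*}b=a$ from the injectivity of $q^{*}$.
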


\begin{proof}
This is left as an elementary exercise. (A proof can be obtained as a corollary from  \cref{comparison-sheaf} below, and its equivalence \cref{condition:sheaf*} $\Leftrightarrow$ \cref{condition:extensive-traditional-sheaf*}; using that every singleton Grothendieck topology is singletonizable).  
\end{proof}

For non-singleton Grothendieck topologies, we may use the traditional sheaf condition in order to check descent. 

\begin{remark}
\label{traditional-sheaf}
The traditional condition imposed for a presheaf $\mathcal{F}$ in order to be a sheaf w.r.t. to a Grothendieck topology $T$  is that for each  $T$-covering $(\pi_i:U_i \to X)_{i\in I}$ the diagram 
\begin{equation*}
\alxydim{}{\mathcal{F}(X) \ar[r]^-{\prod \pi_i^{*}} & \displaystyle\prod_{i\in I}\mathcal{F}(U_i) \arr[r] & \displaystyle\prod_{i,j\in I}\mathcal{F}(U_i \times_X U_j)}
\end{equation*}
is an equalizer. We refer to presheaves satisfying this condition for all $T$-coverings  as \emph{traditional $T$-sheaves}. 
\end{remark}

\begin{lemma}
\label{descent-for-non-singleton}
Let $T$ be any Grothendieck topology. If $\mathcal{F}$ is a traditional $T$-sheaf, then $\mathcal{F}$ satisfies descent w.r.t. all universal T-locally split maps. 
\end{lemma}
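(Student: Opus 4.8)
The plan is to invoke the preceding lemma, which reduces the claim to proving that for every universal $T$-locally split morphism $\pi:Y \to X$ the map $\pi^{*}:\mathcal{F}(X) \to \des_{\mathcal{F}}(\pi)$ is a bijection. By the definition of $T$-locally split I would fix a $T$-covering $(\pi_i:U_i \to X)_{i\in I}$ with local sections $\rho_i:U_i \to Y$ satisfying $\pi \circ \rho_i = \pi_i$. Since the $\pi_i$ are universal, all fibre products $U_{ij}:=U_i \times_X U_j$ exist; since $\pi$ is universal, $Y \times_X Y$ exists; and by the pullback axiom the family $(\alpha_i:Y \times_X U_i \to Y)_{i\in I}$ obtained by pulling $(\pi_i)$ back along $\pi$ is a $T$-covering of $Y$. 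The identity $\pi_i^{*}=\rho_i^{*}\circ \pi^{*}$ will be used repeatedly.

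For injectivity I would suppose $\pi^{*}a=\pi^{*}b$ and apply $\rho_i^{*}$ to obtain $\pi_i^{*}a=\pi_i^{*}b$ for all $i$; since $\mathcal{F}$ is a traditional $T$-sheaf, the first leg $\mathcal{F}(X) \to \prod_i \mathcal{F}(U_i)$ of its equalizer diagram for $(\pi_i)$ is a monomorphism, hence injective in $\Set$, so $a=b$.

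For surjectivity I would begin with a descent datum $c\in \des_{\mathcal{F}}(\pi)$, i.e.\ $\pr_1^{*}c=\pr_2^{*}c$ in $\mathcal{F}(Y \times_X Y)$, and set $c_i:=\rho_i^{*}c$. The first real step is to show that $(c_i)$ is a descent datum for $(\pi_i)$, i.e.\ $p_i^{*}c_i=p_j^{*}c_j$ on $U_{ij}$ for the two projections $p_i,p_j$. The mechanism is that $\pi \circ \rho_i \circ p_i = \pi_i \circ p_i = \pi_j \circ p_j = \pi \circ \rho_j \circ p_j$, so $\rho_i \circ p_i$ and $\rho_j \circ p_j$ induce a single morphism $q:U_{ij} \to Y \times_X Y$; pulling the equation $\pr_1^{*}c=\pr_2^{*}c$ back along $q$ yields precisely $p_i^{*}c_i=p_j^{*}c_j$. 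The traditional sheaf condition then produces a unique $a\in \mathcal{F}(X)$ with $\pi_i^{*}a=c_i$ for all $i$.

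I expect the final verification that $\pi^{*}a=c$ to be the main obstacle, because the naive idea of testing this equality on the sections $\rho_i$ fails: the $\rho_i$ need not cover $Y$. Instead I would test it on the genuine $T$-covering $(\alpha_i:Y \times_X U_i \to Y)_{i\in I}$, exploiting the injectivity of $\mathcal{F}(Y)\to \prod_i \mathcal{F}(Y \times_X U_i)$ coming from the sheaf condition on $Y$. Writing $\beta_i$ for the projection $Y \times_X U_i \to U_i$, the square defining $Y\times_X U_i$ gives $\alpha_i^{*}\pi^{*}a=\beta_i^{*}\pi_i^{*}a=\beta_i^{*}c_i=(\rho_i \circ \beta_i)^{*}c$, while from $\pi \circ (\rho_i \circ \beta_i)=\pi_i \circ \beta_i=\pi \circ \alpha_i$ the pair $(\rho_i \circ \beta_i,\alpha_i)$ induces a morphism into $Y \times_X Y$ along which $\pr_1^{*}c=\pr_2^{*}c$ forces $(\rho_i \circ \beta_i)^{*}c=\alpha_i^{*}c$. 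Thus $\alpha_i^{*}\pi^{*}a=\alpha_i^{*}c$ for all $i$ and hence $\pi^{*}a=c$. The two genuinely delicate points are passing to the pulled-back covering of $Y$ rather than working with the sections, and invoking the cocycle equation $\pr_1^{*}c=\pr_2^{*}c$ through comparison maps into $Y \times_X Y$ in both halves of the surjectivity argument.
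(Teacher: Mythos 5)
Your proposal is correct and follows essentially the same route as the paper: injectivity via the factorization $\pi_i^{*}=\rho_i^{*}\circ\pi^{*}$, surjectivity by restricting the descent datum along the sections to get a compatible family for $(\pi_i)$, and the final check $\pi^{*}a=c$ carried out on the pulled-back covering $(Y\times_X U_i\to Y)$ using the comparison morphism into $Y\times_X Y$. The two points you flag as delicate are exactly the ones the paper's proof handles the same way.
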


\begin{proof}
Let $\pi:Y \to X$ be a universal $T$-locally split morphism, let $(\pi_i: U_i \to X)_{i\in I}$ be a $T$-covering, and let $\rho_i: U_i \to Y$ be local sections. We obtain a commutative diagram
\begin{equation*}
\alxydim{@R=0.7em}{& \mathcal{F}(Y) \ar[dd]^{\prod \rho_i^{*}} \arr[r] & \mathcal{F}(Y \times_X Y) \ar[dd]^{\prod \rho_i^{*} \times \rho_j^{*}} \\ \mathcal{F}(X) \ar[ur]^{\pi^{*}} \ar[dr]_-{\prod \pi_i^{*}} \\ & \displaystyle\prod_{i\in I} \mathcal{F}(U_i) \arr[r] & \displaystyle \prod_{i,j\in I} \mathcal{F}(U_i \times_X U_j)\text{,}}
\end{equation*}
in which the bottom line is an equalizer by assumption. The factorization
\begin{equation*}
\prod \pi_i^{*} = \prod \rho_i^{*} \circ \pi^{*}
\end{equation*}
shows that $\pi^{*}$ is injective. For surjectivity, we assume that $a\in \mathcal{F}(Y)$ satisfies $\pr_1^{*}a=\pr_2^{*}a$. We have for $i,j\in I$ the following equalities for elements of $\mathcal{F}(U_i \times_X U_j)$:
\begin{equation*}
\pr_1^{*}a_i=\pr_1^{*}\rho_i^{*}a=\rho_i^{*}\pr_1^{*}a=(\rho_i \times\rho_j)^{*}\pr_1^{*}a=(\rho_i \times\rho_j)^{*}\pr_2^{*}a=\rho_j^{*}\pr_2^{*}a=\pr_2^{*}\rho_j^{*}a=\pr_2^{*}a_j
\end{equation*}
This shows that there exists $b\in \mathcal{F}(X)$ such that $\pi_i^{*}b=a_i$. We claim that $\pi^{*}b=a$; this shows that $\mathcal{F}$ satisfies descent w.r.t. to universal $T$-locally split morphisms.
In order to prove that claim, we consider the pullback diagrams
\begin{equation*}
\alxydim{}{Y \times_X U_i \ar[r]^-{\rho_i'} \ar[d]_{\pi'_i} & U_i \ar[d]^{\pi_i} \\ Y \ar[r]_{\pi} & X}
\end{equation*}
in which $(\pi'_i: Y \times_X U_i \to Y)_{i\in I}$ is a $T$-covering. We have, for each $i\in I$,  a commutative diagram
\begin{equation*}
\alxydim{@C=3em}{Y \ar@{=}[d] & Y \times_X U_i \ar[d]^{\id \times \rho_i} \ar[l]_-{\pi_i'^{*}} \ar[r]^-{\rho_i'} & U_i \ar[d]^{\rho_i} \\ Y & Y \times_X Y \ar[l]^-{\pr_1} \ar[r]_-{\pr_2} & Y.}
\end{equation*}
Now we calculate
\begin{equation*}
\pi_i^{\prime*}\pi^{*}b =\rho_i^{\prime*}\pi_i^{*}b=\rho_i^{\prime*}a_i=\rho_i^{\prime *}\rho_i^{*}a=(\id \times \rho_i)^{*}\pr_2^{*}a=(\id \times \rho_i)^{*}\pr_1^{*}a=\pi_i^{\prime*}a\text{.}
\end{equation*}
Since $\mathcal{F}$ is a traditional $T$-sheaf and $\pi_i'$ is a $T$-covering, this implies that $\pi^{*}b=a$.
\end{proof}

The main difference between our definition of a sheaf and the traditional one is that we demand explicitly that sheaves are extensive. The traditional sheaf condition includes this only when the Grothendieck topology is superextensive. We briefly recall this well-known statement.

\begin{lemma}
\label{traditional-sheaf-extensive}
Consider the extensive Grothendieck topology $T^{ext}_{\mathscr{C}}$ on an extensive category $\mathscr{C}$; see \cref{extensive-topology}. A presheaf $\mathcal{F}$ is extensive if and only if it is a traditional $T^{ext}_{\mathscr{C}}$-sheaf. In particular, every traditional sheaf on a superextensive site is extensive. 
\end{lemma}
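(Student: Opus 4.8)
The plan is to reduce the traditional $T^{ext}_{\mathscr{C}}$-sheaf condition, covering by covering, to the extensivity condition \cref{sheaf-extensive}. Since the coverings of $T^{ext}_{\mathscr{C}}$ are exactly the families of coproduct injections $(\iota_i : U_i \to X)_{i\in I}$ with $X = \coprod_{i\in I} U_i$ (see \cref{extensive-topology}), the first step is to compute the double fibre products $U_i \times_X U_j$ that occur in the sheaf diagram. Using disjointness and stability of coproducts (\cref{extensive-category}) one obtains the standard identifications $U_i \times_X U_i \cong U_i$, with both projections corresponding to $\id_{U_i}$ (so that the coproduct injections are monic), and $U_i \times_X U_j \cong \emptyset$ for $i \neq j$. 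I would record these as the one genuinely category-theoretic input.

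Granting this, I would next pin down $\mathcal{F}(\emptyset)$. For a traditional $T^{ext}_{\mathscr{C}}$-sheaf this is forced by the empty family, which is a $T^{ext}_{\mathscr{C}}$-covering of $\emptyset$ (\cref{extensive-topology}): its equalizer diagram reads $\mathcal{F}(\emptyset) \to \{*\} \rightrightarrows \{*\}$, whence $\mathcal{F}(\emptyset) = \{*\}$; for an extensive presheaf the same equality holds automatically because $\emptyset$ is the empty coproduct. With $\mathcal{F}(\emptyset) = \{*\}$ the product $\prod_{i,j}\mathcal{F}(U_i \times_X U_j)$ collapses, via the identifications above, to $\prod_{i\in I}\mathcal{F}(U_i)$, and the two parallel arrows
\begin{equation*}
\mathcal{F}(X) \xrightarrow{\;\prod \iota_i^{*}\;} \prod_{i\in I}\mathcal{F}(U_i) \rightrightarrows \prod_{i,j\in I}\mathcal{F}(U_i \times_X U_j)
\end{equation*}
coincide: on the diagonal factors both send $(a_i)_i$ to $a_i$ because both projections $U_i \times_X U_i \to U_i$ are identities, and on the off-diagonal factors they agree trivially since the target $\mathcal{F}(\emptyset)$ is a singleton. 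Hence the equalizer is all of $\prod_i \mathcal{F}(U_i)$, and the sheaf condition for this covering is equivalent to the single assertion that $\prod \iota_i^{*}: \mathcal{F}(X) \to \prod_{i} \mathcal{F}(U_i)$ is a bijection, which is exactly the extensivity condition \cref{sheaf-extensive} for the coproduct $X = \coprod_i U_i$.

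This equivalence, holding for every covering, proves both implications at once: an extensive $\mathcal{F}$ satisfies $\mathcal{F}(\emptyset)=\{*\}$ and makes every $\prod\iota_i^{*}$ a bijection, hence is a traditional $T^{ext}_{\mathscr{C}}$-sheaf; conversely a traditional $T^{ext}_{\mathscr{C}}$-sheaf satisfies $\mathcal{F}(\emptyset) = \{*\}$ (empty covering) and makes every $\prod\iota_i^{*}$ a bijection, hence is extensive. Finally, for the ``in particular'' clause: if the site is superextensive then $T^{ext}_{\mathscr{C}} \subset T$ (\cref{extensive-topology}), so a traditional $T$-sheaf satisfies the sheaf condition in particular for all $T^{ext}_{\mathscr{C}}$-coverings, i.e.\ is a traditional $T^{ext}_{\mathscr{C}}$-sheaf, and therefore extensive by the main equivalence.

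The main obstacle is the fibre-product computation in the first step. The disjointness axiom in \cref{extensive-category} is stated only for binary coproducts, whereas the extensive topology admits covering families with arbitrary (possibly infinite) index sets; I would therefore need to promote both disjointness and the monomorphism property of the injections to arbitrary families using the stability axiom. This is routine but is the one place where care is genuinely required. Everything after that is a mechanical comparison of two diagrams in $\Set$.
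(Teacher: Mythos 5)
Your proposal is correct and follows essentially the same route as the paper's proof: use the empty covering to force $\mathcal{F}(\emptyset)=\{*\}$, identify $U_i\times_X U_j\cong\emptyset$ for $i\neq j$ and $U_i\times_X U_i\cong U_i$ via the diagonal, collapse the double product to $\prod_i\mathcal{F}(U_i)$, and observe that the sheaf condition then reduces exactly to \cref{sheaf-extensive}. The one point you flag as needing care -- promoting binary disjointness to arbitrary index sets -- is also passed over silently in the paper's own argument, so your treatment is if anything slightly more careful.
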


\begin{proof}
In $T_{\mathscr{\mathscr{C}}}^{ext}$, the empty family is a covering of the initial object, and the traditional sheaf condition implies $\mathcal{F}(\emptyset)=*$. 
If
 $(X_i)_{i\in I}$ is a family of objects $X_i\in \mathscr{C}$ whose coproduct $X$ exists, then  $(\iota_i:X_i \to X)_{i\in I}$ is a $T^{ext}_{\mathscr{C}}$-covering family. Since $\mathcal{F}(\emptyset)=*$ and coproducts are disjoint, we have
\begin{equation*}
\prod_{i,j\in I}\mathcal{F}(X_i \times_{\coprod X_i} X_j) = \prod_{i\in I}\mathcal{F}(X_i \times_{\coprod X_i} X_i)\cong \prod_{i\in I} \mathcal{F}(X_i)\text{,}
\end{equation*}
where the latter bijection is induced by the diagonal maps $\Delta_i: X_i \to X_i \times_{\coprod X_i} X_i$, which are isomorphisms. Thus, the traditional sheaf condition is equivalent to the assertion that
\begin{equation}
\label{sfsdfsdfsd}
\mathcal{F}(X) \to \prod_{i\in I} \mathcal{F}(X_i)
\end{equation}
is a bijection; i.e., that $\mathcal{F}$ is extensive.  
\end{proof}

The following result summarizes the lemmas above.

\begin{proposition}
\label{comparison-sheaf}
Let $T$ be a Grothendieck topology on a category $\mathscr{C}$.
Consider the following three assertions:
\begin{enumerate}[(i)]

\item 
\label{condition:sheaf}
$\mathcal{F}$ is a $T$-sheaf.

\item
\label{condition:extensive-traditional-sheaf}
$\mathcal{F}$ is extensive and a traditional $T$-sheaf.  

\item
\label{condition:traditional-sheaf}
$\mathcal{F}$ is a traditional $T$-sheaf.

\end{enumerate}
Then, the implications \cref{condition:extensive-traditional-sheaf*} $\Rightarrow$ \cref{condition:sheaf*} and \cref{condition:extensive-traditional-sheaf*} $\Rightarrow$ \cref{condition:traditional-sheaf*} hold. If $\mathscr{C}$ is extensive and $T$ is superextensive, then  \cref{condition:extensive-traditional-sheaf*} $\Leftrightarrow$ \cref{condition:traditional-sheaf*}. If $T$ is singletonizable, then  \cref{condition:sheaf*} $\Leftrightarrow\cref{condition:extensive-traditional-sheaf*}$. In particular, if  $\mathscr{C}$ is extensive and $T$ is superextensive and singletonizable, all three assertions are equivalent. 
\end{proposition}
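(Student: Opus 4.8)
The plan is to prove the four claims in turn, reducing each to one of the three lemmas already established, with the only genuine work concentrated in the singletonizable case. First I would dispose of the two unconditional implications. The implication (ii)$\Rightarrow$(iii) is immediate, since being a traditional $T$-sheaf is literally one of the two conditions comprising (ii). For (ii)$\Rightarrow$(i), recall that a $T$-sheaf (\cref{sheaf}) is by definition an extensive presheaf satisfying descent with respect to all universal $T$-locally split morphisms: assuming (ii), extensivity is given, and \cref{descent-for-non-singleton} converts the traditional sheaf property into descent along every universal $T$-locally split morphism, so (i) holds.

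Next, assuming $\mathscr{C}$ extensive and $T$ superextensive, I must upgrade (iii) to (ii), i.e.\ show that a traditional $T$-sheaf is automatically extensive. Since $T$ superextensive means $T^{ext}_{\mathscr{C}}\subset T$ (\cref{extensive-topology}), every $T^{ext}_{\mathscr{C}}$-covering is a $T$-covering, so a traditional $T$-sheaf is in particular a traditional $T^{ext}_{\mathscr{C}}$-sheaf; by \cref{traditional-sheaf-extensive} it is then extensive. This gives (iii)$\Rightarrow$(ii), and combined with (ii)$\Rightarrow$(iii) yields the equivalence.

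The core of the argument is (i)$\Rightarrow$(ii) under singletonizability (the converse (ii)$\Rightarrow$(i) being already done). Let $\mathcal{F}$ be a $T$-sheaf and fix a $T$-covering $(\phi_i\colon U_i\to X)_{i\in I}$; since $T$ is singletonizable the coproduct $Y\df\coprod_{i\in I}U_i$ exists, and I would pass to the induced morphism $\phi\colon Y\to X$ with $\phi\circ\iota_i=\phi_i$ for the coproduct injections $\iota_i$. These injections are local sections of $\phi$ over $(\phi_i)$, so $\phi$ is $T$-locally split; and pulling $\phi$ back along an arbitrary morphism decomposes, by stability of coproducts under pullback together with universality of each $\phi_i$, into the coproduct of the pullbacks of the $\phi_i$, so $\phi$ is universal. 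Hence $\mathcal{F}$ satisfies descent with respect to $\phi$. The decisive identification is $Y\times_X Y\cong\coprod_{i,j}U_i\times_X U_j$; applying extensivity of $\mathcal{F}$ (\cref{extensive-sheaf}) then produces bijections $\mathcal{F}(Y)\cong\prod_i\mathcal{F}(U_i)$ and $\mathcal{F}(Y\times_X Y)\cong\prod_{i,j}\mathcal{F}(U_i\times_X U_j)$, under which the descent diagram for $\phi$ becomes exactly the traditional sheaf diagram for $(\phi_i)$. Thus the latter is an equalizer and $\mathcal{F}$ is a traditional $T$-sheaf.

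I expect the main obstacle to be precisely this last identification of $Y\times_X Y$ with $\coprod_{i,j}U_i\times_X U_j$ and the matching of the two parallel maps $\pr_1^{*},\pr_2^{*}$ with their componentwise analogues: this is where the good behaviour of coproducts under pullback is genuinely needed, while the compatibility of the projections with the coproduct injections is a routine diagram chase. Finally, for the last assertion, under all three hypotheses I combine (i)$\Leftrightarrow$(ii) (from singletonizability) with (ii)$\Leftrightarrow$(iii) (from extensivity and superextensiveness) to conclude that (i), (ii) and (iii) are mutually equivalent.
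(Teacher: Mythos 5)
Your proposal is correct and follows essentially the same route as the paper: (ii)$\Rightarrow$(iii) trivially, (ii)$\Rightarrow$(i) via \cref{descent-for-non-singleton}, (iii)$\Rightarrow$(ii) via \cref{traditional-sheaf-extensive}, and (i)$\Rightarrow$(ii) by passing to the singletonized morphism $\coprod U_i \to X$ and using extensivity of $\mathcal{F}$ to identify the descent diagram with the traditional one. Your explicit justification of why $\coprod U_i \to X$ is universal $T$-locally split and of the identification $Y\times_X Y\cong\coprod_{i,j}U_i\times_X U_j$ only spells out details the paper leaves implicit.
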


\begin{proof}
The implication \cref{condition:extensive-traditional-sheaf*} $\Rightarrow$ \cref{condition:traditional-sheaf*} is trivial. The implication \cref{condition:extensive-traditional-sheaf*} $\Rightarrow$ \cref{condition:sheaf*} follows from \cref{descent-for-non-singleton}. 
  The implication \cref{condition:traditional-sheaf*} $\Rightarrow$ \cref{condition:extensive-traditional-sheaf*} is \cref{traditional-sheaf-extensive}. Finally, in order to see the implication \cref{condition:sheaf*} $\Rightarrow$ \cref{condition:extensive-traditional-sheaf*} let $(\pi_i: U_i \to X)$ be a $T$-covering. Since $T$ is singletonizable, we may consider $\pi: \coprod U_i \to X$, which is universal  $T$-locally split. Hence, $\mathcal{F}$ satisfies descent w.r.t. $\pi$. Since $\mathcal{F}$ is extensive, this is equivalent to the traditional sheaf  condition for the covering $(\pi_i)_{i\in I}$.   
\end{proof}

\begin{comment}
\begin{remark}
If $T$ is terminal, then descent w.r.t. to a morphism $X \to \ast$ gives the condition that
\begin{equation*}
\alxydim{}{\mathcal{F}(\ast) \ar[r]^{} & \mathcal{F}(X) \arr[r] & \mathcal{F}(X \times X)}
\end{equation*}
is an equalizer. 
\end{remark}
\end{comment}

\begin{remark}
\label{problem-with-traditional}
If $T$ is singletonizable (but not singleton), then a traditional $\Sing(T)$-sheaf $\mathcal{F}$ is not necessarily a traditional  $T$-sheaf. For example, let $\Z_{\mathscr{C}}$ be the constant presheaf on $\mathscr{C}$ that assigns the set $\Z$ to every object and the identity map $\id_\Z$ to every morphism. This is a traditional sheaf for \emph{every} singleton Grothendieck topology on $\mathscr{C}$, because the relevant  diagram is
\begin{equation*}
\alxydim{}{\Z \ar[r] & \Z \arr[r] & \Z,}
\end{equation*}
which is an equalizer. In particular, $\Z_{\mathscr{C}}$ is a traditional $\Sing(T)$-sheaf. However, the constant presheaf $\Z_{\mathscr{C}}$ is typically not a traditional sheaf for the non-singleton Grothendieck topology $T$, see \cref{traditional-sheaf-bad} for an example.  Our modified definition of a sheaf resolves this: since $T \sim \Sing(T)$, the $T$-sheaves are precisely the $\Sing(T)$-sheaves. 
\end{remark}

If $X\in \mathscr{C}$, then $\Yo_X \in \PSh(\mathscr{C})$ is the presheaf with $\Yo_X(U) := \mathrm{Hom}_{\mathscr{C}}(U,X)$ and $f^{*}: \Yo_X(V) \to \Yo_X(U)$ given by $g \mapsto g \circ f$, for any $f:U \to V$. A presheaf $\mathcal{F}$ is called \emph{representable}, if there exists $X\in \mathscr{C}$ and an isomorphism $\mathcal{F} \cong \Yo_X$.  Because the functor $\mathrm{Hom}_{\mathscr{C}}(-,X): \mathscr{C}^{op} \to \mathscr{C}$ preserves limits, every representable presheaf is extensive. Let $\pi:Y \to X$ be a $T_{\mathscr{C}}$-covering; in particular, a universal effective epimorphism. Since again  $\mathrm{Hom}_{\mathscr{C}}(-,X): \mathscr{C}^{op} \to \mathscr{C}$  preserves limits, it follows that every representable presheaf satisfies descent w.r.t. $\pi$. Thus, every representable presheaf is a $T_{\mathscr{C}}$-sheaf. 

\begin{proposition}
\label{subcanonical-and-sheaves}
A Grothendieck topology $T$ is subcanonical  if and only if every representable presheaf is a $T$-sheaf.
\end{proposition}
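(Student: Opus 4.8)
The plan is to prove the two implications separately, reusing the two facts established immediately above the statement: every representable presheaf $\Yo_Z$ is extensive, and every representable presheaf is a $T_{\mathscr{C}}$-sheaf.

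For the forward implication, suppose $T$ is subcanonical, i.e.\ $T \prec T_{\mathscr{C}}$ in the sense of \cref{subcanonical}. Here I would simply invoke the invariance property recorded in the first item of the list following \cref{sheaf}: via \cref{universal-completion:c}, $T \prec T_{\mathscr{C}}$ forces every $T_{\mathscr{C}}$-sheaf to be a $T$-sheaf (any universal $T$-locally split morphism is then universal $T_{\mathscr{C}}$-locally split, so $T$-descent is a special case of $T_{\mathscr{C}}$-descent, and extensivity is topology-independent). Since every representable presheaf is a $T_{\mathscr{C}}$-sheaf, it is therefore a $T$-sheaf. This direction needs no further computation.

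For the converse, assume every representable presheaf is a $T$-sheaf; I want to deduce $T \prec T_{\mathscr{C}}$, i.e.\ that every universal $T$-locally split morphism $\pi\colon Y \to X$ is $T_{\mathscr{C}}$-locally split. First I would note that the class $\mathcal{E}$ of universal $T$-locally split morphisms is precisely the class of $\Uni(T)$-coverings, hence is a (singleton) Grothendieck topology, in particular stable under pullback with all relevant pullbacks existing (as its members are universal). The crux is the Yoneda--coequalizer correspondence: for $\pi\in\mathcal{E}$ the double fibre product $Y\times_X Y$ exists by universality, and the hypothesis that $\Yo_Z$ satisfies descent w.r.t.\ $\pi$ for every object $Z$ says exactly that $\mathrm{Hom}(X,Z) \to \mathrm{Hom}(Y,Z) \rightrightarrows \mathrm{Hom}(Y\times_X Y, Z)$ is an equalizer for all $Z$. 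By the Yoneda lemma this is the corepresentable reformulation of the assertion that $Y\times_X Y \rightrightarrows Y \to X$ is a coequalizer, i.e.\ that $\pi$ is an effective epimorphism.

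To upgrade ``effective epimorphism'' to ``universally effective epimorphism'' -- and thereby to a $T_{\mathscr{C}}$-covering -- I would use pullback-stability of $\mathcal{E}$: every pullback $f^{*}\pi$ again lies in $\mathcal{E}$, so the same argument shows each $f^{*}\pi$ is an effective epimorphism and universal. Thus $\mathcal{E}$ is a pullback-stable class all of whose members are universal effective epimorphisms, whence every $\pi\in\mathcal{E}$ satisfies the recursive condition defining a universally effective epimorphism, i.e.\ is a $T_{\mathscr{C}}$-covering. By \cref{locally-split-morphisms:a} a singleton $T_{\mathscr{C}}$-covering is $T_{\mathscr{C}}$-locally split, so $\pi$ is $T_{\mathscr{C}}$-locally split, giving $T \prec T_{\mathscr{C}}$. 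The main obstacle I anticipate is exactly this last upgrade: one must carefully match the pullback-stable class $\mathcal{E}$ of effective epimorphisms against the recursive (coinductive) definition of \emph{universally} effective epimorphism, arguing that stability under arbitrary pullback makes the clause ``every pullback is again a universally effective epimorphism'' automatic. By contrast, the Yoneda--coequalizer identification, while conceptually central, is routine once the descent diagram for $\Yo_Z$ is spelled out.
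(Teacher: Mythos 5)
Your proposal is correct and follows essentially the same route as the paper: the forward direction via $T \prec T_{\mathscr{C}}$ and the fact that representables are $T_{\mathscr{C}}$-sheaves, and the converse via the Yoneda--coequalizer identification showing each universal $T$-locally split morphism is an effective epimorphism, upgraded to a universally effective epimorphism by pullback-stability. In fact you make explicit the pullback-stability step that the paper leaves implicit when it passes from ``effective epimorphism'' to ``universally effective epimorphism''.
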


\begin{proof}If $T$ is subcanonical, then because $T \prec T_{\mathscr{C}}$, every representable presheaf is  a $T$-sheaf. 
Conversely, assume that $T$ is some  Grothendieck topology such that every representable presheaf is a $T$-sheaf. We claim that every universal  $T$-locally split morphism  $\pi:Y \to X$ is a universally effective epimorphism. This shows that $\Uni(T) \subset T_{\mathscr{C}}$, which implies that $T$ is subcanonical. 
 
 In order to prove the claim, we show that
\begin{equation*}
\alxydim{}{Y \times_X Y \arr[r] & Y \ar[r]^{\pi} & X}
\end{equation*}
is a coequalizer. Indeed, let $f: Y \to Z$ be a cocone, i.e., $f \circ \pr_1 = f \circ \pr_2$ as morphisms $Y \times_X Y \to Z$. Let $\Yo_Z$ be the presheaf represented by $Z$. By assumption, $\Yo_Z$ is a $T$-sheaf. The descent condition w.r.t. $\pi$ says that
\begin{equation*}
\alxydim{}{\Yo_Z(X) \ar[r]^{\pi^{*}} & \Yo_Z(Y) \arr[r] & \Yo_Z(Y \times_X Y) }
\end{equation*}
is an equalizer. Evaluating this for $f\in \Yo_Z(Y)$, we see that there exists a unique $g: X \to Z$ such that $g \circ \pi = f$. This proves the claim. 
\end{proof}

\subsection{Base change}

\label{sheaves-II}

In this section we discuss the behaviour of sheaves under functors between sites. We start by looking only at presheaves. Every functor $\mathscr{F}:\mathscr{C}_1 \to \mathscr{C}_2$ induces a functor 
\begin{equation*}
\mathscr{F}^{*}:\PSh(\mathscr{C}_1) \to \PSh(\mathscr{C}_2)
\end{equation*}
by setting $\mathscr{F}^{*}\mathcal{F} := \mathcal{F} \circ \mathscr{F}$. The functor $\mathscr{F}^{*}$ has both a left adjoint $\mathscr{F}_{!}$ and a right adjoint $\mathscr{F}_{*}$. If $\mathcal{F}$ is a presheaf on $\mathscr{C}_1$, then $\mathscr{F}_{!}\mathcal{F}$ and $\mathscr{F}_{*}\mathcal{F}$ are called the left and right \emph{Kan extensions}, respectively. 
We are primarily interested in the case where $\mathscr{F}^{*}$ is an equivalence of categories, in which case left and right Kan extensions are both essential inverses and hence naturally isomorphic to each other. Thus, it suffices to concentrate on the right Kan extension, which we now describe, following standard sheaf theory, e.g. \cite[\S 7.19]{stacks-project}. 

For $Y$ an object of $\mathscr{C}_2$, we consider the slice category $\mathscr{F}/Y$, whose objects are pairs $(X,\psi)$ with $X$ an object of $\mathscr{C}_1$ and $\psi: \mathscr{F}(X) \to Y$ a morphism in $\mathscr{C}_2$, and where a morphism $(X_1,\psi_1) \to (X_2,\psi_2)$ is a morphism $f:X_1 \to X_2$ in $\mathscr{C}_1$ such that $\psi_2\circ \mathscr{F}(f)=\psi_1$. We have the projection functor $\mathscr{P}_Y:\mathscr{F}/Y \to \mathscr{C}_1: (X,\psi) \mapsto X$ and hence obtain a presheaf $\mathscr{P}_Y^{*}\mathcal{F}$ on $\mathscr{F}/Y$. We define the set
\begin{equation}
\label{right-Kan-extension}
(\mathscr{F}_{*}\mathcal{F})(Y) := \lim \mathscr{P}_Y^{*}\mathcal{F}\text{.} 
\end{equation} 
Any morphism $g: Y_1 \to Y_2$ induces a functor $\mathscr{F}/g: \mathscr{F}/Y_1 \to \mathscr{F}/Y_2$ such that $\mathscr{P}_{Y_2} \circ \mathscr{F}/g = \mathscr{P}_{Y_1}$.
Thus, it induces a map $g^{*}:\lim \mathscr{P}_{Y_2}^{*}\mathcal{F} \to \lim \mathscr{P}_{Y_1}^{*}\mathcal{F}$, turning $\mathscr{F}_{*}\mathcal{F}$ into a presheaf on $\mathscr{C}_2$.
Similarly, one can turn $\mathscr{F}_{*}$ into a functor $\mathscr{F}_{*}:\PSh(\mathscr{C}_1) \to \PSh(\mathscr{C}_2)$.

We may realize the limit \cref{right-Kan-extension} in the category of sets explicitly. An element in $(\mathscr{F}_{*}\mathcal{F})(Y)$ is a family $\{s_{X,\psi}\}_{(X,\psi)\in \mathscr{F}/Y}$ of elements $s_{X,\psi} \in \mathcal{F}(X)$ such that, for all morphisms $f:(X_1,\psi_1) \to (X_2,\psi_2)$ in $\mathscr{F}/Y$ we have 
$f^{*}s_{X_2,\psi_2}=s_{X_1,\psi_1}$.
A morphism $g:Y_1 \to Y_2$ induces the map $g^{*}$ that sends a family $\{s_{X,\psi}\}_{(X,\psi)\in \mathscr{F}/Y_2}\in (\mathscr{F}_{*}\mathcal{F})(Y_2)$ to the family $\{s_{X,g \circ \phi}\}_{(X,\phi)\in \mathscr{F}/Y_1}\in (\mathscr{F}_{*}\mathcal{F})(Y_1)$.
Finally,  if $\phi: \mathcal{F}\to \mathcal{G}$ is a morphism in $\PSh(\mathscr{C}_1)$, then 
\begin{equation*}
(\mathscr{F}_{*}\phi)_Y: (\mathscr{F}_{*}\mathcal{F})(Y) \to (\mathscr{F}_{*}\mathcal{G})(Y)
\end{equation*}
is given by $\{s_{X,\psi}\}_{_{(X,\psi)\in \mathscr{F}/Y}}\mapsto \{\phi_X(s_{X,\psi})\}_{_{(X,\psi)\in \mathscr{F}/Y}}$.

We will first recall how the adjunction between $\mathscr{F}^{*}$ and $\mathscr{F}_{*}$ is established. For each presheaf $\mathcal{F}\in \PSh(\mathscr{C}_1)$, there is a canonical presheaf morphism 
\begin{equation*}
\varepsilon_{\mathcal{F}} : \mathscr{F}^{*}(\mathscr{F}_{*}\mathcal{F}) \to \mathcal{F\text{,}}
\end{equation*}
providing the components of a natural transformation $\varepsilon: \mathscr{F}^{*}\circ \mathscr{F}_{*}\Rightarrow \id_{\PSh(\mathscr{C}_1)}$, see \cite[Lem. 7.19.1]{stacks-project}. Indeed, on the left hand side we have, for $X_0\in \mathscr{C}_1$,  $\mathscr{F}^{*}(\mathscr{F}_{*}\mathcal{F})(X_0)=\mathscr{F}_{*}\mathcal{F}(\mathscr{F}(X_0))$, of which an element is a family $\{s_{X,\psi}\}_{(X,\psi)\in \mathscr{F}/{\mathscr{F}(X_0)}}$. We send this to $s_{X_0,\id_{\mathscr{F}(X_0)}}\in \mathcal{F}(X_0)$, i.e. to the evaluation of the family at the object $(X_0,\id_{\mathscr{F}(X_0)})$ of $\mathscr{F}/{\mathscr{F}(X_0)}$. Basically because the object $(X_0,\id_{\mathscr{F}(X_0)})$ is terminal in $\mathscr{F}/\mathscr{F}(X_0)$,  the following result holds.

\begin{lemma}
\label{epsilon-iso}
$\varepsilon_{\mathcal{F}}$ is an isomorphism if $\mathscr{F}$ is full and faithful. 
\end{lemma}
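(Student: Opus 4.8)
The plan is to exploit the observation already recorded in the text: when $\mathscr{F}$ is full and faithful, the pair $(X_0,\id_{\mathscr{F}(X_0)})$ is a terminal object of the slice category $\mathscr{F}/\mathscr{F}(X_0)$, and the limit of any diagram indexed by a category with a terminal object is computed by evaluation at that object. First I would verify terminality. An object of $\mathscr{F}/\mathscr{F}(X_0)$ is a pair $(X,\psi)$ with $\psi:\mathscr{F}(X)\to\mathscr{F}(X_0)$, and a morphism $(X,\psi)\to(X_0,\id_{\mathscr{F}(X_0)})$ is a morphism $f:X\to X_0$ in $\mathscr{C}_1$ with $\mathscr{F}(f)=\psi$. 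Fullness of $\mathscr{F}$ supplies such an $f$, and faithfulness makes it unique; hence there is exactly one morphism from each object to $(X_0,\id_{\mathscr{F}(X_0)})$, which is therefore terminal.

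Next I would make the ``evaluation at the terminal object'' argument explicit at the level of the set-theoretic model of the limit given just above the statement, and check that it coincides with $\varepsilon_{\mathcal{F}}$. Write $\varepsilon_{\mathcal{F},X_0}$ for the component at $X_0$, which sends a compatible family $\{s_{X,\psi}\}$ to $s_{X_0,\id_{\mathscr{F}(X_0)}}$. For injectivity: given such a family and any object $(X,\psi)$, let $f:X\to X_0$ be the unique morphism with $\mathscr{F}(f)=\psi$; compatibility applied to $f:(X,\psi)\to(X_0,\id_{\mathscr{F}(X_0)})$ forces $s_{X,\psi}=f^{*}s_{X_0,\id_{\mathscr{F}(X_0)}}$, so the whole family is determined by its value at the terminal object. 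For surjectivity I would start from an arbitrary $a\in\mathcal{F}(X_0)$ and define $s_{X,\psi}:=f^{*}a$ with $f$ as above; then $s_{X_0,\id_{\mathscr{F}(X_0)}}=a$.

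The only thing that needs a short verification is that this prescribed family is indeed compatible. Given a morphism $g:(X_1,\psi_1)\to(X_2,\psi_2)$ in $\mathscr{F}/\mathscr{F}(X_0)$, with unique arrows $f_i:X_i\to X_0$ satisfying $\mathscr{F}(f_i)=\psi_i$, I would compute $\mathscr{F}(f_2\circ g)=\psi_2\circ\mathscr{F}(g)=\psi_1=\mathscr{F}(f_1)$ and invoke faithfulness to conclude $f_2\circ g=f_1$; then $g^{*}s_{X_2,\psi_2}=(f_2\circ g)^{*}a=f_1^{*}a=s_{X_1,\psi_1}$, as required. This exhibits the constructed family as a preimage of $a$, so $\varepsilon_{\mathcal{F},X_0}$ is a bijection for every $X_0$, and hence $\varepsilon_{\mathcal{F}}$ is an isomorphism of presheaves.

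I do not anticipate a genuine obstacle here: the whole content is the terminality of $(X_0,\id_{\mathscr{F}(X_0)})$, which is precisely where full-and-faithfulness enters, and the remainder is routine bookkeeping of variance (remembering that $f^{*}=\mathcal{F}(f)$ is contravariant) when checking compatibility of the family.
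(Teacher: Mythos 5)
Your proof is correct and follows essentially the same route as the paper's: the paper also constructs the inverse by sending $a\in\mathcal{F}(X_0)$ to the family $s_{X,\psi}:=g^{*}a$ for the unique lift $g$ of $\psi$, verifies compatibility via faithfulness, and checks both composites. Your explicit framing in terms of terminality of $(X_0,\id_{\mathscr{F}(X_0)})$ is exactly the remark the paper makes just before stating the lemma, so the two arguments coincide.
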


\begin{comment}
\begin{proof}
Let $X_0\in \mathscr{C}_1$. We define 
\begin{equation*}
\varepsilon':\mathcal{F}(X_0) \to  (\mathscr{F}_{*}\mathcal{F})(\mathscr{F}(X_0))
\end{equation*}
by sending $s\in \mathcal{F}(X_0)$ to the family $\{s_{X,\psi}\}_{(X,\psi)\in \mathscr{I}_{\mathscr{F}(X_0)}}$ defined as follows. Given $(X,\psi)\in \mathscr{I}_{\mathscr{F}(X_0)}$, since $\mathscr{F}$ is full and faithful, there exists a unique morphism $g: X \to X_0$ such that $\mathscr{F}(g)=\psi$. We define $s_{X,\psi} := g^{*}s$. Let $f: (X_1,\psi_1) \to (X_2,\psi_2)$ be a morphism in $\mathscr{I}_{\mathscr{F}(X_0)}$, and let $g_1: X_1 \to X_0$ and $g_2: X_2 \to X_0$ the corresponding morphisms. Then,  $\psi_2\circ \mathscr{F}(f)=\psi_1$ implies $g_2 \circ f=g_1$. Thus, we have
\begin{equation*}
f^{*}s_{X_2,\psi_2} = f^{*}g_2^{*}s=g_1^{*}s=s_{X_1,\psi_1}\text{.}
\end{equation*}
This shows that we have a well-defined morphism $\varepsilon'$ in the opposite direction  to $\varepsilon_{\mathcal{F}}|_{X_0}$. It is clear that $\varepsilon_{\mathcal{F}}|_{X_0} \circ \varepsilon'=\id_{\mathcal{F}(X_0)}$. Conversely, the condition that $\varepsilon' \circ \varepsilon_{\mathcal{F}}|_{X_0}=\id$ says that $s_{X,\psi}=g^{*}s_{X_0,\id_{\mathscr{F}(X_0)}}$ for each  $\{s_{X,\psi}\}_{(X,\psi)\in \mathscr{I}_{\mathscr{F}(X_0)}}$.
This is precisely the consistency condition we have imposed on such families. \end{proof}
\end{comment}
Another presheaf morphism is
\begin{equation*}
\eta_{\mathcal{G}}: \mathcal{G}\to \mathscr{F}_{*}(\mathscr{F}^{*}\mathcal{G})
\end{equation*}
where $\mathcal{G}$ is a sheaf on $\mathscr{C}_2$, and it sends $s\in \mathcal{G}(Y)$ to the family $\{s_{X,\psi}\}_{(X,\psi)\in \mathscr{F}/Y}$ defined by $s_{X,\psi} := \psi^{*}s \in \mathcal{G}(\mathscr{F}(X))$. This provides the components of a natural transformation $\eta: \id_{\PSh(\mathscr{C}_2)} \Rightarrow \mathscr{F}_{*}\circ \mathscr{F}^{*}$. 

\begin{comment}
\begin{remark}
By abstract nonsense, $\varepsilon_F$ is an isomorphism if and only if the right adjoint $\mathscr{F}_{*}$ is fully faithful. Hence, we have the implication
\begin{equation*}
\mathscr{F}\text{ fully faithful}\quad\Rightarrow\quad \mathscr{F}_{*}\text{ fully faithful.}
\end{equation*}
\end{remark}
\end{comment}
The following result is standard, see e.g.
\cite[Lemma 7.19.2]{stacks-project}.
\begin{proposition}
\label{adjunction-presheaves}
For $\mathscr{F}:\mathscr{C}_1 \to \mathscr{C}_2$ any functor, 
the presheaf morphisms $\varepsilon$ and $\eta$ are, respectively, the counit and the unit of an adjunction
\begin{equation*}
\mathscr{F}^{*}:\PSh(\mathscr{C}_2) \leftrightarrows\ \PSh(\mathscr{C}_1): \mathscr{F}_{*}\text{.} 
\end{equation*}
\end{proposition}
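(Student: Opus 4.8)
The plan is to verify that $\varepsilon$ and $\eta$ satisfy the two triangle (zig-zag) identities; together with the naturality already recorded in the construction above, this is exactly what promotes them to the counit and unit of an adjunction $\mathscr{F}^{*}\dashv \mathscr{F}_{*}$. Concretely, I would check
\[
\varepsilon_{\mathscr{F}^{*}\mathcal{G}}\circ \mathscr{F}^{*}(\eta_{\mathcal{G}}) = \id_{\mathscr{F}^{*}\mathcal{G}} \quand \mathscr{F}_{*}(\varepsilon_{\mathcal{F}})\circ \eta_{\mathscr{F}_{*}\mathcal{F}} = \id_{\mathscr{F}_{*}\mathcal{F}}
\]
for all $\mathcal{G}\in\PSh(\mathscr{C}_2)$ and all $\mathcal{F}\in\PSh(\mathscr{C}_1)$, working throughout with the explicit set-theoretic models of the limit $(\mathscr{F}_{*}\mathcal{F})(Y)$ and of the structure maps $\varepsilon$, $\eta$ described above.

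For the first identity I would evaluate at an object $X_0\in \mathscr{C}_1$, where $\mathscr{F}^{*}\mathcal{G}(X_0)=\mathcal{G}(\mathscr{F}(X_0))$. The component of $\mathscr{F}^{*}(\eta_{\mathcal{G}})$ at $X_0$ is $(\eta_{\mathcal{G}})_{\mathscr{F}(X_0)}$, which sends $s\in\mathcal{G}(\mathscr{F}(X_0))$ to the family whose $(X,\psi)$-component is $\psi^{*}s$; then $\varepsilon_{\mathscr{F}^{*}\mathcal{G}}$ at $X_0$ reads off the component indexed by $(X_0,\id_{\mathscr{F}(X_0)})\in\mathscr{F}/\mathscr{F}(X_0)$, namely $\id_{\mathscr{F}(X_0)}^{*}s=s$. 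Hence the composite is the identity; this is really just the observation, flagged before \cref{epsilon-iso}, that $(X_0,\id_{\mathscr{F}(X_0)})$ is a terminal object of $\mathscr{F}/\mathscr{F}(X_0)$.

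For the second identity I would evaluate at $Y\in \mathscr{C}_2$ on a family $\sigma=\{s_{X,\psi}\}_{(X,\psi)\in\mathscr{F}/Y}\in(\mathscr{F}_{*}\mathcal{F})(Y)$. The unit $\eta_{\mathscr{F}_{*}\mathcal{F}}$ sends $\sigma$ to the family whose $(X,\psi)$-component is $\psi^{*}\sigma\in(\mathscr{F}_{*}\mathcal{F})(\mathscr{F}(X))$, and by the definition of the presheaf structure on $\mathscr{F}_{*}\mathcal{F}$ this is the reindexed family $\psi^{*}\sigma=\{s_{X',\psi\circ\phi'}\}_{(X',\phi')\in\mathscr{F}/\mathscr{F}(X)}$. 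Applying $\mathscr{F}_{*}(\varepsilon_{\mathcal{F}})$ then replaces each such component by its value at the terminal object $(X,\id_{\mathscr{F}(X)})$, namely $s_{X,\psi\circ\id_{\mathscr{F}(X)}}=s_{X,\psi}$. Thus the composite returns $\sigma$, giving the identity again.

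The proof is therefore a matter of careful index bookkeeping rather than of any genuine difficulty, and I expect no real obstacle. The one place where anything could go wrong — and hence the step I would write out most carefully — is the correct application of the induced map $\psi^{*}$ on the limit model, which reindexes $\mathscr{F}/Y$ to $\mathscr{F}/\mathscr{F}(X)$ by postcomposition with $\psi$; once this is done, both identities collapse onto the terminal-object evaluation that also underlies \cref{epsilon-iso}. The naturality of $\varepsilon$ and $\eta$ has already been asserted during their construction; were one to demand full self-containedness, it could be established by the analogous componentwise computation, again reducing to compatibility of the families $\{s_{X,\psi}\}$ under the morphisms of the slice categories $\mathscr{F}/Y$.
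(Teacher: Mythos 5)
Your proposal is correct and follows essentially the same route as the paper's own (commented-out) proof: both verify the two triangle identities by direct componentwise computation in the explicit set-theoretic model of $\mathscr{F}_{*}$, with the key step being the reindexing effect of $\psi^{*}$ on the limit followed by evaluation at the component $(X_0,\id_{\mathscr{F}(X_0)})$. No gaps; the only caveat is that your aside about $(X_0,\id_{\mathscr{F}(X_0)})$ being terminal in $\mathscr{F}/\mathscr{F}(X_0)$ holds in general only when $\mathscr{F}$ is fully faithful, but your actual computation does not use it.
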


In the following we investigate circumstances under which one can restrict the adjunction of \cref{adjunction-presheaves} to sheaves. 
First we have the following standard result; see, e.g. \cite[Lemma 7.13.2]{stacks-project}, which has to be adapted slightly to our modified definition of a sheaf.

\begin{proposition}
Suppose $\mathscr{F}:(\mathscr{C}_1,T_1) \to (\mathscr{C}_2,T_2)$ is a continuous  and coproduct-preserving functor between sites. If $\mathcal{F}$ is a $T_2$-sheaf on $\mathscr{C}_2$, then $\mathscr{F}^{*}\mathcal{F} $ is a $T_1$-sheaf on $\mathscr{C}_1$.
\end{proposition}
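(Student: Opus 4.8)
The plan is to verify directly the two defining conditions of a $T_1$-sheaf from \cref{sheaf}: that $\mathscr{F}^{*}\mathcal{F}$ is extensive, and that it satisfies descent with respect to every universal $T_1$-locally split morphism. Throughout I use the defining identity $(\mathscr{F}^{*}\mathcal{F})(U)=\mathcal{F}(\mathscr{F}(U))$.

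For extensivity, take a coproduct $X=\coprod_{i\in I}X_i$ in $\mathscr{C}_1$. Since $\mathscr{F}$ preserves coproducts, $\mathscr{F}(X)=\coprod_{i\in I}\mathscr{F}(X_i)$ in $\mathscr{C}_2$, with coproduct injections the images of those in $\mathscr{C}_1$. As $\mathcal{F}$ is a $T_2$-sheaf it is extensive, so the canonical map $\mathcal{F}(\mathscr{F}(X)) \to \prod_{i\in I}\mathcal{F}(\mathscr{F}(X_i))$ is a bijection; this is precisely the canonical map $(\mathscr{F}^{*}\mathcal{F})(X)\to \prod_{i\in I}(\mathscr{F}^{*}\mathcal{F})(X_i)$, so $\mathscr{F}^{*}\mathcal{F}$ is extensive. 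This is the only place where coproduct-preservation enters.

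For descent, let $\pi:Y\to X$ be a universal $T_1$-locally split morphism; its double fibre product $Y\times_X Y$ exists because $\pi$ is universal. Continuity of $\mathscr{F}$ supplies two facts: by clause (i) of \cref{continuous-functor}, $\mathscr{F}(\pi)$ is universal $T_2$-locally split, and by clause (ii), $\mathscr{F}$ preserves the fibre product $Y\times_X Y$, so the canonical comparison morphism $\mathscr{F}(Y\times_X Y)\to \mathscr{F}(Y)\times_{\mathscr{F}(X)}\mathscr{F}(Y)$ is an isomorphism. Since $\mathcal{F}$ is a $T_2$-sheaf, it satisfies descent with respect to $\mathscr{F}(\pi)$, so the diagram $\mathcal{F}(\mathscr{F}(X)) \to \mathcal{F}(\mathscr{F}(Y)) \rightrightarrows \mathcal{F}(\mathscr{F}(Y)\times_{\mathscr{F}(X)}\mathscr{F}(Y))$ is an equalizer. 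Transporting the target along the comparison isomorphism identifies this with the diagram $(\mathscr{F}^{*}\mathcal{F})(X) \to (\mathscr{F}^{*}\mathcal{F})(Y) \rightrightarrows (\mathscr{F}^{*}\mathcal{F})(Y\times_X Y)$, which is therefore an equalizer as well. Hence $\mathscr{F}^{*}\mathcal{F}$ satisfies descent with respect to $\pi$, and combined with extensivity it is a $T_1$-sheaf.

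The only delicate point is the bookkeeping in the descent step: one must check that the comparison isomorphism $\mathscr{F}(Y\times_X Y)\cong \mathscr{F}(Y)\times_{\mathscr{F}(X)}\mathscr{F}(Y)$ intertwines the maps induced by $\mathscr{F}(\pr_1),\mathscr{F}(\pr_2)$ with those induced by the projections of the fibre product formed in $\mathscr{C}_2$, so that the two equalizer diagrams genuinely coincide rather than merely being abstractly isomorphic. This holds because the comparison map is defined precisely so as to commute with the projections, but it is the verification worth spelling out; everything else is a direct translation of the hypotheses into the two sheaf conditions.
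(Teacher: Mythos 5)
Your proof is correct and follows essentially the same route as the paper's: coproduct-preservation gives extensivity of $\mathscr{F}^{*}\mathcal{F}$, and the two clauses of continuity transport the descent equalizer for $\mathscr{F}(\pi)$ into the one for $\pi$ via the comparison isomorphism $\mathscr{F}(Y\times_X Y)\cong \mathscr{F}(Y)\times_{\mathscr{F}(X)}\mathscr{F}(Y)$. Your closing remark about checking that this isomorphism intertwines the projections is a reasonable point of care that the paper leaves implicit.
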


\begin{proof}
\begin{comment}
Since we altered the definitions of a continuous functor and of a sheaf, we should carry out the proof. 
\end{comment}
First, since  $\mathscr{F}$ preserves coproducts,  $\mathscr{F}^{*}$ sends extensive presheaves to extensive ones.
Second, consider a universal $T_1$-locally split morphism $\pi:Y \to X$. Since $\mathscr{F}$ is continuous,  $\mathscr{F}(\pi):\mathscr{F}(Y) \to \mathscr{F}(X)$ is universal $T_2$-locally split, and  
\begin{equation}
\label{continuous-fibre-product}
\mathscr{F}(Y \times_X Y) \cong \mathscr{F}(Y) \ttimes{\mathscr{F}(\pi)}{\mathscr{F}(\pi)} \mathscr{F}(Y)\text{.}
\end{equation}
That $\mathscr{F}^{*}\mathcal{F}$ satisfies descent w.r.t. $\pi$ means that
\begin{equation*}
\alxydim{}{\displaystyle \mathscr{F}^{*}\mathcal{F}(X) \ar[r]^-{\pi^{*}} &  \mathscr{F}^{*}\mathcal{F}(Y) \arr[r] &  \mathscr{F}^{*}\mathcal{F}(Y \times_X Y)}
\end{equation*}
is an equalizer. Using \cref{continuous-fibre-product} this is exactly the condition that $\mathcal{F}$ satisfies descent w.r.t.  $\mathscr{F}(\pi):\mathscr{F}(Y) \to \mathscr{F}(X)$, which is true since $\mathcal{F}$ is a $T_2$-sheaf.
\end{proof}

For the right Kan extension $\mathscr{F}_{*}$, the situation is slightly more involved. For sheaves in the traditional sense of \cref{traditional-sheaf}, it is a classical result that $\mathscr{F}_{*}$ sends sheaves to sheaves when $\mathscr{F}$ is cocontinuous in the traditional sense. We have to deal with our additional assumption that sheaves must be extensive. We introduce the following terminology.

\begin{definition}
\label{coproduct-inverting}
A functor $\mathscr{F}: \mathscr{C}_1 \to \mathscr{C}_2$  is said to \emph{invert coproducts}, if it is fully faithful, 
and it is essentially surjective on the components of coproducts in its image, i.e.  for every $X\in \mathscr{C}_1$ such that $\mathscr{F}(X)=\coprod_{i\in I} Z_i$, there exist objects $X_i\in \mathscr{C}_1$ and isomorphisms $\gamma_i: \mathscr{F}(X_i)\to Z_i$. 
\end{definition}

\begin{remark}
\label{remark-about-inverting-coproducts}
If $\mathscr{F}$ inverts coproducts, if $\mathscr{F}(X)=\coprod_{i\in I} Z_i$ by means of coproduct injections $\iota_i: Z_i \to \mathscr{F}(X)$, and if we have chosen objects $X_i\in \mathscr{C}$ and isomorphisms $\gamma_i: \mathscr{F}(X_i)\to Z_i$, then there exist unique morphisms $\phi_i:X_i \to X$ such that $\mathscr{F}(\phi_i) = \iota_i \circ \gamma_i$ and $X=\coprod_{i\in I} X_i$ with coproduct injections $\phi_i$.
Indeed, $\phi_i$ exist because $\mathscr{F}$ is fully faithful, and they are coproduct injections because $\mathscr{F}$ reflects colimits (again because it is fully faithful). 
\end{remark}

\begin{proposition}
\label{coproduct-inverting-functors-preserve-extensiveness}
Suppose $\mathscr{F}:\mathscr{C}_1 \to \mathscr{C}_2$ inverts coproducts and $\mathscr{C}_2$ is extensive. If $\mathcal{F}$ is an extensive presheaf on $\mathscr{C}_1$, then $\mathscr{F}_{*}\mathcal{F}$ is extensive. 
\end{proposition}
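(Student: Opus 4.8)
The plan is to fix a coproduct $Y=\coprod_{k\in K}Y_k$ in $\mathscr{C}_2$ with injections $j_k\colon Y_k\to Y$ and to show that the canonical comparison map
\[
\Theta\colon (\mathscr{F}_{*}\mathcal{F})(Y)\longrightarrow \prod_{k\in K}(\mathscr{F}_{*}\mathcal{F})(Y_k),\qquad s\mapsto (j_k^{*}s)_{k\in K},
\]
is a bijection, working throughout with the explicit description of $\mathscr{F}_{*}\mathcal{F}$ as families $\{s_{X,\psi}\}_{(X,\psi)\in\mathscr{F}/Y}$. The heart of the argument is a decomposition attached to each object $(X,\psi)$ of the slice category $\mathscr{F}/Y$. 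Since $\mathscr{C}_2$ is extensive, coproducts are stable under pullback, so pulling the injections $j_k$ back along $\psi\colon\mathscr{F}(X)\to Y$ gives $\mathscr{F}(X)=\coprod_{k}\psi^{*}Y_k$ with $\psi^{*}Y_k=\mathscr{F}(X)\times_Y Y_k$. Because $\mathscr{F}$ inverts coproducts, \cref{remark-about-inverting-coproducts} transports this to a decomposition $X=\coprod_k X_k$ in $\mathscr{C}_1$ with injections $\phi_k\colon X_k\to X$ and isomorphisms $\mathscr{F}(X_k)\cong\psi^{*}Y_k$; composing with the pullback projections $\psi^{*}Y_k\to Y_k$ produces structure maps $\psi_k\colon\mathscr{F}(X_k)\to Y_k$, so that $(X_k,\psi_k)\in\mathscr{F}/Y_k$ and, by the defining pullback square, $\phi_k$ becomes a morphism $(X_k,j_k\circ\psi_k)\to(X,\psi)$ in $\mathscr{F}/Y$.

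Granting this decomposition, I would define the inverse $\Xi$ of $\Theta$ as follows. Given a tuple $(t^{(k)})_k$ with $t^{(k)}=\{t^{(k)}_{X,\phi}\}\in(\mathscr{F}_{*}\mathcal{F})(Y_k)$, extensivity of $\mathcal{F}$ furnishes a bijection $\mathcal{F}(X)\cong\prod_k\mathcal{F}(X_k)$ via the maps $\phi_k^{*}$, and I set $s_{X,\psi}$ to be the unique element whose $k$-th component is $t^{(k)}_{X_k,\psi_k}$ (different choices of decomposition differ by canonical isomorphisms, so $s_{X,\psi}$ is well defined). Checking that $\{s_{X,\psi}\}$ is a consistent family, hence really lies in $(\mathscr{F}_{*}\mathcal{F})(Y)$, reduces to showing that a morphism $f\colon(X_1,\psi_1)\to(X_2,\psi_2)$ in $\mathscr{F}/Y$ restricts, by the same extensivity-plus-inversion mechanism, to morphisms $f_k\colon(X_1)_k\to(X_2)_k$ in $\mathscr{F}/Y_k$ with $\phi_k^{(2)}\circ f_k=f\circ\phi_k^{(1)}$; the consistency relation $f_k^{*}t^{(k)}_{(X_2)_k,(\psi_2)_k}=t^{(k)}_{(X_1)_k,(\psi_1)_k}$ then propagates componentwise, through the product decomposition of $\mathcal{F}(X_1)$, to the required identity $f^{*}s_{X_2,\psi_2}=s_{X_1,\psi_1}$.

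Finally I would verify that $\Xi$ is a two-sided inverse. For $\Xi\circ\Theta=\id$, starting from $s\in(\mathscr{F}_{*}\mathcal{F})(Y)$ and $t^{(k)}=j_k^{*}s$, the $k$-th component of $\Xi((t^{(k)}))_{X,\psi}$ is $t^{(k)}_{X_k,\psi_k}=s_{X_k,\,j_k\circ\psi_k}=\phi_k^{*}s_{X,\psi}$, the last equality being exactly the consistency of $s$ applied to the slice morphism $\phi_k$; so $\Xi((t^{(k)}))_{X,\psi}=s_{X,\psi}$. For $\Theta\circ\Xi=\id$, I evaluate $j_k^{*}\Xi((t^{(\ell)}))$ at $(X,\phi)\in\mathscr{F}/Y_k$: for the object $(X,j_k\circ\phi)$ the structure map factors through $Y_k$, so disjointness of coproducts in $\mathscr{C}_2$ forces $\psi^{*}Y_\ell\cong\emptyset$ for $\ell\neq k$ and $\psi^{*}Y_k\cong\mathscr{F}(X)$, whence $X_k\cong X$, $\psi_k=\phi$, and the remaining factors $X_\ell$ are initial with $\mathcal{F}(X_\ell)=\{*\}$; this collapses $s_{X,j_k\circ\phi}$ to $t^{(k)}_{X,\phi}$. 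The main obstacle will be organizing the two ``inversion of a pullback decomposition'' arguments, for objects and for morphisms, cleanly and uniformly; once the functoriality $f\mapsto(f_k)_k$ of the decomposition is in place, both inverse identities follow by routine componentwise bookkeeping, with the collapsing of the initial factors resting on $\mathcal{F}(\emptyset)=\{*\}$, which holds because $\mathcal{F}$ is extensive.
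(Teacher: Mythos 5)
Your proposal is correct and follows essentially the same route as the paper's proof: both hinge on pulling the coproduct injections back along $\psi\colon\mathscr{F}(X)\to Y$, transporting the resulting decomposition of $\mathscr{F}(X)$ to a coproduct decomposition $X=\coprod_k X_k$ via the coproduct-inverting property, and then using extensivity of $\mathcal{F}$ to reconstruct $s_{X,\psi}$ from its components. The only difference is presentational — you package the argument as an explicit two-sided inverse $\Xi$, whereas the paper proves injectivity and surjectivity of the comparison map separately.
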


\begin{proof}
We suppose that $Y=\coprod_{i\in I} Y_i$ is a coproduct in $\mathscr{C}_2$. The canonical map
\begin{equation*}
\alpha:\mathscr{F}_{*}\mathcal{F}(Y) \to \prod_{i\in I} \mathscr{F}_{*}\mathcal{F}(Y_i)
\end{equation*}
sends $s=\{s_{X,\psi}\}_{(X,\psi)\in \mathscr{F}/Y}\in \mathscr{F}_{*}\mathcal{F}(Y)$ to $\alpha(s):=(\alpha_i(s))_{i\in I}$ with 
\begin{equation*}
\alpha_i(s):=\{s_{\tilde X,\iota_i \circ \tilde \psi}\}_{(\tilde X,\tilde \psi)\in \mathscr{F}/{Y_i}}\in \mathscr{F}_{*}\mathcal{F}(Y_i)\text{.}
\end{equation*}
We have to show that $\alpha$ is bijective.

For injectivity, let us assume that $s,s'\in  \mathscr{F}_{*}\mathcal{F}(Y)$ such that $\alpha_i(s)=\alpha_i(s')$ for all $i\in I$. Now let $(X,\psi)\in \mathscr{F}/Y$.
Since $\mathscr{C}_2$ is extensive, one can form, for each $i\in I$, the pullback diagram 
\begin{equation*}
\alxydim{}{Z_i:=\mathscr{F}(X) \times_Y Y_i \ar[r]^-{p_i}\ar[d]_{\kappa_i} & Y_i \ar[d]^{\iota_i} \\ \mathscr{F}(X) \ar[r]_{\psi} & Y }
\end{equation*} 
in which $\kappa_i$ are coproduct injections. \ Now, because $\mathscr{F}$ inverts coproducts, there exist objects $X_i\in \mathscr{C}_1$ and isomorphisms $\gamma_i: \mathscr{F}(X_i) \to Z_i$, which induce via \cref{remark-about-inverting-coproducts} coproduct injections $\phi_i: X_i \to X$; moreover, we obtain morphisms $\psi_i := p_i \circ \gamma_i: \mathscr{F}(X_i) \to Y_i$ such that the diagram
\begin{equation}
\label{klhjslfsd}
\alxydim{}{\mathscr{    F}(X_i) \ar[d]_{\mathscr{F}(\phi_i)} \ar[r]^-{\psi_i} & Y_i \ar[d]^{\iota_i}\\
\mathscr{F}(X) \ar[r]_-{\psi} & Y}
\end{equation}
is commutative. This shows that  $\phi_i: (X_i,\iota_i\circ \psi_i) \to (X,\psi)$ is a morphism in $\mathscr{F}/Y$, and hence, that
$s_{X,\psi} = \phi_i^{*}s_{X_i,\iota_i \circ \psi_i}$;
similarly for $s'$. By assumption, we have $s_{X_i,\iota_i \circ \psi_i}=s'_{X_i,\iota_i \circ \psi_i}$. This shows that $s=s'$.

For surjectivity, we let $t=(t_i)_{i\in I}$, with $t_i=\{t_{\tilde X,\tilde \psi}\}_{(\tilde X,\tilde\psi)\in \mathscr{I}_{Y_i}}\in \mathscr{F}_{*}\mathcal{F}(Y_i)$, i.e., $t_{\tilde X,\tilde \psi}\in \mathcal{F}(\tilde X)$. 
Let $(X,\psi)\in \mathscr{F}/Y$, so that we have a morphism $\psi: \mathscr{F}(X) \to Y$. Again, we choose for each $i\in I$ an object $X_i$ and an isomorphism $\gamma_i$, and induce $\phi_i$ and $\psi_i$ as described above, resulting in the commutative diagram \cref{klhjslfsd}. Then, $(X_i,\psi_i)\in \mathscr{F}/{Y_i}$, and we may consider the family $(t_{X_i,\psi_i})_{i\in I} \in \prod_{i\in I} \mathcal{F}(X_i)$. Its unique preimage under  the  map
\begin{equation*}
 \mathcal{F}(X) \to  \prod_{i\in I}\mathcal{F}(X_i): a \mapsto (\phi_i^{*}a)_{i\in I} \text{,}
\end{equation*}
which is a bijection  because $\mathcal{F}$ is extensive, defines an element $\tilde t_{X,\psi}\in \mathcal{F}(X)$, i.e., $\phi_i^{*}\tilde t_{X,\psi} =t_{X_i,\psi_i}$. One can show that $\tilde t_{X,\psi}$  does not depend on the choices of the objects $X_i$ and the morphisms $\gamma_i$.
\begin{comment}
Indeed, for other choices $(X_i',\gamma_i')$, and the corresponding morphisms $\psi_i'$, $\phi_i'$, and the resulting element $\tilde t'_{X,\psi}$, we have the unique isomorphisms $\chi_i: X_i \to X_i'$, and we see that it is a morphism $(X_i,\psi_i) \to (X_i',\psi_i')$ in $\mathscr{I}_{Y_i}$. Hence, $t_{X_i,\psi_i}=\chi_i^{*}t_{X_i',\psi_i'}$. Note that $\phi_i = \phi_i' \circ \chi_i$; hence, 
\begin{equation*}
\phi_i^{*}\tilde t_{X,\psi}=t_{X_i,\psi_i} = \chi_i^{*}t_{X_i',\psi_i'}=\chi_i^{*}\phi_i'^{*}\tilde t'_{X,\psi}=\phi_i^{*}\tilde t'_{X,\psi}\text{.}
\end{equation*}
This shows that $\tilde t_{X,\psi}=\tilde t'_{X,\psi}$. 
\end{comment} 
Similarly, one can show that  $s:=\{\tilde t_{X,\psi}\}_{(X,\psi)\in \mathscr{F}/Y}$ is an element in $\mathscr{F}_{*}\mathcal{F}(Y)$. 
\begin{comment}
Indeed, suppose $\phi:(X,\psi) \to (X',\psi')$ is a morphism in $\mathscr{F}/Y$. We have to show that $\phi^{*}\tilde t_{X',\psi'}=\tilde t_{X,\psi}$. The functoriality of the pullback provides morphisms $Z_i \to Z_i'$, and thus, for arbitrary choices of $(X_i,\gamma_i)$ and $(X_i',\gamma_i')$, morphisms $\chi_i: X_i \to X_i'$, such that $\gamma_i = \gamma_i' \circ \mathscr{F}(\chi_i)$, leading to  an equality $\psi_i=\psi_i' \circ \mathscr{F}(\chi_i)$, which in turn implies that $\chi_i: (X_i,\psi_i) \to (X_i',\psi_i')$ is a morphism in $\mathscr{I}_{Y_i}$, and hence, $\chi_i^{*}t_{X_i',\psi_i'}=t_{X_i,\psi_i}$. We have a commutative diagram
\begin{equation*}
\alxydim{}{\mathscr{F}(X_i)  \ar@/^2pc/[rr]^{\psi_i}\ar[d]_{\mathscr{F}(\phi_i)}\ar[r]^{\mathscr{F}(\chi_i)} &\mathscr{F}(X'_i) \ar[d]_{\mathscr{F}(\phi'_i)} \ar[r]_-{\psi'_i} & Y_i \ar[d]^{\iota_i}\\ \mathscr{F}(X) \ar@/_2pc/[rr]_{\psi} \ar[r]_{\mathscr{F}(\phi)}
& \mathscr{F}(X') \ar[r]^-{\psi'} & Y}
\end{equation*}
We check that
\begin{equation*}
\phi_i^{*}\phi^{*}\tilde t_{X',\psi'} =\chi_i^{*}\phi_i'^{*}\tilde t_{X',\psi'}=\chi_i^{*}t_{X_i',\psi_i'} =t_{X_i,\psi_i}=\phi_i^{*}t_{X,\psi}\text{;}
\end{equation*}
this shows the claim. 
\end{comment}
We claim that $s$ is a preimage of $t$ under the map $\alpha$. To see this, we consider an object $(\tilde X,\tilde \psi)\in \mathscr{F}/{Y_i}$ for some fixed index $i\in I$. We consider $\psi := \iota_i \circ \tilde\psi$; then, $(\tilde X,\psi)$ is an object in $\mathscr{F}/Y$. We may now assume that the choices $X_i$ and $\gamma_i$ are such that $Z_i = \mathscr{F}(\tilde X)$, $\gamma_i=\id$, and $X_i=\tilde X$. Note that the corresponding morphisms are $\phi_i=\id$ and $\psi_i=\tilde\psi$; hence,
\begin{equation*}
\tilde t_{\tilde X,\psi}=\phi_i^{*}\tilde t_{X,\psi}=t_{X_i,\psi_i}=t_{\tilde X,\tilde \psi}\text{.}
\end{equation*}
This shows that
\begin{equation*}
\alpha_i(s)=\{\tilde t_{\tilde X,\iota_i \circ \tilde \psi}\}_{(\tilde X,\tilde \psi)\in \mathscr{I}_{Y_i}} =\{t_{\tilde X,\tilde \psi}\}_{(\tilde X,\tilde \psi)\in \mathscr{I}_{Y_i}}= t_i\text{.}
\end{equation*}
This completes the proof. 
\end{proof}

Via \cref{comparison-sheaf}, this can be used to deduce the following from the classical theory.

\begin{theorem}
\label{extension-preserves-sheaf-1}
Suppose $\mathscr{F}:(\mathscr{C}_1,T_1) \to (\mathscr{C}_2,T_2)$ is a cocontinuous and coproduct-inverting functor between sites. If $\mathcal{F}$ is a $T_1$-sheaf on $\mathscr{C}_1$, then $\mathscr{F}_{*}\mathcal{F}$ is a $T_2$-sheaf on $\mathscr{C}_2$.  
\end{theorem}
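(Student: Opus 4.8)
The plan is to separate the two defining conditions of a $T_2$-sheaf -- extensiveness and descent with respect to universal $T_2$-locally split morphisms -- establish each from a result proved earlier, and recombine them through \cref{comparison-sheaf}. It is convenient to pass to the singleton completions $\Uni(T_1)$ and $\Uni(T_2)$: since the sheaf notion is invariant under equivalences of Grothendieck topologies and $T_i\sim\Uni(T_i)$ (\cref{universal-completion:a}), a presheaf is a $T_i$-sheaf if and only if it is a $\Uni(T_i)$-sheaf, and by the remark following \cref{cocontinuous-functor} cocontinuity of $\mathscr{F}$ with respect to $T_1$ and $T_2$ coincides with traditional cocontinuity with respect to $\Uni(T_1)$ and $\Uni(T_2)$. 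The extensiveness of $\mathscr{F}_{*}\mathcal{F}$ is then immediate from \cref{coproduct-inverting-functors-preserve-extensiveness}: $\mathcal{F}$ is extensive because it is a $T_1$-sheaf, $\mathscr{F}$ inverts coproducts by hypothesis, and $\mathscr{C}_2$ is extensive.

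For descent I would argue in three moves. First, because $\mathcal{F}$ is a $\Uni(T_1)$-sheaf and $\Uni(T_1)$, being singleton, is singletonizable, the equivalence (i) $\Leftrightarrow$ (ii) of \cref{comparison-sheaf} shows that $\mathcal{F}$ is a traditional $\Uni(T_1)$-sheaf. Second, the classical fact that the right Kan extension along a traditionally cocontinuous functor sends traditional sheaves to traditional sheaves yields that $\mathscr{F}_{*}\mathcal{F}$ is a traditional $\Uni(T_2)$-sheaf. Third, $\Uni(T_2)$ is likewise singleton, hence singletonizable, and $\mathscr{F}_{*}\mathcal{F}$ is extensive by the first paragraph, so the implication (ii) $\Rightarrow$ (i) of \cref{comparison-sheaf} gives that $\mathscr{F}_{*}\mathcal{F}$ is a $\Uni(T_2)$-sheaf, that is, a $T_2$-sheaf.

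The genuine mathematical content resides in the two imported inputs -- \cref{coproduct-inverting-functors-preserve-extensiveness} for extensiveness and the classical cocontinuity theorem for descent -- so the remaining work is to align hypotheses rather than to compute. The point that demands care, and the reason the classical theorem cannot simply be quoted, is that our sheaves carry an additional extensiveness clause; this is what forces extensiveness and descent to be run as two independent threads and reunited through the singletonizability equivalence of \cref{comparison-sheaf} at both the source and the target. I expect the only subtle hypothesis to be that $\mathscr{C}_2$ is extensive, which \cref{coproduct-inverting-functors-preserve-extensiveness} requires; everything else is bookkeeping.
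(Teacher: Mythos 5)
Your proof is correct and follows essentially the same route as the paper: pass to the universal completions, invoke the classical cocontinuity result to conclude that $\mathscr{F}_{*}\mathcal{F}$ is a traditional $\Uni(T_2)$-sheaf, obtain extensiveness from \cref{coproduct-inverting-functors-preserve-extensiveness}, and recombine the two threads via \cref{comparison-sheaf}. Your observation that extensiveness of $\mathscr{C}_2$ is needed for \cref{coproduct-inverting-functors-preserve-extensiveness} yet is not listed among the theorem's hypotheses is a fair catch, but it applies equally to the paper's own proof.
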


\begin{proof}
Since $\mathcal{F}$ is a $T_1$-sheaf, it is also a $\Uni(T_1)$-sheaf, and since $\Uni(T_1)$ is singleton, a traditional $\Uni(T_1)$-sheaf by \cref{comparison-sheaf}. Moreover, that $\mathcal{F}$ is cocontinuous implies that it is cocontinuous in the traditional sense for $\Uni(T_1)$ and $\Uni(T_2)$. Thus,  the classical result (e.g., \cite[Lemma 7.20.2]{stacks-project}) shows that $\mathscr{F}_{*}\mathcal{F}$ is a traditional $\Uni(T_2)$-sheaf. Because $\mathscr{F}$ inverts coproducts, $\mathscr{F}_{*}\mathcal{F}$ is extensive by \cref{coproduct-inverting-functors-preserve-extensiveness}. Hence, again by  \cref{comparison-sheaf}, $\mathscr{F}_{*}\mathcal{F}$ is a $T_2$-sheaf.   
\end{proof}

\begin{corollary}
\label{adjunction-sheaves}
Suppose $\mathscr{F}$ is continuous, cocontinuous, preserves and inverts coproducts. Then, the  adjunction of \cref{adjunction-presheaves} restricts to an adjunction between sheaves,
\begin{equation*}
\mathscr{F}^{*}:\Sh(\mathscr{C}_2,T_2) \leftrightarrows\ \Sh(\mathscr{C}_1,T_1): \mathscr{F}_{*}\text{.} 
\end{equation*}
\end{corollary}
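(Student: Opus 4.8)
The plan is to combine the two sheaf-preservation results proved above with the presheaf-level adjunction of \cref{adjunction-presheaves}, and then invoke the general principle that an adjunction restricts to any pair of full subcategories preserved by its two functors.

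First I would verify that the hypotheses line up with the two preservation statements. Since $\mathscr{F}$ is continuous and preserves coproducts, the proposition above (asserting that a continuous, coproduct-preserving functor pulls $T_2$-sheaves back to $T_1$-sheaves) shows that $\mathscr{F}^{*}$ restricts to a functor $\Sh(\mathscr{C}_2,T_2) \to \Sh(\mathscr{C}_1,T_1)$. Since $\mathscr{F}$ is cocontinuous and inverts coproducts, \cref{extension-preserves-sheaf-1} shows that $\mathscr{F}_{*}$ restricts to a functor $\Sh(\mathscr{C}_1,T_1) \to \Sh(\mathscr{C}_2,T_2)$. Thus the four hypotheses are precisely those needed for both adjoints to preserve the sheaf subcategories.

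Next I would restrict the adjunction. Because each $\Sh(\mathscr{C}_i,T_i)$ is a \emph{full} subcategory of $\PSh(\mathscr{C}_i)$, for a $T_2$-sheaf $\mathcal{G}$ and a $T_1$-sheaf $\mathcal{F}$ the natural bijection
\begin{equation*}
\mathrm{Hom}_{\PSh(\mathscr{C}_1)}(\mathscr{F}^{*}\mathcal{G},\mathcal{F}) \cong \mathrm{Hom}_{\PSh(\mathscr{C}_2)}(\mathcal{G},\mathscr{F}_{*}\mathcal{F})
\end{equation*}
underlying \cref{adjunction-presheaves} has both sides equal to the corresponding sheaf hom-sets, since by the previous step $\mathscr{F}^{*}\mathcal{G}$ and $\mathscr{F}_{*}\mathcal{F}$ already lie in the sheaf subcategories. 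Equivalently, the counit $\varepsilon$ and unit $\eta$ of \cref{adjunction-presheaves} restrict: for a $T_2$-sheaf $\mathcal{G}$ the component $\eta_{\mathcal{G}}\colon \mathcal{G} \to \mathscr{F}_{*}(\mathscr{F}^{*}\mathcal{G})$ is a morphism in $\Sh(\mathscr{C}_2,T_2)$, and for a $T_1$-sheaf $\mathcal{F}$ the component $\varepsilon_{\mathcal{F}}\colon \mathscr{F}^{*}(\mathscr{F}_{*}\mathcal{F}) \to \mathcal{F}$ is a morphism in $\Sh(\mathscr{C}_1,T_1)$. Naturality and the two triangle identities are inherited verbatim from the presheaf adjunction.

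I do not anticipate a genuine obstacle here: the argument is purely formal once the two preservation results are available. The only things to keep straight are matching each hypothesis to the correct adjoint --- continuity and coproduct-preservation for $\mathscr{F}^{*}$, cocontinuity and coproduct-inversion for $\mathscr{F}_{*}$ --- and noting that fullness of the sheaf subcategories makes the restriction of the hom-set bijection, and hence of the adjunction, automatic.
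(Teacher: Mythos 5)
Your argument is correct and is exactly the route the paper intends: the corollary is stated without proof precisely because it follows immediately from the preceding proposition (continuity plus coproduct-preservation makes $\mathscr{F}^{*}$ preserve sheaves) and \cref{extension-preserves-sheaf-1} (cocontinuity plus coproduct-inversion makes $\mathscr{F}_{*}$ preserve sheaves), together with the formal fact that an adjunction restricts to full subcategories preserved by both adjoints. Nothing is missing.
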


Since we guaranteed already via the assumptions in  \cref{coproduct-inverting} and \cref{epsilon-iso} that the counit $\varepsilon$ of this adjunction is an isomorphism, the only difference to an adjoint equivalence is that the unit $\eta$ is an isomorphism.  

\begin{lemma}
The presheaf morphism
$\eta_{\mathcal{G}}$ is an isomorphism if $\mathcal{G}$ is a $T_2$-sheaf,  $\mathscr{F}$ has dense image
and $\mathscr{C}_2$ is extensive.
\end{lemma}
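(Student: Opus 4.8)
The plan is to prove that $\eta_{\mathcal{G}}$ is a bijection at each object $Y\in\mathscr{C}_2$, by covering $Y$ with images of $\mathscr{F}$ and reducing every comparison to the compatibility conditions built into the limit $\mathscr{F}_{*}(\mathscr{F}^{*}\mathcal{G})(Y)=\lim\mathscr{P}_Y^{*}(\mathscr{F}^{*}\mathcal{G})$, whose elements are families $\{s_{X,\psi}\}_{(X,\psi)\in\mathscr{F}/Y}$ with $s_{X,\psi}\in\mathcal{G}(\mathscr{F}(X))$ and $\mathscr{F}(f)^{*}s_{X_2,\psi_2}=s_{X_1,\psi_1}$ along morphisms of $\mathscr{F}/Y$. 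Fix $Y$. By \cref{dense-image} I obtain a family $(U_i)_{i\in I}$ in $\mathscr{C}_1$ and morphisms $\phi_i:\mathscr{F}(U_i)\to Y$ such that $P:=\coprod_i\mathscr{F}(U_i)$ exists and $\pi:P\to Y$ is universal $T_2$-locally split. Two facts will be used throughout: since $\mathcal{G}$ is a $T_2$-sheaf it satisfies descent along every universal $T_2$-locally split morphism (\cref{sheaf}), so each such pullback map is injective and a section of the base is recovered from any descent datum; and since $\mathscr{C}_2$ is extensive and $\mathcal{G}$ is extensive, coproducts are stable under pullback while $\mathcal{G}$ sends them to products, so equalities in $\mathcal{G}\bigl(\coprod_k\mathscr{F}(A_k)\bigr)$ may be tested componentwise.

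The workhorse will be the following observation. Let $V\in\mathscr{C}_2$, let $(X_1,\psi_1),(X_2,\psi_2)\in\mathscr{F}/Y$, and let $g:V\to\mathscr{F}(X_1)$ and $h:V\to\mathscr{F}(X_2)$ satisfy $\psi_1\circ g=\psi_2\circ h$. Then for every family $s=\{s_{X,\psi}\}$ in the limit one has $g^{*}s_{X_1,\psi_1}=h^{*}s_{X_2,\psi_2}$ in $\mathcal{G}(V)$. To prove it, I would apply \cref{dense-image} again to $V$, getting a universal $T_2$-locally split $\rho:\coprod_l\mathscr{F}(A_l)\to V$; by descent and extensivity it suffices to test after each injection $\theta_l:\mathscr{F}(A_l)\to V$. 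Because $\mathscr{F}$ is full and faithful, $g\circ\theta_l=\mathscr{F}(g_l)$ and $h\circ\theta_l=\mathscr{F}(h_l)$ for unique $g_l:A_l\to X_1$ and $h_l:A_l\to X_2$, and the hypothesis $\psi_1\circ g=\psi_2\circ h$ turns both into morphisms in $\mathscr{F}/Y$ out of $(A_l,\kappa_l)$ with $\kappa_l:=\psi_1\circ g\circ\theta_l=\psi_2\circ h\circ\theta_l$. The limit compatibility then gives $\theta_l^{*}g^{*}s_{X_1,\psi_1}=\mathscr{F}(g_l)^{*}s_{X_1,\psi_1}=s_{A_l,\kappa_l}=\mathscr{F}(h_l)^{*}s_{X_2,\psi_2}=\theta_l^{*}h^{*}s_{X_2,\psi_2}$, as needed.

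Injectivity is then immediate: if $\psi^{*}u=\psi^{*}u'$ for all $(X,\psi)$, then $\phi_i^{*}u=\phi_i^{*}u'$ for all $i$, hence $\pi^{*}u=\pi^{*}u'$ by extensivity, and $u=u'$ by injectivity of $\pi^{*}$. For surjectivity, I start from a family $s=\{s_{X,\psi}\}$. By extensivity the elements $s_{U_i,\phi_i}$ assemble to $a\in\mathcal{G}(P)$ with $\iota_i^{*}a=s_{U_i,\phi_i}$. Decomposing $P\times_Y P\cong\coprod_{i,j}\mathscr{F}(U_i)\times_Y\mathscr{F}(U_j)$ (extensivity, $\pi$ universal) and applying the observation with $V=\mathscr{F}(U_i)\times_Y\mathscr{F}(U_j)$, the two projections, and $(U_i,\phi_i),(U_j,\phi_j)$ (whose compatibility $\phi_i\circ\pr_1=\phi_j\circ\pr_2$ is the pullback property) shows $a$ is a descent datum for $\pi$. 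Descent yields a unique $t\in\mathcal{G}(Y)$ with $\pi^{*}t=a$, i.e. $\phi_i^{*}t=s_{U_i,\phi_i}$. To verify $\eta_{\mathcal{G}}(t)=s$, i.e. $\psi^{*}t=s_{X,\psi}$ for arbitrary $(X,\psi)$, I pull back along the universal $T_2$-locally split $\pr_1:\mathscr{F}(X)\times_Y P\to\mathscr{F}(X)$ (\cref{locally-split-morphisms:d}); by descent and the decomposition $\mathscr{F}(X)\times_Y P\cong\coprod_i\mathscr{F}(X)\times_Y\mathscr{F}(U_i)$ it suffices to compare the two sides on each factor $W_i$ with projections $r_1,r_2$. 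The left side equals $(\psi\circ r_1)^{*}t=(\phi_i\circ r_2)^{*}t=r_2^{*}s_{U_i,\phi_i}$, and the observation applied to $V=W_i$, $g=r_1$, $h=r_2$, $(X,\psi),(U_i,\phi_i)$ identifies this with $r_1^{*}s_{X,\psi}$.

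The main obstacle is conceptual rather than computational: the slice category $\mathscr{F}/Y$, and hence the family $s$, records only objects in the image of $\mathscr{F}$, whereas the fibre products $\mathscr{F}(U_i)\times_Y\mathscr{F}(U_j)$ and $\mathscr{F}(X)\times_Y\mathscr{F}(U_i)$ controlling descent generally lie outside this image. The observation is exactly the device bridging this gap, and its proof is where all three hypotheses are needed simultaneously—dense image to cover such fibre products by images, full faithfulness to convert the covering morphisms into morphisms of $\mathscr{F}/Y$, and the sheaf/extensivity conditions on $\mathcal{G}$ to descend and to argue componentwise. The one point requiring care is that the coproduct decompositions of these fibre products, and the existence of all the double fibre products invoked, genuinely follow from extensivity (stability of coproducts under pullback) together with the universality of $\pi$ and of the maps $\psi$.
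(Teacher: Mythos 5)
Your proof is correct and follows essentially the same route as the paper's: the same injectivity argument via the dense-image covering of $Y$, and the same surjectivity strategy of assembling the $s_{U_i,\phi_i}$ into a descent datum on $\coprod_i\mathscr{F}(U_i)$ and verifying $\psi^{*}t=s_{X,\psi}$ by descending along $\mathscr{F}(X)\times_Y P\to\mathscr{F}(X)$. The only difference is organizational: you abstract into a single reusable ``workhorse observation'' the step (cover a fibre product by images of $\mathscr{F}$, use full faithfulness to land in $\mathscr{F}/Y$, invoke the limit compatibility, descend back) that the paper carries out inline twice, which is a tidy improvement but not a different argument.
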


\begin{proof}
We recall that at an object $Y\in \mathscr{C}_2$ the map $\eta_{\mathcal{G}}|_{Y}:\mathcal{G}(Y) \to \mathscr{F}_{*}(\mathscr{F}^{*}\mathcal{G})(Y)$  sends $s\in \mathcal{G}(Y)$ to the family $\{s_{X,\psi}\}_{(X,\psi)\in \mathscr{F}/Y}$ with $s_{X,\psi} := \psi^{*}s \in \mathcal{G}(\mathscr{F}(X))$. For injectivity, we assume $s,s'\in \mathcal{G}(Y)$ such that $\psi^{*}s=\psi^{*}s'$ for all $X\in \mathscr{C}_1$ and all $\psi: \mathscr{F}(X) \to Y$. Since $\mathscr{F}$ has dense image, there exist $U_i \in \mathscr{C}_1$ and morphisms $\psi_i: \mathscr{F}(U_i) \to Y$ such that
\begin{equation*}
\psi:\coprod_{i\in I} \mathscr{F}(U_i) \to Y
\end{equation*} 
is universal  $T_2$-locally split. By assumption, we have $\psi_i^{*}s=\psi_i^{*}s'$ for all $i\in I$. This means that
\begin{equation*}
\{\psi_i^{*}s\}_{i\in I} = \{\psi_i^{*}s'\}_{i\in I} \in \prod_{i\in I}\mathcal{G}(\mathscr{F}(U_i))=\mathcal{G}(\coprod_{i\in I} \mathscr{F}(U_i))\text{.}
\end{equation*}
We also find that $\psi^{*}s=\{\psi_i^{*}s\}_{i\in I}$, since $\pr_i(\psi^{*}s)=\iota_i^{*}\psi^{*}s=\psi_i^{*}s$. This shows that $\psi^{*}s=\psi^{*}s'$, and hence, since $\mathcal{G}$ is a $T_2$-sheaf and  $\psi$ is universal  $T_2$-locally split,  $s=s'$. 

For surjectivity, we consider an element $\{s_{X,\psi}\}_{(X,\psi)\in \mathscr{F}/Y}$ in $\mathscr{F}_{*}(\mathscr{F}^{*}\mathcal{G})(Y)$, where the indices are morphisms $\psi: \mathscr{F}(X) \to Y$, and $s_{X,\psi}\in \mathcal{G}(\mathscr{F}(X))$. Since $\mathscr{F}$ has dense image, there exist $U_i\in \mathscr{C}_1$ and morphisms $\psi_i: \mathscr{F}(U_i) \to Y$ such that
\begin{equation*}
\psi:\coprod_{i\in I} \mathscr{F}(U_i) \to Y
\end{equation*} 
is universal  $T_2$-locally split. We note that the fibre product
\begin{equation*}
\alxydim{}{\mathscr{F}(U_i) \times_Y \mathscr{F}(U_j) \ar[r]^-{\pr_2}\ar[d]_-{\pr_1} & \mathscr{F}(U_j) \ar[d]^{\psi_j} \\ \mathscr{F}(U_i) \ar[r]_{\psi_i} & Y}
\end{equation*}
exists for all $i,j\in I$ because $\psi$ is universal and $\mathscr{C}_2$ is extensive. 
\begin{comment}
Indeed, it can be constructed as an iterated pullback: first, we pull back $\psi$ along $\psi_i$, and then the coproduct injection $\iota_j$. \end{comment}
Again, since $\mathscr{F}$ has dense image, there exist $V_k\in \mathscr{C}_1$, with $k\in I_{ij}$, and morphisms $\pi_k: \mathscr{F}(V_k) \to \mathscr{F}(U_i) \times_Y \mathscr{F}(U_j)$ such that
\begin{equation*}
\pi: \coprod_{k\in I_{ij}} \mathscr{F}(V_k) \to \mathscr{F}(U_i) \times_Y \mathscr{F}(U_j)
\end{equation*}
is universal $T_2$-locally split. Since $\mathscr{F}$ is full and faithful, there exist unique morphisms $g_{k,1}: V_k \to U_i$ and $g_{k,2}:V_k \to U_j$ such that $\mathscr{F}(g_{k,a})= \pr_a \circ \pi_k$ for $a=1,2$, i.e., the diagram
\begin{equation*}
\alxydim{}{\mathscr{F}(V_k) \ar@/^1pc/[drr]^{\mathscr{F}(g_{k,2})} \ar@/_1pc/[ddr]_{\mathscr{F}(g_{k,1})} \ar[dr]_{\pi_k}\\&\mathscr{F}(U_i) \times_Y \mathscr{F}(U_j) \ar[r]^-{\pr_2}\ar[d]_-{\pr_1} & \mathscr{F}(U_j) \ar[d]^{\psi_j} \\ &\mathscr{F}(U_i) \ar[r]_{\psi_i} & Y}
\end{equation*}
is commutative.
We define $\psi_{k,a} : \mathscr{F}(V_k) \to Y$ by $\psi_{k,1}:= \psi_i \circ \pr_1 \circ \pi_k$ and $\psi_{k,2} := \psi_j \circ \pr_2 \circ \pi_k$,  and note that $\tilde\psi_k :=\psi_{k,1}=\psi_{k,2}$. Thus, we have $\mathscr{F}(g_{k,1})^{*}s_{U_i,\psi_i} =s_{V_k,\tilde \psi_k}=\mathscr{F}(g_{k,2})^{*}s_{U_j,\psi_j}$ and hence 
\begin{equation*}
\pi_k^{*}\pr_1^{*}s_{U_i,\psi_i} = \mathscr{F}(g_{1,k})^{*}s_{U_i,\psi_i}=\mathscr{F}(g_{2,k})^{*}s_{U_j,\psi_j}=\pi_k^{*}\pr_2^{*}s_{U_j,\psi_j}\text{.}
\end{equation*} 
Thus, we have
\begin{equation*}
(\pi_k^{*}\pr_1^{*}s_{U_i,\psi_i} )_{k\in I_{ij}} =(\pi_k^{*}\pr_2^{*}s_{U_j,\psi_j})_{k\in I_{ij}} \in \prod_{k\in I_{ij}}\mathcal{G}(\mathscr{F}(V_k))\text{,}
\end{equation*}
and hence,
\begin{equation*}
\pi^{*}\pr_1^{*}s_{U_i,\psi_i} =\pi^{*}\pr_2^{*}s_{U_j,\psi_j} \in \mathcal{G}(\coprod_{k\in I_{ij}} \mathscr{F}(V_k))\text{.} 
\end{equation*}
As $\pi$ is universal $T_2$-locally split and $\mathcal{G}$ is a $T_2$-sheaf, this implies $\pr_1^{*}s_{U_i,\psi_i}=\pr_2^{*}s_{U_j,\psi_j}$, for all $i,j\in I$. Thus, we have
\begin{equation*}
(\pr_1^{*}s_{U_i,\psi_i})_{i,j\in I} = (\pr_2^{*}s_{U_j,\psi_j})_{i,j\in I} \in \prod_{i,j\in I} \mathcal{G}(\mathscr{F}(U_i) \times_Y \mathscr{F}(U_j))\text{.}
\end{equation*}
\begin{comment}
We note  that the maps 
\begin{equation*}
\iota_i \times \iota_j:\mathscr{F}(U_i) \times_Y \mathscr{F}(U_j)\to \coprod_{i\in I} \mathscr{F}(U_i) \times_Y \coprod_{i\in I} \mathscr{F}(U_i)
\end{equation*}
are coproduct injections. 
This comes from the general rule
\begin{equation*}
(\coprod X_i) \times_YX = \coprod (X_i \times_Y X)
\end{equation*}
that follows from extensiveness of $\mathscr{C}_2$. 
\end{comment}
Since $\mathcal{G}$ is extensive, the map
\begin{equation*}
\mathcal{G}( \coprod_{i\in I} \mathscr{F}(U_i) \times_Y \coprod_{i\in I} \mathscr{F}(U_i)) \to \prod_{i,j\in I} \mathcal{G}(\mathscr{F}(U_i) \times_Y \mathscr{F}(U_j)): s \mapsto ((\iota_i \times \iota_j)^{*}s)_{i,j\in I} 
\end{equation*}
is a bijection.
We consider the given element $(s_{U_i,\psi_i})_{i\in I} \in \prod_{i\in I} \mathcal{G}(\mathfrak{\mathscr{F}}(U_i))$ and its unique preimage $\tilde s \in \mathcal{G}(\coprod_{i\in I} \mathscr{F}(U_i))$, i.e., $\iota_i^{*}\tilde s = s_{U_i,\psi_i}$. We note that $\pr_1^{*}s_{U_i,\psi_i}=(\iota_i \times \iota_j)^{*}\pr_1^{*}\tilde s$. 
Then, we obtain that
\begin{equation*}
\pr_1^{*}\tilde s= \pr_2^{*}\tilde s \in \mathcal{G}( \coprod_{i\in I} \mathscr{F}(U_i) \times_Y \coprod_{i\in I} \mathscr{F}(U_i))
\end{equation*}
Since, again, $\mathcal{G}$ is a sheaf and $\psi$ is universal $T_2$-locally split, there exists a unique $s\in \mathcal{G}(Y)$ such that $\tilde s=\psi^{*}s$. We claim that $s$ is a preimage of the given element $(s_{X,\psi})_{(X,\psi)\in \mathscr{F}/Y}$.

To see this claim let $(X_0,\psi_0)\in  \mathscr{F}/Y$ be arbitrary; we have to show that $s_{X_0,\psi_0}=\psi_0^{*}s$. Similarly to the procedure above, we consider the pullback diagram
\begin{equation*}
\alxydim{}{\mathscr{F}(X_0) \times_Y \mathscr{F}(U_i) \ar[r]^-{\pr_2}\ar[d]_-{\pr_1} &  \mathscr{F}(U_i) \ar[d]^{\psi_i} \\ \mathscr{F}(X_0) \ar[r]_{\psi_0} & Y\text{.}}
\end{equation*}
Since $\mathscr{F}$ has dense image, there exists, for each $i\in I$, a set $K_i$, objects $V_k\in \mathscr{C}_1$ and morphisms $\pi_{k}:\mathscr{F}(V_k) \to \mathscr{F}(X_0) \times_Y \mathscr{F}(U_i)$ such that
\begin{equation*}
\pi: \coprod_{k\in K_i} \mathscr{F}(V_k) \to \mathscr{F}(X_0) \times_Y \mathscr{F}(U_i)
\end{equation*}
is universal $T$-locally split. Since $\mathscr{F}$ is fully faithful, there exists a unique $g_{0,k}:V_k \to U_i$ such that $\mathscr{F}(g_{0,k})= \pr_2 \circ \pi_k$, and also a unique $g_k:V_k \to X_0$ such that $\mathscr{F}(g_k)=\pr_1 \circ \pi_k$.  
Then,
\begin{align*}
\pi_k^{*}\pr_1^{*}s_{X_0,\psi_0} =\mathscr{F}(g_k)^{*}s_{X_0,\psi_0} &=s_{V_k,\psi_0 \circ \mathscr{F}(g_k)}
=s_{V_k,\psi_i \circ \mathscr{F}(g_{0,k})}=\mathscr{F}(g_{0,k})^{*}s_{U_i,\psi_i}\\&=\pi_k^{*}\pr_2^{*}s_{U_i,\psi_i}=\pi_k^{*}\pr_2^{*}\iota_i^{*}\tilde s=\pi_k^{*}\pr_2^{*}\iota_i^{*}\psi^{*}s=\pi_k^{*}\pr_2^{*}\psi_i^{*}s=\pi_k^{*}\pr_1^{*}\psi_0^{*}s\text{.}
\end{align*}
Thus, we have
$(\pi_k^{*}\pr_1^{*}s_{X_0,\psi_0} )_{k\in K_i} =(\pi_k^{*}\pr_1^{*}\psi_0^{*}s)_{k\in K_i}$ and hence, since $\mathcal{G}$ is extensive,
\begin{equation*}
\pi^{*}\pr_1^{*}s_{X_0,\psi_0} =\pi^{*}\pr_1^{*}\psi_0^{*}s \in \mathcal{G}(\coprod_{k\in K_{i}} \mathscr{F}(V_k)) 
\end{equation*}
As $\pi$  and $\pr_1$ are universal $T_2$-locally split and $\mathcal{G}$ is a $T_2$-sheaf, we obtain $s_{X_0,\psi_0}=\psi_0^{*}s$. 
\end{proof}

We obtain the following main result, which has appeared in numerous versions in many references, e.g.
\cite[App. 4]{MacLane1992}, \cite[C2.2]{Johnstone2002}, \cite[\S 5]{Caramello2019}, \cite[Lem. 44]{metzler}.

\begin{theorem}
\label{comparison-lemma}
Suppose $\mathscr{F}:(\mathscr{C}_1,T_1) \to (\mathscr{C}_2,T_2)$ is continuous, cocontinuous, has dense image, and preserves and inverts coproducts, and suppose $\mathscr{C}_2$ is extensive. Then, the functors
\begin{equation*}
\mathscr{F}^{*}: \Sh(\mathscr{C}_2,T_2) \rightleftharpoons \Sh(\mathscr{C}_1,T_1): \mathscr{F}_{*}
\end{equation*}
establish an adjoint equivalence. 
\end{theorem}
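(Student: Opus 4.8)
The plan is purely to assemble results already in hand; essentially nothing new has to be proved, only organized. First I would observe that \cref{adjunction-sheaves} applies verbatim: the hypotheses that $\mathscr{F}$ is continuous, cocontinuous, and preserves and inverts coproducts guarantee that the presheaf adjunction $\mathscr{F}^{*} \dashv \mathscr{F}_{*}$ of \cref{adjunction-presheaves} restricts to an adjunction $\mathscr{F}^{*}:\Sh(\mathscr{C}_2,T_2) \leftrightarrows \Sh(\mathscr{C}_1,T_1): \mathscr{F}_{*}$ between the subcategories of sheaves (the relevant preservation statements being \cref{coproduct-inverting-functors-preserve-extensiveness} and \cref{extension-preserves-sheaf-1}). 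The two remaining hypotheses of the theorem -- that $\mathscr{F}$ has dense image and that $\mathscr{C}_2$ is extensive -- serve only to upgrade this adjunction to an adjoint equivalence.

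The standard criterion is that an adjunction is an adjoint equivalence precisely when both its unit and its counit are natural isomorphisms; since the unit and counit of the restricted adjunction are just the restrictions of $\eta$ and $\varepsilon$ to sheaves, it suffices to verify that $\varepsilon_{\mathcal{F}}$ and $\eta_{\mathcal{G}}$ are isomorphisms for every $T_1$-sheaf $\mathcal{F}$ and every $T_2$-sheaf $\mathcal{G}$. For the counit I would cite \cref{epsilon-iso}: it gives that $\varepsilon_{\mathcal{F}}$ is an isomorphism whenever $\mathscr{F}$ is full and faithful, and full faithfulness is built into the notion of inverting coproducts (\cref{coproduct-inverting}). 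Hence $\varepsilon$ is a natural isomorphism on all of $\PSh(\mathscr{C}_1)$, in particular on every $T_1$-sheaf.

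For the unit I would appeal to the preceding lemma, which establishes exactly that $\eta_{\mathcal{G}}$ is an isomorphism when $\mathcal{G}$ is a $T_2$-sheaf, $\mathscr{F}$ has dense image, and $\mathscr{C}_2$ is extensive -- precisely the hypotheses that remained unused. Thus $\eta$ restricts to a natural isomorphism on $\Sh(\mathscr{C}_2,T_2)$. With both the unit and the counit of the restricted adjunction now isomorphisms, the restricted adjunction is an adjoint equivalence, which is the assertion of the theorem.

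I would stress that no genuine obstacle remains at this stage: all of the real content is concentrated in the two ingredient results, namely the preceding lemma on the unit $\eta$ (whose surjectivity half is the lengthy descent computation carried out just above it) and \cref{epsilon-iso} on the counit $\varepsilon$, both of which have already been proved. The theorem itself is then a one-line bookkeeping consequence of the equivalence ``unit and counit are isomorphisms $\iff$ adjoint equivalence,'' so the only care needed is to confirm that each component is being evaluated on an object for which the corresponding ingredient lemma actually applies -- i.e. that $\mathscr{F}_{*}\mathcal{F}$ and $\mathscr{F}^{*}\mathcal{G}$ land in the sheaf subcategories, which is exactly what \cref{adjunction-sheaves} secured.
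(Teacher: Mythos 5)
Your proposal is correct and matches the paper's own (implicit) argument exactly: the paper likewise obtains the restricted adjunction from \cref{adjunction-sheaves}, notes that the counit is an isomorphism by \cref{epsilon-iso} together with the full faithfulness built into \cref{coproduct-inverting}, and then proves the lemma on $\eta_{\mathcal{G}}$ precisely so that the theorem follows from the ``unit and counit are isomorphisms'' criterion. Nothing is missing.
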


\section{Examples of sites}

\label{examples-of-sites}

\subsection{Smooth manifolds}

\label{smooth-manifolds}

The following lists some well-known facts about the category $\Man$ of smooth manifolds (finite-dimensional, Hausdorff, and second countable).
\begin{itemize}

\item
$\Man$ is neither complete nor cocomplete.

\item 
$\Man$ is extensive, with initial object the empty manifold.

\item
A smooth map is:
\begin{itemize}
\item 
universal if and only if it is a submersion.

\item
an epimorphism if and only if
it has a dense image.

\item
an effective epimorphism if and only if it is a submersion with dense image. 

\item
a universally effective epimorphism if and only if it is a surjective submersion. 
\end{itemize}

\item
The discrete Grothendieck topology consists of all
submersions.
\item
The canonical Grothendieck topology $T_{\Man}$ consists of all
surjective submersions. 
\end{itemize} 

\begin{example}
\label{GTs-on-Man}
We consider the following Grothendieck topologies on the category $\Man$:
\begin{itemize}

\item
$T_{op}$ -- the coverings are families  $(\pi_i: U_i \incl M)_{i\in I}$ of  embeddings, such that $(\pi_i(U_i))_{i\in I}$ is a cover of $M$ by open sets. 

\item
$T_{cop}$ -- the coverings are as above, but with $I$ countable.

\item
$T_{sld}$ -- the coverings are surjective local diffeomorphisms.

\item
$T_{susu}$ -- the coverings are surjective submersions; $T_{susu}=T_{\Man}$.
\begin{comment}
This is discussed in \cite{Nikolausb,Zhu2016}.
\end{comment}
\begin{comment}
If manifolds are modelled on locally convex vector space, the definition of a surjective submersion has to be adapted, see \cite[Def. 9.39]{Meyer2014}, \cite[Def. 4.4.8]{hamilton1}, \cite[§A]{Nikolausb}.
\end{comment}

\end{itemize}
In all cases, it is straightforward to verify the axioms of a Grothendieck topology. 
\end{example}

\begin{comment}
\begin{remark}
\begin{itemize}

\item
The definition of $T_{op}$ must be as given above, because diffeomorphisms have to be singleton covers. 

\item 
Good open covers, i.e. ones with contractible open sets and contractible intersections, do not form a Grothendieck topology because, for instance, not every diffeomorphism is of this kind. They only form a so-called \quot{coverage}.         

\end{itemize}
\end{remark}
\end{comment}

\begin{lemma}
\label{topologies-on-man}
All Grothendieck topologies in \cref{GTs-on-Man} are all equivalent; they are all subcanonical and local. 
\end{lemma}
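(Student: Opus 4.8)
The plan is to compute, for each topology $T$ in \cref{GTs-on-Man}, the class $\Uni(T)$ of universal $T$-locally split morphisms, and to show that in every case this class equals the class of surjective submersions. By \cref{universal-completion:c} (which states that $T_1 \sim T_2$ exactly when $\Uni(T_1) = \Uni(T_2)$) this yields all six equivalences at once. Subcanonicity then comes for free: $T_{susu} = T_{\Man}$ is the canonical topology $T_{\mathscr{C}}$, so each $T_i \sim T_{\mathscr{C}}$ gives $T_i \prec T_{\mathscr{C}}$, which is exactly the condition in \cref{subcanonical}.

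To carry out the computation I recall that in $\Man$ a morphism is universal precisely when it is a submersion, so I only need to decide which submersions $\pi:Y\to M$ are $T$-locally split. For $T_{susu}$ this class is the surjective submersions: a surjective submersion is $T_{susu}$-locally split via $(Y,\pi,\mathrm{id})$, and conversely any splitting $\pi\circ\rho=\pi'$ through a surjective $\pi'$ forces $\pi$ surjective. The remaining three reduce to comparing local sections. A submersion is $T_{op}$-locally split iff it admits local sections over an open cover $(U_i)$; since the $U_i$ cover $M$ this forces surjectivity, and conversely the submersion normal form supplies such sections for any surjective submersion. The topology $T_{cop}$ gives the same class, because $M$ is second countable and hence every open cover has a countable subcover carrying the sections along. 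For $T_{sld}$, from a countable open cover with local sections $\sigma_i:U_i\to Y$ I would form the surjective local diffeomorphism $\coprod_i U_i\to M$ — whose source is a manifold precisely because $I$ is countable — and assemble the $\sigma_i$ into a section; surjectivity is again forced in the converse direction. Thus all four classes coincide with the surjective submersions.

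For locality I would use that being local is invariant under $\sim$ (\cref{assumption-2.6}) and depends only on the class $\Uni(T)$, so it suffices to verify the condition of \cref{local-GT} for the common class of surjective submersions. So suppose the square with left arrow $f:V\to U$, bottom arrow $g:U\to X$ and right arrow $\pi:Y\to X$ is a pullback with $V=U\times_X Y$, and that $f$ and $g$ are surjective submersions; I must then show $\pi$ is one too. Surjectivity is a short diagram chase: for $x\in X$ pick $u\in g^{-1}(x)$ and then $v=(u,y)\in f^{-1}(u)$, so that $\pi(y)=g(u)=x$. For the submersion property at $y$, write $x=\pi(y)$ and $v=(u,y)$; since $g$ is a submersion the pullback is transversal, giving $T_v V = \{(a,b) : dg_u(a) = d\pi_y(b)\}$ with $df_v(a,b)=a$. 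Given $c\in T_x X$, surjectivity of $dg_u$ provides $a$ with $dg_u(a)=c$, and surjectivity of $df_v$ provides $b$ with $d\pi_y(b)=dg_u(a)=c$, so $d\pi_y$ is onto.

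The crux, and the only genuinely delicate point, is the coincidence of the four locally-split classes, where one must bridge the family-type topologies $T_{op},T_{cop}$ and the singleton topologies $T_{sld},T_{susu}$. This bridge rests entirely on second countability, which both supplies countable subcovers (yielding $T_{op}\sim T_{cop}$) and guarantees that $\coprod_i U_i$ is again a manifold (turning a countable open cover with sections into a genuine surjective local diffeomorphism). The locality computation, by contrast, is routine once the transversal tangent-space description of the pullback along the submersion $g$ is available.
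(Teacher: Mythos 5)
Your proposal is correct, and its core is the same as the paper's: everything hinges on identifying the universal locally split morphisms of all four topologies with the surjective submersions, using second countability (countable refinements, and the fact that a countable disjoint union of opens is again a manifold) to bridge the family topologies $T_{op},T_{cop}$ and the singleton ones $T_{sld},T_{susu}$. The organization differs slightly: you compute $\Uni(T)$ explicitly for each topology and invoke \cref{universal-completion}, whereas the paper runs a cycle $T_{op}\sim T_{cop}\prec T_{sld}\prec T_{susu}\prec T_{op}$ using the general machinery of \cref{equivalence-by-refinement}, \cref{singletonization} and \cref{checking-equivalence} — the same facts, packaged as refinements and singletonization rather than as a direct computation. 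The one genuine divergence is locality: the paper simply cites Meyer--Zhu (Prop.~9.40) for this, while you verify \cref{local-GT} directly with the transversal tangent-space description of the pullback along a submersion; your argument is sound (surjectivity of $d\pi_y$ follows from surjectivity of $df_v$ onto $T_uU$ combined with surjectivity of $dg_u$), and it has the merit of being self-contained, at the cost of being specific to $\Man$ rather than reusing the reference's general criterion.
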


\begin{proof}
We have inclusions $T_{cop}\subset T_{op}$ and $T_{sld}\subset T_{susu}$. The first one is an equivalence since every open cover of a smooth manifold has a countable refinement (\cref{equivalence-by-refinement}). Moreover, since the category $\Man$ admits countable disjoint unions (of manifolds with the same dimension),   $T_{cop}$ is singletonizable, and $T_{cop} \sim T_{cop}^{sing}$ by \cref{singletonization}.
Again, obviously, $T_{cop}^{sing}\subset T_{sld}$.
Thus, we have
\begin{equation*}
T_{op} \sim T_{cop} \prec T_{sld} \prec T_{susu}\text{.}
\end{equation*}
Since every surjective submersion is $T_{op}$-locally split, \cref{checking-equivalence} implies $T_{susu}\prec T_{op}$. This shows that all four Grothendieck topologies are equivalent. Since $T_{susu}=T_{\Man}$, $T_{susu}$ is subcanonical, and also singleton; hence, all four Grothendieck topologies are subcanonical. Locality is proved in \cite[Prop. 9.40]{Meyer2014}.
\end{proof}

\begin{remark}
Due to \cref{topologies-on-man}, all Grothendieck topologies in \cref{GTs-on-Man}  induce the same universal locally split maps, namely the class of surjective submersions $\Uni(T)=T_{susu}=T_{\Man}$, for all $T$ from  \cref{GTs-on-Man}  (cf. \cite[Example 3.9]{Roberts2012}).
\end{remark}

\begin{comment}
\begin{remark}
\begin{itemize}

\item 
$T_{susu}$ is terminal, while $T_{sld}$ is not terminal.

\item
$T_{susu}$ and $T_{sld}$ are subcanonical.
\item
$T_{susu}$ and $T_{sld}$ are weakly saturated \cite[Prop. 9.42]{Meyer2014}.
\end{itemize}
\end{remark}
\end{comment}

The following list shows that the geometric structures discussed in \cref{geometry-on-sites} and the sheaves discussed in \cref{sheaves}, specialize to the standard structures considered for the category of smooth manifolds.
\begin{enumerate}

\item 
$\Man$-categories (\cref{internal-category}) and $\Man$-groupoids are precisely the usual Lie categories and Lie groupoids: since source and target map are required to be universal, hence are  submersions, and are surjective anyway. Weak equivalences (\cref{weak-equivalence}) are the usual weak equivalences between Lie groupoids (see, e.g. \cite{lerman1,metzler,Hoyo2013}), and coincide with the weak equivalences in \cite[cf. Thm. 3.28]{Meyer2014} (because the Grothendieck topologies on $\Man$ are local).  

\item
The usual definition of a smooth principal bundle on a smooth manifold gives our notion of a $T_{op}$-local  principal bundle. By \cref{topologies-on-man,principal-bundles-equivalence} these are the same as the local principal bundles in any of the other  Grothendieck topologies of \cref{GTs-on-Man}. 

\item
Locally trivial bibundles in $\Man$ are precisely the smooth bibundles, as e.g. discussed in \cite{lerman1,metzler,Blohmann2008,Hoyo2013,Nikolaus}. The statements summarized in \cref{bicat-bibundles} were proved \quot{manually} in these sources, but now simply follow from the results of Meyer-Zhu.  

\begin{comment}
\item
$T_{\Man}$-anafunctors are precisely the smooth anafunctors (also seems to be correct).   
\end{comment}

\item
For sheaves on $\Man$ one typically uses the traditional sheaf condition of \cref{traditional-sheaf} for the Grothendieck topology $T_{op}$. Since $T_{op}$ is superextensive, every traditional sheaf is a $T_{op}$-sheaf in our sense, by \cref{comparison-sheaf}. Conversely, every $T_{op}$-sheaf is by \cref{topologies-on-man}   also a $T_{cop}$-sheaf, and since $T_{cop}$ is singletonizable, it is also a traditional sheaf for $T_{cop}$. But traditional sheaves w.r.t. open covers are the same as traditional sheaves for countable open covers, since every open cover is \quot{tautologically equivalent} to a countable refinement (\cite[Def. 7.8.2 (2) \& Lem. 7.8.4]{stacks-project}).
Thus, the traditional sheaves on $\Man$ are precisely our sheaves, for any of the four equivalent Grothendieck topologies  of \cref{GTs-on-Man}.
\end{enumerate}

\begin{warnung}
\label{traditional-sheaf-bad}
The \emph{traditional sheaf condition} leads \emph{not} to the same category of sheaves for all four Grothendieck topologies   of \cref{GTs-on-Man}.
 Indeed, a traditional $T_{susu}$-sheaf is not necessarily a traditional $T_{op}$-sheaf. For example, let $\mathcal{F}:\Man^{op} \to \Set$ be the constant presheaf that assigns to each manifold $M$ the set $\Z$ (or any other non-singleton set), and to every morphism the identity $\id_\Z$. We claim that this is a traditional $T_{susu}$-sheaf: if $\pi:Y \to M$ is a surjective submersion, then 
\begin{equation*}
\alxydim{}{\Z \ar[r] & \Z \arr[r] & \Z }
\end{equation*}  
with all maps identities, is an equalizer diagram. But  $\mathcal{F}$ is a not traditional $T_{op}$-sheaf, for if it was,  since $T_{op}$ is superextensive, \cref{traditional-sheaf-extensive} implies that $\mathcal{F}$ is extensive, in particular, $\mathcal{F}(\emptyset)=*$, a contradiction. 
This conflict is resolved by our modified sheaf condition that includes extensiveness: $\mathcal{F}$ is not extensive and hence neither a $T_{susu}$-sheaf nor a $T_{op}$-sheaf. 
\end{warnung}

\begin{comment}
\subsection{Fr\'echet manifolds}
\end{comment}

\subsection{Opens and submanifolds}

We will first consider   \emph{embedded submanifolds} $X \subset \R^{n}$, for all possible $n\in \N$. These form a small full subcategory $\Subman\subset \Man$, which is -- via Whitehead's embedding theorem -- equivalent to $\Man$. It is hence extensive, and a smooth map is universal / an epimorphism / an effective epimorphism / a universally effective epimorphism in $\Subman$ if and only if it is so in $\Man$. We may consider on $\Subman$ the Grothendieck topology $T_{susu}|_{\Subman}$, whose coverings are all surjective submersions between embedded submanifolds. 
\begin{comment}
If we have a pullback diagram
\begin{equation*}
\alxydim{@=2.5em}{f^{*}Y \ar[r] \ar[d]_{f^{*}\pi} & Y \ar[d]^{\pi} \\ W \ar[r]_{f} &X}
\end{equation*}
in $\Man$, where $\pi$ is surjective submersion  and $Y\subset \R^{m}$, $X\subset \R^{n}$, and $W\subset \R^{l}$ are objects in $\Subman$, then $f^{*}Y \subset W \times Y \subset \R^{l+m}$ is a  submanifold in a  submanifold, and hence a submanifold. 
\end{comment}
We denote this again by $T_{susu}$. Similarly, one can restrict any of the other equivalent Grothendieck topologies of \cref{GTs-on-Man} to $\Subman$ to obtain equivalent Grothendieck topologies there.    

Next we restrict to \emph{open submanifolds}, i.e. open subsets $U \subset \R^{n}$, yielding a full subcategory $\Open \subset \Subman$, which is, however, not equivalent to $\Subman$. $\Open$ is extensive, with $\emptyset \subset \R^{0}$ as the initial object. Smooth maps in $\Open$ are (effective) epimorphisms if they are so in $\Subman$. It is natural to consider the Grothendieck topology $T_{op}|_{\Open}$ of  all open covers, which we denote again by $T_{op}$.
\begin{comment}
If $X\subset \R^{n}$ is open, and $\{U_i \incl X\}_{i\in I}$ is an open cover, then $U_i \subset X\subset \R^{n}$ is open, too, and if $f:W \to X$ is any smooth map with $W\subset \R^{m}$ open, then $V_i:=f^{-1}(U_i)$ is a fibre product. Thus, the Grothendieck topology on $\Open$ consists of all open covers. 
\end{comment}
It is also possible to restrict $T_{susu}$ to $\Open$, but one has to be careful since a general surjective submersion in $\Open$ is not necessarily universal in $\Open$; to see this, consider the  diagram
\begin{equation*}
\alxydim{}{ & \R^2 \setminus \{0\} \ar[d]^{\|..\|} \\ \R^{0} \ar[r]_-{const_1} & \R \setminus \{0\}}
\end{equation*}
in $\Open$, whose vertical map is a surjective submersion, but whose limit (i.e., $S^1$) only exists in $\Subman$ but not in $\Open$.  
Hence, the coverings of $T_{susu}|_{\Open}$ are only \emph{some} surjective submersions, and we currently do not have an explicit answer. Yet, we can prove that $T_{susu}|_{\Open}\sim T_{op}$. Indeed, suppose a smooth map $\pi:Y \to U$ in $\Open$ is $T_{op}$-locally split. We choose a countable open cover $(U_n)_{n\in \N}$ of $U$ with local sections $\rho_i: U_i \to Y$, and consider 
\begin{equation}
\label{dsfsdfsdf}
\tilde Y:=\bigcup_{n\in \N} \{(x,r)\sep x\in U_n\text{ and } n-0.1<r<n+0.1\}\subset \R^{k+1}\text{,}
\end{equation}
a  union of countably many open sets in $\R^{k+1}$;  hence, an object in $\Open$. We have a smooth map $\rho:\tilde Y \to Y$ sending $(x,r)$ from the $n\in \N$ component to $\rho_n(x)$. Moreover, we consider the projection $\tilde\pi:\tilde Y \to U$ given by $(x,r) \mapsto x$, which is a surjective submersion, and satisfies $\pi \circ \rho=\tilde\pi$.
\begin{comment}
We have 
\begin{equation*}
(\pi\circ s)(x,r) =\pi(s_n(x))=x=\tilde\pi(x,r)\text{.}
\end{equation*}
\end{comment}
We claim that $\tilde\pi:\tilde Y \to U$ is universal in $\Open$; this shows that $\tilde\pi$ is a $T_{susu}|_{\Open}$-covering and that $\pi$ is $T_{susu}|_{\Open}$-locally split, thus, $T_{op}\prec T_{susu}|_{\Open}$. 
To show the claim, let $f:W \to U$ be a morphism in $\Open$. One can check that the fibre product $W \times_U \tilde Y$ is given by the object 
\begin{equation*}
\tilde W := \bigcup_{n\in \N} \{(y,r) \sep y\in f^{-1}(U_n)\text{ and }n-0.1 < r < n+0.1 \}
\end{equation*}  
in $\Open$
together with the maps $\tilde W \to W,(y,r)\mapsto y$ and $\tilde W \to \tilde Y,(y,r)\mapsto (f(y),r)$.
\begin{comment}
Indeed, if $Z \subset \R^{p}$ comes with smooth maps  $\alpha:Z \to W$ and $\beta:Z \to \tilde Y$ such that $f \circ \alpha = \tilde\pi \circ \beta$, then we write $\beta(z)=:(x_z,r_z)$ in the $n_z$-component, and define $Z \to \tilde W$ by $z \mapsto (\alpha(z),r_z)$. The condition tells us that $f(\alpha(z))=\tilde\pi(\beta(z))=\tilde\pi(x_z,r_z)=x_z \in U_{n_z}$ and hence $\alpha(z)\in f^{-1}(U_{n_z})$. The triangles commute; this shows that we have a fibre product.
\end{comment}
Conversely, since any surjective submersion splits over an open cover, we have $T_{susu}|_{\Open}\prec T_{op}$, and hence $T_{op}\sim T_{susu}|_{\Open}$. One can show like in \cref{smooth-manifolds} that traditional sheaves on $\Open$ (w.r.t. to open covers) are precisely our $T_{op}$-sheaves on $\Open$.

\begin{comment}

\subsection{Cartesian spaces}

 Finally, we come to the  category of \emph{cartesian spaces}, the full subcategory $\Cart\subset \Open$ with just  $\R^{n}$ and $\emptyset$ as the objects  (for all $n\in \N$). This is still extensive (although no nontrivial coproducts exist). 
The restriction $T_{op}|_{\Cart}$ consists, in the first place, of  coverings    $(\pi_i: \R^{n} \to \R^{n})_{i\in I}$  where the maps $\pi_i$ are universal (in $\Cart$) and  embeddings, such that $(\pi_i(U_i))_{i\in I}$ is a cover of $\R^{n}$ by open sets. We denote this again by $T_{op}$.
\end{comment}

\subsection{Presheaves and sheaves}

\label{presheaves-and-sheaves}

Let $\mathscr{C}$ be a category. We collect some well-known facts about the category $\PSh(\mathscr{C})$ of presheaves on $\mathscr{C}$:
\begin{itemize}
\item

it is complete and cocomplete; limits and colimits computed object-wise.
In particular, every morphism is universal. 
\item
it is extensive, with  initial object the empty presheaf (which assigns the empty set to every object, and the empty map to every morphism).  

\item
for a morphism of presheaves, the following properties are equivalent:  
\begin{itemize}

\item 
it is an epimorphism 

\item
it is an effective epimorphism

\item
it is a universally effective epimorphism
\begin{comment}
This is because every epimorphism is universal and effective, and epimorphisms are preserved under pullbacks. 
\end{comment}

\item
it is object-wise surjective.

\end{itemize}

\item
the canonical Grothendieck topology $T_{\PSh(\mathscr{C})}$ consists of all epimorphisms.

\end{itemize}
Let $T$ be a Grothendieck topology on $\mathscr{C}$. 
We induce on $\PSh(\mathscr{C})$ the   singleton Grothendieck topology $\Pre(T)$, in which a morphism $\phi: \mathcal{F} \to \mathcal{G}$ of presheaves is a covering if and only if  for every object $X\in \mathscr{C}$ and every element $\psi\in \mathcal{G}(X)$ there exists a covering $(\pi_i:U_i \to X)_{i\in I}$ and elements $\psi_i'\in \mathcal{F}(U_i)$ such that $\pi_i^{*}\psi=\phi_{U_i}(\psi_i')$. 
\begin{comment}
Equivalently, for every object $X\in \mathscr{C}$ and every morphism $\psi:  \Yo_X \to \mathcal{G}$ there exists a $T$-covering $(\pi_i:U_i \to X)_{i\in I}$ and morphisms $\psi_i': \Yo_{U_i} \to \mathcal{F}$ such that the diagrams
\begin{equation*}
\alxydim{}{\Yo_{U_i} \ar[r]^{\psi_i'} \ar[d]_{\Yo_{\pi_i}} & \mathcal{F} \ar[d]^{\phi} \\ \Yo_X \ar[r]_{\psi} & \mathcal{G}}
\end{equation*}
of morphisms of presheaves are  commutative for all $i\in I$. 
\end{comment}
It is straightforward to see that this indeed defines a singleton Grothendieck topology on $\PSh(\mathscr{C})$.
We remark that $\Pre(T)$ is in general not subcanonical, since its coverings are not necessarily (effective) epimorphisms.  
\begin{comment}
An (effective) epimorphism in $\PSh(\mathscr{C})$ has a direct lift $\psi': \Yo_X \to \mathcal{F}$. 
\end{comment}

Next we consider the full subcategory $\Sh(\mathscr{C},T)\subset \PSh(\mathscr{C})$ of $T$-sheaves on $\mathscr{C}$. For simplicity, we assume that $T$ is superextensive and singletonizable; then, by \cref{comparison-sheaf}, this is the ordinary category of sheaves. It is hence complete, cocomplete, and extensive, and the inclusion functor $\Sh(\mathscr{C},T) \to \PSh(\mathscr{C})$ preserves limits.  It is well-known that
\begin{equation}
\label{canonical-topology-on-Sh}
\Pre(T)|_{\Sh(\mathscr{C},T)} = T_{\Sh(\mathscr{C},T)}\text{.}
\end{equation}
Indeed,  for a morphism $\phi: \mathcal{F} \to \mathcal{G}$ between $T$-sheaves the following are equivalent, see, e.g. \cite[Lem. 7.11.2 \& 7.11.3]{stacks-project}: 
\begin{enumerate}

\item 
it is an  epimorphism in $\Sh(\mathscr{C},T)$ 

\item
it is an effective epimorphism in $\Sh(\mathscr{C},T)$

\item
it is a $\Pre(T)$-covering. 

\end{enumerate}
Since epimorphisms are stable under pullback in $\Sh(\mathscr{C},T)$, above conditions are also equivalent to: 
\begin{enumerate}

\item[4.]
it is a universally effective epimorphism in $\Sh(\mathscr{C},T)$.

\end{enumerate}
This proves \cref{canonical-topology-on-Sh}.

\subsection{Diffeological spaces}

\label{diffeological-spaces}

A comprehensive treatment of diffeological spaces from an analytical point of view is given in the book of Iglesias-Zemmour \cite{iglesias1}. More category-oriented treatments are \cite{baez6,Wu2012,Kihara2018}.
We recall that a \emph{diffeological space} is a set $X$ together with a collection of maps $c:U \to X$ called \emph{plots}, where $U \in \Open$, such that:
\begin{enumerate}[(1)]

\item
every constant map is a plot. 

\item
if $f:U \to V$ is a morphism in $\Open$ and $c: V \to X$ is a plot, then $c \circ f$ is a plot.

\item
if $(U_i)_{i\in I}$ is an open cover of $U \in \Open$, and $c_i:U_i \to X$ are plots such that $c_i|_{U_i \cap U_j} = c_j|_{U_i \cap U_j}$ for all $i,j\in I$, then there exists a unique plot $c:U \to X$ such that $c_i=c|_{U_i}$ for all $i\in I$.  

\end{enumerate} 
A \emph{smooth map} between diffeological spaces is a map $g: X \to Y$ that sends plots $c:Y \to X$ to plots $c \circ g$. The category of diffeological spaces is denoted by $\Diff$.

\begin{comment}
\begin{remark}
Every subduction is surjective.   
\end{remark}
\end{comment}

\begin{remark}
If $X$ is a diffeological space, we may consider the functor
\begin{equation*}
\mathcal{D}_X: \Open^{op} \to \Set 
\end{equation*}
that sends $U \in \Open$ to the set of plots with domain $U$, and a morphism $f:U \to V$ to the map $c \mapsto c \circ f$, which is well-defined due to (2). Due to (3), this is a traditional sheaf for $T_{op}$. Since $T_{op}$ is superextensive, $\mathcal{D}_X$ is a $T_{op}$-sheaf. This defines a fully faithful functor
\begin{equation*}
\Diff \to \Sh(\Open,T_{op})\text{.}
\end{equation*}   
\end{remark}

An important class of smooth maps between diffeological spaces are so-called subductions. 

\begin{definition}
A smooth map $\pi:Y \to X$ is called \emph{subduction} if for all plots $c:U \to X$ and all $x\in U$ there exists an open neighborhood $x\in U_x \subset U$ and a smooth map $\sigma:U_x \to Y$ such that $\pi \circ \sigma=c|_{U_x}$. 
\end{definition}

The following is a straightforward observation.
We call a smooth map
$\pi:Y \to X$  \emph{plot-wise local} if its pullback $c^{*}\pi: c^{*}Y \to U$ along every plot $c:U \to X$ is $T_{op}$-locally split. Then, we have: 
\begin{lemma}
A smooth map is plot-wise local
if and only if it is a subduction.
\end{lemma}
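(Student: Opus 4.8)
The plan is to unwind both notions and observe that each asserts exactly the same thing, namely that every plot $c:U \to X$ admits local lifts through $\pi$; the only real work is to pass back and forth between a section of the pullback $c^{*}\pi$ and such a lift. First I would fix notation: write $c^{*}Y = U \ttimes c\pi Y$ with its two projections $c^{*}\pi = \pr_U : c^{*}Y \to U$ and $\pr_Y : c^{*}Y \to Y$, so that $\pi \circ \pr_Y = c \circ \pr_U$. This fibre product is exactly the one appearing in the definition of plot-wise local, and it exists because $\Diff$ is complete.

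The heart of the argument is a bookkeeping observation that I would isolate once and for all: for an open subset $\iota : U' \incl U$, giving a section $\rho : U' \to c^{*}Y$ of $c^{*}\pi$ over $\iota$ is the same as giving a smooth map $\sigma : U' \to Y$ with $\pi \circ \sigma = c|_{U'}$. In one direction, the equality $c \circ \iota = \pi \circ \sigma$ and the universal property of the fibre product turn $(\iota,\sigma)$ into such a $\rho$; in the other, one sets $\sigma := \pr_Y \circ \rho$ and computes $\pi \circ \sigma = c \circ \pr_U \circ \rho = c \circ \iota$. With this correspondence in hand both implications are immediate. If $\pi$ is a subduction, then for a plot $c$ the neighbourhoods $U_x$ furnished at each point $x\in U$ form an open cover, i.e.\ a $T_{op}$-covering, and the point-wise lifts $\sigma_x$ translate into sections of $c^{*}\pi$, so $c^{*}\pi$ is $T_{op}$-locally split and $\pi$ is plot-wise local. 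Conversely, if $c^{*}\pi$ is $T_{op}$-locally split via an open cover $(U_i)_{i\in I}$ with sections $\rho_i$, then for any $x\in U$ I would choose $i$ with $x\in U_i$ and read off the lift $\sigma := \pr_Y \circ \rho_i$ on the neighbourhood $U_x := U_i$, which is precisely the subduction condition.

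I do not anticipate a genuine obstacle: the statement is deliberately a \emph{straightforward observation}. The only two points demanding a moment's care are (i) confirming that an arbitrary open cover of $U$ — in particular the cover by the neighbourhoods $U_x$ — qualifies as a $T_{op}$-covering, and (ii) reconciling the two slightly different quantifier shapes, subductions being phrased point-by-point (``for each $x$ a neighbourhood'') while local splitness is phrased as a single cover. Both are handled by the trivial facts that the family of all the $U_x$ is itself an open cover, and that membership of $x$ in some member of a cover supplies a neighbourhood of $x$; so no delicate argument is needed beyond the universal-property translation above.
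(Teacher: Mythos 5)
Your argument is correct and is exactly the straightforward unwinding the paper has in mind (the paper omits the proof entirely, calling the lemma ``a straightforward observation''): the universal property of the pullback identifies sections of $c^{*}\pi$ over an open subset with local lifts of the plot $c$, and the two quantifier shapes match because the pointwise neighbourhoods $U_x$ assemble into an open cover and, conversely, any member of an open cover containing $x$ is a neighbourhood of $x$. No gaps.
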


Subductions are important due to the following result. 

\begin{proposition}
The category $\Diff$ is complete, cocomplete, and extensive. A smooth map $g:X \to Y$ is an epimorphism if and only if it is surjective. Moreover,  the following statements are equivalent:

\begin{enumerate}[(i)]

\item
$f$ is an effective epimorphism. 

\item
$f$ is a universally effective epimorphism.

\item
$f$ is a subduction. 

\end{enumerate}
In particular, the canonical Grothendieck topology $T_{\Diff}$ consists of all subductions. 
\end{proposition}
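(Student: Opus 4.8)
The plan is to treat the four assertions in turn, leaning throughout on the fact that the forgetful functor $U\maps \Diff \to \Set$ is \emph{topological}: it admits a left adjoint (the discrete diffeology) and a right adjoint (the indiscrete diffeology), and it creates all limits and colimits, computing them on underlying sets and endowing the result with the initial, respectively final, diffeology. Completeness and cocompleteness follow immediately, and I would simply cite the standard references \cite{iglesias1,baez6,Wu2012} for this. For extensiveness I would verify the three conditions of \cref{extensive-category}: the empty diffeological space is initial; and since $U$ creates limits and colimits while $\Set$ is extensive, both the disjointness of finite coproducts and the stability of coproducts under pullback may be checked after applying $U$, where they hold in $\Set$, the diffeologies on the relevant pullbacks and coproducts being forced by creation.

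For the epimorphism characterization I would argue purely formally. If $g$ is surjective, then $U(g)$ is an epimorphism in $\Set$; since $U$ is faithful, any pair $h_1,h_2$ with $h_1\circ g = h_2 \circ g$ yields $U(h_1)\circ U(g)=U(h_2)\circ U(g)$, hence $U(h_1)=U(h_2)$ and so $h_1=h_2$. Thus $g$ is epic. Conversely, $U$ preserves colimits (it is left adjoint to the indiscrete functor), hence preserves epimorphisms, so if $g$ is epic then $U(g)$ is surjective.

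The core is the equivalence (i)$\Leftrightarrow$(ii)$\Leftrightarrow$(iii), which I would establish as a cycle. For (iii)$\Rightarrow$(ii): subductions are surjective (lift a constant plot) and stable under pullback -- given a subduction $\pi\maps Y\to X$ and a smooth $f\maps W\to X$, a plot $c\maps U\to W$ is locally lifted by lifting the plot $f\circ c$ of $X$ through $\pi$ and pairing the lift with $c$ into $W\times_X Y$. Since $\Diff$ is complete every morphism is universal, so it remains to see that a subduction is an effective epimorphism. Because $U$ creates colimits, the coequalizer of $Y\times_X Y \rightrightarrows Y$ has underlying set $Y/\!\!\sim$ with $y\sim y'$ iff $\pi(y)=\pi(y')$, which surjectivity identifies with $X$; its diffeology is the final one along $\pi$, which for a subduction is exactly the given diffeology of $X$. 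Hence $(X,\pi)$ is the coequalizer, i.e. $\pi$ is an effective epimorphism, and as this persists under all pullbacks we obtain a universally effective epimorphism. The implication (ii)$\Rightarrow$(i) is definitional. For (i)$\Rightarrow$(iii): an effective epimorphism is in particular epic by \cref{epimorphisms}, hence surjective, and as the coequalizer object it carries the final diffeology along $\pi$; a surjection whose target carries the final diffeology is a subduction (equivalently, I would invoke the preceding lemma identifying subductions with plot-wise local maps). The final sentence is then immediate: $T_{\Diff}$ is by definition the class of universally effective epimorphisms, which by (ii)$\Leftrightarrow$(iii) is precisely the class of subductions.

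The main obstacle I anticipate is the two-directional matching of diffeologies in the effective-epimorphism step: one must check both that the quotient diffeology on $Y/\!\!\sim$ agrees with the given diffeology on $X$ when $\pi$ is a subduction, and conversely that ``$X$ carries the final diffeology along a surjection'' is equivalent to the plot-lifting definition of a subduction. Both reduce to the description of the quotient (pushforward) diffeology as consisting of the locally liftable plots, which is exactly the content of the preceding lemma, so I would route the argument through it rather than re-derive the quotient diffeology by hand.
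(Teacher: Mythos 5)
Your proof is correct and follows essentially the same route as the paper's: both identify the coequalizer of $Y\times_X Y \rightrightarrows Y$ with the quotient $Y/\!\sim$ carrying the pushforward diffeology and reduce (i)$\Leftrightarrow$(iii) to the fact that a surjection is a subduction exactly when the pushforward diffeology along it recovers the diffeology of the target -- the paper merely carries out the plot-level lifting by hand where you invoke the final-diffeology machinery. The only difference is that you additionally supply arguments for extensiveness and for the epimorphism characterization (via the adjoints of the forgetful functor), which the paper defers to the cited references.
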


\begin{proof}
Completeness and cocompleteness, together with explicit descriptions of all kinds of limits and colimits, and be found in \cite[\S 3]{baez6}.  
The statement about  epimorphisms is  \cite[Prop. 31]{baez6}.

We show the equivalence of (i) with (iii).  Suppose first that $\pi:Y \to X$ is a subduction. We consider a cocone for the diagram $Y^{[2]} \rightrightarrows Y$, i.e., a smooth map $f:Y \to X'$ such that $f(y_1)=f(y_2)$ for all $(y_1,y_2)\in Y^{[2]}$. Since $\pi$ is surjective, there exists a unique map $u: X \to X'$ such that $f= u \circ \pi$. It remains to prove that $u$ is smooth. This can be checked locally on the plots. If $c:U \to X$ is a plot, then, because $\pi$ is a subduction, each point $x\in U$ has an open neighborhood $x\in U_x \to U$ together with a smooth section $\sigma: U_x \to Y$. Then, $(u \circ c)|_{U_x}=u\circ \pi \circ \sigma=f \circ \sigma$, which is smooth. This shows that $u$ is smooth and hence, that $\pi:Y \to X$ is the colimit.  

Now suppose $\pi:Y \to X$ is an effective epimorphism. We define on $Y$ the equivalence relation $y_1\sim y_2 \Leftrightarrow \pi(y_1)=\pi(y_2)$, and  $X' := Y/\sim$ be the quotient space, and $p:Y \to X'$ be the canonical projection. Projections to quotients are always subductions. Since $\pi$ is effective, there exists a smooth map $u:X \to X'$ such that $u \circ \pi = p$. Moreover, $u$ is a bijection, since the quotient computes the colimit in the category of sets. If now $c:U \to X$ is a plot, then $u \circ c$ is a plot of $X'$. Thus, every point $x\in X$ has an open neighborhood $x\in U_x \subset U$ with a lift $\sigma: U_x \to Y$, i.e., $p \circ \sigma = (u \circ c)|_{U_x}$. Hence, $u \circ \pi \circ \sigma =(u \circ c)|_{U_x}$, showing that $\pi \circ \sigma = c|_{U_x}$. Thus, $\pi$ is a subduction.

It is easy to see that the pullback of a subduction is a subduction \cite[Lemma 2.23]{Schaaf2020a}.
This shows that subductions are even universally effective epimorphisms. 
\end{proof}

\begin{comment}
\begin{remark}
In \cite{Ahmadi2023} a Grothendieck topology on the category of plots of a fixed diffeological space $X$ is defined, similarly to a Grothendieck topology on the category of open covers of a space. This is not what we want to do.  
\end{remark}
\end{comment}
\begin{comment}
See \cite{Watts2014}.
\end{comment}

By identifying the subductions as the coverings of the canonical Grothendieck topology on $\Diff$, we have our first explicit example of a Grothendieck topology, and mostly we will write $T_{sudu}$ instead of $T_{\Diff}$ in order to emphasize that we know exactly what the canonical coverings are. That $T_{sudu}$ is a Grothendieck topology is also shown in \cite[Thm. 10.1]{Watts2022}.
In the following we introduce some more examples.

\begin{definition}
A smooth map $\pi:Y \to X$ is called \emph{submersion} if  for all $y\in Y$, all plots $c:U \to X$ and all $x\in U$ with $c(x)=\pi(y)$, there exists an open neighborhood $x\in U_x \subset U$ and a smooth map $\sigma:U_x \to Y$ such that $\pi \circ \sigma=c|_{U_x}$ and $\sigma(x)=y$. 
\end{definition}

\begin{remark}
\label{Local-subductions-are-subductions}
Every  surjective submersion is a subduction. Submersions are called \quot{local subductions} in \cite{iglesias1}. 
\end{remark}

\begin{lemma}
\label{submersions-diffeo}
Surjective submersions form a subcanonical singleton Grothendieck topology $T_{susu}$ on $\Diff$.
\end{lemma}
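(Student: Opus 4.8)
The plan is to verify directly that the singleton families $(\pi)$ with $\pi$ a surjective submersion satisfy the three axioms of \cref{grothendieck-topology}, and then to read off subcanonicity from the fact that every surjective submersion is a subduction. Throughout I will use that $\Diff$ is complete (as recalled in the proposition above), so that all fibre products exist and every morphism is universal; consequently the only content of the pullback axiom beyond existence is that the projection out of the explicitly describable fibre product is again a surjective submersion. I will repeatedly use that restricting a plot to an open subset is again a plot, and that $f\circ c$ is a plot of $X$ whenever $f$ is smooth and $c$ is a plot.

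For the isomorphism axiom, if $\phi:U \to X$ is a diffeomorphism then it is surjective, and given $y\in U$, a plot $c:V \to X$ and $x\in V$ with $c(x)=\phi(y)$, the global lift $\sigma:=\phi^{-1}\circ c$ satisfies $\phi\circ\sigma=c$ and $\sigma(x)=y$; hence $\phi$ is a surjective submersion. For the composition axiom, let $\pi:Y\to X$ and $\pi':Z\to Y$ be surjective submersions, so that $\pi\circ\pi'$ is surjective. Given $z\in Z$, a plot $c:U\to X$ and $x\in U$ with $c(x)=\pi(\pi'(z))$, I first apply the submersion property of $\pi$ at the point $y:=\pi'(z)$ to obtain a neighborhood $U_x$ and a smooth lift $\sigma:U_x\to Y$ with $\pi\circ\sigma=c|_{U_x}$ and $\sigma(x)=y$; since $\sigma$ is then a plot of $Y$ with $\sigma(x)=\pi'(z)$, the submersion property of $\pi'$ at $z$ yields a further neighborhood $U_x'\subset U_x$ and a lift $\tau:U_x'\to Z$ with $\pi'\circ\tau=\sigma|_{U_x'}$ and $\tau(x)=z$, whence $\pi\circ\pi'\circ\tau=c|_{U_x'}$ and $\tau(x)=z$. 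The base-point clause $\sigma(x)=y$ is precisely what allows the second lift to be invoked.

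For the pullback axiom, let $\pi:Y\to X$ be a surjective submersion and $f:W\to X$ a smooth map. The fibre product $W\times_X Y=\{(w,y)\sep f(w)=\pi(y)\}$, carrying its natural (pullback) diffeology, exists by completeness of $\Diff$, with $f^{*}\pi=\pr_1$; surjectivity of $\pr_1$ follows from surjectivity of $\pi$. For the submersion property, given $(w_0,y_0)\in W\times_X Y$, a plot $c:U\to W$ and $x\in U$ with $c(x)=w_0$, I push $c$ forward to the plot $f\circ c:U\to X$, which satisfies $(f\circ c)(x)=f(w_0)=\pi(y_0)$, and apply the submersion property of $\pi$ at $y_0$ to obtain a neighborhood $U_x$ and a smooth lift $\tau:U_x\to Y$ with $\pi\circ\tau=(f\circ c)|_{U_x}$ and $\tau(x)=y_0$. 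The map $\sigma:=(c|_{U_x},\tau):U_x\to W\times_X Y$ lands in the fibre product because $f\circ c=\pi\circ\tau$ on $U_x$, is smooth because a map into the fibre product is smooth exactly when both projections are, and satisfies $\pr_1\circ\sigma=c|_{U_x}$ and $\sigma(x)=(w_0,y_0)$; hence $\pr_1$ is a surjective submersion.

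Finally, for subcanonicity: by \cref{Local-subductions-are-subductions} every surjective submersion is a subduction, so every $T_{susu}$-covering is a $T_{sudu}$-covering, i.e.\ $T_{susu}\subset T_{\Diff}$, and therefore $T_{susu}\prec T_{\Diff}$ in the sense of \cref{equivalence-between-grothendieck-topologies}, which is exactly subcanonicity as in \cref{subcanonical}. All three axiom checks are essentially routine; I expect the only delicate point to be keeping track of the base-point conditions $\sigma(x)=y$ and $\tau(x)=y_0$ through composition and pullback, together with the (easy but necessary) verification that the induced lift into the fibre product is genuinely smooth, both of which become transparent once the explicit set-theoretic model of the fibre product with its pullback diffeology is used.
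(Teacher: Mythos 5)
Your proposal is correct and takes essentially the same approach as the paper's own proof, which verifies the three axioms directly (citing Iglesias-Zemmour for closure of submersions under composition, and declaring the isomorphism and pullback checks easy) and deduces subcanonicity from the inclusion $T_{susu}\subset T_{sudu}=T_{\Diff}$. You have simply written out in full the routine verifications that the paper leaves implicit.
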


\begin{proof}
The composition of submersions is a submersion \cite[2.17]{iglesias1}.
That the pullback of a submersion is again a submersion is easy to see, and that diffeomorphisms are submersions is clear. This shows that we have a Grothendieck topology.
Since $T_{susu} \subset T_{sudu}$, it is subcanonical. 
\end{proof}

A similar statement to \cref{submersions-diffeo} can be found in \cite[Prop. 10.5]{Watts2022}. 
The next round of examples of Grothendieck topologies uses so-called \emph{D-open sets}. If $X$ is a diffeological space, a subset $A \subset X$ is called \emph{D-open}, if its preimage under every plot $c:U \to X$ is open in $U$. D-open sets equip $X$ with a topology, the \emph{D-topology}. We fix the following definitions.

\begin{definition}
Let $X$ be a diffeological space.
\begin{enumerate}[(a)]

\item 
A \emph{D-open cover} of a diffeological space $X$ is a family $(\pi_i: X_i \to X)_{i\in I}$ of smooth maps with $D$-open images $A_i:=\pi_i(X_i) \subset X$, such that $\pi_i: X_i \to A_i$ are diffeomorphisms (w.r.t. the subspace diffeology on $A_i$) and $(A_i)_{i\in I}$ covers $X$. 

\item
A \emph{numerable D-open cover} is a D-open cover such that  the open cover $(A_i)_{i\in I}$ of the topological space $X$ is numerable. 

\item
A smooth map $\pi:Y \to X$ between diffeological spaces is called a \emph{D-local diffeomorphism} if  each point $y\in Y$ has a D-open neighborhood $A\subset Y$ such that $\pi(A)\subset X$ is D-open and  $\pi|_A : A \to \pi(A)$ is a diffeomorphism (w.r.t. the subspace diffeologies).  

\item
A smooth map $\pi:Y \to X$ is called \emph{ D-submersion} if   for every point $y\in Y$ there exists a D-open neighborhood $A\subset X$ of $\pi(x)$ together with a smooth map $\sigma:A \to Y$ such that $\pi \circ \sigma=\id_A$ and $\sigma(\pi(x))=y$.

\item
A smooth map $\pi:Y \to X$ is called \emph{D-locally split} if every point $x\in X$ has a D-open neighborhood $A \subset X$ together with a smooth map $\sigma:A \to Y$ such that $\pi \circ \sigma=\id_A$.

\end{enumerate}
\end{definition}

The next lemma is easy to check.

\begin{lemma}
We have the following Grothendieck topologies on $\Diff$:
\begin{itemize}

\item 
Numerable D-open covers covers form a  superextensive and singletonizable Grothendieck topology $T_{numDop}$. 

\item 
D-open covers covers form a  superextensive and singletonizable Grothendieck topology $T_{Dop}$. 

\item
Surjective D-local diffeomorphisms form a singleton Grothendieck topology $T_{suDld}$.

\item
Surjective D-submersions form a singleton Grothendieck topology $T_{suDsu}$.

\item
D-locally split maps form a singleton Grothendieck topology $T_{Dlsplit}$. 

\end{itemize}
Moreover, all Grothendieck topologies above are subcanonical. 
\end{lemma}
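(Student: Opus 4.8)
The plan is to reduce everything to two background facts and then run essentially the same routine verification five times. The first fact is that smooth maps are continuous for the D-topologies, i.e. the preimage $f^{-1}(A)$ of a D-open set $A\subset X$ under a smooth map $f:W\to X$ is D-open in $W$; this is immediate from the definition, since for any plot $c:U\to W$ the composite $f\circ c$ is a plot of $X$ and $c^{-1}(f^{-1}(A))=(f\circ c)^{-1}(A)$ is open. The second fact, already recorded in the preceding proposition, is that $\Diff$ is complete, so that all the fibre products demanded by the pullback axiom of \cref{grothendieck-topology} automatically exist; this removes the only existence question and reduces the pullback axiom to checking that the pulled-back family again lies in the class.

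For each of the five classes I would verify the three axioms in turn, the ``singleton'' status of the last three being automatic since their coverings are individual morphisms by construction. Diffeomorphisms are trivially coverings in every case (a diffeomorphism is a singleton D-open cover, and its inverse provides the pointed local section required for a surjective D-local diffeomorphism, a surjective D-submersion, and a D-locally split map). Closure under composition and stability under pullback are each a short diagram chase built on the two facts above: for the singleton classes one composes local sections (carrying along the base-point condition $\sigma(\pi(y))=y$ in the D-submersion case) and, for a pullback of $\pi:Y\to X$ along $f:W\to X$, transports a section $\sigma$ defined over a D-open $A\ni f(w)$ to the section $w'\mapsto(w',\sigma(f(w')))$ over the D-open set $f^{-1}(A)$; for the D-open cover classes one uses that $f^{-1}(A_i)$ is D-open and that the pullback of a diffeomorphism onto a D-open image is again such. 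Along the way this yields the inclusions $T_{suDld}\subset T_{suDsu}\subset T_{Dlsplit}$, since a D-local diffeomorphism has local inverses (hence pointed sections) and a surjective D-submersion is in particular D-locally split.

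Superextensiveness and singletonizability of the two family topologies are then quick. For superextensiveness I would observe that the coproduct injections $X_i\to\coprod_j X_j$ form a D-open cover whose pieces are clopen, hence a numerable one, so that $T^{ext}_{\Diff}\subset T_{numDop}\subset T_{Dop}$ in the sense of \cref{extensive-topology}. Singletonizability is automatic, because $\Diff$ is cocomplete and therefore the coproduct of the domains of any covering family exists.

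Finally, subcanonicity follows by collapsing each class into the canonical topology $T_{sudu}=T_{\Diff}$ of subductions. The argument of the pullback step shows, via the D-continuity fact, that every D-locally split map is a subduction (pull a local section back along an arbitrary plot), so the inclusion chain above extends to $T_{suDld}\subset T_{suDsu}\subset T_{Dlsplit}\subset T_{sudu}$, whence all three singleton topologies are $\prec T_{sudu}$ by the remark that $T\subset T'$ implies $T\prec T'$. For $T_{Dop}$ and $T_{numDop}$ I would note that the coproduct map $\coprod_i X_i\to X$ attached to a D-open cover is itself a subduction, so that $\Sing(T_{Dop})\subset T_{sudu}$; combined with $T_{Dop}\sim\Sing(T_{Dop})$ from \cref{singletonization} this gives $T_{Dop}\prec T_{sudu}$, and likewise for the numerable case. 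I expect the only genuinely non-routine point to be the numerable topology: preservation of numerability under pullback uses D-continuity so that a subordinate partition of unity pulls back, while closure under composition requires the standard (but non-trivial) fact that a numerable cover each of whose members carries a numerable relative cover is again numerable.
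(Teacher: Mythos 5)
Your verification is correct. The paper offers no proof of this lemma at all---it is introduced only with \quot{The next lemma is easy to check}---and your argument is exactly the routine verification the author has in mind, correctly reducing everything to D-continuity of smooth maps and completeness of $\Diff$, and rightly isolating the composition axiom for numerable D-open covers as the one step requiring a genuinely non-trivial (but standard) topological fact.
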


\begin{proposition}
\label{comparison-GTs-on-Diff}
The Grothendieck topologies collected above obey the following relations:
\begin{equation*}
\alxydim{@C=3em}{&&&& T_{Dlsplit}  \ar@{_(->}[dr]  \\T_{numDop} \ar[r] & T_{Dop} \ar[r]^{\sim}  & T_{suDld} \ar@{^(->}[r]^{\sim}  & T_{suDsu} \ar@{^(->}[ur]^-{\sim} \ar@{_(->}[dr] && T_{sudu} \\ &&&& T_{susu} \ar@{^(->}[ur] }
\end{equation*}
Here,  an arrow \quot{$\;\alxydim{@C=0.5em}{\ar@{^(->}[r] &}$} stands for \quot{$\,\subset$}, an arrow \quot{$\alxydim{@C=0.5em}{\ar[r] &}$} stands for \quot{$\prec$}, and a decoration with \quot{$\;\sim$} means that an equivalence is induced.
\end{proposition}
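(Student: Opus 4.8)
The plan is to split the diagram into the hooked arrows (each an inclusion $\subset$, hence a $\prec$-relation by the remark following \cref{equivalence-between-grothendieck-topologies}) and the genuine equivalences, then verify the latter. All the inclusions are immediate from unwinding the definitions: a surjective D-local diffeomorphism is a surjective D-submersion (invert the local diffeomorphism to produce a smooth local section); a surjective D-submersion is D-locally split (by surjectivity every point of the base is $\pi(y)$ for some $y$); every surjective submersion is a subduction (\cref{Local-subductions-are-subductions}); every D-locally split map is a subduction (pull the given section back along a plot $c$, using that $c^{-1}$ of a D-open set is open); and a surjective D-submersion is a surjective submersion (same plot-pullback argument). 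These account for $T_{suDld}\subset T_{suDsu}\subset T_{Dlsplit}\subset T_{sudu}$, $T_{suDsu}\subset T_{susu}\subset T_{sudu}$, and $T_{numDop}\subset T_{Dop}$, i.e. every arrow as a $\prec$-relation; in particular $T_{numDop}\prec T_{Dop}$ needs nothing more.

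The central tool for the equivalences is one elementary observation: if $\pi\colon Y\to X$ in $\Diff$ admits smooth local sections $\sigma_i\colon A_i\to Y$ over a D-open cover $(A_i)_{i\in I}$ of $X$, then $\pi$ splits over the map $\coprod_i A_i\to X$ assembled from the inclusions, and that map is a surjective D-local diffeomorphism, since each component $A_i$ is D-open in the coproduct and maps diffeomorphically onto the D-open set $A_i\subset X$. I would use this to run a cycle for $T_{suDld}\sim T_{suDsu}\sim T_{Dlsplit}$. All three are singleton, so by \cref{checking-equivalence} it suffices to close the loop with $T_{Dlsplit}\prec T_{suDld}$: a D-locally split map has, by definition, local sections over a D-open cover, hence splits over a surjective D-local diffeomorphism and is $T_{suDld}$-locally split. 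Combined with the inclusions above this gives $T_{Dlsplit}\prec T_{suDld}\prec T_{suDsu}\prec T_{Dlsplit}$, so the three topologies coincide up to equivalence.

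For $T_{Dop}\sim T_{suDld}$ I would argue in two steps. Since $\Diff$ is cocomplete, $T_{Dop}$ is singletonizable, and $\Diff$ is extensive, so \cref{singletonization} yields $T_{Dop}\sim \Sing(T_{Dop})$; the singletonized coverings are exactly the maps $\coprod_i X_i\to X$ produced from D-open covers, and these are surjective D-local diffeomorphisms, so $\Sing(T_{Dop})\subset T_{suDld}$ and hence $T_{Dop}\prec T_{suDld}$. Conversely, $T_{suDld}$ is singleton, so by \cref{checking-equivalence} it is enough to show every surjective D-local diffeomorphism is $T_{Dop}$-locally split; but such a map carries D-open neighborhoods $B_y\subset Y$ mapping diffeomorphically onto D-open sets $A_y=\pi(B_y)$ which, by surjectivity, cover $X$, and the inverse diffeomorphisms are local sections over the D-open cover $(A_y\hookrightarrow X)_{y}$. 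This establishes the remaining equivalence.

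I expect the only real subtlety to lie in the two places where the D-topology interacts with categorical constructions: verifying that $\coprod_i A_i\to X$ is genuinely a surjective D-local diffeomorphism (that each $A_i$ is D-open in the coproduct diffeology, and that the images $A_i$ are D-open in $X$), and checking $T_{suDsu}\subset T_{susu}$, which requires converting a section defined over a D-open neighborhood of $\pi(y)$ into a section over an arbitrary plot through that point by restricting to the open set $c^{-1}(A)$. Both are routine, but they are where the actual content of the D-topology definitions is used; everything else reduces to formal manipulation of $\prec$ together with \cref{checking-equivalence} and \cref{singletonization}.
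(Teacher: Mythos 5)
Your proof is correct and follows essentially the same route as the paper: the hooked arrows are disposed of by the same elementary implications among D-local diffeomorphisms, D-submersions, submersions, D-locally split maps and subductions; $T_{Dop}\prec T_{suDld}$ comes from $T_{Dop}\sim\Sing(T_{Dop})\subset T_{suDld}$; and the equivalences are closed by observing that D-locally split maps refine through D-open covers. The only difference is organizational (you close two small cycles where the paper closes one), which does not change the content.
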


\begin{proof}
The inclusion relations follow from the following elementary statements: 
\begin{itemize}

\item 
Every D-local diffeomorphism is a D-submersion.

\item
Every D-submersion is a submersion.
\begin{comment}
If $f:Y \to X$ is a D-submersion, let $y\in Y$, $c:U \to X$ be a plot, and $x\in U$ such that $c(x)=\pi(y)$. Choose a D-open subset $c(x) \in A \subset X$ with a section $\sigma:A \to Y$ such that $\sigma(c(x))=y$. Then, consider the open subset $U_x := c^{-1}(A) \subset U$. The map $\sigma_x:=\sigma \circ c|_{U_x}:U_x \to Y$ is smooth and satisfies $\sigma_x(x)=y$. 
\end{comment}

\item
Every D-submersion is D-locally split. 

\item
Every D-locally split map $f:Y \to X$ is a subduction.
\begin{comment}
Indeed, if $c:U \to X$ is a plot and $x\in U$, then let $A$ be a D-open neighborhood of $c(x)$ with a section $\sigma:A \to Y$. Let $U_x :=  c^{-1}(A)$, which is open and contains $x$. Then, $\sigma \circ c|_{U_x}$ is a section along $c$, showing that $\pi$ is a subduction.
\end{comment}

\item
Every surjective submersion is a subduction; this is \cref{Local-subductions-are-subductions}. 

\end{itemize}
The relation $T_{Dop}\prec T_{suDld}$ holds because $T_{Dop}\sim \Sing(T_{Dop})\subset T_{suDld}$. The equivalences come about because every D-locally split map has a refinement through a D-open cover. 
\end{proof}

\begin{example}
Differential forms on $\Diff$ form a sheaf for $T_{sudu}$; hence, by \cref{comparison-GTs-on-Diff}, also a sheaf for all other Grothendieck topologies studied here. 
\end{example}

\begin{comment}
\begin{definition}
A \emph{plot cover} of a diffeological space $X$ is a collection $\{c_i\}_{i\in I}$ of plots of $X$  such that for every plot $c:U \to X$ there exists a subset $J \subset I$ and a local diffeomorphism $\pi: \coprod_{j\in J} U_j \to U$ such that $c_j=c \circ \pi$ on $U_j$. 
\end{definition}

\begin{remark}
Plot covers do not form a Grothendieck topology; in fact, none of the axioms is satisfied.  
\end{remark}

\begin{lemma}
Every plot cover is a subduction, and every subduction can be refined by a plot cover.
\end{lemma}

\begin{proof}
Let $\{c_i\}_{i\in I}$ be a plot cover of $X$, and let $c:U \to X$ be a plot. Let $J\prec I$ and $\pi: \coprod_{j\in J} U_j \to U$ be as in the definition of plot cover. Let $x\in X$. Because $\pi$ is a local diffeomorphism, $x$ has an open neighborhood $x\in U_x \prec U$ with a section $\sigma:U_x \to U_j$ for some $j\in J$. Its prolongation along $U_j \prec \coprod_{i\in I} U_i$ is the desired section.  This shows that every plot cover is a subduction.

If $\pi:Y \to X$ is a subduction. For each plot $c:U_c \to X$, let $\{U_j\}_{j\in J_c}$ be an open cover of $U_c$ together with sections $\sigma_j: U_j \to Y$. Let $I:= \coprod_{c} J_c$, and $c_i :=c|_{U_j}$, when $i\in J_c$. Then, $c_i$ is a plot, and $\{c_i\}_{i\in I}$ is a plot cover. Moreover, the sections $\sigma_j$ define a smooth map $\coprod_{i\in I} U_i \to Y$.  
\end{proof}
\end{comment}

The following list shows that the geometric structures discussed in \cref{geometry-on-sites} and the sheaves discussed in \cref{sheaves}, specialize to various versions considered for the category of diffeological spaces:
\begin{enumerate}[(a)]

\item
Diffeological groupoids as used, e.g., in \cite{iglesias1,Blohmann2021,Watts2022,Schaaf2020} are $\Diff$-groupoids in the sense of \cref{internal-category}. In particular, there are no extra conditions on source and target maps. However, as described in \cref{internal-categories}, source and target maps are always $T$-locally split, for any $T$. In particular, they are always subductions; this is \cite[Prop. 3.17]{Schaaf2020}.         
 $T_{sudu}$-weak equivalences between diffeological groupoids appear in \cite[Def. 5.1]{Schaaf2020} and \cite[Def. 4.3]{Watts2022}.

\item
Diffeological principal $G$-bundles (for diffeological groupoids $G$)  have been introduced in \cite[Def. 4.19]{Schaaf2020}, matching precisely our general definition of $T_{sudu}$-locally trivial principal $G$-bundles (\cref{principal-G-bundle,locally-trivial-bundle}). If $G$ is a diffeological \emph{group}, gives back immediately earlier definitions in \cite{waldorf9,Minichiello2022}. In \cite[Def. 5.1]{Krepski2021}, the projection $p$ of a principal $G$-bundle $p:P \to X$ is required to be the quotient map of the $G$-action on $P$, and quotient maps are subductions. Conversely, if the projection is a subduction, the fact that subductions are effective epimorphisms and that the shear map is a diffeomorphism, shows that
\begin{equation*}
\alxydim{}{P \times G \arr[r] & P \ar[r]^{p} & X} 
\end{equation*}  
is a coequalizer, hence that $p$ is a quotient map. Thus, the principal $G$-bundles of \cite{Krepski2021} are the same as ours. Finally, the principal $G$-bundles of \cite{Krepski2021} coincide with the ones of \cite[8.11]{iglesias1}; this is \cite[Rem. 5.2]{Krepski2021}.
In \cite{Magnot2017} $T_{numDop}$-locally trivial principal $G$-bundles are studied; they are, in particular, $T_{sudu}$-locally trivial.

\item
Locally trivial diffeological bibundles for $T_{sudu}$ are in use in   \cite{Schaaf2020,Schaaf2020a,Watts2022}. These reference all construct the same  bicategory of diffeological groupoids,  which coincides precisely with the bicategory $(\Diff,T_{sudo})$-$\Grpd^{bi}$ from \cref{bicat-bibundles:a}, for instance \cite[Thm. 8.12]{Watts2022} and \cite[Thm. 4.51]{Schaaf2020}. They also derive some conclusions that we also get from the general theory of Meyer-Zhu, for instance, \cite[Lemma 8.15]{Watts2022} is \cref{bicat-bibundles:c}, and \cite[Thm. 8.19]{Watts2022} follows from \cite[Thm. 7.15 \& 3.23]{Meyer2014}.

\end{enumerate}

\subsection{Topological spaces}

The following lists some well-known facts about the category $\Top$ of topological spaces:
\begin{itemize}

\item
$\Top$ is complete and cocomplete; in particular, every continuous map is universal.

\item 
$\Top$ is extensive, with initial object the empty space.

\item
A continuous map $f:X \to Y$ is
\begin{itemize}

\item
an epimorphism if and only if
it is surjective. 
\item
an effective epimorphism if and only if it is quotient map. 

\item
a universally effective epimorphism if and only if it is  surjective, and for every point $y\in Y$ and every covering $(U_i)_{i\in I}$ of $f^{-1}(\{y\})$ by open subsets $U_i$, there exist finitely many open subsets $U_1,...,U_n$ such that $f(U_1)\cup ... \cup f(U_n)$ is a neighborhood of $y$; see  \cite{Day1970}. 
\end{itemize}

\item
The discrete Grothendieck topology consists of all
continuous maps.\item
The canonical Grothendieck topology $T_{\Top}$ consists, as always, of all universally effective epimorphisms. 
\end{itemize}

\begin{example}
\label{GTs-on-Top}
We consider the following Grothendieck topologies on the category of topological spaces:
\begin{enumerate}[leftmargin=4em,itemindent=0pt,labelwidth=*]

\item 
[$T_{op}$ :]
the coverings are families  $(\pi_i: U_i \incl X)_{i\in I}$ of  topological embeddings, such that $(\pi_i(U_i))_{i\in I}$ is an open cover of $X$. 

\item
[$T_{nop}$ :] 
as before, but such that the open cover $(\pi_i(U_i))_{i\in I}$ is numerable, i.e., it admits a subordinate partition of unity. 

\item
[$T_{slh}$ :]
the coverings are surjective local homeomorphisms.

\item
[$T_{sutsu}$ :]
the coverings are surjective topological submersions, i.e., surjective continuous maps $f:X \to Y$ such that for each $x\in X$ there is an open neighborhood $U\subset Y$ of $f(x)$ with a section $\sigma: U \to X$ satisfying $\sigma(f(x))=x$. 

\item
[$T_{sop}$ :]
the coverings are surjective open maps; this is used in \cite{pronk} and in most work about groupoid $C^{*}$-algebras; see \cite[§9.2.6]{Meyer2014}. 

\item
[$T_{lsplit}$ :]
the coverings are locally split maps, i.e. continuous maps $f:X \to Y$ such that each $y\in Y$ has an open neighborhood $U \subset Y$ with a section $\sigma:U \to X$.

\item
[$T_{sur}$ :]
the coverings are surjective maps. 

\end{enumerate}
We refer to \cite[§9.2]{Meyer2014}  for an even longer list of singleton Grothendieck topologies on $\Top$, together with explicit proofs of the axioms.
\end{example}

\begin{lemma}
\label{Comparison-of-GTs-on-Top}
The Grothendieck topologies of \cref{GTs-on-Top} obey the following relations:
\begin{equation*}
\alxydim{@C=3em}{&&&& T_{lsplit}  \ar@{_(->}[dr]  \\T_{nop} \ar@{^(->}[r]  &T_{op} \ar[r]^{\sim}  & T_{slh} \ar@{^(->}[r]^{\sim}  & T_{sutsu} \ar@{^(->}[ur]^-{\sim} \ar@{_(->}[dr] && T_{sur} \\ &&&& T_{sop} \ar@{^(->}[ur] }
\end{equation*}
Here,  an arrow \quot{$\;\alxydim{@C=0.5em}{\ar@{^(->}[r] &}$} stands for \quot{$\,\subset$}, an arrow \quot{$\alxydim{@C=0.5em}{\ar[r] &}$} stands for \quot{$\prec$}, and a decoration with \quot{$\;\sim$} means that an equivalence is induced.
Moreover, all Grothendieck topologies are either singleton or singletonizable, and $T_{op}$ is superextensive. Finally, all Grothendieck topologies except $T_{sur}$ are subcanonical and local.
\end{lemma}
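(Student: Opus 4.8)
The plan is to mirror the proofs of \cref{topologies-on-man} and the diffeological \cref{comparison-GTs-on-Diff}: first record the elementary inclusions of covering families, then upgrade the remaining comparisons to equivalences via singletonization and \cref{checking-equivalence}, and finally read off the global properties from the resulting equivalence classes. All the hooked arrows are inclusions of classes of maps, each coming from a pointwise statement: a numerable open cover is an open cover ($T_{nop}\subset T_{op}$); a surjective local homeomorphism has local inverses and is thus a surjective topological submersion ($T_{slh}\subset T_{sutsu}$); a surjective topological submersion admits a section through every point, hence is locally split ($T_{sutsu}\subset T_{lsplit}$); a map with local sections is open, so a surjective topological submersion is a surjective open map ($T_{sutsu}\subset T_{sop}$); and locally split maps as well as surjective open maps are surjective ($T_{lsplit},T_{sop}\subset T_{sur}$). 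Since $T\subset T'$ implies $T\prec T'$, all the asserted $\subset$-relations are in place.

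For the equivalences I would argue around the cycle
\begin{equation*}
T_{op}\prec T_{slh}\prec T_{sutsu}\prec T_{lsplit}\prec T_{op}\text{,}
\end{equation*}
whose two middle relations are the inclusions just recorded. For $T_{op}\prec T_{slh}$: as $\Top$ is cocomplete, $T_{op}$ is singletonizable, and each singletonized covering $\coprod_i U_i\to X$ of an open cover is a surjective local homeomorphism, so $\Sing(T_{op})\subset T_{slh}$; together with $T_{op}\sim\Sing(T_{op})$ from \cref{singletonization} this yields $T_{op}\prec T_{slh}$. For the closing relation $T_{lsplit}\prec T_{op}$, note that $T_{lsplit}$ is singleton, so by \cref{checking-equivalence} it suffices to see that every locally split map is $T_{op}$-locally split; but the open sets over which a locally split map admits sections form an open cover of the base, which is exactly an $T_{op}$-refinement. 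Hence $T_{op}\sim T_{slh}\sim T_{sutsu}\sim T_{lsplit}$, and (since every continuous map in $\Top$ is universal) the common universal completion $\Uni(T)$ of these four is the class of locally split maps.

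The structural assertions now follow. The non-singleton topologies $T_{op}$ and $T_{nop}$ are singletonizable because all coproducts exist in $\Top$, while the remaining topologies are singleton by definition. $T_{op}$ is superextensive because the coproduct injections of $\Top$ are open embeddings forming an open cover, so $T^{ext}_{\Top}\subset T_{op}$ (\cref{extensive-topology}). For subcanonicality I would combine two observations: a locally split map is an open surjection, and by the Day--Kelly criterion recalled before \cref{GTs-on-Top} (see \cite{Day1970}) every open surjection is a universally effective epimorphism, since a single member of any open cover of a fibre already has open image, hence a neighbourhood of the point. Thus $\Uni(T_{op})\subset T_{\Top}$ and also $T_{sop}\subset T_{\Top}$, giving $T_{op}\prec T_{\Top}$ and $T_{sop}\prec T_{\Top}$; as subcanonicality is invariant under equivalence (\cref{subcanonical}) this covers the whole main class, and $T_{nop}\prec T_{op}$ transports it to $T_{nop}$. (The map $T_{sur}$ fails to be subcanonical, e.g.\ via continuous bijections that are not quotient maps, consistent with its exclusion.)

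Finally, locality. Because being local is invariant under equivalence (\cref{assumption-2.6}), it is enough to verify \cref{local-GT} for one representative of each class. For the class of locally split maps this is a short descent argument: in a pullback square whose base leg $g$ and whose pulled-back leg $f$ both admit local sections, choosing a local section $s$ of $g$ near a point $x$, a local section $t$ of $f$ near $s(x)$, and composing $\mathrm{pr}_Y\circ t\circ s$ produces a section of $\pi$ near $x$. For $T_{nop}$ and $T_{sop}$ the analogous statement needs that numerable covers, respectively surjective open maps, behave well under pullback and refinement, and the universal completion of $T_{sop}$ is genuinely larger than the surjective open maps; I expect this to be the main obstacle. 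I would discharge it by appealing to the verifications of Meyer--Zhu's Assumption 2.6 for the corresponding singleton topologies on $\Top$ in \cite[\S 9.2]{Meyer2014}, exactly as locality on $\Man$ was cited from \cite[Prop. 9.40]{Meyer2014} in \cref{topologies-on-man}.
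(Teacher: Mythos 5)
Your overall strategy matches the paper's: the elementary inclusions, $T_{op}\prec T_{slh}$ via singletonization, closing the equivalence cycle with the observation that locally split maps split over open covers, Day--Kelly for subcanonicality, and Meyer--Zhu for the harder locality statements. One step, however, is false as stated: a locally split map in the sense of $T_{lsplit}$ (sections over an open cover of the \emph{base}) need not be an open map. Take $X=\R\sqcup\{\ast\}$ and $Y=\R$ with $f$ the identity on the first summand and $\ast\mapsto 0$; this map is globally split, hence locally split, but the open set $\{\ast\}$ has non-open image $\{0\}$. Openness requires sections through every point of the \emph{total space} -- the topological-submersion condition -- which is exactly why $T_{sutsu}\subset T_{sop}$ holds in your diagram while $T_{lsplit}\not\subset T_{sop}$.

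The damage is local and easily repaired in either of two ways. You can apply Day--Kelly directly to a locally split map: given $y\in Y$ and an open cover $(U_i)$ of the fibre, choose a section $\sigma:V\to X$ with $y\in V$, note that $\sigma(y)$ lies in some $U_i$, and observe $f(U_i)\supset\sigma^{-1}(U_i)\ni y$, so $f(U_i)$ is already a neighbourhood of $y$; this gives $\Uni(T_{op})\subset T_{\Top}$ outright. Alternatively, follow the paper, which only proves the weaker but sufficient relation $\Sing(T_{op})\subset T_{\Top}$ (singletonized open covers are surjective local homeomorphisms, hence open surjections, hence universally effective epimorphisms) and then passes to universal completions to get $T_{op}\prec T_{\Top}$. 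Everything else in your write-up is sound, and your remark that $\Uni(T_{sop})$ is strictly larger than the class of surjective open maps is correct -- the same example $\R\sqcup\{\ast\}\to\R$ splits over the identity yet is not open -- and is in fact more careful than the paper, which asserts $T_{sop}=\Uni(T_{sop})$ at this point.
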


\begin{proof}
The inclusion arrows are mostly clear; maybe the only non-trivial but still elementary statement is that surjective topological submersions are open maps.
Since $T_{op}$ is singletonizable, we have $T_{op}\sim \Sing(T_{op}) \subset T_{slh}$; this shows the only proper \quot{$\prec$}-relation. Since every locally split map splits over an open cover, we have $T_{lsplit}\prec T_{op}$; this proves the three equivalences at once. That the singleton Grothendieck topologies except $T_{sur}$ are subcanonical can be proved by checking that their coverings are universally effective epimorphisms. Since $T_{op}\sim\Sing(T_{op})\subset T_{\Top}$, we get $\Uni(T_{op})\sim \Uni(\Sing(T_{op}))\subset \Uni(T_{\Top})\sim T_{\Top}$; this shows that $T_{op}$ is subcanonical. Since $T_{nop}\subset T_{op}$ implies $\Uni(T_{nop})\subset \Uni(T_{op})$, we see that $T_{nop}$ is also subcanonical. For locality, we have $\Uni(T)=T_{lsplit}$ for $T\in\{ T_{op},T_{slh},T_{sutsu},T_{lsplit}\}$ for which the condition is easy to check.  $\Uni(T_{nop})$ is the Grothendieck topology of \quot{maps with local continuous sections and partitions of unity}, which is proved in \cite[Prop. 9.24]{Meyer2014} to satisfy their Assumption 2.6, equivalent to our locality.  Finally, for $T_{sop}=\Uni(T_{sop})$, and one can check explicitly that it is local. 
\end{proof}

\noindent
The following list shows that the geometric structures discussed in \cref{geometry-on-sites} and the sheaves discussed in \cref{sheaves} specialize to the standard structures considered for the category of topological  spaces.
\begin{enumerate}

\item 
$\Top$-categories (\cref{internal-category}) and $\Top$-groupoids are precisely the usual topological categories and topological groupoids, where no extra condition is put for source and target maps.  Nonetheless, as described in \cref{internal-categories}, source and target maps are automatically $T$-locally split, for any $T$. $T$-weak equivalences (for $T\in \{T_{op}, T_{slh}, T_{sutsu}, T_{lsplit} \}$) between topological groupoids (\cref{weak-equivalence}) are called \quot{essential equivalences} in \cite[Def. 58]{metzler}. 
Carchedi considers $T$-weak equivalences between topological groupoids for arbitrary $T$ \cite[Def. I.2.20]{Carchedi2011}, matching exactly our general  \cref{weak-equivalence}.
Freed-Hopkins-Teleman \cite{Freed2011a} use a non-standard notion called \quot{local equivalence}, with another meaning of fully faithfulness, but our (standard) notion of $T$-essential surjectivity.

\item
The usual definition of a continuous principal bundle on a topological space gives our notion of a $T_{op}$-local  principal bundle. By \cref{Comparison-of-GTs-on-Top,principal-bundles-equivalence} these are the same $T$-local principal bundles, for $T \in \{T_{sutsu},T_{lsec}\}$. $T_{nop}$-local principal bundles are more special; these are the ones that are classified by Milnor's classifying space. $T_{sop}$-local principal bundles are considered in \cite[Def. 61]{metzler}, these are  more general.
\begin{comment}
This relies on the fact that $\Uni(T_{sop})=T_{sop}$, which one can show explicitly. Indeed, since $T_{sop}$ is singleton, we have $T_{sop} \subset \Uni(T_{sop})$. Conversely, suppose that $\pi:Y \to X$ is $T_{sop}$-locally split, on a $T_{sop}$-covering $\tilde\pi:\tilde Y \to \tilde X$
 and with a local section $\rho: \tilde Y \to Y$. It is clear that $\pi$ is surjective. One can easily check that for $U \subset Y$ open, we have
 \begin{equation*}
\tilde\pi(\rho^{-1}(U))=\pi(U)\text{.}
\end{equation*}
This shows that $\pi$ is open. 
 \end{comment}
Carchedi discusses general $T$-locally trivial principal bundles for topological groupoids \cite[I.2.13]{Carchedi2011}.

\item
$T_{sop}$-locally trivial bibundles are precisely the Hilsum-Skandalis morphisms, see \cite[Def. 62]{metzler}. Many statements about Hilsum-Skandalis morphisms follow from the general theory of \cref{bibundles}, for example \cref{bicat-bibundles:c} is \cite[Prop. 64]{metzler}.

\item
For sheaves on $\Top$ one typically uses the traditional sheaf condition of \cref{traditional-sheaf} for the Grothendieck topology $T_{op}$. Since $T_{op}$ is superextensive and singletonizable, a presheaf is a  traditional sheaf if and only if it is a $T_{op}$-sheaf (\cref{comparison-sheaf}). A warning similar to \cref{traditional-sheaf-bad} applies.

\end{enumerate}

\begin{comment}
\subsection{Internal groupoids}

Let $(\mathscr{C},T)$ be a  site, and let  $\mathscr{C}\text{-}\Grpd$ be the category of $\mathscr{C}$-groupoids and functors. 
\begin{definition}
A functor $F: \Gamma \to \Lambda$ between $\mathscr{C}$-groupoids is called \emph{$T$-covering functor} if it is fully faithful and  $F_0: \Gamma_0 \to \Lambda_0$ is  universal and $T$-locally split. 
\end{definition}

\begin{remark}
If $F:\Gamma \to \Lambda$ is a $T$-covering functor, then $F_1: \Gamma_1 \to \Lambda_1$ is also universal and $T$-locally split. This is because the diagram
\begin{equation*}
\alxydim{}{X_1 \ar[r]^{F_1} \ar[d]_{(s,t)} & Y_1\ar[d]^{(s,t)} \\ X_0 \times X_0 \ar[r]_{F_0 \times F_0} & Y_0 \times Y_0}
\end{equation*}
is a pullback diagram, and $F_0 \times F_0$ is a covering. 
\end{remark}

\begin{lemma}
Covering functors form a Grothendieck topology $\grpd(T)$ on $\mathscr{C}\text{-}\Grpd$. 
\end{lemma}

\begin{lemma}\cite[Prop. 5.7]{nikolaus2}
\label{factorization-of-weak-equivalences}
Every $T$-weak equivalence $F: \Gamma \to \Lambda$ factors as
\begin{equation*}
\alxydim{}{\Gamma \ar[rr]^{F} \ar[dr]_{G} && \Lambda \\ & \Omega \ar[ur]_{H}}
\end{equation*} 
where $G$ is a strong equivalence and $H$ is a $T$-covering functor.
\end{lemma}

\begin{remark}
$T$-covering functors are $T$-weak equivalences. This follows from \cref{factorization-of-weak-equivalences} and the 2-out-of-3 property of weak equivalences. 
\end{remark}
\end{comment}

\section{Examples of  functors between sites}

\label{examples-of-functors}

\subsection{Inclusion of opens and submanifolds}

We consider the functors
\begin{equation}
\label{chain-cartesian-spaces}
\Open \to \Subman \to \Man
\end{equation}
which are  inclusion functors of full subcategories, and hence fully faithful. The fibre product in each category is the fibre product of smooth manifolds, and so  the functors \cref{chain-cartesian-spaces}  preserve them. The Grothendieck topologies have been obtained by restriction of $T_{\Man}$; hence, the functors \cref{chain-cartesian-spaces} are continuous by \cref{induced-topology-on-subcategory}. 

The functor $\Subman \to \Man$ has dense image and preserves and invert coproducts because it is an equivalence. It is also cocontinuous: as  $T_{\Subman}$ and $T_{\Man}$ are singleton, we can use \cref{check-cocontinuity}. If now $\pi:Y \to X \subset \R^{n}$ is a surjective submersion, we may choose an embedding $Y \incl \R^{l}$ and replace $Y$ by its image, getting a covering in $\Subman$.

The  functor $\Open \to \Subman$ is cocontinuous: suppose $\pi:Y \to U$ is a surjective submersion from an embedded submanifold $Y\subset \R^{m}$ to an open subset $U \subset \R^{k}$. We choose an open cover of $U$ by countable many open sets $U_n$ with sections $s_n: U_n \to Y$, with $n\in \N$. Then we consider the object $\tilde Y \subset \R^{k+1}$ in $\Open$ defined in \cref{dsfsdfsdf}; coming with smooth maps $s:\tilde Y \to Y$ and $\tilde\pi : \tilde Y \to U$ such that
$\pi \circ s=\tilde\pi$. 
\begin{comment}
By second-countability, we can assume that we have only countably many open sets, i.e., that $I=\N$.
Consider
\begin{equation*}
\tilde Y:=\bigcup_{n\in \N} \{(x,r)\sep x\in U_n\text{ and } n-0.1<r<n+0.1\}\subset \R^{k+1}\text{,}
\end{equation*}
a disjoint union of countably many open sets in $\R^{k+1}$, and hence, an object in $\Open$. We have a smooth map $s:\tilde Y \to Y$ sending $(x,r)$ from the $n\in \N$ component to $s_n(x)$. Moreover, we consider the projection $\tilde\pi:\tilde Y \to U$ given by $(x,r) \mapsto x$. We have 
\begin{equation*}
(\pi\circ s)(x,r) =\pi(s_n(x))=x=\tilde\pi(x,r)\text{.}
\end{equation*}
\end{comment}
It is clear that $\tilde\pi:\tilde Y \to U$ is locally split: on $U_n$ we have the map $x \mapsto (x,n)$. This shows cocontinuity.
Moreover, the inclusion functor $\Open \to \Subman$ has dense image because every smooth manifold $M$ of dimension $n$ can be covered by countable many opens sets $U_i \subset \R^{n}$, whose disjoint union $Y$ is a smooth manifold and hence can be embedded in some $\R^{m}$.
The corresponding map $Y \to M$ is a surjective local diffeomorphism and hence  $T_{susu}|_{\Subman}$-locally split. Finally, it inverts coproducts: if an open subset $U \subset \R^{n}$ is a disjoint union, then each connected component is also an open subset of $\R^{n}$.

\begin{comment}
The functor $\Cart \to \Open$ is cocontinuous. It also has dense image, as manifolds can be covered by charts whose domains are open unit balls, and open unit balls are diffeomorphic to $\R^{n}$.
It is trivially coproduct-inverting, since $\Cart$ does not have any non-trivial coproducts. 
\end{comment}

We apply \cref{comparison-lemma} to obtain a standard result about sheaves on smooth manifolds:

\begin{proposition}
\label{sheaves-on-Open}
The functors \cref{chain-cartesian-spaces} induce equivalences of categories:
\begin{equation*}
\Sh(\Open,T_{op}) \cong \Sh(\Subman,T_{op}) \cong \Sh(\Man, T_{\Man})\text{.}
\end{equation*}
\end{proposition}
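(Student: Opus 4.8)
The plan is to apply the comparison lemma \cref{comparison-lemma} to each of the two inclusion functors in the chain \cref{chain-cartesian-spaces} separately, and then compose the two resulting adjoint equivalences, matching up the various Grothendieck topologies as I go. Recall that \cref{comparison-lemma} takes a functor $\mathscr{F}\colon (\mathscr{C}_1,T_1) \to (\mathscr{C}_2,T_2)$ that is continuous, cocontinuous, has dense image, preserves and inverts coproducts, with $\mathscr{C}_2$ extensive, and produces an adjoint equivalence between $\Sh(\mathscr{C}_2,T_2)$ and $\Sh(\mathscr{C}_1,T_1)$ via $\mathscr{F}^{*}$ and $\mathscr{F}_{*}$. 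In the paragraphs preceding the statement essentially all of these properties have already been verified for both inclusions, so the substantive work is to assemble them and to reconcile the different (but equivalent) topologies involved.

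First I would settle the topology bookkeeping. Since $\Sh(\mathscr{C},T)$ depends only on the equivalence class of $T$ --- by \cref{universal-completion:c}, $T_1 \prec T_2$ forces every $T_2$-sheaf to be a $T_1$-sheaf, so $T_1 \sim T_2$ gives $\Sh(\mathscr{C},T_1)=\Sh(\mathscr{C},T_2)$ --- I may replace any topology by an equivalent one. On $\Man$ we have $T_{op} \sim T_{susu}=T_{\Man}$ by \cref{topologies-on-man}, hence $\Sh(\Man,T_{\Man})=\Sh(\Man,T_{op})$; the restricted topologies on $\Subman$ are equivalent for the same reason, and on $\Open$ we have $T_{op}\sim T_{susu}|_{\Open}$ as shown above. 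This frees me to pick whichever representative is convenient for each application of the lemma.

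Next I would apply \cref{comparison-lemma} to $\Subman \to \Man$. As this inclusion is an equivalence of categories (Whitehead embedding) respecting the topologies, it is fully faithful, continuous, cocontinuous, has dense image, and preserves and inverts coproducts, and $\Man$ is extensive; hence it yields $\Sh(\Man,T_{\Man}) \cong \Sh(\Subman,T_{susu}) = \Sh(\Subman,T_{op})$. Then I would apply the lemma to $\Open \to \Subman$: this inclusion is fully faithful, continuous by \cref{induced-topology-on-subcategory}, cocontinuous and dense-image via the construction \cref{dsfsdfsdf}, and inverts coproducts, with $\Subman$ extensive; this gives $\Sh(\Subman,T_{op}) \cong \Sh(\Open,T_{op})$. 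Composing the two equivalences produces the asserted chain.

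The one hypothesis not stated verbatim before the proposition, and therefore the point I expect to require the most care, is that $\Open \to \Subman$ also \emph{preserves} coproducts (the lemma needs both "preserves" and "inverts"). I would deduce this from extensivity: both $\Open$ and $\Subman$ are extensive, and whenever a coproduct $\coprod_{i} U_i$ exists in $\Open$ it is realized as a disjoint union of clopen pieces inside an open subset of some $\R^{N}$; that same object with the same injections satisfies the universal property of the coproduct in $\Subman$, so the full, fibre-product- and initial-object-preserving inclusion carries it to the coproduct in $\Subman$. Everything else is routine; the genuine content lies entirely in \cref{comparison-lemma} together with the already-established properties of the two inclusions.
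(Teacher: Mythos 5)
Your proposal is correct and follows essentially the same route as the paper, which likewise just assembles the properties verified in the preceding paragraphs and invokes \cref{comparison-lemma} for each of the two inclusions. Your extra observation that the preservation of coproducts by $\Open \to \Subman$ needs to be checked (the paper only explicitly verifies that it \emph{inverts} coproducts) is a legitimate small gap in the paper's exposition, and your extensivity argument fills it correctly.
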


\begin{comment}
\begin{corollary}
It does not matter if diffeological spaces are modelled on $\Open$ (as in \cref{diffeological-spaces} or \cite{iglesias1}), or on $\Cart$ (as, e.g., by Pavlov and Schreiber-Sati), or even on $\Subman$ or $\Man$.
\end{corollary}
\end{comment}

\subsection{The smooth diffeology functor}

The \emph{smooth diffeology functor}
\begin{equation*}
\mathscr{S}: \Man \to \Diff
\end{equation*}
is defined by setting $\mathcal{D}_{\mathscr{S}(M)}(U) := C^{\infty}(U,M)$, i.e., the plots with domain $U \in \Open$ are the smooth maps from $U$ to $M$. It has the following well-known properties:
\begin{enumerate}[(i)]

\item 
It is fully faithful.

\item
It preserves   products and coproducts.

\item
It preserves submanifolds: if $N \subset M$ is an embedded submanifold, then the subspace diffeology on $N \subset \mathscr{S}(M)$ coincides with $\mathscr{S}(N)$. 

\item
Since $\Diff$ is complete, $\mathscr{S}$ preserves universal morphisms.

\item
It extends to a fully  faithfully functor on Fr\'echet manifolds \cite{losik1}, and even to more general manifolds modelled on locally convex spaces \cite{Wockel2013}.  

\item
If $M$ is a smooth manifold, then the D-open sets of $\mathscr{S}(M)$ are precisely the open sets in the manifold topology; in other words, the composition $\mathscr{D}\circ \mathscr{S}: \Man \to \Top$, where $\mathscr{D}$ is the D-topology functor (see \cref{D-topology-functor}), is the manifold topology functor.  

\item
If $M$ is a smooth manifold, then $\mathcal{D}_{\mathscr{S}(M)} = \Yo_M|_{\Open}$; in other words, 
\begin{equation}
\label{fadsfaf}
\mathcal{D} \circ \mathscr{S} = \mathscr{I}^{*} \circ \Yo
\end{equation}
where $\mathcal{D}: \Diff \to \Sh(\Open, T_{op})$ is the embedding functor and $\mathscr{I}:\Open \to \Man$ is the inclusion.

\end{enumerate}

Concerning some of the classes of maps we have considered in $\Man$ and $\Diff$ we have the following lemma.

\begin{lemma}
\label{surjective-submersions-in-diff}
The following conditions for a smooth map $f:M \to N$ between smooth manifolds  are equivalent :
\begin{enumerate}

\item
$f$ is a surjective submersion

\item
$\mathscr{S}(f)$ is a surjective D-submersion

\item
$\mathscr{S}(f)$ is a submersion

\end{enumerate} 
\end{lemma}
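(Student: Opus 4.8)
The plan is to establish the three conditions as equivalent via the cycle $(1)\Rightarrow(2)\Rightarrow(3)\Rightarrow(1)$. A preliminary observation streamlines the treatment of surjectivity: since $\mathscr{S}$ is fully faithful and leaves the underlying map of sets untouched, $f$ is surjective if and only if $\mathscr{S}(f)$ is surjective. Hence the substance of the lemma is that the three \emph{local-section} conditions---being a submersion of manifolds, being a D-submersion of $\mathscr{S}(f)$, and being a diffeological submersion of $\mathscr{S}(f)$---all agree for a map between manifolds, with surjectivity carried along as a set-level condition wherever it appears.

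For $(1)\Rightarrow(2)$ I would invoke the local normal form for submersions of manifolds: given $y\in M$, the implicit function theorem produces an open neighbourhood $A\subset N$ of $f(y)$ and a smooth section $\sigma\colon A\to M$ of $f$ with $\sigma(f(y))=y$. Because the D-open subsets of $\mathscr{S}(N)$ are exactly the manifold-open subsets (property (vi) above), $A$ is D-open, so these sections exhibit $\mathscr{S}(f)$ as a D-submersion; together with surjectivity this yields a surjective D-submersion. The step $(2)\Rightarrow(3)$ is then immediate from the elementary implication \quot{every D-submersion is a submersion} already used in the proof of \cref{comparison-GTs-on-Diff}: from a D-open section $\sigma\colon A\to M$ and a plot $c\colon U\to N$ with $c(x)=f(y)$, the restriction $\sigma\circ c|_{c^{-1}(A)}$ is the desired plot-wise section, and $c^{-1}(A)$ is open precisely because $A$ is D-open.

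The main obstacle is the closing implication $(3)\Rightarrow(1)$, where a manifold-level statement must be reconstructed from the purely plot-wise hypothesis. To recover the submersion property at a point $y\in M$, I would test the definition of a diffeological submersion against a chart $c\colon U\to N$ about $f(y)$ (a diffeomorphism onto an open set), with $x\in U$ chosen so that $c(x)=f(y)$; the resulting smooth section $\sigma\colon U_x\to M$ satisfies $f\circ\sigma=c|_{U_x}$, and differentiating at $x$ gives $df_y\circ d\sigma_x=dc_x$ with $dc_x$ invertible, so $df_y$ is surjective and $f$ is a submersion of manifolds. The delicate point I expect to be the crux is \emph{surjectivity} of $f$: the plot-wise submersion condition only constrains $f$ over points already in its image, so it does not by itself force $f$ onto all of $N$. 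Closing the cycle therefore requires that surjectivity be supplied alongside the submersion property---matching the surjectivity clauses of $(1)$ and $(2)$---and isolating exactly where this hypothesis enters is the step that will need the most care.
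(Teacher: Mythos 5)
Your proposal is correct in substance and in fact does more work than the paper, which disposes of the lemma in one line by declaring $(1)\Rightarrow(2)\Rightarrow(3)$ elementary and citing van der Schaaf (\cite{Schaaf2020a}) for the only non-trivial implication $(3)\Rightarrow(1)$. Your arguments for $(1)\Rightarrow(2)$ (local sections from the normal form of a submersion, combined with the fact that D-open subsets of $\mathscr{S}(N)$ are exactly the manifold-open subsets) and for $(2)\Rightarrow(3)$ (pull a D-open section back along a plot) match what the paper treats as obvious. For $(3)\Rightarrow(1)$ your chart-and-differential argument --- test the lifting property against a chart $c$ at a point $x$ with $c(x)=f(y)$, differentiate $f\circ\sigma=c|_{U_x}$ to get $df_y\circ d\sigma_x=dc_x$ with $dc_x$ invertible, hence $df_y$ surjective --- is a correct, self-contained replacement for the citation, and is essentially the proof in the reference.

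The point you flag about surjectivity is well taken, and it is a feature of the statement rather than a defect of your argument: condition (3) as literally written does not contain the word \emph{surjective}, and the pointed lifting property quantifies only over $y$ in the domain, so it cannot force $f$ onto $N$ (an open inclusion $U\hookrightarrow N$ with $U\neq N$, or the empty map $\emptyset\to N$, satisfies (3) but not (1)). The equivalence is therefore really between the three \emph{local-section} properties, with surjectivity to be carried as a uniform side condition in all three items --- which is how the lemma is actually used in the paper, namely to match the coverings of $T_{susu}$ on $\Man$ with those of $T_{suDsu}$ and $T_{susu}$ on $\Diff$, all of which are surjective by definition. (In Iglesias-Zemmour's terminology, where \emph{local subductions} are surjective by fiat, no such adjustment is needed.) So: your proof is sound, and your closing caveat correctly identifies the one place where the statement needs to be read charitably.
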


\begin{proof}
The only non-trivial implication 3 $\to$ 1  is, e.g. proved in \cite{Schaaf2020a}. 
\end{proof}

\begin{proposition}
\label{properties-of-S}
The smooth diffeology functor $\mathscr{S}:\Man\to \Diff$ is continuous and cocontinuous for each of the Grothendieck topologies on $\Man$  considered in \cref{GTs-on-Man} and each of the Grothendieck topologies on $\Diff$ considered in \cref{comparison-GTs-on-Diff}. It has dense image if $\Diff$ is equipped with the Grothendieck topology $T_{sudu}$ of subductions. 
\end{proposition}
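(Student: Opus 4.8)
The plan is to exploit that continuity and cocontinuity depend only on the equivalence classes of the topologies involved, which collapses the statement to a single representative pair. By \cref{topologies-on-man} all topologies of \cref{GTs-on-Man} on $\Man$ are equivalent, sharing the universal completion given by surjective submersions, so the topology on $\Man$ is immaterial; I will work with $T_{op}$ for continuity and with the singleton $T_{susu}$ for cocontinuity. On the diffeological side, the diagram of \cref{comparison-GTs-on-Diff} exhibits $T_{numDop}$ as the coarsest and $T_{sudu}$ as the finest of the listed topologies (every listed $T$ satisfies $T_{numDop}\prec T\prec T_{sudu}$). Since continuity is equivalent to sending $\Uni(T_1)$-coverings to $\Uni(T_2)$-coverings and preserving fibre products of $\Uni(T_1)$-coverings (the remark after \cref{continuous-functor}), and since $T_2\prec T_2'$ gives $\Uni(T_2)\subset\Uni(T_2')$ by \cref{universal-completion:c}, continuity into $(\Diff,T_{numDop})$ automatically upgrades to continuity into every finer topology. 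Dually, a universal $T_2'$-locally split morphism is universal $T_2$-locally split whenever $T_2'\prec T_2$, so cocontinuity into $(\Diff,T_{sudu})$ descends to every coarser one. Hence it suffices to prove continuity into $T_{numDop}$, cocontinuity into $T_{sudu}$, and dense image for $T_{sudu}$.

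For continuity I would apply \cref{continuous-functor-coverings} to $\mathscr{S}:(\Man,T_{op})\to(\Diff,T_{numDop})$. The functor preserves universal morphisms because $\Diff$ is complete, so every morphism there is universal (property (iv)); it preserves fibre products along submersions, which I would check on plots, since a plot of $\mathscr{S}(W\times_X Y)$ is exactly a pair of plots into $\mathscr{S}(W)$ and $\mathscr{S}(Y)$ agreeing in $\mathscr{S}(X)$. Finally, it sends an open cover $(U_i\hookrightarrow M)$ to $(\mathscr{S}(U_i)\hookrightarrow\mathscr{S}(M))$, whose images are precisely the D-open sets $U_i$ because the D-topology of $\mathscr{S}(M)$ is the manifold topology (property (vi)) and $\mathscr{S}$ preserves open submanifolds (property (iii)); as a manifold is paracompact, this D-open cover is numerable, i.e. a $T_{numDop}$-covering. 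This verifies the three hypotheses of \cref{continuous-functor-coverings}.

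For cocontinuity into $(\Diff,T_{sudu})$ I would use \cref{check-cocontinuity} in case (a), both $T_{susu}$ and $T_{sudu}$ being singleton. Given a manifold $X$ and a subduction $\pi:Y\to\mathscr{S}(X)$, applying the subduction property to the chart plots $\varphi_\alpha^{-1}:U_\alpha\to X$ produces an open cover of $X$ over which $\pi$ admits local sections; second countability yields a countable subcover $(W_n)_n$ with smooth sections $\sigma_n:\mathscr{S}(W_n)\to Y$ satisfying $\pi\circ\sigma_n=\mathscr{S}(\iota_n)$. Setting $V:=\coprod_n W_n$, a manifold, gives a surjective submersion $\psi:=\coprod_n\iota_n:V\to X$, hence a $T_{susu}$-covering, and, using that $\mathscr{S}$ preserves coproducts (property (ii)), a morphism $\rho:=\coprod_n\sigma_n:\mathscr{S}(V)\cong\coprod_n\mathscr{S}(W_n)\to Y$ with $\pi\circ\rho=\mathscr{S}(\psi)$, which is the refinement required by \cref{check-cocontinuity}.

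For dense image with respect to $T_{sudu}$, condition (b) of \cref{dense-image} is full faithfulness (property (i)). For condition (a), given $X\in\Diff$ I would take the family of all plots $c:U_c\to X$; since $\mathrm{Hom}_{\Diff}(\mathscr{S}(U_c),X)$ is canonically the set of plots with domain $U_c$, each $c$ is a morphism $\mathscr{S}(U_c)\to X$, and the induced $p:\coprod_c\mathscr{S}(U_c)\to X$ is a subduction: every plot $d:V\to X$ occurs in the family, so the inclusion of the $d$-component is a section lifting $d$. A subduction is a $T_{sudu}$-covering, hence universal $T_{sudu}$-locally split by \cref{locally-split-morphisms:a}, giving (a). I expect the main obstacle to be the bookkeeping in the cocontinuity step — confirming that the countable disjoint union of chart pieces is genuinely an object of $\Man$ and that the transported local sections assemble into a single morphism out of $\mathscr{S}(V)$ — together with the care needed to certify that $T_{numDop}$ and $T_{sudu}$ really are the extreme elements of \cref{comparison-GTs-on-Diff}, so that the two invariance reductions cover every topology named in the statement.
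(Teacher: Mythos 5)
Your proof is correct, and its overall architecture coincides with the paper's: reduce to extremal representatives using the fact that continuity ascends and cocontinuity descends along $\prec$, verify the base cases via \cref{continuous-functor-coverings} and \cref{check-cocontinuity}, and use the nebula of plots for the dense image. The one genuinely different ingredient is your base case for continuity. The paper proves continuity of $\mathscr{S}$ into $(\Diff,T_{suDsu})$ by invoking \cref{surjective-submersions-in-diff} (surjective submersions of manifolds become surjective D-submersions) and then propagates to the remaining topologies via $T_{suDsu}\prec T$; you instead prove continuity into the coarsest topology $T_{numDop}$ directly, using that the D-topology of $\mathscr{S}(M)$ is the manifold topology, that $\mathscr{S}$ preserves open submanifolds, and that manifolds are paracompact so every open cover is numerable. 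Your choice is actually the safer one: propagating upward from $T_{numDop}$ only uses the relations $T_{numDop}\prec T$ that \cref{comparison-GTs-on-Diff} explicitly provides, whereas the paper's propagation from $T_{suDsu}$ to $T_{numDop}$ implicitly needs $T_{suDsu}\prec T_{numDop}$, a relation the comparison diagram does not assert (it only gives $T_{numDop}\prec T_{Dop}\sim T_{suDsu}$). The remaining differences are cosmetic: for cocontinuity you apply case (a) of \cref{check-cocontinuity} with the singleton topology $T_{susu}$ on $\Man$, assembling the chart-wise sections over the disjoint union $\coprod_n W_n$ viewed as a single surjective submersion, while the paper applies case (b) with the singletonizable $T_{cop}$; both hinge on the same countable refinement of the chart cover and on $\mathscr{S}$ preserving coproducts, and your dense-image argument is literally the paper's nebula construction.
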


\begin{proof}
\cref{continuous-functor-coverings,surjective-submersions-in-diff} show that $\mathscr{S}$ is continuous w.r.t. to $T_{suDsu}$. By \cref{comparison-GTs-on-Diff}, $T_{suDsu} \prec T$ for all other Grothendieck topologies $T$. Then, continuity follows by \cref{identity-functor-continuous}. 

For cocontinuity, consider a subduction $\pi:Y \to \mathscr{S}(M)$. For $x\in M$, let $\varphi:U \to V$ be chart with $V \subset M$ on open neighborhood of $x$ and $U \in\Open$. Then,  $U \incl V \incl M$ is a plot of $\mathscr{S}(M)$, and since $\pi$ is a subduction, there exists an open subset $U_x \subset U$ with a lift $d_x:U_x \to Y$. We may refine the open cover $(U_x)_{x\in M}$  to a countable one. Then, \cref{check-cocontinuity} (part (b)) shows that $\mathscr{S}$ is cocontinuous for $T_{cop}$ and $T_{sudu}$. \cref{comparison-GTs-on-Diff,identitiy-functor-is-continuous} show that it is cocontinuous for all other Grothendieck topologies. 

In order to see the dense image, let $X$ be a diffeological space, we consider the plots $c:U \to X$ and  their \quot{nebula},
\begin{equation*}
N(X) := \coprod_{U\in\Open} \;\coprod_{c \in \mathcal{D}_X(U)} U\text{.}
\end{equation*} 
The corresponding map  $\pi: N(X) \to X$ is a subduction, but in general neither a submersion nor $D$-locally split. Hence, $\mathscr{S}$ has only a dense image when $\Diff$ is equipped with the Grothendieck topology of subductions.   
\end{proof}

\begin{corollary}
\label{smooth-diffeology-covariance}
The smooth diffeology functor $\mathscr{S}$ sends Lie groupoids to diffeological groupoids, and $T_{\Man}$-weak equivalences to $T$-weak equivalences, where $T$ is any of the Grothendieck topologies considered in \cref{comparison-GTs-on-Diff}. Likewise, it sends $T_{op}$-locally trivial principal $G$-bundles on $\Man$ to $T$-locally trivial principal $\mathscr{S}(G)$-bundles on $\Diff$, and the same for bibundles and anafunctors. 
\end{corollary}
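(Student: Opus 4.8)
The plan is to assemble this corollary directly from the transport results of \cref{geometry-on-sites} together with the continuity of $\mathscr{S}$ established in \cref{properties-of-S}; no new construction is required, which is why it is stated as a corollary.

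First I would dispatch the statement about groupoids. The source and target maps of a Lie groupoid are surjective submersions, hence universal, and they are globally split via the unit $i$, so by \cref{locally-split-morphisms:e} they are universal $T_{susu}$-locally split. Since $\mathscr{S}$ is continuous by \cref{properties-of-S}, it preserves fibre products along such morphisms, and since $\Diff$ is complete it preserves universal morphisms; hence $\mathscr{S}$ preserves all the structural fibre products $X_k$ occurring in \cref{internal-category}. By the remark in \cref{internal-categories} on functors that preserve universal morphisms and fibre products with universal morphisms, $\mathscr{S}$ therefore sends $\Man$-groupoids (i.e.\ Lie groupoids) to $\Diff$-groupoids (i.e.\ diffeological groupoids).

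The remaining assertions are then immediate applications of the functoriality lemmas, using that $\mathscr{S}$ is continuous with respect to $T_{\Man}=T_{susu}$ and any of the topologies $T$ of \cref{comparison-GTs-on-Diff}. For weak equivalences I would simply invoke \cref{continuous-functors-and-weak-equivalences}. For principal bundles I would invoke \cref{functoriality-of-G-bundles}; here one uses that $T_{op}\sim T_{\Man}$ by \cref{topologies-on-man} and that locally trivial principal bundles depend only on the equivalence class of the Grothendieck topology (as noted after \cref{locally-trivial-bundle}), so that a $T_{op}$-locally trivial bundle is $T_{\Man}$-locally trivial and is sent by $\mathscr{S}$ to a $T$-locally trivial $\mathscr{S}(G)$-bundle. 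For bibundles I would use the induced functor on $\Bibun GH^{T}$ recorded just after \cref{functoriality-of-G-bundles}, and for anafunctors I would invoke \cref{continuous-functors-and-anafunctors}.

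The only point requiring a little care, rather than a genuine obstacle, is the preservation of the structural fibre products in the groupoid step: \cref{continuous-functor} guarantees preservation of fibre products only along universal \emph{locally split} morphisms, not along arbitrary universal morphisms. This is resolved precisely because the source and target maps of a Lie groupoid are globally split, hence universal locally split for every Grothendieck topology. Everything else follows formally from continuity, so the corollary is genuinely a corollary of \cref{properties-of-S}.
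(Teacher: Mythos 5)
Your proposal is correct and follows essentially the same route as the paper: the corollary is deduced directly from the continuity of $\mathscr{S}$ (\cref{properties-of-S}) combined with \cref{continuous-functors-and-weak-equivalences,functoriality-of-G-bundles,continuous-functors-and-anafunctors}. Your additional remark that the structural fibre products are preserved because source and target maps are globally split (hence universal $T$-locally split for every $T$) is a point the paper makes in \cref{internal-categories} rather than in the proof itself, but it is the same justification.
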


\begin{proof}
This follows from the statement that $\mathscr{S}$ is continuous in combination with \cref{continuous-functors-and-weak-equivalences,functoriality-of-G-bundles,continuous-functors-and-anafunctors}. 
\end{proof}

\begin{corollary}
\label{smooth-diffeology-sheaves}
The smooth diffeology functor $\mathscr{S}$ induces an adjunction
\begin{equation*}
\mathscr{S}^{*}: \Sh(\Diff,T) \rightleftharpoons \Sh(\Man,T_{\Man}): \mathscr{S}_{*}
\end{equation*}
whose counit $\varepsilon : \mathscr{S}^{*} \circ \mathscr{S}_{*} \to \id$ is an isomorphism, for $T$ any  of the Grothendieck topologies on $\Diff$ considered in \cref{comparison-GTs-on-Diff}. This adjunction is an equivalence if $T=T_{sudu}$. 
\end{corollary}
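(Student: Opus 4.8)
The plan is to obtain the corollary as a direct application of the two general results \cref{adjunction-sheaves} and \cref{comparison-lemma}, so the real content lies in checking that $\mathscr{S}$ meets their hypotheses. For the first assertion I would invoke \cref{adjunction-sheaves}, which asks that $\mathscr{S}$ be continuous, cocontinuous, and preserve and invert coproducts. Continuity and cocontinuity with respect to every Grothendieck topology listed in \cref{comparison-GTs-on-Diff} are exactly the content of \cref{properties-of-S}, and coproduct preservation is property (ii) in the list of properties of $\mathscr{S}$. Taking $\mathscr{C}_1=\Man$, $\mathscr{C}_2=\Diff$, $T_1=T_{\Man}$, $T_2=T$, this yields the restricted adjunction $\mathscr{S}^{*}:\Sh(\Diff,T)\rightleftharpoons\Sh(\Man,T_{\Man}):\mathscr{S}_{*}$. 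The counit being an isomorphism is then immediate from \cref{epsilon-iso}, since $\mathscr{S}$ is fully faithful (property (i)): the counit of the sheaf-level adjunction is the restriction of the presheaf-level counit, and so remains an isomorphism on the full subcategories of sheaves.

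For the second assertion, the equivalence when $T=T_{sudu}$, I would apply \cref{comparison-lemma} in the same setup. Beyond the hypotheses already verified, this additionally requires $\mathscr{S}$ to have a dense image and the target $\Diff$ to be extensive. Dense image holds precisely for $T_{sudu}$ by \cref{properties-of-S} -- which is exactly why the equivalence is claimed only in this case, the nebula map of a general diffeological space being a subduction but typically neither a submersion nor $D$-locally split -- and $\Diff$ is extensive by the structural proposition on diffeological spaces in \cref{diffeological-spaces}. The adjoint equivalence then follows at once.

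The single step that is not a bare citation is verifying that $\mathscr{S}$ \emph{inverts} coproducts in the sense of \cref{coproduct-inverting}. Here I would argue as follows. Since $\mathscr{S}$ is fully faithful, it remains only to establish essential surjectivity on the components of any coproduct in its image. Given a manifold $M$ with $\mathscr{S}(M)=\coprod_{i\in I}Z_i$ in $\Diff$, the decomposition restricts on underlying sets to a partition of $M$ whose pieces $Z_i$ are $D$-open subspaces carrying the subspace diffeology; because the $D$-topology of $\mathscr{S}(M)$ is the manifold topology (property (vi)), these pieces are realized by open submanifolds $M_i\subset M$ with $M=\coprod_{i\in I}M_i$, and property (iii) (preservation of submanifolds) identifies the subspace diffeology on each with $\mathscr{S}(M_i)$. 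As $\mathscr{S}$ preserves this coproduct, the decompositions $\coprod_i Z_i$ and $\coprod_i \mathscr{S}(M_i)$ of $\mathscr{S}(M)$ agree componentwise, producing the required isomorphisms $\gamma_i:\mathscr{S}(M_i)\to Z_i$. This is the only point demanding genuine work, and even it is routine; there is no real obstacle, the corollary being essentially a matter of assembling the hypotheses of the already-established general theorems.
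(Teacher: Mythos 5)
Your proposal is correct and follows essentially the same route as the paper, which states this corollary without further argument as a direct consequence of \cref{properties-of-S} together with \cref{adjunction-sheaves}, \cref{epsilon-iso}, and \cref{comparison-lemma}. Your explicit verification that $\mathscr{S}$ inverts coproducts (via the identification of the $D$-topology of $\mathscr{S}(M)$ with the manifold topology and the preservation of submanifolds) is sound and usefully fills in the one hypothesis the paper leaves unchecked.
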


We remark that the right Kan extension $\mathscr{S}_{*}$ of sheaves from manifolds to diffeological spaces has a particularly nice form.

\begin{lemma}
\label{extension-along-S}
For $\mathcal{F}$ a sheaf on $\Man$ and $X$ a diffeological space, there is a canonical bijection
\begin{align*}
(\mathscr{S}_{*}\mathcal{F})(X) 
\cong \mathrm{Hom}_{\PSh(\Open)}(\mathcal{D}_X,\mathcal{F}|_{\Open})
\end{align*}
that is
natural in $X$.
\end{lemma}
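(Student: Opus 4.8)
The plan is to start from the explicit description of the right Kan extension recorded before the statement (\cref{right-Kan-extension}): an element of $(\mathscr{S}_{*}\mathcal{F})(X)$ is a compatible family $\{s_{M,\psi}\}_{(M,\psi)\in \mathscr{S}/X}$ with $s_{M,\psi}\in \mathcal{F}(M)$ and $f^{*}s_{M_2,\psi_2}=s_{M_1,\psi_1}$ for every morphism $f:(M_1,\psi_1)\to(M_2,\psi_2)$ in $\mathscr{S}/X$. The first step is to identify, for $U\in \Open$, the set $\mathrm{Hom}_{\Diff}(\mathscr{S}(U),X)$ of objects $(U,\psi)$ of $\mathscr{S}/X$ with the set $\mathcal{D}_X(U)$ of $U$-plots of $X$: a plot $c:U\to X$ is smooth as a map $\mathscr{S}(U)\to X$ by axiom (2) of a diffeological space, and conversely any smooth map $\mathscr{S}(U)\to X$ is a plot since $\mathrm{id}_U$ is a plot of $\mathscr{S}(U)$. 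Under this natural bijection the full subcategory of $\mathscr{S}/X$ spanned by the $(U,c)$ with $U\in\Open$ is exactly the category of elements of $\mathcal{D}_X$ on $\Open$, and the standard identification of natural transformations with limits over the category of elements gives $\mathrm{Hom}_{\PSh(\Open)}(\mathcal{D}_X,\mathcal{F}|_{\Open})\cong \lim_{(U,c)}\mathcal{F}(U)$. Restricting a compatible family $\{s_{M,\psi}\}$ to these objects therefore produces a natural transformation $\eta$ with $\eta_U(c):=s_{U,c}$; this defines the forward map $\Phi$.

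The core of the argument is to invert $\Phi$, and this is where the sheaf hypothesis on $\mathcal{F}$ enters. Given $\eta:\mathcal{D}_X\to \mathcal{F}|_{\Open}$, I must reconstruct $s_{M,\psi}\in \mathcal{F}(M)$ for an \emph{arbitrary} manifold $M$. Since all Grothendieck topologies on $\Man$ are equivalent (\cref{topologies-on-man}) and $T_{op}$ is superextensive and singletonizable, $\mathcal{F}$ is a traditional $T_{op}$-sheaf by \cref{comparison-sheaf}; in particular it admits ordinary gluing along open covers. Choosing a countable chart cover $\varphi_\alpha:U_\alpha\hookrightarrow M$ and setting $c_\alpha:=\psi\circ\mathscr{S}(\varphi_\alpha)\in \mathcal{D}_X(U_\alpha)$, the elements $\eta_{U_\alpha}(c_\alpha)\in \mathcal{F}(U_\alpha)$ agree on overlaps: covering each $U_\alpha\times_M U_\beta$ again by charts $W$ and using naturality of $\eta$ together with the fact that the two induced plots of $W$ both equal the restriction of $\psi$, the restrictions agree on every $W$, and separatedness of $\mathcal{F}$ gives agreement on the overlap. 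The sheaf condition then yields a unique $s_{M,\psi}\in \mathcal{F}(M)$; one checks that $s_{M,\psi}$ is independent of the chart cover (refine two covers to a common one) and that the resulting family is compatible with all morphisms of $\mathscr{S}/X$, giving the inverse $\Psi$. That $\Phi$ and $\Psi$ are mutually inverse is then formal: $\Psi\Phi=\mathrm{id}$ by uniqueness of the glued element, and $\Phi\Psi=\mathrm{id}$ because on an open $U$ one may use the identity chart.

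A cleaner way to package the gluing, avoiding explicit overlap bookkeeping, is to invoke the equivalence $\mathscr{I}^{*}:\Sh(\Man,T_{\Man})\xrightarrow{\sim}\Sh(\Open,T_{op})$ of \cref{sheaves-on-Open}, which gives $\mathcal{F}\cong \mathscr{I}_{*}(\mathcal{F}|_{\Open})$ and hence $\mathcal{F}(M)\cong \lim_{(U,\chi)\in \mathscr{I}/M}\mathcal{F}(U)$ via the right Kan extension formula (\cref{right-Kan-extension}); substituting this into $(\mathscr{S}_{*}\mathcal{F})(X)=\lim_{(M,\psi)}\mathcal{F}(M)$ and applying a Fubini argument for limits reduces the claim to the initiality of the functor $(U,\chi,M,\psi)\mapsto (U,\psi\circ\mathscr{S}(\chi))$ onto the category of elements of $\mathcal{D}_X$, which is elementary. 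Either way, I expect the main obstacle to be precisely the verification that the locally defined elements $\eta_{U_\alpha}(c_\alpha)$ glue coherently and independently of choices -- equivalently, that a smooth map $\mathscr{S}(M)\to X$ is determined by, and can be assembled from, its restrictions to a chart cover -- since this is the only place the sheaf property of $\mathcal{F}$ is genuinely used. Naturality of the bijection in $X$ is routine: a smooth map $X\to X'$ acts on plots by post-composition and on $\mathscr{S}/X$ by post-composing the structure maps $\psi$, and both constructions visibly intertwine these actions.
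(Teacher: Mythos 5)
Your proof is correct, but it takes a genuinely different route from the paper. The paper's proof is a three-line abstract argument: it combines \cref{Yoneda-extension} (the right Kan extension of $\mathcal{F}$ along the Yoneda embedding is the representable presheaf $\mathrm{Hom}_{\PSh(\Man)}(-,\mathcal{F})$) with the factorization \cref{fadsfaf} of the Yoneda embedding as $\Man \xrightarrow{\mathscr{S}} \Diff \xrightarrow{\mathcal{D}} \Sh(\Open,T_{op}) \simeq \Sh(\Man,T_{\Man})$, and then uses functoriality of right Kan extensions along this composite (together with full faithfulness of the intermediate functors) to read off the answer. You instead work directly with the limit formula \cref{right-Kan-extension}: the restriction map $\Phi$ to the objects $(U,c)$ with $U\in\Open$ is exactly right (the identification $\mathrm{Hom}_{\Diff}(\mathscr{S}(U),X)\cong\mathcal{D}_X(U)$ is the standard one), and your inverse via gluing over chart covers is sound -- this is where the sheaf hypothesis on $\mathcal{F}$ genuinely enters, and your argument makes that visible in a way the paper's does not. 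Two small remarks: the second covering of the overlaps $U_\alpha\times_M U_\beta$ by charts $W$ is unnecessary, since each such overlap can be realized as an open subset of $U_\alpha\subset\R^{n}$ and is therefore already an object of $\Open$ to which naturality of $\eta$ applies directly; and your ``cleaner packaging'' via the equivalence $\Sh(\Man,T_{\Man})\simeq\Sh(\Open,T_{op})$ of \cref{sheaves-on-Open} plus an initiality argument is in fact a legitimate third route (the relevant comma categories are nonempty and connected because $\Open$ contains disconnected opens), and it reduces the slice category to exactly the category of elements of $\mathcal{D}_X$, eliminating the gluing step altogether. What the paper's approach buys is brevity and reuse of already-established lemmas; what yours buys is a self-contained, element-level proof that isolates precisely which input ($\mathcal{F}$ being a traditional $T_{op}$-sheaf, via \cref{comparison-sheaf,topologies-on-man}) is needed to pass from Euclidean domains to arbitrary manifolds.
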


\begin{proof}
By \cref{Yoneda-extension}, the right Kan extension of $\mathcal{F}$ along the Yoneda embedding $\Yo^{\Sh}: \Man \to \Sh(\Man, T_{\Man})$ is 
\begin{equation*}
(\Yo^{\Sh}_{*}\mathcal{F})(\mathcal{G}) = \mathrm{Hom}_{\PSh(\Man)}(\mathcal{G},\mathcal{F})\text{.}
\end{equation*}
On the other hand, by \cref{fadsfaf}, the Yoneda embedding is canonically naturally isomorphic  the following composite:
\begin{equation*}
\alxydim{}{\Man \ar[r]^{\mathscr{S}} & \Diff \ar[r]^-{\mathcal{D}} & \Sh(\Open,T_{op}) \ar[r]^-{\sim} & \Sh(\Man, T_{\Man})\text{.}}
\end{equation*}
This shows that $(\mathscr{S}_{*}\mathcal{F})(X) = (\Yo_{*}\mathcal{F})|_{\Diff}\cong \mathrm{Hom}_{\PSh(\Open)}(\mathcal{D}_X,\mathcal{F}|_{\Open})$. 
\end{proof}

\begin{example}
Let $\Omega^k$ be the sheaf of differential $k$-forms on $\Man$. Then, computing $\mathscr{S}_{*}\Omega^k$ using \cref{extension-along-S} gives precisely the definition of differential forms on diffeological spaces.
Note that, by \cref{extension-preserves-sheaf-1,properties-of-S}, $\mathscr{S}_{*}\Omega^k$ is a sheaf for each of the Grothendieck topologies on $\Diff$ considered in \cref{comparison-GTs-on-Diff}.
\end{example}

\subsection{The D-topology functor}

\label{D-topology-functor}

The D-topology functor 
\begin{equation*}
\mathscr{D}: \Diff \to \Top
\end{equation*}
sends a diffeological space $X$ to the set $X$ equipped with the topology consisting of all D-open subsets. It is faithful, but not full.
The following are results of Christensen-Sinnamon-Wu \cite{Christensen}{}: 
\begin{enumerate}[(i)]

\item 
$\mathscr{D}$ has a right adjoint, and hence preserves  colimits.

\item
$\mathscr{D}$ does not preserve subspaces: if $A \subset X$ is a subset, then the subspace topology $A \subset D(X)$ may be finer than the D-topology of $A$. 
\begin{comment}
Sufficient condition: $A$ is a smooth retract of an open subset.
\end{comment}

\item
$\mathscr{D}$ does not preserve all products.
\begin{comment}
Sufficient condition: one of the factors is locally compact. 
\end{comment}

\item
$\mathscr{D}$ does not preserve mapping spaces; the D-topology of $C^{\infty}(X,Y)$ is between the weak and strong topologies. 
\begin{comment}
Sufficient condition: $X$ is compact. 
\end{comment}
\end{enumerate}
Since $D$ does not preserve products, it follows that diffeological categories / groupoids / groups have in general no underlying topological counterparts. A possible solution was found by the work of Christensen-Sinnamon-Wu \cite{Christensen}{}, Kihara \cite{Kihara2018}{}, Shimakawa-Yoshida-Haraguchi \cite{Shimakawa2010}{}, featured by the observation that the D-topology of every diffeological space $X$ is $\Delta$-generated, i.e., it is final w.r.t. all continuous maps from simplices $\Delta^k$ to $X$. The co-restriction
\begin{equation*}
\mathscr{D}^{\Delta}: \Diff \to \Top^{\Delta}
\end{equation*}
to $\Delta$-generated topological spaces is in fact surjective,  preserves all colimits, but now also preserves products. This guarantees, at least, that diffeological groups are sent to topological groups.

Another relevant observation is the following lemma
proved in \cite[\S 2.18]{iglesias1}:

\begin{lemma}
If $f: X \to Y$ is a submersion between diffeological spaces, then $\mathscr{D}(f)$ is an open map. 
\end{lemma}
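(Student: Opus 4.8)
The plan is to unwind both sides of the statement in terms of plots. Recall that $\mathscr{D}(f)$ being an open map means that for every D-open subset $A \subset X$ the image $f(A)$ is D-open in $Y$; and by the very definition of the D-topology, $f(A)$ is D-open precisely when $c^{-1}(f(A))$ is open in $U$ for every plot $c:U \to Y$. So I would fix a D-open set $A \subset X$, a plot $c:U \to Y$, and a point $u \in c^{-1}(f(A))$, and aim to produce an open neighborhood of $u$ contained in $c^{-1}(f(A))$.

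Since $c(u) \in f(A)$, I can choose $x \in A$ with $f(x) = c(u)$. The key step is to apply the submersion property of $f$ to the point $x \in X$, the plot $c:U \to Y$, and the point $u \in U$ (which satisfies $c(u) = f(x)$): this yields an open neighborhood $U_u \subset U$ of $u$ together with a smooth map $\sigma: U_u \to X$ such that $f \circ \sigma = c|_{U_u}$ and $\sigma(u) = x$. The point of this local lift is that $\sigma$, being a smooth map out of an open subset of some $\R^{n}$, is itself a plot of $X$, so the D-openness of $A$ can be brought to bear on it.

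To finish, I would set $W := \sigma^{-1}(A)$ and observe that $W$ is open in $U_u$, hence open in $U$, because $A$ is D-open and $\sigma$ is a plot; moreover $u \in W$ since $\sigma(u) = x \in A$. Finally $W \subset c^{-1}(f(A))$, because for every $w \in W$ one has $\sigma(w) \in A$ and therefore $c(w) = f(\sigma(w)) \in f(A)$. Thus $W$ is the desired open neighborhood, so $c^{-1}(f(A))$ is open; as $c$ was an arbitrary plot, $f(A)$ is D-open, and as $A$ was arbitrary, $\mathscr{D}(f)$ is an open map.

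I do not expect a serious obstacle here; the only point requiring care is the bookkeeping of directions in the definition of a submersion -- one must lift through a preimage point lying in $A$, which is exactly what the clause $\sigma(u)=x$ guarantees -- together with the standard identification of a smooth map out of an object of $\Open$ with a plot, which is what lets D-openness of $A$ interact with the lift. It is also worth keeping in mind that a submersion only supplies a \emph{local} lift, but a neighborhood is all that openness requires.
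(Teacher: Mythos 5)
Your proof is correct and is precisely the standard argument; the paper itself gives no proof but cites \cite[\S 2.18]{iglesias1}, where the same reasoning appears. The two points you flag -- instantiating the submersion property at a preimage point $x\in A$ so that $\sigma(u)=x$ lands in $A$, and using that the local lift $\sigma$ is itself a plot so that D-openness of $A$ makes $\sigma^{-1}(A)$ open -- are exactly the content of the lemma, and your bookkeeping of the directions matches the paper's definition of a submersion.
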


This provides the first part of \cref{continuous-functor-coverings}, heading to a possible proof that $\mathscr{D}^{\Delta}$ is continuous. The second part (it preserves universal morphisms) is also true, as both categories are complete. Just the third part remains unclear (to the author): does $\mathscr{D}^{\Delta}$ preserve all fibre products?

\subsection{The Yoneda embedding}

\begin{comment}
The \emph{Yoneda lemma} says that the map $\mathrm{Hom}_{\PSh(\mathscr{C})}(\Yo_X,\mathcal{F}) \to \mathcal{F}(X)$, $\eta \mapsto \eta_X(\id_X)$ is a bijection. 
\end{comment}
The \emph{Yoneda embedding} \begin{equation*}
\Yo: \mathscr{C} \to \PSh(\mathscr{C}):X \mapsto \Yo_X
\end{equation*}
is  fully faithful  and preserves all limits. The following result is similar to  \cite[C2.2.7]{Johnstone2002}.

\begin{proposition}
\label{yoneda-continuous}
Let $(\mathscr{C},T)$ be a  site,
and let $\PSh(\mathscr{C},T)$ be equipped with the induced Grothendieck topology $\Pre(T)$. Then, the Yoneda embedding $\Yo$ is continuous. If $T$ is singleton, it is cocontinuous. 
\end{proposition}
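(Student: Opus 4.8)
The plan is to handle continuity and cocontinuity separately, reducing each to the defining property of $\Pre(T)$ combined with the Yoneda lemma; no input is needed beyond pullback-stability of $T$ and the standard facts about $\PSh(\mathscr{C})$ collected in \cref{presheaves-and-sheaves}.

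For continuity I would argue straight from \cref{continuous-functor}. Condition (ii) comes for free: since $\Yo$ preserves all limits and every fibre product along a universal morphism exists in $\mathscr{C}$, $\Yo$ preserves fibre products with universal $T$-locally split morphisms. For condition (i), recall that every morphism in $\PSh(\mathscr{C})$ is universal, so it suffices to show that $\Yo$ takes a $T$-locally split $f\maps Y\to X$ to a $\Pre(T)$-locally split morphism; I claim in fact that $\Yo_f\maps\Yo_Y\to\Yo_X$ is a $\Pre(T)$-covering, whence it is $\Pre(T)$-locally split by \cref{locally-split-morphisms:a} (as $\Pre(T)$ is singleton). To check the covering condition, fix $W\in\mathscr{C}$ and $\psi\in\Yo_X(W)=\mathrm{Hom}(W,X)$. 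Choosing a $T$-covering $(\pi_j\maps V_j\to X)_j$ with local sections $\rho_j$ splitting $f$, I pull it back along $\psi$ to get a $T$-covering $(\psi^{*}\pi_j\maps\psi^{*}V_j\to W)_j$ with comparison maps $\tilde\psi_j\maps\psi^{*}V_j\to V_j$, and set $\psi_j':=\rho_j\circ\tilde\psi_j\in\Yo_Y(\psi^{*}V_j)$. Then $\Yo_f(\psi_j')=f\circ\rho_j\circ\tilde\psi_j=\pi_j\circ\tilde\psi_j=\psi\circ\psi^{*}\pi_j=(\psi^{*}\pi_j)^{*}\psi$, which is precisely the datum required of a $\Pre(T)$-covering.

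For cocontinuity, assume $T$ singleton; then $T$ and $\Pre(T)$ are both singleton, so I would invoke \cref{check-cocontinuity} (in case (a)), whose only hypothesis to verify is the refinement property. Given $X\in\mathscr{C}$ and a $\Pre(T)$-covering $\zeta\maps\mathcal{Z}\to\Yo_X$, I must find a $T$-covering $\pi\maps U\to X$ and a morphism $\rho\maps\Yo_U\to\mathcal{Z}$ with $\zeta\circ\rho=\Yo_\pi$. The trick is to test the covering condition on the universal element: applying the definition of $\Pre(T)$ to $X$ and $\id_X\in\Yo_X(X)$ produces a singleton $T$-covering $\pi\maps U\to X$ and an element $z\in\mathcal{Z}(U)$ with $\zeta_U(z)=\pi^{*}(\id_X)=\pi$. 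By Yoneda, $z$ corresponds to a presheaf morphism $\rho\maps\Yo_U\to\mathcal{Z}$ with $\rho_U(\id_U)=z$, and evaluating at $\id_U$ gives $(\zeta\circ\rho)_U(\id_U)=\zeta_U(z)=\pi=(\Yo_\pi)_U(\id_U)$, so $\zeta\circ\rho=\Yo_\pi$ again by Yoneda. This is the desired refinement, and cocontinuity follows.

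The only genuine subtlety, and hence the step to get right, is the Yoneda bookkeeping: that $\pi^{*}(\id_X)=\pi$ under the restriction $\Yo_X(X)\to\Yo_X(U)$, and that a section $z\in\mathcal{Z}(U)$ is the same datum as a map $\Yo_U\to\mathcal{Z}$ whose composite with $\zeta$ is pinned down by its value on $\id_U$. Everything else is routine pullback-stability of the covering $(\pi_j)$ and the elementary observation that a singleton covering is locally split.
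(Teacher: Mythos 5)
Your proposal is correct and follows essentially the same route as the paper: for continuity, both reduce condition (ii) to limit-preservation of $\Yo$ and verify condition (i) by showing $\Yo_\pi$ is a $\Pre(T)$-covering via pulling back a splitting covering along an arbitrary $\psi\maps W\to X$; for cocontinuity, both apply \cref{check-cocontinuity}(a) by testing the $\Pre(T)$-covering condition on the universal element $\id_X$ and converting the resulting section into a refinement morphism $\Yo_U\to\mathcal{Z}$ via the Yoneda lemma. No gaps.
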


\begin{proof}
For continuity, since $\PSh(\mathscr{C})$ is complete, $\Yo$ preserves limits, and $\Pre(T)$ is singleton, it suffices to show that $\Yo$ sends $T$-locally split morphisms to $\Pre(T)$-coverings. Let $\pi:Y \to X$ be $T$-locally split, let $X'\in \mathscr{C}$, and let $\psi \in \Yo_X(X')$, i.e.,  $\psi:X' \to X$. We choose  a $T$-covering $(\pi_i: U_i \to X)_{i\in I}$ with local sections $\rho_i: U_i \to Y$. For each $i\in I$, we consider the diagram
\begin{equation*}
\alxydim{}{U'_i \ar[r]^{p_i} \ar[d]_{\pi'_i} & U_i \ar[r]^{\rho_i} \ar[d]_{\pi_i} & Y \ar[dl]^{\pi} \\ X' \ar[r]_{\psi} & X}
\end{equation*}
whose left part is a pullback. 
The family $(\pi_i':U'_i \to X')_{i\in I}$ is a $T$-covering of $X'$, and $\psi_i:=\rho_i \circ p_i \in \Yo_{Y}(U_i')$ are elements such that $\Yo_{\pi}(\psi_i')=\pi \circ \rho_i\circ p_i=\psi \circ \pi_i' = \pi_i'^{*}\psi$. This is the condition showing that $\Yo_{\pi}$ is a $\Pre(T)$-covering.

Next we show cocontinuity. Since $T$ is now singleton, we can apply \cref{check-cocontinuity}, Part (a). Let $X$ be an object and $\phi: \mathcal{F} \to \Yo_X$ be a $\Pre(T)$-covering. 
We apply the condition for $\Pre(T)$-coverings to the object $X$ and $\psi=\id_X\in \Yo_X(X)$. Thus, there exists a $T$-covering $\pi:Y \to X$ and an element $\psi'\in \mathcal{F}(Y)$ such that $\phi(\psi')=\Yo_{\pi}(\psi)$. This means that $\psi'$ corresponds to a presheaf morphism $f: \Yo_Y \to \mathcal{F}$ such that $\phi \circ f=\Yo_{\pi}$.   
\end{proof}

If $T$ is a subcanonical Grothendieck topology on $\mathscr{C}$, then by \cref{subcanonical-and-sheaves} the Yoneda embedding co-restricts to a  functor
\begin{equation*}
\Yo^{\Sh}: \mathscr{C} \to \Sh(\mathscr{C},T)\text{.}
\end{equation*}
This functor is still fully faithful and preserves limits. We recall from \cref{presheaves-and-sheaves} that $\Pre(T)|_{\Sh(\mathscr{C},T)}$ is the canonical Grothendieck topology on $\Sh(\mathscr{C},T)$. Thus, we get the following consequence of \cref{yoneda-continuous}:

\begin{corollary}
Let $(\mathscr{C},T)$ be a  subcanonical site,
and let $\Sh(\mathscr{C},T)$ be equipped with the canonical Grothendieck topology. Then, the Yoneda embedding $\Yo^{\Sh}$ is continuous, and cocontinuous if $T$ is singleton. 
\end{corollary}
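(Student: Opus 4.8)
The plan is to deduce both assertions from \cref{yoneda-continuous}, using two facts recorded earlier. First, since $T$ is subcanonical, every representable presheaf is a $T$-sheaf (\cref{subcanonical-and-sheaves}), so $\Yo^{\Sh}$ is literally the corestriction of $\Yo$ and $\Yo^{\Sh}_\pi=\Yo_\pi$ for every morphism $\pi$ of $\mathscr{C}$. Second, the identity $\Pre(T)|_{\Sh(\mathscr{C},T)}=T_{\Sh(\mathscr{C},T)}$ from \cref{presheaves-and-sheaves} (\cref{canonical-topology-on-Sh}) says that the canonical topology on $\Sh(\mathscr{C},T)$ is exactly the restriction of $\Pre(T)$. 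So the whole corollary should reduce to transporting \cref{yoneda-continuous} across the inclusion $\Sh(\mathscr{C},T)\subset\PSh(\mathscr{C})$.

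For continuity I would verify the two conditions (i), (ii) of \cref{continuous-functor} for $\Yo^{\Sh}$. Condition (ii) is immediate because $\Yo^{\Sh}$ preserves limits, hence all fibre products. For condition (i), take a universal $T$-locally split morphism $\pi\colon Y\to X$; the proof of \cref{yoneda-continuous} already shows that $\Yo_\pi\colon\Yo_Y\to\Yo_X$ is a $\Pre(T)$-covering. As $\Yo_Y$ and $\Yo_X$ are sheaves and $\Sh(\mathscr{C},T)$ admits pullbacks (computed in $\PSh(\mathscr{C})$), the morphism $\Yo^{\Sh}_\pi$ is both a $\Pre(T)$-covering and universal in $\Sh(\mathscr{C},T)$, hence a $\Pre(T)|_{\Sh(\mathscr{C},T)}$-covering by \cref{restriction}, hence a $T_{\Sh(\mathscr{C},T)}$-covering by \cref{canonical-topology-on-Sh}. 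By \cref{locally-split-morphisms:a} it is then $T_{\Sh(\mathscr{C},T)}$-locally split, which gives condition (i).

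For cocontinuity I would assume $T$ singleton; then $T_{\Sh(\mathscr{C},T)}$ is singleton too, so it suffices to verify the refinement hypothesis of \cref{check-cocontinuity}(a). Given $X\in\mathscr{C}$ and a $T_{\Sh(\mathscr{C},T)}$-covering $\phi\colon\mathcal{G}\to\Yo^{\Sh}_X$, \cref{canonical-topology-on-Sh} and \cref{restriction} make $\phi$ a $\Pre(T)$-covering in $\PSh(\mathscr{C})$. Feeding the object $X$ and the element $\id_X\in\Yo_X(X)$ into the defining condition of a $\Pre(T)$-covering yields, as $T$ is singleton, a $T$-covering $\pi_0\colon U\to X$ and an element $\psi'\in\mathcal{G}(U)$ with $\phi_U(\psi')=\pi_0$; by the Yoneda lemma $\psi'$ corresponds to a morphism $f\colon\Yo_U\to\mathcal{G}$ with $\phi\circ f=\Yo_{\pi_0}$. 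Since $\Yo_U=\Yo^{\Sh}_U$ is a sheaf, $f$ lives in $\Sh(\mathscr{C},T)$ and is precisely the refinement required by \cref{check-cocontinuity}(a); cocontinuity follows. This is nothing but the cocontinuity half of \cref{yoneda-continuous} transported into $\Sh(\mathscr{C},T)$.

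The hard part is not any single estimate but the bookkeeping of the passage from $\PSh(\mathscr{C})$ to $\Sh(\mathscr{C},T)$: one must be certain that the $\Pre(T)$-coverings furnished by \cref{yoneda-continuous} remain coverings for the canonical topology on $\Sh(\mathscr{C},T)$, and that the refinement morphisms produced land among representables and hence among sheaves. Both points rest on subcanonicality (so the relevant objects $\Yo_U,\Yo_X$ are sheaves), on the existence of pullbacks of sheaves, and on \cref{canonical-topology-on-Sh}; once these are granted the corollary reduces verbatim to \cref{yoneda-continuous}.
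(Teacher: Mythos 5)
Your proposal is correct and follows essentially the same route as the paper: the paper likewise deduces the corollary from \cref{yoneda-continuous} together with \cref{subcanonical-and-sheaves} (so that $\Yo^{\Sh}$ is the corestriction of $\Yo$) and the identity $\Pre(T)|_{\Sh(\mathscr{C},T)}=T_{\Sh(\mathscr{C},T)}$ from \cref{presheaves-and-sheaves}. You merely spell out the transport of coverings and refinements into $\Sh(\mathscr{C},T)$, which the paper leaves implicit.
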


Currently, the author does not know whether or not $\Yo$ or $\Yo^{\Sh}$ preserve or invert coproducts; thus, we cannot apply \cref{extension-preserves-sheaf-1}.
However, one can easily compute the right Kan extension of a sheaf $\mathcal{F}\in \Sh(\mathscr{C},T)$.

\begin{lemma}
\label{Yoneda-extension}
Let $\mathcal{F}$ be a sheaf on a site $(\mathscr{C},T)$. We have
\begin{equation*}
\Yo^{\Sh}_{*}\mathcal{F} = \Yo_{\mathcal{F}}\text{.}
\end{equation*}
In particular, $\Yo^{\Sh}_{*}\mathcal{F}$ is a sheaf. 
\end{lemma}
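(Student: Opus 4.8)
The plan is to evaluate the right Kan extension pointwise and recognize the resulting limit as a hom-set. Fix a sheaf $\mathcal{G}$, regarded as an object of $\Sh(\mathscr{C},T)$. By the explicit model of the limit \cref{right-Kan-extension} (applied with $\mathscr{C}_1 = \mathscr{C}$, $\mathscr{C}_2 = \Sh(\mathscr{C},T)$ and $\mathscr{F} = \Yo^{\Sh}$), an element of $(\Yo^{\Sh}_{*}\mathcal{F})(\mathcal{G})$ is a family $\{s_{X,\psi}\}$ indexed by the slice $\Yo^{\Sh}/\mathcal{G}$, that is, by pairs $(X,\psi)$ with $\psi: \Yo^{\Sh}_X \to \mathcal{G}$ a morphism of sheaves, where $s_{X,\psi} \in \mathcal{F}(X)$ is subject to $f^{*}s_{X_2,\psi_2} = s_{X_1,\psi_1}$ for every slice morphism $f: (X_1,\psi_1) \to (X_2,\psi_2)$.

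First I would use the Yoneda lemma to reparametrize this index category. Since $\Yo^{\Sh}_X = \Yo_X$ is the representable presheaf (a sheaf because $T$ is subcanonical), Yoneda gives a bijection $\mathrm{Hom}_{\Sh(\mathscr{C},T)}(\Yo_X,\mathcal{G}) \cong \mathcal{G}(X)$, $\psi \mapsto a_{\psi} := \psi_X(\id_X)$, under which the slice condition $\psi_2 \circ \Yo(f) = \psi_1$ becomes $f^{*}a_{\psi_2} = a_{\psi_1}$. Thus $\Yo^{\Sh}/\mathcal{G}$ is isomorphic to the category of elements of $\mathcal{G}$, and the family $\{s_{X,\psi}\}$ is the same datum as an assignment $a \mapsto s_{X,a} \in \mathcal{F}(X)$ for each $X\in\mathscr{C}$ and each $a \in \mathcal{G}(X)$.

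The key observation is then that the compatibility condition $f^{*}s_{X_2,a} = s_{X_1,f^{*}a}$ is precisely the naturality of the maps $\eta_X: \mathcal{G}(X) \to \mathcal{F}(X)$, $a \mapsto s_{X,a}$. Hence such a family is exactly a natural transformation $\mathcal{G} \Rightarrow \mathcal{F}$, giving
\begin{equation*}
(\Yo^{\Sh}_{*}\mathcal{F})(\mathcal{G}) \cong \mathrm{Hom}_{\PSh(\mathscr{C})}(\mathcal{G},\mathcal{F}) = \mathrm{Hom}_{\Sh(\mathscr{C},T)}(\mathcal{G},\mathcal{F}) = \Yo_{\mathcal{F}}(\mathcal{G}),
\end{equation*}
the middle equality holding because $\Sh(\mathscr{C},T)$ is a full subcategory of $\PSh(\mathscr{C})$ and both $\mathcal{G}$ and $\mathcal{F}$ are sheaves.

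It remains to check naturality in $\mathcal{G}$: a morphism $g: \mathcal{G}_1 \to \mathcal{G}_2$ acts on the left-hand side by the induced map $g^{*}$ of the Kan extension, sending $\{s_{X,\psi}\}$ to $\{s_{X,g\circ\psi}\}$, which under the identification above corresponds to precomposition $(-)\circ g$, i.e. to $\Yo_{\mathcal{F}}(g)$; I expect this to be a direct unwinding of the definitions in \cref{sheaves-II}. This establishes $\Yo^{\Sh}_{*}\mathcal{F} = \Yo_{\mathcal{F}}$. Finally, the \quot{in particular} clause follows since $\Yo_{\mathcal{F}}$ is representable and representable presheaves are sheaves for the canonical Grothendieck topology, which on $\Sh(\mathscr{C},T)$ is $\Pre(T)|_{\Sh(\mathscr{C},T)}$ by \cref{canonical-topology-on-Sh}. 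The only mild obstacle is bookkeeping: ensuring the Yoneda reparametrization matches slice morphisms with morphisms of elements, and that the induced-map formulas agree on both sides; there is no analytic difficulty.
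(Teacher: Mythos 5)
Your proposal is correct and follows the same route as the paper: the paper simply asserts that the set-theoretic realization of the limit over the slice category $\Yo^{\Sh}/\mathcal{G}$ ``directly gives'' $\mathrm{Hom}_{\PSh(\mathscr{C})}(\mathcal{G},\mathcal{F})$, and your Yoneda reparametrization of the slice as the category of elements of $\mathcal{G}$ is exactly the unwinding behind that assertion. The handling of the ``in particular'' clause via representability and \cref{subcanonical-and-sheaves} also matches the paper.
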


\begin{proof}
The set-theoretic realization of the limit \cref{right-Kan-extension} directly gives 
\begin{equation*}
(\Yo^{\Sh}_{*}\mathcal{F})(\mathcal{G}) = \mathrm{Hom}_{\PSh(\mathscr{C})} (\mathcal{G},\mathcal{F})= \mathrm{Hom}_{\Sh(\mathscr{C})} (\mathcal{G},\mathcal{F})=\Yo_{\mathcal{F}}(\mathcal{G})\text{.}
\end{equation*} 
Since the result is a representable sheaf, and we equipped $\Sh(\mathscr{C},T)$ with the canonical Grothendieck topology $T_{\Sh(\mathscr{C},T)}$, we see from \cref{subcanonical-and-sheaves} that $\Yo^{\Sh}_{*}\mathcal{F}$ is again a sheaf. 
\end{proof}

\begin{comment}
Similarly to \cref{fadsfaf}, we have that
\begin{equation*}
\mathcal{D} = \mathscr{I}^{*} \circ \Yo^{\Sh}: \Diff \to \Sh(\Open,T_{op})
\end{equation*}
\end{comment}

\bibliographystyle{kobib}
\bibliography{kobib}

\end{document}